\documentclass[letterpaper,11pt]{article}
\usepackage[margin=1in]{geometry}
\usepackage[bookmarks, colorlinks=true, plainpages = false, colorlinks=true,
            citecolor=red,
            linkcolor=blue,
            anchorcolor=red,
            urlcolor=blue]{hyperref}
\usepackage{url}\urlstyle{rm}
\usepackage{amsmath,amsfonts,amsthm,amssymb,bm, verbatim,dsfont,mathtools}
\usepackage{color,graphicx,appendix}
\usepackage{etoolbox}

\usepackage{array}
\usepackage{multirow}

\usepackage{float}
\usepackage{tikz}
\usetikzlibrary{matrix,arrows,calc,shapes,backgrounds}
\usetikzlibrary{shapes.callouts,decorations.text}

\usetikzlibrary{shapes.misc}

\tikzset{cross/.style={cross out, draw=black, minimum size=2*(#1-\pgflinewidth), inner sep=0pt, outer sep=0pt},
cross/.default={1pt}}

\tikzstyle{int}=[draw, fill=blue!20, minimum size=2em]
\tikzstyle{dot}=[circle, draw, fill=blue!20, minimum size=2em]
\tikzstyle{dotred}=[circle, draw, fill=red!20, minimum size=2em]
\tikzstyle{init} = [pin edge={to-,thin,black}]
\tikzstyle{initred} = [pin edge={to-,thin,red}]
\tikzstyle{plan}=[draw, fill=blue!20, minimum size=2em, text width=5em, rounded corners,align=center]
\tikzstyle{planwide}=[draw, fill=blue!20, minimum size=2em, text width=8em, rounded corners,align=center]
\tikzstyle{nodedot}=[circle, draw, fill=white, minimum size=0.3cm,inner sep=0pt]
\tikzstyle{vertexdot}=[circle, draw, fill=white, minimum size=3,inner sep=0pt]
\tikzstyle{Medge}=[green!60!black, thick]
\tikzstyle{Bedge}=[red, thick]
\tikzstyle{Cedge}=[blue, thick]
\tikzstyle{Sedge}=[black, thick]
\tikzstyle{Mgiantedge}=[green!60!black, line width=3.0pt]
\tikzstyle{Bgiantedge}=[red,line width=3.0pt]
\tikzstyle{Cgiantedge}=[blue,line width=3.0pt]
\tikzstyle{Sgiantedge}=[black,line width=3.0pt]
\tikzstyle{shadedgiantnode}=[circle, draw, fill=black!10, minimum size=1cm, inner sep=0pt]
\tikzstyle{unshadedgiantnode}=[circle, draw, fill=white, minimum size=1cm, inner sep=0pt]
\makeatletter
\tikzset{my loop/.style =  {to path={
  \pgfextra{}
  [looseness=5,min distance=10mm]
  \tikz@to@curve@path},font=\sffamily\small
  }}  
\makeatletter 
\newcolumntype{C}[1]{>{\centering\arraybackslash}p{#1}}

\usepackage{xr,xspace}
\usepackage{todonotes}
\usepackage{paralist}
\usepackage{enumitem}

\usepackage{caption,subcaption,soul}
\usepackage{algorithm}
\usepackage{algpseudocode}
\makeatletter
\usepackage{romanbar}

\theoremstyle{plain}
\newtheorem{theorem}{Theorem}
\newtheorem{lemma}{Lemma}
\newtheorem{proposition}{Proposition}

\theoremstyle{definition}
\newtheorem{definition}{Definition}

\newtheorem{problem}{Problem}

\newtheorem{remark}{Remark}

\newtheorem*{remark*}{Remark}

\newcommand{\floor}[1]{\left\lfloor #1 \right\rfloor}
\newcommand{\ceil}[1]{\left\lceil #1 \right\rceil}



\usepackage{xspace,prettyref}


\newcommand{\Esingle}{\calE_{\rm single}}
\newcommand{\Edouble}{\calE_{\rm double}}
\newcommand{\ex}{\mathrm{ex}}

\newcommand{\lcm}{\mathrm{lcm}}
\newcommand{\termI}{\text{(I)}}
\newcommand{\termII}{\text{(II)}}

\newcommand{\sigmae}{\sigma^{\sf E}} 
\newcommand{\pie}{\pi^{\sf E}} 
\newcommand{\tpie}{\tpi^{\sf E}} 

\newcommand{\diverge}{\to\infty}

\newcommand{\iiddistr}{{\stackrel{\text{\iid}}{\sim}}}
\newcommand{\inddistr}{{\stackrel{\text{ind.}}{\sim}}}

\newcommand{\reals}{{\mathbb{R}}}
\newcommand{\integers}{{\mathbb{Z}}}
\newcommand{\naturals}{{\mathbb{N}}}



\newcommand{\Expect}{\mathbb{E}}
\newcommand{\expect}[1]{\mathbb{E}\left[ #1 \right]}

\newcommand{\Prob}{\mathbb{P}}

\newcommand{\prob}[1]{ \mathbb{P}\left\{ #1 \right\} }

\newcommand{\Cov}{\text{Cov}}
\newcommand\indep{\protect\mathpalette{\protect\independenT}{\perp}}
\def\independenT#1#2{\mathrel{\rlap{$#1#2$}\mkern2mu{#1#2}}}
\def\ci{\perp\!\!\!\perp} 
\def\Var{\mathrm{Var}}

\newcommand{\Bern}{{\rm Bern}}
\newcommand{\Binom}{{\rm Binom}}
\newcommand{\Uniform}{\mathrm{Uniform}}
\newcommand{\Pois}{{\rm Poi}}

\newcommand{\ie}{i.e.\xspace}
\newcommand{\iid}{i.i.d.\xspace}
\newrefformat{eq}{(\ref{#1})}
\newrefformat{chap}{Chapter~\ref{#1}}
\newrefformat{sec}{Section~\ref{#1}}
\newrefformat{alg}{Algorithm~\ref{#1}}
\newrefformat{fig}{Fig.~\ref{#1}}
\newrefformat{tab}{Table~\ref{#1}}
\newrefformat{rmk}{Remark~\ref{#1}}
\newrefformat{clm}{Claim~\ref{#1}}
\newrefformat{def}{Definition~\ref{#1}}
\newrefformat{cor}{Corollary~\ref{#1}}
\newrefformat{lmm}{Lemma~\ref{#1}}
\newrefformat{prop}{Proposition~\ref{#1}}
\newrefformat{prob}{Problem~\ref{#1}}
\newrefformat{app}{Appendix~\ref{#1}}
\newrefformat{hyp}{Hypothesis~\ref{#1}}
\newrefformat{thm}{Theorem~\ref{#1}}

\newcommand{\pth}[1]{\left( #1 \right)}
\newcommand{\qth}[1]{\left[ #1 \right]}
\newcommand{\sth}[1]{\left\{ #1 \right\}}



\newcommand{\indc}[1]{{\mathbf{1}_{\left\{{#1}\right\}}}}
\newcommand{\Indc}{\mathbf{1}}

\newcommand{\tpi}{{\widetilde{\pi}}}
\newcommand{\harmonic}{{h}}

\newcommand{\sfB}{{\mathsf{B}}}
\newcommand{\sfC}{{\mathsf{C}}}

\newcommand{\sfM}{{\mathsf{M}}}

\newcommand{\sfS}{{\mathsf{S}}}

\newcommand{\calE}{{\mathcal{E}}}
\newcommand{\calF}{{\mathcal{F}}}
\newcommand{\calG}{{\mathcal{G}}}
\newcommand{\calH}{{\mathcal{H}}}

\newcommand{\calJ}{{\mathcal{J}}}

\newcommand{\calL}{{\mathcal{L}}}

\newcommand{\calN}{{\mathcal{N}}}
\newcommand{\calO}{{\mathcal{O}}}
\newcommand{\calP}{{\mathcal{P}}}
\newcommand{\calQ}{{\mathcal{Q}}}

\newcommand{\calS}{{\mathcal{S}}}
\newcommand{\calT}{{\mathcal{T}}}

\newcommand{\orbit}{O}

\newcommand{\ER}{Erd\H{o}s-R\'enyi\xspace}

\newcommand{\Tr}{\mathsf{Tr}}

\begin{document}
\title{Testing correlation of unlabeled random graphs}
\author{Yihong Wu, Jiaming Xu, and Sophie H. Yu \thanks{
Y.\ Wu is with Department of Statistics and Data Science, Yale University, New Haven CT, USA, 
\texttt{yihong.wu@yale.edu}.
J.\ Xu and S.\ H.\ Yu are with The Fuqua School of Business, Duke University, Durham NC, USA, 
\texttt{\{jx77,haoyang.yu\}@duke.edu}.
Y.~Wu is supported in part by the NSF Grant CCF-1900507, an NSF CAREER award CCF-1651588, and an Alfred Sloan fellowship.
J.~Xu is supported by the NSF Grants IIS-1838124, CCF-1850743, and CCF-1856424.
}}

\date{\today}


\maketitle
\begin{abstract}
We study the problem of detecting the edge correlation between two random graphs with $n$ unlabeled nodes. This is formalized as a 
hypothesis testing problem, where under the null hypothesis, the two graphs are independently generated; 
under the alternative, the two graphs are edge-correlated under some latent node correspondence, but have the same marginal distributions as the null.
For both Gaussian-weighted complete graphs and dense \ER graphs (with edge probability $n^{-o(1)}$), 
we determine the sharp threshold at which the optimal testing error probability exhibits a phase transition from zero to one as $n\to \infty$.
For sparse \ER graphs with edge probability $n^{-\Omega(1)}$, we determine the threshold within a constant factor. 

The proof of the impossibility results is an application of the conditional second-moment
method, where we bound the truncated second moment of the likelihood ratio by carefully
conditioning on the typical behavior of the intersection graph (consisting of edges in both
observed graphs) and taking into account the cycle structure of the induced random permutation on the edges.
Notably, in the sparse regime, this is accomplished by leveraging the pseudoforest structure of subcritical \ER graphs and a careful enumeration of subpseudoforests that can be assembled from short orbits of the edge permutation.
\end{abstract}

\tableofcontents

\section{Introduction} \label{sec:intro}

Understanding and quantifying the correlation between datasets are among the most fundamental tasks in statistics.
In many modern applications, the observations may not be in the familiar form of vectors but rather graphs.
Furthermore, the node labels may be absent or scrambled, 
in which case one needs to decide the similarity between these \emph{unlabeled graphs} on the sheer basis of their topological structures. 
Equivalently, it amounts to determining whether there exists a node correspondence under which the (weighted) edges 
of the two graphs are correlated. This problem arises naturally in a wide range of fields:
\begin{itemize}
	\item In social network analysis, one is interested in 
deciding whether two friendship networks on different social platforms
share structural similarities, where the node labels are frequently anonymized due to privacy considerations 
\cite{narayanan2008robust,narayanan2009anonymizing}.

\item In computer vision, 
3-D shapes are commonly represented by geometric graphs, where nodes are subregions and edges encode the adjacency relationships between different regions.
A key building block for pattern recognition and image processing is to determine whether two graphs 
correspond of the same object that undergoes different rotations or deformations (changes in pose or topology) \cite{cour2007balanced,berg2005shape}.

\item In computational biology, an important task is to assess the correlation of two biological networks in two different species so as to enrich one dataset using the other \cite{singh2008global,vogelstein2011large}. 


\item In natural language processing, the so-called ontology alignment problem refers to uncovering the correlation between two knowledge graphs that are in either different languages \cite{haghighi2005robust} or different domains (e.g.~Library of Congress versus Wikipedia \cite{bayati2013message}).

\end{itemize}


Inspired by the hypothesis testing model proposed by Barak et al~\cite{barak2019nearly}, we formulate a general problem of testing network correlation as 
follows. Let  $G=([n], W)$
denote a weighted undirected graph on the node set $[n]\triangleq \{1,\ldots,n\}$ with weighted
adjacency matrix $W$, where $W_{ii}=0$ and 
for any $1 \le i < j \le n$,  $W_{ij}=1$ (or the edge weight) if $i$ and $j$
are adjacent and $W_{ij}=0$ otherwise. 
Recall that two weighted graphs $G=([n], W)$ and $H=([n], W')$ are \emph{isomorphic}
and denoted by $G\cong H$ if there exists a permutation (called a graph isomorphism) $\pi$ on $[n]$ such that 
$W_{ij}= W'_{\pi(i)\pi(j)}$ for all $i,j$.
Given a weighted graph $G$, its \emph{isomorphism class}
$\overline{G}$ is the equivalence class $\overline{G}= \{H: H \cong G\}$. 
We refer to an isomorphism class as an unlabeled graph and $\overline{G}$ as the unlabeled version of $G$.


\begin{problem}[Testing correlation of unlabeled graphs]\label{prob:HypTesting}
Let $G_1=([n], W)$ and $G_2=([n],W')$ be two weighted random graphs,
where the edge weights $\{(W_{ij}, W'_{ij}): 1 \le i < j  \le n\}$ are i.i.d.~pairs of random variables, 
and $W_{ij}$ and $W'_{ij}$ have the same marginal distribution.
Under the null hypothesis  $\calH_0$, $W_{ij}$ and $W'_{ij}$ are independent; 
under the alternative hypothesis  $\calH_1$, $W_{ij}$ and $W'_{ij}$ are correlated.
Given the unlabeled versions  of $G_1$ and $G_2$,  i.e., their isomorphism classes $\overline{G}_1=\{G: G \cong G_1\}$ and 
$\overline{G}_2=\{G: G \cong G_2\}$, the goal is to test  $\calH_0$ versus  $\calH_1$.
\end{problem}

Note that were the node labels of $G_1$ and $G_2$ observed, one could stack all the edge weights as a vector and reduce the problem to
simply testing the correlation of two random vectors. However, when the node labels are unobserved, 
the inherent correlation between $G_1$ and $G_2$ is obscured by the latent node correspondence, making the testing problem significantly more challenging. 
Indeed, since the observed graphs are unlabeled, the test needs to rely on \emph{graph invariants} 
(\ie, graph properties that are invariant under graph isomorphisms), such as subgraphs counts (e.g.~the number of edges and triangles) and spectral information (e.g.~eigenvalues of adjacency matrices or Laplacians).



 
In this work, we focus on the following two special cases of particular interests:
\begin{itemize}
\item (Gaussian Wigner model). 
Suppose that under  $\calH_1$, each pair of edge weights 
$W_{ij}$ and $W'_{ij}$ are jointly normal with zero mean, unit variance and correlation coefficient $\rho \in [0,1]$; 
under $\calH_0$, $W_{ij}$ and  $W'_{ij}$ are independent standard normals. 
Note that marginally  $W, W'$ are two Gaussian Wigner random matrices under both  $\calH_0$ and  $\calH_1$. 
 The correlated Gaussian Wigner model is proposed in \cite{ding2018efficient} as a prototypical model for random graph matching and further studied in \cite{FMWX19a,ganassali2019spectral}.

\item (\ER random graph).  Let $\calG(n,p)$ denote the \ER random graph model with edge probability $p \in [0,1]$. 
Consider the edge sampling process that generates a children graph from a given parent graph by keeping each edge independently with probability $s \in [0,1]$. 
Suppose that under  $\calH_1$, $G_1$ and $G_2$ are independently subsampled 
from a common parent graph $G \sim \calG(n,p)$; under  $\calH_0$, $G_1$ and $G_2$ are independently subsampled 
from two independent parent graphs $G, G' \sim \calG(n,p)$, respectively. See \prettyref{fig:test} for
an illustration.

Note that $G_1$ and $G_2$ are both instances of $\calG(n,ps)$ that are independent under $\calH_0$ and correlated under $\calH_1$. This specific model of correlated \ER random graphs is initially proposed by \cite{pedarsani2011privacy} and has been widely used for studying the problem of matching random graphs~\cite{cullina2016improved,cullina2017exact,barak2019nearly,mossel2019seeded,dai2019analysis,cullina2019partial,ding2018efficient,FMWX19b,ganassali2020tree,hall2020partial}. 


  \end{itemize}

\begin{figure}[H]
\centering
\begin{subfigure}{0.75\textwidth}
  \centering
  \includegraphics[trim= 10mm 10mm 10mm 10mm, width=1\linewidth]{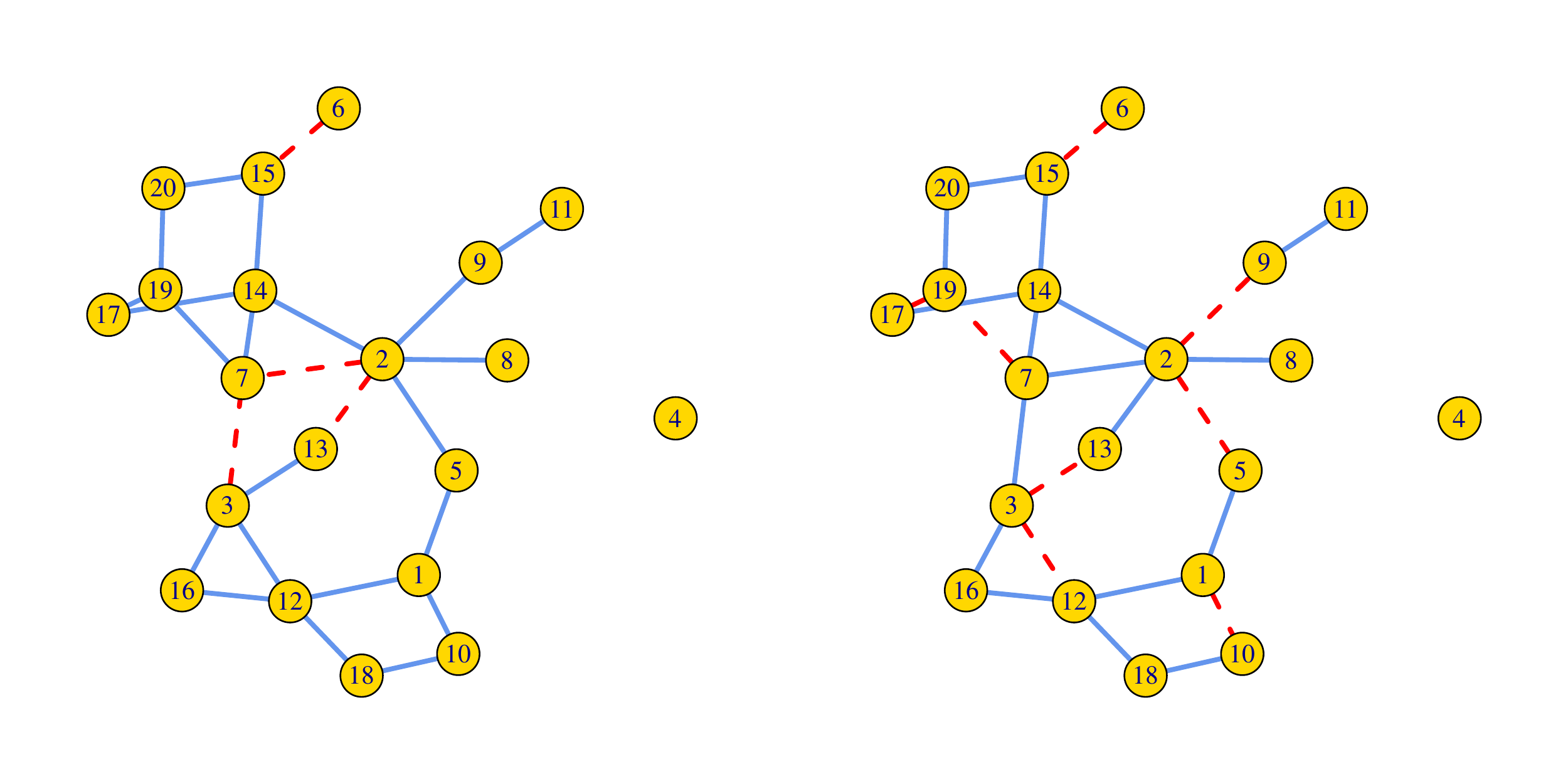}
  \caption{Two labeled graphs $G_1$ and $G_2$ are subsampled from a common parent graph  according to the correlated \ER graph model with $n=20$, $p=0.1$, and $s=0.8$ under $\calH_1$, where blue edges are edges sampled from the parent graph, and red, dashed edges are edges deleted from the parent graph.
}
  \label{fig:sub1}
\end{subfigure}%
\vskip\baselineskip
\begin{subfigure}{0.75\textwidth}
  \centering
  \includegraphics[
  trim= 10mm 10mm 10mm 10mm, width=1\linewidth]{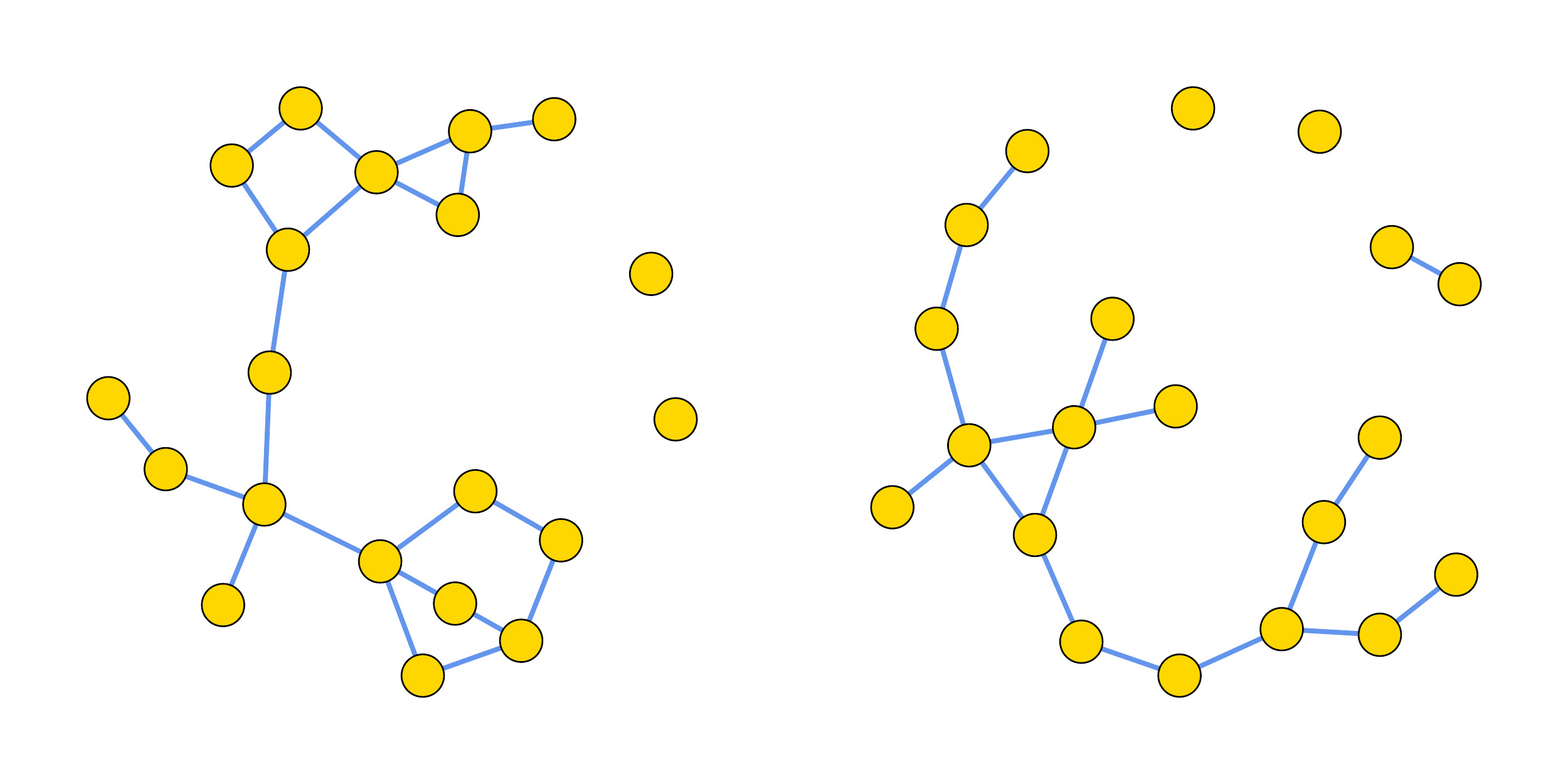}
  \caption{Two observed graphs $\overline{G}_1$ and $\overline{G}_2$ are the unlabeled versions  of $G_1$ and $G_2$, respectively. }
  \label{fig:sub2}
\end{subfigure}
\caption{
Example of testing correlation of two \ER random graphs. The task is to test the underlying hypothesis ($\calH_0$ or $\calH_1$) based on the two unlabeled graphs in panel (b).
}
\label{fig:test}
\end{figure}


We further focus on the following two natural types of testing guarantees.

\begin{definition}[Strong and weak detection]
Let $\calQ$ and $\calP$ denote the probability measure under  $\calH_0$  and $\calH_1$, respectively. 
We say a test statistic $\calT \left( \overline{G}_1,
\overline{G}_2\right)$ with threshold $\tau$ achieves 
\begin{itemize}
\item \emph{strong detection} if the sum of type I and type II error converges to 0 as $n\diverge$, that is, 
\begin{align}
    \lim_{n\diverge} \left[\calP\left(\calT\left(\overline{G}_1,\overline{G}_2\right)<\tau\right)+\calQ\left(\calT\left(\overline{G}_1, \overline{G}_2\right) \geq \tau\right)\right] = 0; \label{eq:detection}
\end{align}
\item \emph{weak detection}, if the sum of type I and type II error is bounded away from $1$ as $n\diverge$, that is, 
\begin{align}
    \limsup_{n\diverge} \left[\calP\left(\calT\left(\overline{G}_1,\overline{G}_2\right)<\tau\right)+\calQ\left(\calT\left(\overline{G}_1, \overline{G}_2\right) \geq \tau\right)\right] <1. \label{eq:weak_detection}
\end{align}
\end{itemize}
\end{definition}
Note that strong detection requires the test statistic to determine 
with high probability whether $\left( \overline{G}_1, \overline{G}_2\right)$ is drawn from $\calQ$ or $\calP$, while
weak detection only aims at strictly outperforming random guessing. It is well-known that the minimal sum of type I and type II error is $1-\mathrm{TV}\left(\calP,\calQ\right)$, achieved by the likelihood ratio test, where $\mathrm{TV}\left(\calP,\calQ\right)=\frac{1}{2} \int\left|\mathrm{d}\calP-\mathrm{d}\calQ\right|$ denotes the total variation distance between $\calP$ and $\calQ$. Thus strong and weak detection are equivalent to $\lim_{n \to \infty}\mathrm{TV}\left(\calP,\calQ\right) =1$ and $\liminf_{n \to \infty} \mathrm{TV}(\calP,\calQ)>0$, respectively.

Recent work~\cite{barak2019nearly} 
developed a polynomial-time test based on counting certain subgraphs
 that correctly distinguishes between 
 $\calH_0$ and  $\calH_1$ with probability at least $0.9$, 
 provided that the edge subsampling probability $s = \Omega \left(1\right)$ and the average degree satisfies certain conditions; however, the fundamental limit of detection remains elusive. The main objective of this paper is to obtain tight necessary and sufficient conditions for strong and weak detection.


\subsection{Main results}
\begin{theorem}[Gaussian Wigner model] \label{thm:gw}
If 
\begin{align}
\rho^2 \geq \frac{4\log n}{n-1},
\label{eq:1upperbound}
\end{align} 
then $\mathrm{TV}\left(\calP,\calQ\right) = 1+o\left(1\right)$.
Conversely, if 
\begin{align}
\rho^2 \leq \frac{(4-\epsilon) \log n}{n}
    \label{eq:1lowerbound}
\end{align}
for any constant $\epsilon>0$, then $\mathrm{TV}\left(\calP,\calQ\right) = o(1)$.
\end{theorem}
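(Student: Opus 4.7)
The plan is to use the maximum inner product test based on the graph invariant
\[
T(\bar G_1, \bar G_2) \triangleq \max_{\pi \in S_n} \sum_{1 \le i < j \le n} W_{ij} W'_{\pi(i)\pi(j)}.
\]
Under $\calH_1$, plugging in the unobserved true alignment $\pi^\ast$ shows that $T$ is at least a sum of $\binom{n}{2}$ i.i.d.\ $\rho$-correlated Gaussian products, which concentrates around $\rho\binom{n}{2}$ by the law of large numbers. Under $\calH_0$, each fixed-$\pi$ sum is centered with variance $\binom{n}{2}$ and subgaussian with the same proxy, so a union bound over $n!$ permutations combined with the Gaussian maximal inequality yields $T \le (1+o(1))\sqrt{n(n-1)\log n!}$ with high probability. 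Comparing the two, strong detection via thresholding holds as soon as $\rho\binom{n}{2} > (1+o(1))\sqrt{n(n-1)\log n!}$, which using $\log(n!) = n\log n(1+o(1))$ simplifies to exactly $\rho^2 \ge (1+o(1))\cdot 4\log n/(n-1)$.

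\textbf{Converse direction ($\rho^2 \le (4-\epsilon)\log n/n$).}
I would apply the conditional second moment method. For any event $\Omega$,
\[
\dTV(\calP,\calQ) \le \calP(\Omega^c) + \sqrt{\Expect_\calQ[L^2 \Indc_\Omega]/\calP(\Omega)^2 - 1},
\]
so it suffices to exhibit $\Omega$ with $\calP(\Omega) = 1-o(1)$ and $\Expect_\calQ[L^2 \Indc_\Omega] = 1 + o(1)$, where $L \triangleq d\calP/d\calQ$. Writing $L = \Expect_\pi[L_\pi]$ with $\pi$ uniform on $S_n$ and $L_\pi$ the labeled LR assuming alignment $\pi$, the Hermite (Mehler) expansion of the bivariate normal density produces the clean formula
\[
\Expect_\calQ[L_\pi L_\tau] = \prod_{k\ge 1}(1-\rho^{2k})^{-C_k(\sigma_E)}, \qquad \sigma \triangleq \pi \tau^{-1},
\]
where $\sigma_E$ is the edge permutation induced by $\sigma$ on the $\binom{n}{2}$ vertex pairs and $C_k(\sigma_E)$ counts its $k$-cycles. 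The untruncated second moment already diverges from the $\sigma = \id$ contribution $\frac{1}{n!}(1-\rho^2)^{-\binom{n}{2}}$ alone once $\rho^2 \gg 2\log n/n$, well short of the claimed threshold; hence some truncation is necessary.

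\textbf{Choice of truncation and main obstacle.}
A natural candidate is $\Omega = \{T(\bar G_1,\bar G_2) \le \tau\}$ with $\tau$ just above the null maximum $\sqrt{n(n-1)\log n!}$. In the subthreshold regime the alternative signal $\rho\binom{n}{2}$ lies strictly below $\tau$, so $\calP(\Omega) = 1-o(1)$ by Gaussian concentration. Expanding,
\[
\Expect_\calQ[L^2 \Indc_\Omega] = \Expect_{\pi, \tau}\bigl[\Expect_\calQ[L_\pi L_\tau]\cdot\Prob_{M_{\pi,\tau}}(\Omega)\bigr],
\]
where $M_{\pi,\tau}$ is the probability measure with $dM_{\pi,\tau}/d\calQ \propto L_\pi L_\tau$; under $M_{\pi,\tau}$ the observation carries planted correlations at both alignments $\pi$ and $\tau$, so $T$ exceeds $\tau$ with high probability whenever $\sigma = \pi\tau^{-1}$ has many vertex fixed points, sharply suppressing those dangerous near-identity contributions. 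The hardest step will be to sum the resulting bounds over $\sigma$ while precisely tracking $C_k(\sigma_E)$ as a function of the vertex cycle structure of $\sigma$ (two vertex cycles of lengths $L_r, L_s$ contribute $\gcd(L_r, L_s)$ edge cycles of length $\mathrm{lcm}(L_r, L_s)$, and intra-cycle edges require a separate combinatorial enumeration), and to combine this enumeration with a Gaussian tail bound on $\Prob_{M_{\pi,\tau}}(\Omega)$ sharp enough to improve the naive constant $2$ to the claimed $4$.
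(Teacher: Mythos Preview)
Your achievability argument is the paper's (Section~2.1): threshold the QAP statistic $T=\max_\pi T_\pi$, lower-bound by $T_{\pi^*}$ under $\calH_1$, union-bound under $\calH_0$. (Minor nit: $\sum A_{ij}B_{\pi(i)\pi(j)}$ is subexponential, not subgaussian --- its MGF is $(1-\lambda^2)^{-m/2}$ --- but Chernoff with this gives the same leading tail.)

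Your converse framework also matches: the orbit decomposition, the Mehler formula $\Expect_\calQ[L_\pi L_{\tpi}]=\prod_k(1-\rho^{2k})^{-N_k}$, and the diagnosis that near-identity $\sigma$ blows up the untruncated second moment at $\rho^2\approx 2\log n/n$ are exactly what the paper establishes. Where you diverge is the \emph{conditioning event}. The paper does not truncate on $\{T\le\tau\}$; it conditions on the $(A,B,\pi)$-measurable event $\calE=\bigcap_{|S|\ge n/2}\calE_S$, where $\calE_S$ bounds $\sum_{i<j\in S}A_{ij}^2$, $\sum_{i<j\in S}B_{\pi(i)\pi(j)}^2$, and $e_{A\wedge B^\pi}(S)=\sum_{i<j\in S}A_{ij}B_{\pi(i)\pi(j)}$. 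When $\sigma$ has fixed-point set $F$ with $|F|>n/2$, $\calE\subset\calE_F$ forces $e_{A\wedge B^\pi}(F)\le(1+o(1))\rho\binom{|F|}{2}$; a single truncated-MGF bound on this scalar then halves the exponent of $\Expect_\calQ\bigl[\prod_{O\in\calO_1}X_O\,\Indc_{\calE_F}\bigr]$ from $\rho^2\binom{n_1}{2}$ to $\tfrac12\rho^2\binom{n_1}{2}$, precisely the factor that moves the threshold from $2$ to $4$. Verifying $\calP(\calE)=1-o(1)$ is a direct Hanson--Wright plus union bound over the $2^n$ subsets, and the remaining average over $\sigma$ reuses the same Poisson-approximation lemma as the unconditional argument.

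Your $\{T\le\tau\}$ route may also reach the sharp constant --- a back-of-envelope for $\sigma=\id$ does give the right cancellation --- but two steps you treat as routine are not. First, $\calP(T\le\tau)=1-o(1)$ does not follow from ``$T_{\pi^*}<\tau$ by concentration'': $T$ is a maximum over all $\pi$, and under $\calP$ the $T_\pi$ for $\pi$ near $\pi^*$ are boosted and correlated through the latent alignment, so a separate union-bound argument under $\calP$ is still required. Second, bounding $M_{\pi,\tpi}(T\le\tau)$ for general $\sigma$ with $n_1$ fixed points requires the law of $T_\pi$ under a product of orbit-wise tilted Gaussians, which is considerably heavier than the paper's one-line truncated-MGF step. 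The paper's choice of truncating directly on the intersection-graph edge densities is what makes the argument short.
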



 

\begin{theorem}[\ER graphs]\label{thm:er}
If
\begin{align}
    s^2 \geq  \frac{2 \log n }{ (n-1) p\left( \log\frac{1}{p}-1+p\right) } , \label{eq:2upperbound}
\end{align} 
then $ \mathrm{TV}\left(\calP,\calQ\right) = 1-o\left(1\right)$.

Conversely, assume that $p$ is bounded away from $1$.
\begin{itemize}
\item (Dense regime): If $p=n^{-o(1)}$ and
\begin{align}
    s^2 \leq  \frac{(2-\epsilon) \log n}{np\left( \log\frac{1}{p}-1+p\right) } \label{eq:2lowerbound}
\end{align} 
for any constant $\epsilon>0$,
then $\mathrm{TV}\left(\calP,\calQ\right) = o(1)$.
\item (Sparse regime): If $p=n^{-\Omega(1)}$ and 
\begin{align}
s^2 \le \frac{1- \omega( n^{-1/3})}{np} \wedge c_0 \label{eq:2lowerbound_sparse_strong}
\end{align} 
for some universal constant $c_0$ ($c_0=0.01$ works),
then $\mathrm{TV}\left(\calP,\calQ\right) =1-\Omega(1)$.
In addition, if \prettyref{eq:2lowerbound_sparse_strong} holds and $s=o(1)$, 
then $\mathrm{TV}\left(\calP,\calQ\right) =o(1)$.
\end{itemize}
\end{theorem}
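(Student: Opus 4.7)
The statement has three independent claims: the achievability \eqref{eq:2upperbound}, the dense-regime converse \eqref{eq:2lowerbound}, and the sparse-regime converse \eqref{eq:2lowerbound_sparse_strong} (weak in general, upgraded to strong under the extra assumption $s = o(1)$). For the achievability I propose the isomorphism-invariant statistic $\calT(\overline{G}_1, \overline{G}_2) = \max_{\pi \in S_n} |E(G_1) \cap \pi(E(G_2))|$, the maximum common-edge count over relabelings. Under $\calH_1$ the latent true alignment yields $\calT \geq (1-o(1))\binom{n}{2} p s^2$ with high probability, while under $\calH_0$ the overlap for each fixed $\pi$ is a sum of $\binom{n}{2}$ independent $\Bern((ps)^2)$ variables. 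A Chernoff upper-tail bound gives an exponent of $\binom{n}{2}\cdot d(ps^2 \Vert (ps)^2) \approx \binom{n}{2} p s^2(\log \tfrac{1}{p} - 1 + p)$ at the target value, and a union bound over $n! \le e^{n\log n}$ permutations yields strong detection precisely when \eqref{eq:2upperbound} holds.

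For the dense-regime impossibility I would use a truncated second-moment method applied to $L = dP/dQ = \tfrac{1}{n!}\sum_\pi L_\pi$, where $L_\pi = \prod_{i<j} f((W_1)_{ij}, (W_2)_{\pi(i)\pi(j)})$ and $f$ is the edge-wise likelihood ratio for the correlated vs.\ independent Bernoulli pair. Using edge independence under $\calQ$ and $\Expect_\calQ[f]=1$, the second moment reduces to a sum over $\sigma = \pi_1^{-1}\pi_2$: $\Expect_\calQ[L^2] = \frac{1}{n!}\sum_\sigma \alpha^{F(\sigma^\sfE)}$, where $\alpha = \Expect_\calQ[f^2] = 1 + s^2(1-p)^2/(1-ps)^2$ and $F(\sigma^\sfE)$ counts the fixed edges of the induced edge permutation. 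The plain second moment is too large, so I would truncate $L$ on the event that $|E(G_1) \cap E(G_2)|$ is near its $\calQ$-mean and the intersection graph has typical local structure, which suppresses the contribution of $\sigma$'s whose edge permutation has large orbits. Enumerating $\sigma$ by the cycle type of $\sigma^\sfE$ (subject to the constraint that edge cycle lengths are determined by vertex cycle lengths) then yields $\Expect_\calQ[\tilde L^2] = 1 + o(1)$ under \eqref{eq:2lowerbound}; the sharp constant $2$ is recovered by matching the Chernoff exponent $\binom{n}{2} p s^2(\log\tfrac{1}{p}-1+p)$ on the achievability side against $\log n! \asymp n\log n$ from the permutation sum.

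The sparse regime requires a different approach because the intersection graph $G_1 \cap G_2$ is too sparse for the preceding cycle-count estimates to bite. When $nps^2 < 1$ the intersection graph is subcritical \ER and is a.a.s.\ a pseudoforest (each component has at most one cycle), so I would truncate $L$ on this pseudoforest event. The contribution of each $\sigma$ to the truncated second moment is then controlled by enumerating pseudoforests whose edge set is a union of orbits of $\sigma^\sfE$; because $\sigma^\sfE$ arises from a vertex permutation, the admissible orbit lengths are highly constrained and only short orbits (fixed edges and rare $2$-cycles of $\sigma^\sfE$) contribute appreciably. This yields $\mathrm{TV} \le 1 - \Omega(1)$ under \eqref{eq:2lowerbound_sparse_strong}. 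Under the additional assumption $s = o(1)$, the per-edge increment $\alpha - 1 = o(1)$ forces the leading non-identity contribution to vanish, upgrading the conclusion to $\mathrm{TV} = o(1)$.

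The hardest step will be the combinatorial enumeration in the sparse regime: classifying subpseudoforests that can be assembled from orbits of $\sigma^\sfE$ subject to the vertex-permutation compatibility constraint, and showing that contributions from non-short orbits decay fast enough that the threshold $s^2 \le 1/(np)$ is recovered up to constants. A subsidiary difficulty is the choice of truncation in the dense regime---conditioning only on $|E(G_1) \cap E(G_2)|$ is too weak, so one must additionally condition against atypical dense substructures in order to match the sharp constant $2$; getting both constants to line up with achievability is what distinguishes ``sharp threshold'' from ``threshold up to constants.''
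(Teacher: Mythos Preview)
Your high-level architecture matches the paper: the QAP statistic for achievability, and conditional second moment for both converses, with the conditioning event being (dense) control of induced-subgraph edge counts in the intersection graph $A\wedge B^\pi$ and (sparse) the pseudoforest property of $A\wedge B^\pi$. But two points are off and one of them is a genuine gap.

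First, your second-moment formula $\Expect_\calQ[L^2]=\tfrac{1}{n!}\sum_\sigma\alpha^{F(\sigma^{\sf E})}$ is not correct: after decomposing $\prod_{i<j}X_{ij}$ over edge orbits of $\sigma^{\sf E}$, each $k$-orbit contributes $1+\rho^{2k}$ (a trace computation), so the answer is $\Expect_\sigma\bigl[\prod_k(1+\rho^{2k})^{N_k}\bigr]$ rather than $(1+\rho^2)^{N_1}$. Relatedly, you have the obstruction backwards: it is the permutations $\sigma$ with \emph{many short} orbits (in the extreme, $\sigma=\mathrm{id}$) that blow up the second moment, not those with long orbits. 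The truncation must be measurable in $(A,B,\pi)$ --- you cannot condition on $\sigma$ --- and must cap $\prod_{|O|=1}X_O$ when the fixed-point set $F$ of $\sigma$ is large. The paper does this by conditioning on $e_{A\wedge B^\pi}(S)$ for \emph{all} $S$ with $|S|\ge\alpha n$, so that the bound applies a posteriori to $S=F$. Your ``near its $\calQ$-mean'' is also the wrong normalization: the event must hold whp under $\calP$, where $A\wedge B^\pi$ has edge density $ps^2$, not $(ps)^2$.

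The genuine gap is in the sparse regime. Your claim that ``only short orbits (fixed edges and rare $2$-cycles) contribute appreciably'' is precisely what fails when $p=n^{-\Omega(1)}$: every orbit length $k$ with $\rho^{2k}\ge(2+\epsilon)\log n/n$ is an obstruction, so for $p=\Theta(1/n)$ and $s=\Theta(1)$ one must simultaneously control all $k\le\Theta(\log n)$. The paper's mechanism is that a $k$-orbit $O$ attains its maximal value $X_O=(1/p)^{2k}$ exactly when $O\subset E(A\wedge B^\pi)$; under the pseudoforest truncation, the conditional second moment is then bounded by the generating function $\sum_{H}s^{2e(H)}$ over \emph{orbit pseudoforests} (pseudoforests that are unions of edge orbits of length $\le k$). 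Bounding this requires a structural classification of edge orbits into four types (matchings, bridges, cycles, splits --- according to how the two endpoints sit in the node-cycle decomposition of $\sigma$) and a careful enumeration yielding a bound of the shape $\prod_{m\le k}\bigl(1+\mathrm{poly}(s^m,n_m,\ldots)\bigr)^{n_m}$, which is then averaged over the random cycle type $(n_1,\ldots,n_k)$ via Poisson approximation and a recursive peeling over $m$. Treating ``non-short orbits'' as a tail to be bounded away will not recover $nps^2\lesssim 1$; the enumeration across all $m\le k=\Theta(\log n)$ is the heart of the argument and is where the constant $c_0$ emerges.
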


%


For the Gaussian Wigner model, \prettyref{thm:gw} shows that the fundamental limit of detection in terms of the limiting value of $ \frac{n\rho^2}{\log n}$ exhibits a sharp threshold at $4$, above which strong detection is possible and below which weak detection is impossible, 
a phenomenon known as the ``all-or-nothing'' phase transition~\cite{reeves2019all}.
In the \ER model, for dense parent graphs with $p=n^{-o(1)}$ and bounded away from $1$, \prettyref{thm:er} shows that a similar sharp threshold for $\frac{nps^2( \log (1/p) -1+p)}{ \log n}$ exists at $2$. 
Curiously, the function $p \mapsto p( \log\frac{1}{p}-1+p)$ is \emph{not} monotone and uniquely maximized at $p_* \approx 0.203$, the solution to the equation $\log \frac{1}{p}=2(1-p)$.
This shows the counterintuitive fact that the parent graph with edge density $p_*$ is the ``easiest'' for detection as it requires the lowest sampling probability $s$;
nevertheless, such non-monotonicity in the detection threshold can be anticipated by noting that in the extreme cases of $p=0$ and $p=1$, the observed two graphs are always independent and the two hypotheses are identical.

For sparse parent graphs with $p=n^{-\Omega(1)}$, the picture is less clear:
\begin{itemize}
	\item Unbounded average degree $np=\omega(1)$: 
	For simplicity, assume that $p=n^{-\alpha+o(1)}$ for some constant $\alpha \in (0,1]$. 	
	\prettyref{thm:er} implies that strong detection is possible if $\liminf nps^2 >  \frac{2}{\alpha}$ and weak detection is impossible if $\limsup nps^2 < 1$; these two conditions differ by a constant factor.

	\item Bounded average degree $np=\Theta(1)$: For simplicity, assume that $p=d/n$ for some constant $d>0$. 
	\prettyref{thm:er} shows that strong detection is possible if $s^2 >  \frac{2}{d}$ and impossible if $s^2 < c_0 \wedge \frac{1}{d}$.

\end{itemize}
For both cases, it is an open problem to determine the  sharp threshold for detection (or the existence thereof).

\begin{remark}[Simple test for weak detection]
\label{rmk:weak_detection}
In the non-trivial case of $p= \omega(1/n^2)$ and $p$ bounded away from $1$, as long as the sampling problem $s$ is any constant, weak detection can be achieved in linear time 
by simply comparing the number of edges of the two observed graphs. 
Intuitively, their difference behaves like a centered Gaussian with slightly different scale parameters under the two hypotheses which can then be distinguished non-trivially (see \prettyref{sec:pf-weak_detection} for a rigorous justification).
In view of the negative result for weak detection in \prettyref{thm:er}, we conclude that for parent graph with bounded average degree $np=O(1)$, 
weak detection is possible if and only if $s=\Omega(1)$. 
\end{remark}

As discussed in the next subsection, the testing procedure used to achieve strong detection in both \prettyref{thm:gw} and 
\prettyref{thm:er} involves a combinatorial optimization that is intractable in the worst case. 
Thus it is of interest to compare the optimal threshold to the performance of existing computationally efficient algorithms. These methods are based on subgraph counts that extend the simple idea of counting edges in \prettyref{rmk:weak_detection}. For \ER graphs, the polynomial-time test in \cite[Theorem 2.2]{barak2019nearly} (based on counting certain probabilistically constructed subgraphs) correctly distinguishes between 
 $\calH_0$ and  $\calH_1$ with probability at least $0.9$, 
 provided that the edge subsampling probability $s = \Omega \left(1\right)$
and
$nps \in \left[n^{\epsilon},n^{1/153}\right] \cup \left[n^{2/3},n^{1-\epsilon}\right]$
for some small constant $\epsilon>0$.
This performance guarantee is highly suboptimal compared to $s^2 = \Omega(\frac{\log n}{np \log(1/p) })$ given by \prettyref{thm:er}. 
In a companion paper \cite{MWXY20}, we propose a polynomial-time algorithm based on counting trees that achieves strong detection, 
 provided that $np \ge n^{-o(1)}$ and $\rho^2\triangleq \frac{s^2(1-p)^2}{(1-ps)^2} > \frac{1}{\beta}$ where $\beta \triangleq \lim_{k\to\infty} [t(k)]^{1/k} \approx 2.956$ and 
$t(k)$ is the number of unlabeled trees with $k$ vertices~\cite{otter1948number}.
Achieving the optimal threshold with polynomial-time tests remains an open problem.

\subsection{Test statistic and proof techniques} \label{sec:analysis_technique}
To introduce our testing procedure and the analysis, we first reformulate the testing problem given in \prettyref{prob:HypTesting} in a more convenient form. 
Due to the exchangeability of the (i.i.d.)~edge weights, observing the unlabeled version  is equivalent to observing its randomly relabeled version. Indeed, 
let $\pi_1$ and $\pi_2$ be two independent random permutations uniformly drawn from the set $\calS_n$ of all permutations on $[n]$.
Consider the relabeled version of $G_1=([n],W)$ with weighted adjacency matrix $A$, where $A_{ij} = W_{\pi_1(i)\pi_1(j)}$; similarly, let $B$ correspond to the relabeled version of $G_2=([n],W')$ with $B_{ij} = W'_{\pi_2(i)\pi_2(j)}$.
It is clear that observing the unlabeled graphs $\overline{G}_1$ and $\overline{G}_2$ is equivalent to observing the labeled graphs $A$ and $B$.
Since $\pi_2^{-1} \circ \pi_1$ is also a uniform random permutation, we arrive at the following formulation that is equivalent to \prettyref{prob:HypTesting}:

\begin{problem}[Reformulation of \prettyref{prob:HypTesting}]
\label{prob:HypTesting2}
	Let $A$ and $B$ denote the weighted adjacency matrices of two weighted graphs on the vertex set $[n]$,	both consisting of i.i.d.~edge weights. 
	Under  $\calH_0$, $A$ and $B$
are independent; under  $\calH_1$, conditional on a latent permutation $\pi$ drawn uniformly at random from $\calS_n$, 
$\{(A_{ij},B_{\pi(i)\pi(j)}): 1 \leq i<j\leq n\}$ are i.i.d.~and each pair $A_{ij}$ and $B_{\pi(i)\pi(j)}$ are correlated. 
Upon observing $A$ and $B$, the goal is to test $\calH_0$ versus $\calH_1$.
\end{problem}


Note that under  $\calH_1$, the latent random permutation $\pi$ represents the hidden node
correspondence under which $A$ and $B$ are correlated. For this reason, we refer to $\calH_1$ as the \emph{planted model} and $\calH_0$ as the \emph{null model}. 
The likelihood ratio is given by:
\begin{align}
\frac{\mathcal{P}\left(A,B \right) }{\mathcal{Q} \left(A,B\right)}
= \frac{1}{n!} \sum_{\pi \in \calS_n}  \frac{\mathcal{P}\left(A,B \mid \pi\right) }{\mathcal{Q} \left(A,B\right)},
\end{align}
which is the optimal test statistic but difficult to analyze due to the averaging over all $n!$ permutations. 
Instead, we consider the \emph{generalized likelihood ratio} by replacing the average with the maximum:
\begin{align}
 \max_{\pi \in \calS_n} \frac{\calP\left(A,B|\pi\right)}{\calQ\left(A,B\right)}. \label{eq:generalized_ratio}
\end{align}
As shown later in \prettyref{sec:3st}, for both the Gaussian Wigner and the \ER graph model, \prettyref{eq:generalized_ratio} is equivalent to 
\begin{align}
\calT(A, B) \triangleq \max_{\pi \in \calS_n} \; \calT_{\pi},\quad \mathrm{where\ } \calT_\pi \triangleq  \sum_{i<j} 
A_{ij}B_{\pi\left(i\right),\pi\left( j\right)}.
\label{eq:QAP_test}
\end{align}
which amounts to computing the maximal edge correlation over
all possible node correspondences between $A$ and $B$. 
As desired, the test statistic $\calT(A, B)$ is invariant to the relabeling of both $A$ and $B$ and can be applied to their unlabeled versions.
The combinatorial optimization problem \prettyref{eq:QAP_test} is an instance of
 the \textit{quadratic assignment problem} \cite{rendl1994quadratic}, 
 which is known to be NP-hard to solve 
  or to approximate within a growing factor \cite{makarychev2010maximum}.

To show the test statistic $\calT(A, B)$ achieves detection, 
first observe that in the planted model with hidden permutation $\pi$, 
$\calT(A, B)$ is trivially bounded 
from below
by $\sum_{i<j } A_{ij} B_{\pi(i)\pi(j)}$, which can be further shown to exceed some threshold $\tau$ with high probability by concentration inequalities. 
For the null model, we use a simple first moment argument (union bound) to show that $\calQ\left(\calT(A, B) \ge \tau\right) =o(1)$. 
Together we conclude that $\calT(A,B)$ with threshold $\tau$ achieves strong detection and $\mathrm{TV}\left( \calP, \calQ\right)=1-o(1)$. 


Next we provide an overview of the impossibility proof, which constitutes the bulk of the paper.  
To this end, we bound the second moment of the likelihood ratio. 
It is well-known that\footnote{Indeed, \prettyref{eq:secondmoment-tv1} follows from, e.g.,~
\cite[Lemma 2.6 and 2.7]{Tsybakov09} and \prettyref{eq:secondmoment-tv2} is by Cauchy-Schwarz inequality.}
\begin{align}
&\Expect_{\calQ}\left[\left( \frac{\calP(A,B)}{\calQ(A,B)} \right)^2\right] = O(1) \quad \Longrightarrow \quad  \mathrm{TV}(\calP(A,B),\calQ(A,B)) \le  1- \Omega(1)
\label{eq:secondmoment-tv1}\\
& \Expect_{\calQ}\left[\left( \frac{\calP(A,B)}{\calQ(A,B)} \right)^2\right] = 1+o(1) \quad \Longrightarrow \quad  \mathrm{TV}(\calP(A,B),\calQ(A,B)) =  o(1),
\label{eq:secondmoment-tv2}
 \end{align}
 which correspond to the impossibility of strong and weak detection, respectively. 

To compute the second moment, we introduce an independent copy $\tpi$ of the latent permutation $\pi$ and express the squared likelihood ratio as
\begin{align*}
\pth{\frac{\calP(A,B)}{\calQ(A,B)}}^2 
= \Expect_{\tpi\ci\pi}\qth{\prod_{i<j} X_{ij} }, \quad  \mathrm{where\ }X_{ij} \triangleq \frac{P( A_{ij},B_{\pi(i)\pi(j)}) P(A_{ij},B_{\tpi(i)\tpi(j)})}{
Q(A_{ij},B_{\pi(i)\pi(j)}) Q(A_{ij},B_{\tpi(i)\tpi(j)})},
\end{align*}
where for any $(i,j)\in \left[\binom{n}{2}\right]$, $Q$ denotes the joint density function of $A_{ij}$ and $B_{ij}$ under $\calQ$, and $P$ denotes the joint density function of $A_{ij}$ and $B_{\pi(i)\pi(j)}$ under $\calP$ given its latent permutation $\pi$.
Fixing $\pi$ and $\tpi$, we then decompose this as a product over independent randomness indexed by the so-called \emph{edge orbits}.
Specifically, the permutation $\sigma \triangleq \pi^{-1}\circ \tpi$ on the node set naturally induces a permutation $\sigmae$ on the edge set of the complete graph by permuting the end points.
Denoting by $\calO$ the collection of edge orbits (orbit in the cycle decomposition of the edge permutation $\sigmae$), we show that
\begin{align*}
\pth{\frac{\calP(A,B)}{\calQ(A,B)}}^2 
= \Expect_{\tpi\ci\pi}\qth{\prod_{O \in\calO} X_O },  \qquad X_O \triangleq \prod_{(i,j)\in O} X_{ij},
\end{align*}
where $X_O$'s are mutually independent under $\calQ$ conditioned on $\pi$ and $\tpi$.

Then, we take expectation $\Expect_{(A,B)\sim\calQ}$ on the right-hand side and interchange the two expectations.
For both the Gaussian and \ER models, 
this calculation can be explicitly carried out 
by evaluating the trace of certain operators.
In particular, this computation shows the following dichotomy: the second moment is $1+o(1)$ when $\rho^2 \le \frac{(2-\epsilon)\log n}{n}$, but unbounded
when $\rho^2 \ge  \frac{(2+\epsilon)\log n}{n}$, where $\rho$ is the correlation coefficient in the Gaussian case and $\rho=\frac{s(1-p)}{1-ps}$ in the \ER case. 
Compared with Theorems \ref{thm:gw} and \ref{thm:er}, we see that directly applying the second-moment method fails to capture the sharp threshold: 
The impossibility condition $\rho^2 \le \frac{(2-\epsilon)\log n}{n}$ is suboptimal by a multiplicative factor of $2$ in the Gaussian case 
and by an unbounded factor in the \ER case when $p=o(1)$. 

It turns out that the second moment is mostly influenced by those short edge orbits of length $k=O(\log n)$ for which $\prod_{|O|=k} X_O$ has a large expectation (see~\prettyref{sec:obstruction} for a detailed explanation).
Fortunately, the atypically large magnitude of $\prod_{|O|=k} X_\orbit$ can be attributed to certain rare events associated with the \emph{intersection graph} (edges that are included in both $A$ and $B^\pi=(B_{\pi(i)\pi(j)})$), which is distributed as $\calG(n,ps^2)$ under the planted model $\calP$.
This observation prompts us to apply the \emph{conditional second moment method}, which truncates the squared likelihood ratio on appropriately chosen \emph{global event} that has high probability under $\calP$.
Specifically,
\begin{itemize}
	\item In the dense case (including Gaussian model and dense \ER graphs), the dominating contribution comes from fixed points ($k=1$) which can be regulated by conditioning on the edge density of large induced subgraphs of the intersection graph. 
Note that for \ER graphs, even though the density of small induced subgraphs (e.g.\ induced by $\Theta(\log n)$ vertices) can deviate significantly from their expectations~\cite{balister2019dense}, fortunately we only need to consider sufficiently large subgraphs here. 

		\item For sparse \ER graphs, the argument is much more involved and combinatorial, as one need to control the contribution of not only fixed points, but all edge orbits of length $O(\log n)$. Crucially, the major contribution is due to those edge orbits that are subgraphs of the intersection graph. 
		Under the impossibility condition of \prettyref{thm:er}, the intersection graph $\calG(n,ps^2)$ is subcritical and a pseudoforest (each component having at most one cycle) with high probability. This global structure significantly limits the co-occurrence of edge orbits in the intersection graph.
		We thus truncate the squared likelihood ratio on the global event that intersection graph is a pseudoforest. To compute the conditional second moment, 
		we first study the graph structure of edge orbits, then reduce the problem to enumerating pseudoforests that are disjoint union of edge orbits and bounding their generating functions, and finally average over the cycle lengths of the random permutation $\sigma$.
This is the most challenging part of the paper.
\end{itemize}

\subsection{Connection to the literature}

This work joins an emerging line of research which examines inference problems on networks from statistical and computational perspectives.
We discuss some particularly relevant work below.

\paragraph{Random graph matching}
Given a pair of graphs, the  problem of \textit{graph matching} (or network alignment) 
refers to finding a node correspondence that maximizes the edge correlation \cite{conte2004thirty,livi2013graph},
which amounts to solving the QAP in \prettyref{eq:QAP_test}. 
Due to the worst-case intractability of the QAP, 
there is a recent surge of interests in the average-case analysis of matching two correlated random graphs~\cite{cullina2016improved,cullina2017exact,barak2019nearly,mossel2019seeded,dai2019analysis,cullina2019partial,ding2018efficient,FMWX19b,ganassali2020tree,hall2020partial},
where the goal is to reconstruct the hidden node correspondence between the two graphs accurately with high probability. 
To this end, the correlated \ER graph model (the alternative hypothesis  $\calH_1$ in \prettyref{prob:HypTesting2}) has been used as a popular model, for which 
the solution to the QAP \prettyref{eq:QAP_test} is the maximal likelihood estimator. 
It is shown in~\cite{cullina2017exact} that exact recovery of the hidden node correspondence with high probability  is information-theoretically possible 
if $nps^2-\log n \to +\infty $ and $p=O(\log^{-3}(n))$, and impossible
if $nps^2- \log n =O(1)$. In contrast,  the state of the art of polynomial-time algorithms achieve the exact recovery only when
$np=\text{poly}(\log n)$ and $1-s=1/\text{poly}(\log n)$~\cite{ding2018efficient,FMWX19a,FMWX19b}.

Recent work~\cite{cullina2019partial} initiated the study of almost exact recovery, that is, to obtain a matching 
(possibly imperfect) of size $n-o(n)$ that is contained in the true matching with high probability. 
It shows that the almost exact recovery is information-theoretically possible if 
$nps^2=\omega(1)$ and $p\le n^{-\Omega(1)}$, and impossible if $nps^2=O(1)$. 
Another work~\cite{ganassali2020tree} considers 
a weaker objective of partial recovery, that is, to output a matching 
that
contains $\Theta(n)$ correctly matched vertex pairs with high probability. 
It is shown that the partial recovery can be attained in polynomial time
by a neighborhood tree matching algorithm in the sparse graph regime where 
$nps \in (1,\lambda_0]$ for some constant $\lambda_0$ close to $1$ and $s \in (s_0,1]$ for some constant $s_0$ close to $1$. 
More recently, the partial recovery is shown to be information-theoretically impossible if 
$nps^2 \log \frac{1}{p} = o(1)$ when $p=o(1)$~\cite{hall2020partial}. 
For ease of comparison, we summarize the different thresholds
under various performance metrics in \prettyref{tab:thresholds} in the \ER model.

\begin{table}[!ht]
\centering
\begin{tabular}{ >{\centering\arraybackslash}m{1.5in}  | >{\centering\arraybackslash}m{2in}  |  >{\centering\arraybackslash}m{2in} }
\hline
 Performance metric & Positive result & Negative result \\[5pt]
\hline
Exact recovery & $nps^2 \ge \log n+\omega(1)$ \& $p=O(\log^{-3}(n))$~\cite{cullina2017exact} & $nps^2 \le \log n- \omega(1)$~\cite{cullina2016improved}  \\[10pt]
\hline
Almost exact recovery & $nps^2=\omega(1)$ \& $p\le n^{-\Omega(1)}$~\cite{cullina2019partial}  & $nps^2=O(1)$~\cite{cullina2019partial} \\[10pt]
\hline
Partial recovery & $nps \in (1,\lambda_0]$ \& $s \in (s_0,1]$~\cite{ganassali2020tree}  & $nps^2 \log \frac{1}{p} = o(1)$~\cite{hall2020partial} \\[10pt]
\hline
\shortstack{Detection \\(This paper)}
& $nps^2 \log \frac{1}{p} \ge (2+\epsilon) \log n$
%
& $p=n^{-o(1)}$ \& $nps^2\log \frac{1}{p} \le (2-\epsilon) \log n$, or $p = n^{-\Omega(1)}$ \& $s^2 \le \frac{1- \omega( n^{-1/3})}{np} \wedge 0.01$ \\[10pt]
%
\hline
\end{tabular}
\caption{Thresholds for various recovery criteria in the correlated \ER graph model when $p=o(1)$.}
\label{tab:thresholds}
\end{table}

In contrast to the aforementioned work focusing on recovering the latent matching, this work studies the hypothesis testing aspect of graph matching, which, nevertheless, has direct consequences on the recovery problem. 
%
As an application of the truncated second moment calculation, in a companion paper \cite{wu2021settling} we resolve the sharp threshold of recovery by characterizing the asymptotic mutual information $I(A, B; \pi)$. 
In particular, we show that in the dense regime with $p = n^{-o(1)}$, the sharp threshold of recovery exactly matches the detection threshold above which almost exact recovery is possible and below which partial recovery is impossible,
thereby closing the gap in \prettyref{tab:thresholds}. In the sparse regime with
$p=n^{-\Omega(1)}$, we show that the information-theoretic threshold for partial recovery is at $nps^2 \asymp 1$, which coincides with the detection threshold up to a constant factor.

%


\paragraph{Detection problems in networks}
There is a recent flurry of work using the first and second-moment methods to study hypothesis testing problems on networks with latent structures
such as community detection under stochastic block models~\cite{Mossel12,arias2014community,verzelen2015community,banks2016information}. 
Notably, a conditional second moment argument was applied by Arias-Castro and Verzelen in \cite{arias2014community} and \cite{verzelen2015community} to study the problem of detecting the presence of a planted community in dense and sparse \ER graphs, respectively. 
Similar to our work, for dense graphs, they condition on the edge density of induced subgraphs (see also the earlier work \cite{butucea2013} for the Gaussian model);
for sparse graphs, they condition on the planted community being a forest and bound the truncated second-moment by enumerating subforests using Cayley's formula.
However, the crucial difference is that in our setting simply enumerating the pseudoforests is inadequate for proving \prettyref{thm:er}. Instead, we need to take into account the cycle structure of permutations and enumerate \emph{orbit pseudoforests}, i.e., pseudoforests assembled from edge orbits (see the discussion before \prettyref{thm:jk} in \prettyref{sec:lb-sparse} for details). By separately accounting for orbits of different lengths and their graph properties, we are able to obtain a much finer control on the generating function of orbit pseudoforests that allows the conditional second moment to be bounded after averaging over the random permutation. 
This proof technique is of particular interest, and likely to be useful for other detection problems regarding permutations. 

Finally, we mention that the recent work~\cite{racz2020correlated} studied a related correlation detection problem, where 
the observed two graphs are either independent, or correlated randomly growing graphs (which grow together until time $t_*$ and grow independently afterwards
according to either uniform and preferential attachment models). Sufficient conditions are obtained for both weak detection and strong detection as $t_* \to \infty$.
However, the problem setup, main results, and proof techniques
are very different from the current paper.



\subsection{Notation and paper organization}
	\label{sec:notation}

For any $n \in \naturals$, let $[n]=\{1,2,\cdots,n\}$ and $\calS_n$ denote the set of all permutations on $[n]$. 
For a given graph $G$, let $V(G)$ denote its vertex set and $E(G)$ its edge set. 
For two graphs on $[n]$ with (weighted) adjacency matrices $A$ and $B$, their \emph{intersection graph} is a graph on $[n]$ with (weighted) adjacency matrix $A \wedge B$, where 
\begin{equation}
(A \wedge B)_{ij} \triangleq A_{ij}B_{\pi(i)\pi(j)};
\label{eq:intersection}
\end{equation}
 in the unweighted case, the edge set of $A\wedge B$ is the intersection of those of $A$ and $B$.
Given a permutation $\pi \in \calS_n$, let $ B^{\pi} = (B_{\pi(i)\pi(j)})$ denote the relabeled version of $B$ according to $\pi$. 
For any $S\subset[n]$, define $e_A(S) \triangleq \sum_{i<j\in S}A_{ij}$ as the total edge weights in the subgraph induced by $S$.

For any $a,b \in \reals$, let $a\wedge b = \min\{a,b\}$ and $a\vee b = \max\{a,b\}$. Given any $n,m\in\naturals$, let $\lcm(n,m)$ denote the least common multiple of $n$ and $m$. Given any $n,m \in \naturals$, and some nonnegative integers $\{k_{i}\}_{i=1}^m $ such that $\sum_{i=1}^m k_i = n$, let $\binom{n}{k_1,\ k_2,\ ,\cdots,k_m} = \frac{n!}{k_1!k_2!\cdots k_m!}$ be a multinomial coefficient.
We use standard asymptotic notation: for two positive sequences $\{a_n\}$ and $\{b_n\}$, we write $a_n = O(b_n)$ or $a_n \lesssim b_n$, if $a_n \le C b_n$ for some an absolute constant $C$ and for all $n$; $a_n = \Omega(b_n)$ or $a_n \gtrsim b_n$, if $b_n = O(a_n)$; $a_n = \Theta(b_n)$ or $a_n \asymp b_n$, if $a_n = O(b_n)$ and $a_n = \Omega(b_n)$; 
$a_n = o(b_n)$ or $b_n = \omega(a_n)$, if $a_n / b_n \to 0$ as $n\diverge$.


The rest of the paper is organized as follows. 
In \prettyref{sec:3st} we prove the positive result of strong detection for both Gaussian Wigner model and \ER random graphs. 
To lay the groundwork for the conditional second moment method,
in \prettyref{sec:uncond}
we present the unconditional second moment calculation and discuss the key reasons for its looseness. 
 \prettyref{sec:lb-dense} presents the conditional second-moment proof for weak detection in the dense regime. Due to their similarity, the proof for the Gaussian Wigner model 
is given in \prettyref{sec:gaussian_second_moment} and the (more technical) proof for dense \ER graphs is postponed till \prettyref{sec:er_dense}. 
\prettyref{sec:lb-sparse} provides 
the impossibility proofs of both strong and weak detection for sparse \ER random graphs. 
Several other technical proofs are also relegated to supplementary materials in \ref{sec:supp_1_3_4} and  \ref{sec:supp_lb_sparse}.
Some useful concentration inequalities and facts about random permutations are collected
in appendices for readers' convenience.

\section{First Moment Method for Detection}
\label{sec:3st}

In this section we prove the positive parts of Theorems \ref{thm:gw} and \ref{thm:er} by analyzing the test statistic \prettyref{eq:generalized_ratio}. 
Recall from \prettyref{sec:analysis_technique}
the reformulated \prettyref{prob:HypTesting2} 
with observations $A$ and $B$, whose distributions are specified as follows:

\begin{itemize}
    \item Gaussian Wigner model. 
    \begin{align}
     &\calH_0:  \left(A_{ij},B_{ij}\right) \iiddistr  \calN  \Big ( \left(\begin{smallmatrix} 0\\ 0\end{smallmatrix}\right) , \left(\begin{smallmatrix} 1 & 0 \\ 0 & 1 \end{smallmatrix}\right) \Big), \label{eq:gaussianh0} \\ 
     &\calH_1:  \left(A_{ij}, B_{\pi\left(i\right)\pi\left( j\right)}\right) \iiddistr \calN  \Big( \left(\begin{smallmatrix} 0\\ 0\end{smallmatrix}\right) , \left(\begin{smallmatrix} 1 & \rho \\ \rho & 1 \end{smallmatrix}\right)  \Big )  \text{ conditional on $\pi \sim \Uniform(\calS_n)$}.
      \label{eq:gaussianh1}
    \end{align}
    \item \ER random graph
    \begin{align}
   \calH_0 &:  \left(A_{ij},B_{ij}\right)  \iiddistr  \; \Bern(ps) \otimes \Bern(ps) ,  \label{eq:erh0} \\
   \calH_1 &:  \left(A_{ij},B_{ \pi\left(i\right)\pi\left(j\right)}\right) \iiddistr  \text{ pair of correlated $\Bern\left(p s\right)$ conditional on $\pi \sim \Uniform(\calS_n)$, where}\nonumber \\
    & \quad \quad \quad \quad \quad \quad \ A_{ij}\sim \Bern\left(p s\right) \text{ and }  B_{ \pi\left(i\right)\pi\left(j\right)}  \sim  
        \begin{cases} 
            \Bern\left(s\right) & \text{if } A_{ij}=1 \\
            \Bern\left(\frac{ps(1-s)}{1-ps}\right) & \text{if } A_{ij}=0
        \end{cases} . \label{eq:erh1}
\end{align}
\end{itemize}

Then we get
\begin{align}
\frac{\mathcal{P}\left(A,B|\pi\right) }{\mathcal{Q} \left(A,B\right)}
= \prod_{1\le i<j\le n} L\left (A_{ij}, B_{\pi(i)\pi(j)}\right), \label{eq:ratio} 
\end{align}
where 
\begin{align}
    L\left (A_{ij}, B_{\pi(i)\pi(j)}\right)
    & \triangleq \frac{P\left (A_{ij}, B_{\pi(i)\pi(j)}\right)}{Q\left (A_{ij}, B_{\pi(i)\pi(j)}\right)}. \label{eq:L_a_b}
    \end{align}
For the Gaussian Wigner model,
we have 
\begin{equation}
L(a,b) = \frac{1}{\sqrt{1-\rho^2}}  \exp \left( \frac{-\rho^2 \left( b^2 + a^2 \right)+2\rho ab }{2\left(1-\rho^2\right)}\right).
\label{eq:Lba-gaussian}
\end{equation}
For the \ER graph model, we 
have
\begin{align}
L(a,b) 
&= \begin{cases}
\frac{1}{p}  & a=1,b=1 \\
\frac{1-s}{1-ps}  & a=1,b=0\text{ or } a=0,b=1\\
\frac{1-2ps+ps^2}{(1-ps)^2}  & a=0,b=0 \\
\end{cases}
\,.
\label{eq:Lba-ER1}
\end{align}
Then, it yields that the generalized likelihood ratio test \prettyref{eq:generalized_ratio} is equivalent to 
\begin{align}
    \max_{\pi \in \calS_n} \frac{\calP\left(A,B|\pi\right)}{\calQ\left(A,B\right)} 
    & \iff \max_{\pi \in \calS_n} \sum_{i<j}A_{ij} B_{\pi(i)\pi(j)} \label{eq:generalized}
\end{align}
for both Gaussian Wigner model and \ER random graphs.

\subsection{Proof of \prettyref{thm:gw}: positive part}
Throughout the proof, denote $m=\binom{n}{2}$ for brevity. 
Without loss of generality, we assume that 
\prettyref{eq:1upperbound} holds with equality, i.e., $\rho^2 = \frac{2n\log n}{m}$; 
otherwise, one can apply the test to $(A',B')$ where $A'=\cos(\theta) A + \sin(\theta) Z$, $B'=\cos(\theta) B + \sin(\theta) Z$ 
with an appropriately chosen $\theta$, and $Z$ is standard normal and independent of $(A,B)$.
Define
\begin{equation}
\tau = \rho m - a_n
\label{eq:tau-gaussian}
\end{equation}
where $a_n$ is some sequence to be chosen satisfying $a_n=\omega(n)$ and $a_n=O(n^{3/2})$.

We first analyze the error event under the alternative hypothesis
\prettyref{eq:gaussianh1}.
Let $\pi$ denote the latent permutation such that $(A_{ij},B_{\pi(i)\pi(j)})$ are iid pairs of standard normals with correlation coefficient $\rho$.
Applying the Hanson-Wright inequality (see \prettyref{lmm:hw} in \prettyref{app:concentration}) with $M=I_m$ to $\calT_{\pi} = \sum_{1\le i<j\le n} A_{ij} B_{\pi(i)\pi(j) }$,
we get that 
\begin{align*}
\calP\pth{ \calT_{\pi} \le \tau } = \calP\pth{ \calT_{\pi} \le \rho m -a_n }  \le e^{- c a_n}+e^{-ca_n^2/m},
\end{align*}
for some universal constant $c$. Since  by definition $\calT \ge \calT_{\pi}$,  it follows that $\calP\left( \calT \le \tau \right) = o(1)$. 

To analyze the error event under the null hypothesis \prettyref{eq:gaussianh0},
in which case for each $\pi\in \calS_n$, $(A_{ij},B_{\pi(i)\pi(j)})$ are iid pairs of independent standard normals. Note that for $X,Y\iiddistr \calN(0,1)$ and any $\lambda \in (-1,1)$, we have
\begin{align}
    \Expect \left[\exp\left(\lambda X Y\right) \right] = \Expect \left[ \exp\left(\frac{\lambda^2Y^2}{2}\right) \right]
    = \frac{1}{\sqrt{1-\lambda^2}}. \label{eq:MGF_XY}
\end{align}
Then by the Chernoff bound, for any $\lambda \in (0,1)$,
\begin{align*}
    \calQ\left( \calT_{\pi} \ge  \tau \right) = \calQ\left( \exp(\lambda \calT_{\pi})\ge  \exp (\lambda\tau )\right)
    \le \exp\sth{ - \frac{m}{2} \log(1-\lambda^2) -  \lambda \tau}.
\end{align*}
Choosing $\lambda = \frac{\tau}{m}$, which satisfies $0 < \lambda=o(1)$ in view of \prettyref{eq:1upperbound} and \prettyref{eq:tau-gaussian}, we have
$\calQ( \calT_{\pi} \ge  \tau ) \le  e^{ - \frac{\tau^2}{2m} + O(\tau^4/m^3) }$.
Finally by the union bound and Stirling approximation that $n! \le e n^{n+\frac{1}{2}} e^{-n}$, $\calQ( \calT \ge  \tau ) \le  n! e^{ - \frac{\tau^2}{2m} + O(\tau^4/m^3) } = o(1)$, provided that 
$\frac{\rho^2 m}{2} -  \rho a_n - O(\rho^4 m) - n \log\frac{n}{e} - \frac{\log n}{2} \to + \infty$. 
This is ensured by the assumption that $\rho^2 = \frac{2n\log n}{m}$ and the choice of $a_n=n^{1.1}$.

\subsection{Proof of \prettyref{thm:er}: positive part}

Throughout the proof, denote $m=\binom{n}{2}$ for brevity. 
Without loss of generality, we assume that 
\prettyref{eq:2upperbound} holds with equality, i.e., 
\begin{align}
mps^2 \left( \log \frac{1}{p}  -1 +p  \right)  = n \log n \label{eq:2upperbound_eq};
\end{align}
otherwise, one can apply the test to $(A',B')$ where $A'$ ($B'$) are edge-subsampled from $A$ ($B$) with an appropriately chosen 
subsampling probability $s'$. It follows from \prettyref{eq:2upperbound_eq} that $p\ge 1/n$ and $mps^2 =\Omega(n)$. 
Define
\begin{equation}
\tau =  m ps^2 \left( 1-  \delta_n\right)
\label{eq:tau-er}
\end{equation}
where $0<\delta_n<1$ is some sequence to be chosen satisfying $\delta_n=\omega\left( 1/\sqrt{ m ps^2}  \right)$.

We first analyze the error event under the alternative hypothesis \prettyref{eq:erh1}. Let $\pi$ denote the latent permutation such that $(A_{ij} B_{\pi(i)\pi(j)})\iiddistr \Bern(ps^2)$. 
Then $\calT_{\pi}\sim \Binom(m,ps^2)$. 
By applying the Chernoff bound~\prettyref{eq:chernoff_binom_left}:
$
    \calP\left(\calT_{\pi}\le \tau \right)
    \le \exp\left(- \delta_n^2 mps^2 /2 \right)
    = o(1).
$
Since by definition $\calT \ge  \calT_{\pi}$, it follow that $\calP\left( \calT\le \tau \right)=o(1)$.


Next, we analyze the error event under the null hypothesis \prettyref{eq:erh0}, in which case for each $\pi\in \calS_n$, $(A_{ij}B_{\pi(i)\pi(j)}) \iiddistr \Bern(p^2s^2)$ and thus $\calT_{\pi}\sim \Binom\left(m,p^2 s^2\right)$. 
Using the multiplicative Chernoff bound for Binomial distributions~\prettyref{eq:chernoff_binom_right},
we obtain that 
\begin{align*}
\calQ\left( \calT_{\pi}\ge  \tau \right) & \le \exp \left( -  \tau \log \frac{\tau}{e\mu}  - \mu \right) \\
& = \exp \left( -  mps^2  (1-\delta_n)  \log \frac{1-\delta_n}{e p}   - mp^2s^2 \right)  \\
& \le  \exp \left( -  mps^2  \left( \log\frac{1}{p} -1 + p  \right) + mps^2 \delta_n \log \frac{1}{p}  \right),
\end{align*}
where $\mu = mp^2s^2$, and the last inequality holds due to $ \left(1-\delta_n\right)\log \frac{1-\delta_n}{e } \ge  -1$. 

Then by applying union bound and Stirling approximation that $n!\le en^{n+\frac{1}{2}}e^{-n},$ we have that 
\begin{align*}
\calQ\left( \calT \ge  \tau\right) 
& \le e  \exp \left( -  mps^2  \left( \log\frac{1}{p} -1 + p  \right) + mps^2 \delta_n \log \frac{1}{p}   + n \log \frac{n}{e}  + \frac{1}{2} \log n  \right) \\
& = e  \exp \left(  \left( mps^2\right)^{0.6}  \log \frac{1}{p}  - n  + \frac{1}{2} \log n  \right)  =o(1),
\end{align*}
where  the first equality holds by the assumption $mps^2 \left( \log \frac{1}{p}  -1 +p  \right)  = n \log n$ and choosing $\delta_n = 1/(mps^2)^{0.4}$;
the last equality holds by the claim that $\left(mps^2\right)^{0.6}  \log \frac{1}{p} =o(n)$. 
To finish the proof, it suffices to verify the claim, which is done separately in the following two cases.  

Suppose $1-p=\Omega(1)$. Then  $\log(1/p) -1 +p =\Omega(\log(1/p))$. Thus in view of  assumption~\prettyref{eq:2upperbound_eq}
and $p \ge 1/n$, we get that 
$ \left( mps^2\right)^{0.6}  \log \frac{1}{p} \le O \left( (n\log n)^{0.6} \log^{0.4}(1/p) \right) = o(n)$.

Suppose $1-p=o(1)$.  As $\log(1/p)-1+p  \ge (1-p)^2/2$, it follows from assumption~\prettyref{eq:2upperbound_eq}
that $(1-p)^2=\Omega(\log n/n)$. Furthermore, $\log(1/p) \le  \frac{1-p}{p}$.
Thus by assumption~\prettyref{eq:2upperbound_eq}, 
$ \left( mps^2\right)^{0.6}  \log \frac{1}{p} \le O \left(  (n\log n)^{0.6} (1-p)^{-0.2} \right) = O\left( n^{0.7} \log^{0.5} n\right)=o(n)$. 

\section{Unconditional Second Moment Method and Obstructions}
\label{sec:uncond}

In this section, we apply the unconditional second moment method 
to derive impossibility conditions for detection. As mentioned in \prettyref{sec:analysis_technique} (and described in details in \prettyref{sec:obstruction}), these conditions do not match the positive results in \prettyref{sec:3st}, due to the obstructions presented by the short edge orbits.
To overcome these difficulties, 
in Sections \ref{sec:lb-dense} and \ref{sec:lb-sparse}, we apply the conditional second moment method
by building upon the second moment computation in this section. 
We start by introducing some preliminary definitions associated with
permutations. 


\subsection{Node permutation, edge permutation, and cycle decomposition}
\label{sec:orbit}

Let $\sigma\in S_n$ be a permutation on $[n]$.
For each element $a\in[n]$, its \emph{orbit} is a cycle  $(a_0,\ldots,a_{k-1})$ for some $k\le n$, where $a_i=\sigma^{i}(a), i=0,\ldots,k-1$ and $\sigma(a_{k-1})=a$.
Each permutation can be decomposed as disjoint orbits.
For example, consider the permutation $\sigma\in S_8$ that swaps $1$ with $2$, swaps $3$ with $4$, and cyclically shifts $5678$. Then $\sigma$ consists of three orbits represented in canonical notation as $\sigma=(12)(34)(5678)$.

Consider the complete graph $K_n$ with vertex set $[n]$. Each permutation $\sigma\in S_n$ naturally induces a permutation $\sigmae$ on the edge set of $K_n$, the set $\binom{[n]}{2}$ of all unordered pairs,
according to
\begin{equation}
\sigmae((i,j)) \triangleq (\sigma(i),\sigma(j)).
\label{eq:sigmae}
\end{equation}
We refer to $\sigma$ and $\sigmae$ as \emph{node permutation} and \emph{edge permutation}, whose orbits are refereed to as \emph{node orbits} and \emph{edge orbits}, respectively.
For each edge $(i,j)$, let $\orbit_{ij}$ denotes its orbit under $\sigmae$.
As a concrete example, consider again the permutation $\sigma=(12)(34)(5678)$.
Then $\orbit_{12}=\{(1,2)\}$ and $\orbit_{34}=\{(3,4)\}$ are $1$-edge orbits (fixed point of $\sigmae$) and $\orbit_{56}=\{(5,6),(6,7),(7,8),(8,5)\}$ is a $4$-edge orbit.
(See \prettyref{tab:orbits} in \prettyref{sec:classification_edge_orbits} for more examples.)

%
%

The cycle structure of the edge permutation is determined by that of the node permutation. 
Let $n_k$ (resp. $N_k$) denote the number of $k$-node (resp. $k$-edge) orbits in $\sigma$ (resp. $\sigmae$). 
For example, 
\begin{equation}
N_1 = \binom{n_1}{2}+ n_2, \quad N_2 = \binom{n_2}{2}\times 2+n_1 n_2 + n_4. 
\label{eq:N1N2}
\end{equation}
This is due to the following reasoning.
\begin{itemize}
    \item Consider a $1$-edge orbit given by $\{(i,j)\}$. Since $(i,j)$ is unordered, it follows that either both $i,j$ are fixed points of $\sigma$ or $i,j$ form a $2$-node orbit of $\sigma$. Thus, $N_1= \binom{n_1}{2}+n_2$. 
    \item Consider a $2$-edge orbit given by $\{(i,j),(\sigma(i),\sigma(j))\}$. Then there are three cases: 
		(a) $i,j$ belong to two different $2$-node orbits;
		(b) $i$ is a fixed point and $j$ lies in a $2$-node orbit;
		(c) $i,j$ belong to a common $4$-node orbit of the form $(i*j*)$.
		Thus $N_2 = \binom{n_2}{2}\times 2 + n_1 n_2 + n_4$. 
\end{itemize}

\subsection{Second moment calculation} \label{sec:second-moment-calculation}

Recall the second-moment method described in~\prettyref{sec:analysis_technique}.
When $\calP$ is a mixture distribution, the calculation of the second moment can proceed as follows. 
Note that the likelihood ratio is $\frac{\calP(A,B)}{\calQ(A,B)} = \Expect_\pi[\frac{\calP(A,B|\pi)}{\calQ(A,B)}]$, where $\pi$ is a random permutation 
uniformly distributed over $\calS_n$. 
Introducing an independent copy $\tpi$ of $\pi$  and noting that $B$ has the same marginal distribution under both $\calP$ and $\calQ$, the squared likelihood ratio can be expressed as 
\begin{align}
\pth{\frac{\calP(A,B)}{\calQ(A,B)}}^2 
= & ~ \Expect_{\tpi\ci\pi}\qth{\frac{\calP(A,B|\pi)}{\calQ(A,B)} \frac{ \calP(A,B|\tpi)}{\calQ(A,B)}}  \overset{\prettyref{eq:L_a_b}}{=}  \Expect_{\tpi\ci\pi}\qth{\prod_{i<j} X_{ij} }, \label{eq:LR-sq}
\end{align}
where  $\pi \indep{} \tpi$ denotes that $\pi$ and $\tpi$ are independent, and
\begin{align}
X_{ij}\triangleq L\left(A_{ij},B_{\pi(i)\pi(j)} \right) L\left(A_{ij},B_{\tpi(i)\tpi(j)}\right). \label{eq:X_ij}
\end{align}

Interchanging the expectations yields 
\begin{align}
    \Expect_{\mathcal{Q}}\left[\left(\frac{\mathcal{P}  \left(A,B\right)}{\mathcal{Q}  \left(A,B\right)}\right)^2\right]
    =
    \Expect_{\tpi\ci\pi}\qth{    \Expect_{\left(A,B\right)\sim\calQ}    \qth{\prod_{i<j} X_{ij} }}. \label{eq:second1}
\end{align}


Fixing $\pi$ and $\tpi$, we first compute the inner expectation in \prettyref{eq:second1}. 
Observe that $X_{ij}$ may not be independent across different pairs of $(i,j)$. For example, suppose $(i_1,j_1)\neq (i_2,j_2)$ and $(\pi(i_1),\pi(j_1)) = (\tpi(i_2),\tpi(j_2))$, then $B_{\pi(i_1),\pi(j_1)} = B_{\tpi(i_2),\tpi(j_2)}$ and $X_{i_1j_1}$ is not independent of $X_{i_2j_2}$. 
In order to decompose $\prod_{i<j} X_{ij}$ as a product over independent randomness, we use the notion of cycle decomposition introduced in \prettyref{sec:orbit}.
Define 
\begin{equation}
\sigma \triangleq \pi^{-1}\circ \tpi,
\label{eq:sigma}
\end{equation}
which is also uniformly distributed on $\calS_n$.
Let $\sigmae$ denote the edge permutation induced by $\sigma$ as in \prettyref{eq:sigmae}, i.e., $\sigmae(i,j)=(\sigma(i),\sigma(j))$.
For each edge orbit $\orbit$ of $\sigmae$, define 
\begin{align}
    X_{\orbit}
    & \triangleq  \prod_{(i,j)\in \orbit}X_{ij} =  \prod_{(i,j)\in \orbit} L \left( A_{ij} , B_{\pi(i)\pi(j)}\right) L\left( A_{ij},B_{\tpi(i)\tpi(j)} \right). \label{eq:xo}
\end{align} 
Importantly, observe that $X_\orbit$ is a function of $\left(A_{ij}, B_{\pi(i)\pi(j)}\right)_{(i,j)\in \orbit}$.
Indeed, since $(\tpi(i),\tpi(j)) = \pi (\sigma(i), \sigma(j))$, or equivalently in terms of edge permutation, 
$\tpie = \pie\circ \sigmae$, 
and $\orbit$ is an orbit of $\sigmae$, we have $\{B_{\pi(i)\pi(j)}\}_{(i,j)\in \orbit}=\{B_{\tpi(i)\tpi(j)}\}_{(i,j)\in \orbit}$. 

Let $\calO$ denote the collection of all edge orbits of $\sigma$. 
Since edge orbits are disjoint, we have 
\begin{align}
\prod_{i<j}X_{ij} = \prod_{\orbit \in \calO} X_{\orbit}.
\label{eq:LR-orbit}
\end{align}
Since $\{A_{ij}\}_{i<j}$ and $\{B_{ij}\}_{i<j}$ are $\iid$ under $\calQ$, we conclude $\{X_{\orbit}\}_{\orbit \in \calO}$ are mutually independent under $\calQ$. 
Therefore, by \prettyref{eq:second1},
\begin{align}
\Expect_{\mathcal{Q}}\left[\left(\frac{\mathcal{P}  \left(A,B\right)}{\mathcal{Q}  \left(A,B\right)}\right)^2\right]
    & =
    \Expect_{\pi \indep{} \tpi}\left[\prod_{\orbit\in\calO}  \Expect_{\left(A,B\right)\sim\calQ} \left[X_{\orbit}\right]\right].
    \label{eq:second3}
\end{align}

Recall that, for the Gaussian Wigner model, $\rho$ denotes the correlation coefficient of edge weights in the planted model $\calP$. 
For \ER graph model, the correlation parameter in the planted model $\calP$ is defined as:
\begin{equation}
\rho \triangleq \frac{\Cov(A_{ij}B_{\pi(i)\pi(j)})}{\sqrt{\Var{(A_{ij})}}\sqrt{\Var{(B_{\pi(i)\pi(j)})}}}=\frac{s(1-p)}{1-ps}.
\label{eq:rho-ER}
\end{equation}


\begin{proposition}\label{prop:cycle}
Fixing $\pi$ and $\tpi$, for any edge orbit $\orbit$ of $\sigma=\pi^{-1} \circ \tpi$, we have  
\begin{itemize}
    \item for Gaussian Wigner models,
    \begin{align}
     \Expect_{\left(A,B\right)\sim\calQ} \left[X_{\orbit}\right] = \frac{1}{1-\rho^{2|\orbit|}}\, , \label{eq:wmcycle}
    \end{align}
    \item for \ER random graphs, 
    \begin{align}
          \Expect_{\left(A,B\right)\sim\calQ} \left[X_{\orbit}\right] = 1+\rho^{2|\orbit|} \, . \label{eq:ercycle}
    \end{align}
\end{itemize}

\end{proposition}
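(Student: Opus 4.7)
The strategy is to exploit the cyclic structure of an edge orbit to reduce $\Expect_\calQ[X_\orbit]$ to the trace $\Tr(T^k)$ of the $k$-th power of an integral operator $T$, and then read off the answer from the explicit spectral decomposition of the correlation kernel $L$ available in each model.

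First I would set up coordinates along the orbit. Enumerate $\orbit = \{e_0, \ldots, e_{k-1}\}$ with $e_{l+1} = \sigmae(e_l)$ (indices mod $k$) and $k = |\orbit|$, and set $a_l \triangleq A_{e_l}$, $b_l \triangleq B_{\pie(e_l)}$. Since $\tpie = \pie\circ\sigmae$, one has $B_{\tpie(e_l)} = b_{l+1}$, so
\[
X_\orbit = \prod_{l=0}^{k-1} L(a_l, b_l)\,L(a_l, b_{l+1}).
\]
The edges $e_0, \ldots, e_{k-1}$ are distinct (by definition of an orbit), as are their $\pi$-images, hence under $\calQ$ the families $\{a_l\}$ and $\{b_l\}$ are mutually independent \iid samples from the common marginals. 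Integrating the $a_l$'s out first and using this independence gives
\[
\Expect_\calQ[X_\orbit] = \Expect\Big[\prod_{l=0}^{k-1} M(b_l, b_{l+1})\Big], \qquad M(b,b') \triangleq \Expect_a\bigl[L(a,b)\,L(a,b')\bigr],
\]
which is exactly $\Tr(T^k)$ for $T$ the self-adjoint positive operator on $L^2$ of the common marginal of $B$ with kernel $M$.

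It then remains to diagonalize $T$ in each model. For the Gaussian Wigner model, Mehler's formula applied to \prettyref{eq:Lba-gaussian} yields the Hermite expansion $L(a,b) = \sum_{n\ge 0} \rho^n h_n(a) h_n(b)$ in the normalized Hermite ONB of $L^2(\calN(0,1))$; a term-by-term calculation gives $M(b,b') = \sum_{n\ge 0} \rho^{2n} h_n(b) h_n(b')$, so $T$ has eigenvalues $\{\rho^{2n}\}_{n\ge 0}$ and $\Tr(T^k) = \sum_{n\ge 0} \rho^{2nk} = (1-\rho^{2k})^{-1}$ (valid since $\rho \in [0,1)$). For the \ER model, $L^2(\Bern(ps))$ is two-dimensional with ONB $\{1,\tilde a\}$ for $\tilde a \triangleq (a-ps)/\sqrt{ps(1-ps)}$; a direct check against \prettyref{eq:Lba-ER1} (with $\rho$ from \prettyref{eq:rho-ER}) gives the rank-two decomposition $L(a,b) = 1 + \rho\,\tilde a\,\tilde b$, whence $M(b,b') = 1 + \rho^2\,\tilde b\,\tilde b'$, the spectrum of $T$ is $\{1,\rho^2\}$, and $\Tr(T^k) = 1 + \rho^{2k}$.

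I expect the only delicate step to be the combinatorial bookkeeping in the setup: correctly deriving the cyclic pattern of factors $L(a_l, b_l)\,L(a_l, b_{l+1})$ from the identity $\tpie = \pie\circ\sigmae$, and verifying that the resulting $a_l$'s and $b_l$'s form genuine \iid families rather than merely exchangeable ones. Once the cyclic structure is exposed, the reduction to $\Tr(T^k)$ and the two spectral computations are routine; the contrast between the two answers reflects that the kernel $L$ has infinite-dimensional spectrum $\{\rho^{2n}\}_{n\ge 0}$ (summing as a geometric series) in the Gaussian case but only rank-two spectrum $\{1,\rho^2\}$ in the \ER case.
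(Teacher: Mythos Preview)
Your proposal is correct and essentially identical to the paper's proof: both reduce $\Expect_\calQ[X_\orbit]$ to a trace of the $k$-th power of the squared kernel operator (your $T$ is exactly the paper's $L^2$, since $M(b,b')=\Expect_a[L(a,b)L(a,b')]=L^2(b,b')$ by symmetry of $L$), then diagonalize via Mehler's formula in the Gaussian case and via the two-dimensional spectrum $\{1,\rho\}$ of $L$ in the \ER case. The only cosmetic difference is that for \ER the paper reads off the eigenvalues from the row-stochastic matrix $P(x,y)/Q(x)$, whereas you use the explicit rank-two expansion $L(a,b)=1+\rho\,\tilde a\,\tilde b$; both yield the same spectrum.
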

\begin{proof}
Recall from \prettyref{eq:L_a_b} that $L(x,y) = \frac{P(x,y)}{Q(x,y)}= \frac{P(x,y)}{Q(x)Q(y)}$.
This kernel defines an operator as follows:
for any square-integrable function $f$ under $Q$, 
\begin{equation}
(L f) (x) \triangleq \mathbb{E}_{Y\sim Q}\left[L(x,Y) f(Y)\right] = \mathbb{E}_{(X,Y)\sim P}\left[f(Y) \mid X=x\right].
\label{eq:}
\end{equation}
In addition, $L^2=L \circ L$ is given by 
$L^2(x,y) = \Expect_{Z\sim Q}[L(x,Z)L(Z,y)]$ and $L^k$ is similarly defined.
For both Gaussian and Bernoulli model, we have $L(x,y)=L(y,x)$ and hence
$L$ is self-adjoint. Furthermore, since $\iint L(x,y)^2 Q(dx) Q(dy) < \infty$, $L$ is Hilbert-Schmidt.
Thus $L$ is diagonazable with eigenvalues $\lambda_i$'s and the trace of $L$ is given by 
$\mathrm{tr}(L)=\Expect_{Y\sim Q}[L(Y,Y)] = \sum \lambda_i$.

Let $k=|O|$.
To simplify the notation, let $a_i$'s and $b_i$'s be independent sequences of iid random variables drawn from $Q$. 
Since $O$ is an edge orbit of $\sigmae$, we have $\{B_{\pi(i)\pi(j)}\}_{(i,j)\in \orbit}=\{B_{\tpi(i)\tpi(j)}\}_{(i,j)\in \orbit}$ and $(\tpi(i),\tpi(j)) = \pi (\sigma(i), \sigma(j))$.
By \prettyref{eq:xo},
\begin{align*}
\Expect_{A,B\sim \calQ}\left[ X_{\orbit} \right]
    & = \Expect_{A,B\sim \calQ}\left[\prod_{(i,j)\in \orbit}  L\left(A_{ij},B_{\pi(i)\pi(j)}\right) L\left(A_{ij},B_{\tpi(i)\tpi(j)} \right )\right]   \\
    &  = \Expect \qth{
    \prod_{\ell=1}^{k} 
    L\left(a_\ell,b_\ell\right) L\left( a_\ell,b_{(\ell+1) \bmod k}\right)}\\
    &= \Expect \qth{
    \prod_{\ell=1}^{k} 
    L^2\left(b_\ell,b_{(\ell+1) \bmod k}\right)}\\
    & =\mathrm{tr}\left( L^{2k} \right)=\sum \lambda_i^{2k}.
\end{align*}

For Gaussian Wigner model,
$L(x,y)$ given in~\prettyref{eq:Lba-gaussian} is known as Mehler's kernel and can be diagonalized by Hermite polynomials as
$$
L(x,y)= \sum_{i=0}^\infty \frac{\rho^i}{i!} H_i(x) H_i (y),
$$
where $\mathbb{E}_{Y \sim \calN(0,1)}\left[H_i(Y)H_j(Y)\right]=i! \indc{i=j}$~\cite{kibble1945extension}.
It follow that the eigenvalues of operator $L$ are given by $\lambda_i=\rho^i$ for $i \geq 0$ and thus
$
\mathrm{tr}\left( L^{2k} \right)
= \sum_{i=0}^\infty \rho^{2ki}
=\frac{1}{1-\rho^{2k}}.
$

For \ER graphs,
$$
(L f)(x)= \sum_{y \in \{0,1\}}
\frac{P(x,y)}{Q(x)Q(y)} f(y) Q(y) 
=\frac{1}{Q(x)} \sum_{y \in \{0,1\}} P(x,y)f(y).
$$
Thus the eigenvalues of $L$ are given by the eigenvalues of  the following $2 \times 2$ row-stochastic matrix $M$ with rows and columns indexed by $\{0,1\}$ and
$
M(x, y) = \frac{P(x, y)}{Q(x)} \, .
$
Explicitly, by \prettyref{eq:Lba-ER1} we have
\[
M = \begin{pmatrix}
\frac{1-ps(2-s)}{1-ps} &  \frac{ps (1-s)}{1-ps} \\
 1-s & s 
\end{pmatrix} \, .
\]
The eigenvalues of $M$ are $1$ and $\rho=\frac{s(1-p)}{1 - ps}$,
so $
\mathrm{tr} \left(L^{2k}\right) = 1 + \rho^{2 k}.$
\end{proof}

In view of Propositions \ref{prop:cycle}, $\Expect_{\left(A,B\right)\sim\calQ} \left[X_{\orbit}\right] $ decreases when the orbit length $|\orbit|$ increases.
Let $n_k$ denote the total number of $k$-node orbits in the cycle decomposition of node permutation $\sigma$, and 
let $N_k$ denote the total number of $k$-edge orbits in the cycle decomposition of edge permutation $\sigmae$, for $k\in\naturals$. 
For the Gaussian Wigner model, by \prettyref{eq:second3} and \prettyref{eq:wmcycle}, we get 
\begin{align}
     \Expect_{\mathcal{Q}}\left[\left(\frac{\mathcal{P}  \left(A,B\right)}{\mathcal{Q}  \left(A,B\right)}\right)^2\right] =    
     \Expect_{\pi\ci\widetilde{\pi}}\left[\prod_{\orbit\in\calO}\left(\frac{1}{1-\rho^{2|\orbit|}}\right)\right] =\Expect_{\pi\ci\widetilde{\pi}}\left[\prod_{k=1}^{\binom{n}{2}}\left(\frac{1}{1-\rho^{2k}}\right)^{N_k}\right]. \label{eq:secondwm}
\end{align}
For the \ER graphs, by \prettyref{eq:second3} and \prettyref{eq:ercycle}, we get 
\begin{align}
     \Expect_{\mathcal{Q}}\left[\left(\frac{\mathcal{P}  \left(A,B\right)}{\mathcal{Q}  \left(A,B\right)}\right)^2\right] =    
          \Expect_{\pi\ci\widetilde{\pi}}\left[\prod_{\orbit\in\calO}\left(1+\rho^{2|\orbit|}\right)\right] =\Expect_{\pi\ci\widetilde{\pi}}\left[\prod_{k=1}^{\binom{n}{2}}\left(1+\rho^{2k}\right)^{N_k}\right]  .
       \label{eq:seconder}
\end{align}

Let us assume  
\begin{align}
n^2 \rho^6 =o(1), \label{eq:rho_mild_condition}
\end{align}
which is ensured by \prettyref{eq:1lowerbound} for Gaussian model in \prettyref{thm:gw} or \prettyref{eq:2lowerbound} for dense \ER model with $p=n^{-o(1)}$ in \prettyref{thm:er}.

For the Gaussian model, consider the orbits of length $ k \ge  3$. Since $\sum_{k=3}^{\binom{n}{2}} N_k \le \binom{n}{2}$, we have 
\begin{align}
    \prod_{k=3}^{\binom{n}{2}}
    \left( \frac{1}{1-\rho^{2k}} \right)^{N_{k}}
    \le \left( \frac{1}{1-\rho^6} \right)^{\binom{n}{2}}
    =\left( 1+ \frac{\rho^6}{1-\rho^6} \right)^{\binom{n}{2}}
    \le \exp \left( \frac{n^2 \rho^6}{2\left(1-\rho^6\right)} \right) = 1+o(1), \label{eq:1lower2}
\end{align}
where the last equality holds due to \prettyref{eq:rho_mild_condition}.
Moreover, for $1$-orbits and $2$-orbits.
\begin{align*}
    \left(\frac{1}{1-\rho^{2}} \right)^{N_1} \left(\frac{1}{1-\rho^{4}} \right)^{N_2}
     &  \le  \left( 1+ \frac{\rho^2}{1-\rho^{2}} \right)^{N_1} \left( 1+ \left( \frac{\rho^2}{1-\rho^{2}} \right)^2 \right)^{N_2} \nonumber \\
      & \le  \exp \left(  \frac{\rho^2}{1-\rho^{2}}  N_1 + \left( \frac{\rho^2}{1-\rho^{2}} \right)^2  N_2 \right).
\end{align*}      
Therefore,
\begin{align}
     \Expect_{\mathcal{Q}}\left[\left(\frac{\mathcal{P}  \left(A,B\right)}{\mathcal{Q}  \left(A,B\right)}\right)^2\right] 
       \le   \left( 1+o(1) \right) 
      \Expect_{\pi \ci{} \tpi}\left[ \exp \left(  \frac{\rho^2}{1-\rho^{2}}  N_1 + \left( \frac{\rho^2}{1-\rho^{2}} \right)^2  N_2 \right) \right]  \label{eq:secondwm_simple}.
\end{align}

For \ER graphs, analogously, for the orbits of length $ k \ge  3$,
\begin{align}
    \prod_{k=3}^{\binom{n}{2}}\left(1+\rho^{2k}\right)^{N_k} \le \left(1+\rho^6\right)^{\binom{n}{2}}\le \exp \left(\frac{n^2\rho^6}{2}\right) = 1+ o(1), \label{eq:2lower2}
\end{align}
where the last equality holds due to  \prettyref{eq:rho_mild_condition}.
For $1$-orbits and $2$-orbits, 
$
\left(1+\rho^2 \right)^{N_1} \left( 1+\rho^4 \right)^{N_2} \le \exp \left( \rho^2 N_1 + \rho^4 N_2 \right). 
$
Therefore, 
\begin{align}
     \Expect_{\mathcal{Q}}\left[\left(\frac{\mathcal{P}  \left(A,B\right)}{\mathcal{Q}  \left(A,B\right)}\right)^2\right]  \le 
         \left( 1+o(1) \right)  \Expect_{\pi \ci{} \tpi}\left[ \exp \left( \rho^2 N_1 + \rho^4 N_2 \right) \right].
\label{eq:seconder_simple}  
\end{align} 

Next, we bound the contribution of $N_1$ and $N_2$ to the second moment for both models using  the following proposition. The proof, based on Poisson approximation, is deferred till \prettyref{sec:n1n2_weak_detection}. 
\begin{proposition}\label{prop:n1n2_weak_detection}
Assume $\mu, \nu, \tau \ge  0$ such that $\tau^2 =o\left(\frac{1}{n}\right)$, and $\mu b +\nu +2 - \log b \le 0$ for some $1 \le b \le n$ such that $b=\omega(1)$.
\begin{itemize}
    \item If $a=\omega(1)$ and $\nu \le \log(a)-3$,
     \begin{align}
     \Expect_{\pi\indep{}\tpi}\left [ \exp \left( \mu n_1^2 + \nu n_1 +  \tau n_2 + \tau^2 N_2  \right) \indc{a \le n_1 \le b } \right] = o(1). \label{eq:n1n2_a_omega_1}
    \end{align}
    \item If $a=0$ and $\nu=o(1)$,
    \begin{align}
     \Expect_{\pi\indep{}\tpi}\left [ \exp \left( \mu n_1^2 + \nu n_1 +  \tau n_2 + \tau^2 N_2  \right) \indc{a \le n_1 \le b } \right] \leq  1+o(1). \label{eq:n1n2_a_0}
    \end{align}
 In particular, if $ 0 \le \tau \le \frac{2 (\log n -2) }{n}$,  then
    \begin{align}
    \Expect_{\pi\indep{}\tpi}\left [ \exp \left( \tau N_1 + \tau^2 N_2  \right)  \right] 
    = 1+o(1).
    \label{eq:n1n2_main}
    \end{align}
\end{itemize}
\end{proposition}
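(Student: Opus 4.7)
The plan is to exploit the Poisson-like distribution of short cycle counts of a uniform random permutation $\sigma = \pi^{-1} \circ \tpi$, together with the identity $N_2 = n_2(n_2 - 1) + n_1 n_2 + n_4$ from \prettyref{eq:N1N2}. Substituting this decomposition, the exponent rewrites as
\[
\mu n_1^2 + \nu n_1 + \tau n_2 + \tau^2 N_2 = \mu n_1^2 + \nu n_1 + (\tau - \tau^2 + \tau^2 n_1) n_2 + \tau^2 n_2^2 + \tau^2 n_4.
\]

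First, I would condition on $n_1 = k$. The restriction of $\sigma$ to the non-fixed elements is a uniform derangement of $n - k$ elements, under which $n_2$ and $n_4$ are close to independent $\Pois(1/2)$ and $\Pois(1/4)$; the boundary regime $k$ close to $n$ contributes negligibly because $\Prob\{n_1 \ge n - O(1)\} = O(1/n!)$. Since $\tau^2 = o(1/n)$, $\tau = o(1)$, and $k \le b \le n$, the coefficient $\tau - \tau^2 + \tau^2 k$ is $o(1)$ uniformly in $k$, so standard Poisson MGF estimates yield
\[
\Expect\bigl[\exp\bigl((\tau - \tau^2 + \tau^2 k) n_2 + \tau^2 n_2^2 + \tau^2 n_4\bigr) \,\big|\, n_1 = k\bigr] = 1 + o(1)
\]
uniformly in $k \le b$. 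Hence the target reduces to bounding $(1 + o(1)) \sum_{k=a}^{b} \Prob\{n_1 = k\} e^{\mu k^2 + \nu k}$.

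Next, I would use $\Prob\{n_1 = k\} \le 1/k!$ (refined to $(1 + o(1)) e^{-1}/k!$ for $k \le n - \omega(1)$) together with Stirling's bound $1/k! \le (e/k)^k/\sqrt{2\pi k}$. For Case 2 ($a = 0$, $\nu = o(1)$), split the sum at some $K = \omega(1)$ with $K = o(\log b)$. The head $k \le K$ satisfies $\mu k^2 + \nu k = o(1)$ (using $\mu \le (\log b)/b$), so a Taylor expansion yields the main contribution $(1 + o(1)) \sum_{k \le K} e^{-1}/k! = 1 + o(1)$. For the tail $K < k \le b$, parametrizing $k = \alpha b$ with $\alpha \in (K/b, 1]$ and applying Stirling together with the constraint $\mu \le (\log b - 2 - \nu)/b$ gives
\[
\log\Prob\{n_1 = k\} + \mu k^2 + \nu k \le \alpha b \bigl[-\log b\,(1 - \alpha) + 1 - 2\alpha + \nu - \log \alpha\bigr] + O(\log b).
\]
A direct optimization shows that the bracketed function is maximized at $\alpha \approx 1/(\log b - 2)$ with value of order $-\log b$, so the overall bound is $-(1 - o(1)) b$ uniformly, and the tail sum is at most $b \cdot e^{-(1 - o(1)) b} = o(1)$.

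For Case 1 ($a = \omega(1)$, $\nu \le \log a - 3$), the extra constraint $\nu \le \log k - 3$ on $k \ge a$ combined with Stirling gives $\nu k - \log k! \le -2k + O(\log k)$, so the summand is bounded by $\exp(\mu k^2 - 2k + O(\log k))$. A similar parameterization argument then yields a sum of $o(1)$, dominated by the head value at $k = a$. Finally, \prettyref{eq:n1n2_main} follows by writing $\tau N_1 = (\tau/2) n_1(n_1 - 1) + \tau n_2 \le (\tau/2) n_1^2 + \tau n_2$ via \prettyref{eq:N1N2} and applying Case 2 with $\mu = \tau/2$, $\nu = 0$, $b = n$; the required condition $\tau n/2 + 2 \le \log n$ holds by the assumption $\tau \le 2(\log n - 2)/n$.

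The main obstacle will be the tail analysis in Case 2, which hinges on the delicate balance between $\mu k^2$ (large near $k = b$) and $\log k!$ (also large near $k = b$), and requires carefully tracking the maximum of the resulting exponent over the full intermediate range. A secondary technical point is establishing the uniform bound $\Expect[\exp(\tau^2 n_2^2) \mid n_1 = k] = 1 + o(1)$, which relies on the sub-exponential tails of $n_2$ under the conditional uniform derangement law and is a standard consequence of Poisson approximation.
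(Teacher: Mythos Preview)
Your reduction to bounding $\sum_{k=a}^{b}\Prob\{n_1=k\}\,e^{\mu k^2+\nu k}$ after controlling the $n_2,n_4$ contribution is essentially the paper's strategy (the paper applies a joint Poisson approximation to $(n_1,n_2)$ rather than conditioning on $n_1$, but the effect is the same). Your Case~2 tail analysis is correct in substance, though note the bracket $f(\alpha)=-(1-\alpha)\log b+1-2\alpha+\nu-\log\alpha$ is convex and hence \emph{minimized}, not maximized, at the interior critical point; the relevant observation is that by convexity $f(\alpha)\le\max\{f(K/b),f(1)\}=-1+o(1)$ on $[K/b,1]$, whence the summand is at most $e^{-(1-o(1))k+O(\log k)}$ (not uniformly $e^{-(1-o(1))b}$).

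The real gap is in Case~1. After using $\nu\le\log k-3$ together with Stirling to obtain $\nu k-\log k!\le-2k+O(\log k)$, you bound the summand by $\exp(\mu k^2-2k+O(\log k))$ and invoke a ``similar parameterization''. But this step has discarded the $-k\log k$ term from Stirling, which is precisely what is needed to control $\mu k^2$ near $k=b$. Concretely, take $\nu=0$ (permitted once $a\ge e^3$): then the hypothesis only forces $\mu\le(\log b-2)/b$, so at $k=b$ your bound reads $\exp\bigl((\log b-4)b+O(\log b)\bigr)\to\infty$, and no parameterization of $\mu k^2-2k$ alone can recover from this.

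The fix is not to separate $\nu k$ from $\log k!$. Keep the full Stirling estimate and show directly that $\mu k+\nu+2-\log k\le 0$ for every $k\in[a,b]$, which yields the summand bound $e^{-k+O(\log k)}$. This is where the two hypotheses interlock: writing $f(x)=(\log x-\nu-2)/x$, one has $f(b)\ge\mu$ from the constraint at $b$, and $f'(x)=(\nu+3-\log x)/x^2\le 0$ exactly for $x\ge e^{\nu+3}$, which holds on all of $[a,b]$ because $\nu\le\log a-3$. Hence $f(k)\ge f(b)\ge\mu$ throughout, i.e.\ $\mu k+\nu+2\le\log k$, and the sum is $o(1)$. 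The condition $\nu\le\log a-3$ is thus not a device for cancelling $\nu k$ against part of $\log k!$, but rather for ensuring the monotonicity of $f$ on the full range $[a,b]$.
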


Finally, we arrive at a condition for bounded second moment, which turns out to be sharp.
\begin{theorem}[Impossibility condition by unconditional second moment method] \label{thm:unconditional_second_moment}
Fix any constant $\epsilon>0$. If 
\begin{align}
\rho^2 \le \frac{ (2-\epsilon) \log n}{n}, \label{eq:cond_unconditional_second_moment}
\end{align}
then for both Gaussian Wigner and \ER graphs,
$\Expect_{\calQ}[(\frac{\calP(A,B)}{\calQ(A,B)})^2]=1+o(1)$, which further implies that 
$\mathrm{TV}(\calP, \calQ) =o(1)$, the impossibility of weak detection.
\end{theorem}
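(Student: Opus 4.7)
The plan is to combine the orbit-based bounds already assembled in \prettyref{sec:second-moment-calculation}, namely \prettyref{eq:secondwm_simple} and \prettyref{eq:seconder_simple}, with the Poisson-approximation estimate on $(N_1,N_2)$ given by \prettyref{prop:n1n2_weak_detection}, and then invoke \prettyref{eq:secondmoment-tv2} to convert the bounded second moment into a vanishing total variation distance.

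First, I would check that the mild condition \prettyref{eq:rho_mild_condition} is satisfied under the hypothesis $\rho^2 \le (2-\epsilon)\log n/n$: indeed $n^2 \rho^6 = O((\log n)^3/n) = o(1)$. Consequently, the long-orbit contributions ($k\ge 3$) bounded in \prettyref{eq:1lower2} and \prettyref{eq:2lower2} produce only a $1+o(1)$ multiplicative factor, reducing the second moment for both models to the expectation of an exponential in $N_1$ and $N_2$ over two independent uniform permutations.

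Second, I would identify the parameter $\tau$ to which \prettyref{eq:n1n2_main} should be applied: take $\tau = \rho^2/(1-\rho^2)$ for the Gaussian Wigner model (appearing in \prettyref{eq:secondwm_simple}) and $\tau = \rho^2$ for the \ER model (appearing in \prettyref{eq:seconder_simple}). Since $\rho^2 = o(1)$ under \prettyref{eq:cond_unconditional_second_moment}, in either case $\tau \le (2-\epsilon/2)\log n/n$ for all sufficiently large $n$, which in turn is dominated by $2(\log n-2)/n$ once $\log n \ge 8/\epsilon$. Thus the hypothesis of \prettyref{eq:n1n2_main} holds, yielding
\[
\Expect_{\pi \indep{} \tpi}\bigl[\exp(\tau N_1 + \tau^2 N_2)\bigr] = 1+o(1),
\]
and plugging back into \prettyref{eq:secondwm_simple} and \prettyref{eq:seconder_simple} gives $\Expect_\calQ[(\calP/\calQ)^2] \le 1+o(1)$ in both models. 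The matching lower bound $\Expect_\calQ[(\calP/\calQ)^2] \ge 1$ follows from Jensen's inequality since $\Expect_\calQ[\calP/\calQ] = 1$, so the second moment equals $1+o(1)$, and \prettyref{eq:secondmoment-tv2} delivers $\mathrm{TV}(\calP,\calQ) = o(1)$, ruling out weak detection.

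The proof is essentially a packaging step: the substantive content has already been extracted in the orbit decomposition of \prettyref{sec:second-moment-calculation} and in the Poisson-approximation bound of \prettyref{prop:n1n2_weak_detection}, so no genuinely hard step remains. The only point requiring minor care is verifying that the slight inflation from $\rho^2$ to $\rho^2/(1-\rho^2)$ in the Gaussian case still leaves $\tau$ within the admissible range of \prettyref{eq:n1n2_main}; this is absorbed into the $\epsilon$ budget as above. The genuine difficulty — why this unconditional bound is tight only up to a factor of $2$ in the Gaussian case and off by an unbounded factor for sparse \ER graphs — is deferred to \prettyref{sec:obstruction}, where the dominant role of short edge orbits motivates the conditional second moment method developed in Sections \ref{sec:lb-dense} and \ref{sec:lb-sparse}.
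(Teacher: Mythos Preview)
Your proposal is correct and follows essentially the same approach as the paper's own proof: both reduce to combining \prettyref{eq:secondwm_simple} (resp.\ \prettyref{eq:seconder_simple}) with \prettyref{eq:n1n2_main} in \prettyref{prop:n1n2_weak_detection}, after noting that \prettyref{eq:cond_unconditional_second_moment} implies \prettyref{eq:rho_mild_condition}, and then invoking \prettyref{eq:secondmoment-tv2}. Your write-up is simply more explicit about the parameter checks (the choice of $\tau$ and its admissible range) and about the trivial lower bound $\Expect_\calQ[(\calP/\calQ)^2]\ge 1$ via Jensen, which the paper leaves implicit.
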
 
\begin{proof}
Note that \prettyref{eq:cond_unconditional_second_moment} implies \prettyref{eq:rho_mild_condition}.
Thus, by combining~\prettyref{eq:secondwm_simple} or~\prettyref{eq:seconder_simple} with \prettyref{eq:n1n2_main} in \prettyref{prop:n1n2_weak_detection},
we get $\Expect_{\calQ}[(\frac{\calP(A,B)}{\calQ(A,B)})^2]=1+o(1)$, which yields $\mathrm{TV}(\calP, \calQ)=o(1)$ in view of \prettyref{eq:secondmoment-tv2}.
\end{proof}

\subsection{Obstruction from short orbits}
	\label{sec:obstruction}



The impossibility condition in \prettyref{thm:unconditional_second_moment} is not optimal. In the Gaussian case, \prettyref{eq:cond_unconditional_second_moment} 
differs by a factor of $2$ from the positive result of $\rho^2 \ge \frac{ (4+\epsilon) \log n}{n}$ in \prettyref{thm:gw}. 
For \ER graphs the suboptimality is more severe: 
\prettyref{thm:er} shows that if $nps^2 \left( \log \frac{1}{ p } - 1 + p \right) \ge (2+\epsilon)\log n$, then strong detection is possible.
In the regime of $p=o(1)$, since $\rho=(1+o(1))s$, this translates to the condition $\rho^2 \ge  \frac{(2+\epsilon) \log n}{np \log \frac{1}{p}}$, which differs from \prettyref{eq:cond_unconditional_second_moment} by an unbounded factor. 
This is the limitation of the second moment method, as the condition $\rho^2 \le \frac{ (2-\epsilon) \log n }{n}$  is actually tight for the second moment
to be bounded. When $\rho^2 \ge \frac{ (2+\epsilon) \log n }{n}$, 
the second moment diverges because of certain rare events associated with short orbits in $\sigma=\pi^{-1} \circ \tpi$.
Below we describe the lower bound on the second moment due to short orbits,
which motivates the conditional second moment arguments in Sections \ref{sec:lb-dense} and \ref{sec:lb-sparse} that eventually overcome these obstructions.

Specifically, in view of \prettyref{eq:LR-sq} and \prettyref{eq:LR-orbit}, for both Gaussian and \ER models, the squared likelihood ratio factorizes into products over the edge orbits of $\sigma$:
$$
\left(\frac{\mathcal{P} (A, B)  }{\mathcal{Q} (A, B) }\right)^2 = \Expect_{\pi\ci\widetilde{\pi}} \left[ \prod_{\orbit \in \calO} X_\orbit \right],
$$
where $X_\orbit$ is defined in \prettyref{eq:xo}. 
Since both $\pi$ and $\tpi$ are uniform random permutations, so is $\sigma=\pi^{-1}\circ\tpi$.
For each divisor $k$ of $n$, consider the rare event that $\sigma$ decomposes into $(n/k)$ disjoint $k$-node orbits (i.e.~$n_k=n/k$ and all the other $n_j$'s are zero), which occurs with probability $\frac{1}{ (n/k)! k^{n/k} } \ge  n^{-n/k}$. 
These short node orbits create an abundance of short edge orbits, as 
each pair of distinct $k$-node orbits can form $k$ different $k$-edge orbits.\footnote{For example, $(12)$ and $(34)$ can form two edge orbits $O_{13}$ and $O_{14}$ of length 2; these edge orbits will be referred to as Type-$\sfM$; see \prettyref{sec:classification_edge_orbits} for a full classification of edge orbits.}
Thus, the following lower bound on the second moment ensues
\begin{align}
   \Expect_{(A, B) \sim \mathcal{Q}}\left[\left(\frac{\mathcal{P} (A, B)  }{\mathcal{Q} (A, B) }\right)^2\right] 
	&  = \Expect_{\pi\ci\widetilde{\pi}}\left[\prod_{\orbit\in\calO}\Expect_{\calQ}\left[X_{\orbit}\right] \right]\nonumber \\
	 &   \overset{(a)}{\ge}    \expect{  \left( 1+ \rho^{2k} \right)^{ \binom{n_k}{2} k } }  \ge n^{-n/k}  \left( 1+ \rho^{2k} \right)^{ \binom{n/k}{2} k } \nonumber \\
  &  =   \exp \left( - \frac{n}{k} \log n +\binom{n/k}{2} k  \log \left( 1+ \rho^{2k} \right)  \right),
  \label{eq:second_moment_blow_up_short_orbits}
\end{align} 
where $(a)$ holds because $\Expect_{\calQ}\left[X_{\orbit}\right] \ge 1+ \rho^{2k}$ for each 
$k$-edge orbit $\orbit$ in both Gaussian (\prettyref{eq:wmcycle}) and \ER models (\prettyref{eq:ercycle}).

Consequently, for any $k=o(n)$, 
$$
 \rho^{2k} \ge \frac{ (2+\epsilon) \log n }{n }   
 \quad \Longrightarrow \quad \Expect_{\mathcal{Q}}\left[\left(\frac{\mathcal{P} (A, B)  }{\mathcal{Q} (A, B) }\right)^2\right] \to \infty.
$$ 
In particular, the strongest obstruction comes from $k=1$ (fixed points):
\begin{align*}
\rho^2 \ge \frac{ (2+\epsilon) \log n }{n }  \quad \Longrightarrow \quad  \Expect_{\mathcal{Q}}\left[\left(\frac{\mathcal{P} (A, B)  }{\mathcal{Q} (A, B) }\right)^2\right] \to \infty.
\end{align*}
In this case, the culprit is the rare event of $\pi$ ``colliding'' with $\tpi$ ($\sigma=\text{id}$), which holds with probability $1/n!$ but has an excessive contribution of $(1+\rho^2)^{\binom{n}{2}}$ to the second moment.

In conclusion, the second moment is susceptible to the influence of short edge orbits, for which $\prod_{|O|=k} X_O$ for small $k$ has a large expectation. 
Fortunately, it turns out that the atypically large magnitude of $\prod_{|O|=k} X_\orbit$  can be attributed to certain rare events associated with the intersection graph $A\wedge B^{\pi}$ under the planted model $\calP$. 
This motivates us to condition on some appropriate high-probability event under $\calP$, so that the excessively large magnitude of $\prod_{|O| =k} X_\orbit$ is truncated. 
As we will see in \prettyref{sec:lb-dense}, in the dense regime (including Gaussian Wigner model and dense \ER graphs), 
it suffices to consider $k=1$ and regulate $\prod_{|O|=1} X_\orbit$ by
conditioning on the edge density  for all sufficiently large induced subgraphs of $A\wedge B^{\pi}$ under $\calP$.
In contrast, in the sparse regime, we need to consider all edge orbits up to length $k=\Theta(\log n)$ for which more sophisticated techniques are called for, as we will see in \prettyref{sec:lb-sparse}.  

\section{Conditional Second Moment Method: Dense regime}
\label{sec:lb-dense}


In this section, we improve \prettyref{thm:unconditional_second_moment} by applying the conditional second moment method. 
The proof of the sharp threshold for the Gaussian model is given in full details in \prettyref{sec:gaussian_second_moment}. 
The proof for dense \ER graphs uses similar ideas but is technically more involved and hence deferred to  \prettyref{sec:er_dense}. 
We start by describing the general program of conditional second moment method.
Note that sometimes certain rare events under $\calP$ can cause the second moment to explode, while $\mathrm{TV}(\calP, \calQ) $ remains bounded away from one.  To circumvent such
catastrophic events, we can compute the second moment conditioned on events that are typical under $\calP$. More precisely, 
given an event $\calE$ such that $\calP(\calE)=1+o(1)$, define the planted model conditional on $\calE$:
\begin{align*}
    \calP'\left(A,B, \pi\right)
    & \triangleq \frac{\calP\left(A,B,\pi\right)\indc{\left(A,B,\pi\right)\in \calE}}{\calP\left( \calE\right)} = \left( 1+o\left(1\right)\right)\calP\left(A,B,\pi\right)\indc{\left(A,B,\pi\right)\in \calE},
\end{align*}
the last equality holds because $\calP(\calE)=1+o(1)$.
Then the likelihood ratio between the conditioned planted model $\calP'$ and the null model $\calQ$ is given by 
\begin{align*}
    \frac{\calP'\left(A,B\right)}{\calQ\left(A,B\right)}
    = \frac{ \int \calP'\left(A,B,\pi\right) \mathrm{d} \pi } {\calQ\left(A,B\right)}
    & = \left( 1+o\left(1\right)\right) \int \frac{\calP\left(\pi\right)\calP\left(A,B \mid \pi\right)\indc{\left(A,B,\pi\right)\in \calE}}{\calQ(A,B)}\mathrm{d}\pi\\
    & = \left( 1+o\left(1\right)\right)  \Expect_{\pi}\left[\frac{\calP\left(A,B\mid\pi\right)}{\calQ\left(A,B\right)}\indc{\left(A,B,\pi\right)\in \calE}\right].
\end{align*}
By the same reasoning that led to  \prettyref{eq:second3}, 
the conditional second moment is given by 
\begin{align}
     \Expect_{\calQ}\left[\left(\frac{\calP'\left(A,B\right)}{\calQ\left(A,B\right)}\right)^2\right] 
    & = \left( 1+o\left(1\right)\right)\Expect_{\pi\indep{}\widetilde{\pi}}\left[\Expect_{Q}\left[\frac{\calP\left(A,B\mid\pi\right)}{\calQ\left(A,B\right)}\frac{\calP\left(A,B\mid\tpi\right)}{\calQ\left(A,B\right)}\indc{\left(A,B,\pi\right)\in \calE} \indc{\left(A,B,\tpi\right)\in \calE}\right]\right] \nonumber \\
		    & = \left( 1+o\left(1\right)\right)\Expect_{\pi\indep{}\widetilde{\pi}}\left[\Expect_{Q}\left[ 
				\prod_{O\in\calO}	X_O \indc{\left(A,B,\pi\right)\in \calE} \indc{ \left(A,B,\widetilde{\pi} \right)\in \calE}
				\right]\right], \label{eq:condition_second_moment}
\end{align}
where the last equality follows from the decomposition \prettyref{eq:LR-orbit} over edge orbits $O \in \calO$ of $\sigma=\pi^{-1}\circ \tpi$. 
Compared to the unconditional second moment in \prettyref{eq:LR-sq}, the extra indicators in \prettyref{eq:condition_second_moment} will be useful for ruling out those rare events causing the second moment to blow up. 

We caution the reader that, crucially, the conditioning event $\calE$ must be measurable with respect to the observed and the latent variables $(A, B, \pi)$.
 Thus we \emph{cannot} rule out the rare event that $\pi$ is close to its independent copy $\tilde{\pi}$
so that $\sigma=\pi^{-1}\circ\tpi$ induces a proliferation of short edge orbits. Instead, as we will see, by truncating certain rare events associated with the intersection graph $A \wedge B^{\pi}$, the excessively large magnitude of $\prod_{|O|=k} X_O$ can be regulated for small $k$.

By the data processing inequality of total variation, we have
$$
\mathrm{TV}\left(\calP(A,B),\calP'(A,B) \right) \le \mathrm{TV} \left(\calP(A,B,\pi),\calP'(A,B,\pi) \right) = \calP\left((A,B,\pi) \not \in \calE \right) = o(1).
$$
Combining this with the second moment bound \prettyref{eq:secondmoment-tv1}--\prettyref{eq:secondmoment-tv2} and applying the triangle inequality, we arrive at the following conditions for non-detection:
\begin{align}
&\Expect_{\calQ}\left[\left( \frac{\calP'(A,B)}{\calQ(A,B)} \right)^2\right] = O(1) \quad \Longrightarrow \quad  \mathrm{TV}(\calP(A,B),\calQ(A,B)) \le  1- \Omega(1) \label{eq:cond_second_moment_TV_impossibilit_strong} \\
& \Expect_{\calQ}\left[\left( \frac{\calP'(A,B)}{\calQ(A,B)} \right)^2\right] = 1+ o(1) \quad \Longrightarrow \quad  \mathrm{TV}(\calP(A,B),\calQ(A,B)) =  o(1). 
\label{eq:cond_second_moment_TV_impossibilit_weak}
 \end{align}



\subsection{Sharp threshold for the Gaussian model} \label{sec:gaussian_second_moment} 

%
%

In this section, we improve over the impossibility condition $\rho^2 \le \frac{(2-\epsilon) \log n}{n}$ 
established in~\prettyref{thm:unconditional_second_moment}, showing that if $\rho^2 \le \frac{(4-\epsilon) \log n}{n} $,
then weak detection is impossible. This completes the impossibility proof of \prettyref{thm:er} for the Gaussian model.

Before the rigorous analysis, we first explain the main intuition.
Let $F$ denotes the set of fixed points of $\sigma= \pi^{-1}\circ\tpi$, so that $|F|=n_1$. 
Let 
\[
\calO_1 = \binom{F}{2},
\]
which is a subset of fixed points of the edge permutation (cf.~\prettyref{eq:N1N2}).
As argued in~\prettyref{sec:obstruction},  the unconditional second moment blows up when $\rho^2 \ge \frac{(2+\epsilon) \log n}{n}$
due to the obstruction of fixed points of $\sigma$, or more precisely, an atypically large magnitude of 
$\prod_{\orbit \in \calO_1} X_O$. 
By \prettyref{eq:Lba-gaussian} and  \prettyref{eq:X_ij}, 
\begin{align}
\prod_{\orbit \in \calO_1} X_O & = \prod_{ i<j \in F } X_{ij}   \nonumber \\
& = \left(1-\rho^2\right)^{-\binom{n_1}{2} } \exp \sth{ \frac{1}{1-\rho^2} \pth{- \rho^2  \sum_{ i<j \in F } \left(A_{ij}^2+ B_{\pi(i) \pi(j) }^2\right) 
+ 2 \rho \sum_{ i<j \in F }A_{ij}  B_{\pi(i)\pi(j)} }}.
\label{eq:X_O_F_gaussian}
\end{align}
 
Recall that for any $S \subset [n]$, $ e_{A\wedge B^{\pi}}(S) = \sum_{ i<j \in S } A_{ij}  B_{\pi(i)\pi(j)}$ as defined in \prettyref{eq:intersection}.
To truncate  $\prod_{ i<j \in F } X_{ij}$, one natural idea is to condition on the typical value of $e_{A\wedge B^{\pi}}(F)$
under the planted model $\calP$ when $|F|=n_1$ is large.
More specifically, 
for each $S\subset[n]$, define
\begin{align*}
\calE_S \triangleq \bigg\{ 
& (A, B, \pi): \sum_{i<j \in S} A_{ij}^2 ,  \; \sum_{i<j \in S} B_{\pi(i) \pi(j) }^2   \ge  \binom{|S|}{2} - C n^{3/2},  e_{A\wedge B^{\pi}}(S) \le  \rho \binom{|S|}{2} + C n^{3/2} \bigg\}
\end{align*}
where $C$ is an absolute constant.
We will condition on the event 
\begin{equation}
\calE \triangleq \bigcap_{S \subset [n]: |S| \ge  n/2} \calE_S.
\label{eq:calE-gaussian}
\end{equation}
This event $\calE$ can be shown to hold with high probability under the planted model $\calP$. 
Note that here in order to truncate $\prod_{ i<j \in F } X_{ij}$, $\calE$ is defined as the intersection of $\calE_S$ over all subsets $S$ with $|S| \ge n/2$,
so that it implies $\calE_F$ when $|F| \ge n/2$. The reason that we cannot condition on $\calE_F$ directly is because the set of fixed points $F$
depends on $\sigma=\pi^{-1}\circ\tpi$ rather than $\pi$ alone, and thus is not measurable with respect to $(A,B,\pi)$. 

Let 
\begin{equation}
\zeta=  \rho \binom{n_1}{2} + C n^{3/2} 
\label{eq:zeta}
\end{equation}
 When $n_1 \ge n/2$, we have $\zeta= \rho \binom{n_1}{2} (1+o(1))$. Furthermore, 
on the event $\calE$, it follows from \prettyref{eq:X_O_F_gaussian} and $\rho=o(1)$ that
\begin{align*}
\Expect_\calQ \left[ \prod_{i<j \in F } X_{ij}   \Indc_{\calE} \right]
\le &   \exp \sth{ - (1+o(1)) \rho^2 \binom{n_1}{2} } \Expect_\calQ\qth{\exp \sth{ \frac{2\rho}{1-\rho^2} e_{A\wedge B^{\pi}}(F)   }
 \indc{ e_{A\wedge B^{\pi}}(F) \le  \zeta   }   }\\
 \le & \exp \sth{ \frac{1+o(1) }{2} \rho^2  \binom{n_1}{2} },
\end{align*}
where the last inequality is by evaluating the truncated MGF of $e_{A\wedge B^{\pi}}(F)$ (see \prettyref{eq:truncated-MGF} below). Note that without the truncation $e_{A\wedge B^{\pi}}(F) \le \zeta$, 
we recover the unconditional bound $\Expect_\calQ \left[ \prod_{i<j \in F } X_{ij} \right] =\exp \sth{ (1+o(1) ) \rho^2  \binom{n_1}{2} }$.
Thus, the conditional bound improves over the unconditional one by a multiplicative factor of $2$ in the exponent.

Finally, to ensure the second moment after conditioning is $1+o(1)$, analogous to \prettyref{eq:second_moment_blow_up_short_orbits}, 
in the extreme case of $n_1=n$, we need to ensure 
\[
\frac{1}{n!} \exp \sth{ \frac{1+o(1) }{2} \rho^2 \binom{n}{2} } = \exp\sth{ - (1+o(1)) n \log n + \frac{ 1+o(1) }{4} \rho^2 n^2  }  = o(1),
\]
which corresponds precisely to the desired condition $\rho^2 \le \frac{(4-\epsilon) \log n}{n}$.

\medskip
Next, we proceed to the rigorous proof. As the impossibility of weak detection when $\rho^2 \le \frac{\log n}{n}$ has already been shown in  \prettyref{thm:unconditional_second_moment}, 
henceforth we only need to focus on
$$
\frac{\log n}{n} \le \rho^2 \le \frac{(4-\epsilon)\log n}{n}.
$$
The following lemma proves that $\calE$ holds with high probability 
under the planted model $\calP$.
\begin{lemma}\label{lmm:condition_Gaussian}
It holds that $\calP(  (A, B, \pi) \in \calE )=1 - e^{-\Omega(n)}$.
\end{lemma}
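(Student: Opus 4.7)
The plan is to fix $\pi$ and exploit the fact that, under $\calP$ conditional on $\pi$, the pairs $\{(A_{ij}, B_{\pi(i)\pi(j)})\}_{i<j}$ are i.i.d.\ bivariate standard normals with correlation $\rho$, so the conditional probability of $\calE^c$ does not depend on $\pi$. It then suffices to fix a single $S \subset [n]$ with $|S| \ge n/2$, show that each of the three defining constraints of $\calE_S$ fails with probability at most $e^{-\Omega(C^2 n)}$, and union bound over the at most $2^n$ choices of $S$. Choosing the absolute constant $C$ large enough then yields $\calP(\calE^c) \le e^{-\Omega(n)}$.

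For the two chi-squared events, note that for a fixed $S$ with $m = \binom{|S|}{2} = \Omega(n^2)$, both $\sum_{i<j\in S} A_{ij}^2$ and $\sum_{i<j\in S} B_{\pi(i)\pi(j)}^2$ are distributed as $\chi^2_m$, and by the standard Laurent--Massart tail bound, each falls below $m - t$ with probability at most $\exp(-t^2/(4m))$. Setting $t = Cn^{3/2}$ gives $t^2/(4m) = \Omega(C^2 n)$, as desired. For the cross term $e_{A\wedge B^\pi}(S) = \sum_{i<j\in S} A_{ij} B_{\pi(i)\pi(j)}$, the summands are i.i.d.\ products of jointly standard normals with correlation $\rho \in [0,1]$, hence subexponential with absolute constants and common mean $\rho$. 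Bernstein's inequality then yields $\calP(e_{A\wedge B^\pi}(S) \ge m\rho + t) \le \exp(-c\min(t^2/m, t))$ for a universal $c>0$, and with $t = Cn^{3/2} \ll m$ the subgaussian branch dominates, again giving $e^{-\Omega(C^2 n)}$. Alternatively, one may invoke the Hanson--Wright inequality of \prettyref{app:concentration} applied to the Gaussian vector $(A_{ij}, B_{\pi(i)\pi(j)})_{i<j\in S}$ with an appropriate block matrix.

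The main structural point is that $\calE$ must be defined as an intersection over \emph{all} subsets $S$ of size at least $n/2$, not just over $\calE_F$, because the set of fixed points $F$ of $\sigma = \pi^{-1}\circ\tpi$ depends on $\tpi$ and is not measurable with respect to $(A, B, \pi)$. This forces a union bound of size $2^n$, which in turn dictates the deviation scale $Cn^{3/2}$: the exponent $t^2/m \asymp C^2 n$ is just large enough to overwhelm $2^n$ and produce an $e^{-\Omega(n)}$ bound, whereas any smaller deviation would fail. The main obstacle is therefore calibrating this deviation level rather than any single concentration step, and once the scale is fixed, the proof reduces to routine tail bounds followed by a union bound.
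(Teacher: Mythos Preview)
Your proposal is correct and takes essentially the same approach as the paper: concentration for each of the three quantities in $\calE_S$ at deviation scale $Cn^{3/2}$, followed by a union bound over the at most $2^n$ subsets $S$ with $|S|\ge n/2$. The only cosmetic difference is that the paper applies the Hanson--Wright inequality of \prettyref{lmm:hw} uniformly to all three events (the quadratic forms via the $\rho=1$ case) and parameterizes by $\delta$ with $1/\delta = 2^{|S|}\binom{n}{|S|}$, whereas you invoke Laurent--Massart and Bernstein separately and choose $C$ large at the end; the underlying calibration $t^2/m \asymp n$ is identical.
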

\begin{proof}
Fix an integer $n/2 \le k \le n$ and let $m=\binom{k}{2}$. 
Let
$t =c \left( \sqrt{m \log (1/\delta)} + \log (1/\delta) \right)$,
for a universal constant $c$ and a parameter $\delta$ to be specified later.

Fix a subset $S\subset [n]$ with $|S|=k$. 
Using the Hanson-Wright inequality given in~\prettyref{lmm:hw}, with probability at least $1-3\delta$, 
\begin{align}
\sum_{i<j \in S} A_{ij}^2 \ge m-t, \; \sum_{i<j \in S} B_{\pi(i) \pi(j) }^2 
\ge m-t, \; e_{A\wedge B^{\pi}}(S) = \sum_{i<j \in S} A_{ij}B_{\pi(i)\pi(j)} \le \rho m +t. \label{eq:deviation_Hanson_wright}
\end{align}


Now, there are $\binom{n}{k}$ different choices of $S \subset[n]$ with $|S|=k$. 
Thus by choosing $1/\delta= 2^{k} \binom{n}{k}$ and applying the union bound,
we get that 
with probability at least $1- 3\sum_{k=n/2}^n 2^{-k}=1-e^{-\Omega(n)}$,  \prettyref{eq:deviation_Hanson_wright}
holds uniformly for all  $S \subset[n]$ with $|S|=k$ and all  $n/2 \le k \le n$.
By definition and the fact that $k \ge n/2$, 
$1/\delta \le 2^k \left( \frac{en}{k}\right)^k \le (4e)^k$,
and thus $t \le c \left( \sqrt{m k \log (4e) } + k\log (4e) \right) = O\left(n^{3/2}\right)$.
\end{proof}

Now, let us compute the conditional second moment. By \prettyref{lmm:condition_Gaussian}, it follows from \prettyref{eq:condition_second_moment} that 
\begin{align*}
     \Expect_{\calQ}\left[\left(\frac{\calP'(A,B) }{\calQ(A, B) }\right)^2\right] 
    & =  \left( 1+o\left(1\right)\right) \Expect_{\pi\ci\widetilde{\pi}}\left[
    \Expect_{\calQ}
    \left[\prod_{\orbit \in \calO } X_{\orbit}\indc{\left(A,B,\pi\right)\in \calE}\indc{ \left(A,B,\widetilde{\pi} \right)\in \calE}\right]\right].
\end{align*}
\medskip 
To proceed further, we fix $\pi, \tpi$ and separately consider the following two cases.

{\bf Case 1}:  $n_1 \le n/2$. In this case, we simply drop the indicators and use the unconditional second moment:
\begin{align*}
 \Expect_{\calQ}
    \left[\prod_{\orbit \in \calO } X_{\orbit}\indc{\left(A,B,\pi\right)\in \calE}\indc{ \left(A,B,\widetilde{\pi} \right)\in \calE}\right]
     \le  \Expect_{\calQ}
    \left[\prod_{\orbit \in \calO } X_{\orbit} \right] 
     = \prod_{\orbit \in \calO } \frac{1}{1-\rho^{2|\orbit|}},
  \end{align*} 
where the last equality follows from \prettyref{eq:wmcycle}.

\medskip
{\bf Case 2}:  $n_1 > n/2$.
 In this case, 
 \begin{align*}
  \Expect_{\calQ}
    \left[\prod_{\orbit \in \calO } X_{\orbit}\indc{\left(A,B,\pi\right)\in \calE}\indc{ \left(A,B,\widetilde{\pi} \right)\in \calE}\right] 
   & \overset{(a)}{\le}  \Expect_{\calQ}
    \left[\prod_{\orbit \in \calO } X_{\orbit} \indc{ (A,B,\pi) \in \calE_F  }\right]  \\
   & \overset{(b)}{=}
 \Expect_{\calQ} \left[ \prod_{\orbit \in \calO_1}   X_{\orbit} \indc{ (A,B,\pi) \in \calE_F  } \right] \prod_{\orbit \notin \calO_1 } \Expect_{\calQ}\left[X_{\orbit} \right] \\
 & \overset{(c)}{=}
  \Expect_{\calQ}   \left[  \prod_{ i<j \in F} X_{ij} \indc{ (A,B,\pi) \in \calE_F  } \right]\prod_{\orbit \notin \calO_1 } \frac{1}{1-\rho^{2|\orbit|}},
\end{align*}
where $(a)$ is due to the definition \prettyref{eq:calE-gaussian}, $\calE \subset \calE_F$ when $n_1 \ge n
/2$; $(b)$ holds because $X_\orbit$ is a function of $(A_{ij}, B_{\pi(i) \pi(j) })_{(i,j) \in \orbit}$ that are independent across different $\orbit \in \calO$, 
and $\indc{ (A,B,\pi) \in \calE_F  }$ only depends on $\left\{ (A_{ij}, B_{\pi(i) \pi(j) })_{(i,j) \in \orbit}: \orbit \in \calO_1\right\}$;
$(c)$ follows from \prettyref{eq:wmcycle}. 

On the event $\calE_F$, 
we have 
$$
\sum_{ i< j \in F} A_{ij}^2 \ge (1+o(1)) \binom{n_1}{2}, \; \sum_{ i<j \in F} B_{\pi(i) \pi(j) }^2  \ge (1+o(1)) \binom{n_1}{2}, \; e_{A\wedge B^{\pi}}(F) \le (1+o(1)) \rho \binom{n_1}{2},
$$ 
where we used the fact that $n^{3/2} = o(\rho n_1^2)$ in view of assumption $\rho^2 \ge \frac{\log n}{n}$ and $n_1>n/2$. 

It follows from \prettyref{eq:X_O_F_gaussian}
that 
\begin{align*}
& \Expect_{\calQ}   \left[  \prod_{ i<j \in F} X_{ij} \indc{ (A,B,\pi) \in \calE_F  } \right] \\
 &= \left(1-\rho^2\right)^{-\binom{n_1}{2} } \Expect_{\calQ}   \left[  \exp \sth{ \frac{1}{1-\rho^2} \pth{- \rho^2  \sum_{ i<j \in F }\left (A_{ij}^2+ B_{\pi(i) \pi(j) }^2\right) 
+ 2 \rho e_{A\wedge B^{\pi}}(F) }} \indc{ (A,B,\pi) \in \calE_F  }  \right]\\
& \le \left(1-\rho^2\right)^{-\binom{n_1}{2} } 
 \exp \sth{  - \frac{(2 +o(1)) \rho^2}{1-\rho^2}  \binom{n_1}{2} }
 \Expect_{\calQ}   \left[  \exp \sth{ \frac{2\rho e_{A\wedge B^{\pi}}(F) }{1-\rho^2 }   }  \indc{ e_{A\wedge B^{\pi}}(F) \le \zeta} \right] ,
\end{align*}
where $e_{A\wedge B^{\pi}}(F) = \sum_{ i<j \in F } A_{ij}  B_{\pi(i)\pi(j)}$ and 
$\zeta= \rho \binom{n_1}{2} (1+o(1))$. 

Let $\beta= \frac{2\rho}{1-\rho^2 }$. Then for any $\lambda \in [0,1]$,
\begin{align}
 \Expect_{\calQ}   \left[  \exp \sth{ \frac{2\rho e_{A\wedge B^{\pi}}(F) }{1-\rho^2 }   }  \indc{ e_{A\wedge B^{\pi}}(F) \le \zeta} \right]
& \le  \Expect_{\calQ}   \left[  \exp \sth{ \beta \left( \lambda e_{A\wedge B^{\pi}}(F) + (1-\lambda) \zeta \right) }   \right] \nonumber \\
& = \exp \sth{ \beta (1-\lambda) \zeta - \frac{1}{2} \binom{n_1}{2}  \log  \left(1-\beta^2\lambda^2 \right)   }, \label{eq:truncated-MGF}
\end{align}
where the equality uses the MGF expression in \prettyref{eq:MGF_XY}.
Choosing\footnote{This choice is motivated by choosing $\lambda$ to minimize
$-\beta \lambda \zeta +  \frac{1}{2} \binom{n_1}{2}  \beta^2 \lambda^2$, the first-order approximation of the exponent in \prettyref{eq:truncated-MGF}, leading to $\lambda^*= \zeta/ [ \binom{n_1}{2} \beta ] = (1+o(1)) (1-\rho^2)/2$. }
 $\lambda=(1-\rho^2)/2$ in \prettyref{eq:truncated-MGF}, we obtain
 \begin{align*}
 \exp \sth{ \beta (1-\lambda) \zeta - \frac{1}{2} \binom{n_1}{2}  \log  \left(1-\beta^2\lambda^2 \right)   }
 =
\exp \sth{ (\beta-\rho) \zeta - \frac{1}{2} \binom{n_1}{2}  \log  \left(1-\rho^2 \right)   }.
\end{align*}
Combining the last three displayed equations yields that 
\begin{align*}
\Expect_{\calQ}   \left[  \prod_{ i<j \in F} X_{ij} \indc{ (A,B,\pi) \in \calE_F  } \right] 
& \le
 \exp \sth{  - \frac{2\rho^2 (1+o(1))}{1-\rho^2}  \binom{n_1}{2}  + 
 (\beta-\rho) \zeta - \frac{3}{2} \binom{n_1}{2}  \log  \left(1-\rho^2 \right)  } \\
 & =   \exp \sth{    \frac{(1+o(1)) \rho^2}{2} \binom{n_1}{2} } \le  \exp \sth{    \frac{(1+o(1)) \rho^2 n_1^2}{4}  } ,
\end{align*}
where the equality holds under the assumption that $\rho=o(1)$ so that
$\log (1-\rho^2)=-(1+o(1))\rho^2$.

\medskip

Combining the two cases yields   that
\begin{align*}
 \Expect_{\calQ}\left[\left(\frac{\calP'(A,B) }{\calQ(A, B) }\right)^2\right] 
& \le   (1+o(1)) \expect{ \prod_{\orbit \in \calO } \frac{1}{1-\rho^{2|\orbit|}}  \indc{n_1 \le n/2} } \\
& + (1+o(1)) \expect{ \prod_{\orbit \notin \calO_1 } \frac{1}{1-\rho^{2|\orbit|} } \exp \sth{    \frac{(1+o(1)) \rho^2n_1^2 }{4}  } \indc{n_1 > n/2} }.
\end{align*}

Let $\tau=\frac{\rho^2}{1-\rho^2}$.  Note that 
\begin{align*}
\prod_{\orbit \notin \calO_1 } \frac{1}{1-\rho^{2|\orbit|}} =
\left( \frac{1}{1-\rho^2} \right)^{n_2} \prod_{k\ge 2}  
\left( \frac{1}{1-\rho^{2k} } \right)^{N_k} 
& =(1+o(1)) \left( \frac{1}{1-\rho^2} \right)^{n_2} \left( \frac{1}{1-\rho^{4} } \right)^{N_2}, \\
& \le (1+o(1)) \exp\left( \tau n_2 + \tau^2 N_2 \right) ,
\end{align*}
where the first equality follows from \prettyref{eq:N1N2}, the second equality holds by \prettyref{eq:1lower2} under the assumption $ \rho^2 \le (4-\epsilon)\log n/n $,
and the last inequality holds because 
$\frac{1}{1-\rho^2} = 1+\tau  \le \exp\left( \tau \right)$
and $\frac{1}{1-\rho^{4} }\le  1+ \tau^2 \le  \exp\left( \tau^2 \right)$. 
Similarly, 
\begin{align*}
\prod_{\orbit \in \calO_1 } \frac{1}{1-\rho^{2|\orbit|}} =
\left( \frac{1}{1-\rho^2} \right)^{ \binom{n_1}{2} }  \le \exp \left( \tau n_1^2/2 \right). 
\end{align*}

Hence,
\begin{align*}
 \Expect_{\calQ}\left[\left(\frac{\calP'(A,B) }{\calQ(A, B) }\right)^2\right] 
& \le   (1+o(1)) \expect{  \exp \left( \tau \left( n_1^2/2 +n_2\right)  +\tau^2 N_2 \right )  \indc{n_1 \le n/2} } \\
& + (1+o(1)) \expect{ \exp \left( \tau n_2 +  \tau^2 N_2 \right)   
\exp \sth{    \frac{(1+o(1)) \rho^2 n_1^2 }{4}} \indc{n_1 > n/2} }.
\end{align*}

We upper bound the two terms separately. 
To bound the first term,  we apply \prettyref{eq:n1n2_a_0} in \prettyref{prop:n1n2_weak_detection} with $\mu=\tau/2$, $\nu=0$, $a=0$, and $b=n/2$. 
Recall that  $\tau=\frac{\rho^2}{1-\rho^2}$. By assumption $\rho^2 \le (4-\epsilon)\log n / n$, we have $\tau^2=o\left(\frac{1}{n}\right)$ and  $\mu b + 2 -\log b=\frac{\rho^2 n}{4(1-\rho^2)} + 2 - \log (n/2) \le 0$ for all sufficiently large $n$. Thus it follows from \prettyref{eq:n1n2_a_0} in \prettyref{prop:n1n2_weak_detection} that 
\begin{align*}
\expect{ \exp \left( \tau (n_1^2/2 + n_2) +  \tau^2 N_2 \right)  \indc{n_1 \le n/2} } \le 1+o(1).
\end{align*}
To bound the second term, we apply \prettyref{eq:n1n2_a_omega_1} in \prettyref{prop:n1n2_weak_detection}  with $\mu=\frac{(1+o(1)) \rho^2}{4}$, $\nu=0$, $a=\frac{n}{2}$, and $b =n$.
Recall that  $\tau=\frac{\rho^2}{1-\rho^2}$. By assumption $n\rho^2 \le (4-\epsilon)\log n$, we have $\tau^2=o\left(\frac{1}{n}\right)$  and $\mu b +\nu +2 -\log b =\frac{(1+o(1)) \rho^2 n}{4} + 2 -\log n \le 0$ for sufficiently large $n$. Thus  it follows from \prettyref{eq:n1n2_a_omega_1} in \prettyref{prop:n1n2_weak_detection} that 
\begin{align*}
 \expect{ \exp \left( \tau n_2 +  \tau^2 N_2 \right)   
\exp \sth{    \frac{(1+o(1)) \rho^2 n_1^2 }{4}} \indc{n_1 > n/2} } = o(1).
\end{align*}

%
Combining the upper bounds for the two terms, we conclude that
$ \Expect_{\calQ}\left[\left(\frac{\calP'(A,B) }{\calQ(A, B) }\right)^2\right] =1+o(1)$ under the assumption that 
$\rho^2 \le (4-\epsilon)\log n/n$. Thus $\mathrm{TV}(\calP,\calQ)=o(1)$ in view of \prettyref{eq:cond_second_moment_TV_impossibilit_weak}.


%
%
%
%

\section{Conditional Second Moment Method: Sparse regime}
\label{sec:lb-sparse}


We focus on the \ER model in the sparse regime of $p=n^{-\Omega(1)}$. 
The impossibility condition previously obtained in \prettyref{thm:unconditional_second_moment} by the unconditional second moment simplifies to $s^2   \le (2-\epsilon) \frac{\log n}{n}$. In this section, we significantly improve this result
by showing that if
\begin{align}
s^2 \le \frac{1- \omega( n^{-1/3})}{np} \wedge 0.01, \label{eq:conditional_er_sparse}
\end{align}
then strong detection is impossible. Moreover, if both $s=o(1)$ and \prettyref{eq:conditional_er_sparse} hold, then weak detection is impossible.

Analogous to the proof for the dense case in \prettyref{sec:lb-dense} (see also \prettyref{sec:er_dense}), we will apply the 
conditional second moment method. However, the argument in the sparse case is much more sophisticated for the following reason. 
In the dense regime (both Gaussian and \ER graph with $p=n^{-o(1)}$), we have shown that the main contribution to the second moment is due to fixed points of $\sigma=\pi^{-1}\circ \tpi$, which can be regulated by conditioning on the edge density of large induced subgraphs in the intersection graph. 
For sparse \ER graphs with $p=n^{-\Omega(1)}$, 	we need to control the contribution of not just fixed points, but all edge orbits of length up to $k=\Theta(\log n)$.
Indeed, as argued in~\prettyref{sec:obstruction}, the unconditional second moment blows up 
when $\rho^{2k} \ge \frac{(2+\epsilon)\log n}{n}$ due to the obstructions from the $k$-edge orbits, or more
precisely, an atypically large magnitude of $\prod_{|\orbit|=k} X_\orbit$. Note that $\rho=  \frac{s(1-p)}{1-ps}=(1+o(1)) s$ in the sparse case. 
Therefore, to show the desired condition~\prettyref{eq:conditional_er_sparse}, we need to regulate 
$\prod_{|\orbit|=k} X_\orbit$ beyond $k=1$ by proper conditioning. 
In fact, for $p=\Theta(1/n)$, since \prettyref{eq:conditional_er_sparse} reduces to $\rho \leq 0.1$, it is necessary to control all $k$ up to $\Theta(\log n)$.

To this end, the crucial observation is as follows. 
We call a given edge orbit $\orbit $ of $\sigma=\pi^{-1}\circ \tpi$ \emph{complete} if it is a subgraph of the intersection graph $A \wedge B^{\pi}$, i.e.~$\orbit \subset E\left( A \wedge B^{\pi}\right) $.
For each complete orbit $O$, we have $A_{ij}=B_{\pi(i)\pi(j)}=B_{\tpi(i)\tpi(j)}=1$ for all $(i,j)\in O$ and hence, by \prettyref{eq:Lba-ER1} and \prettyref{eq:X_ij}, $X_{ij}=L(1,1)^2 = 1/p^2$, so that $X_O$  attains its maximal possible value, namely
\begin{equation}
X_O = \left(\frac{1}{p}\right)^{2|\orbit|}, \quad \forall O \subset E\left( A \wedge B^{\pi}\right).
\label{eq:complete-orbit}
\end{equation}
For incomplete orbits, it is not hard to show (see \prettyref{prop:con2} below) that 
\[
\Expect_{\calQ}\left[ X_{\orbit} \mid \orbit \not\subset A \wedge B^{\pi} \right] \le 1.
\] 
Hence, 
the key is to control the contribution of complete edge orbits $\orbit$ that are subgraphs of $A \wedge B^{\pi}$. 
Crucially, under the assumption of \prettyref{thm:er} in the sparse regime,  $nps^2$ is sufficiently small so that
$A \wedge B^{\pi}$ is subcritical and a pseudoforest (each component having at most one cycle) with high probability under the planted model $\calP$.
This global structure significantly limits the possible configurations of complete edge orbits, since many patterns of co-occurrence of edge orbits in $A \wedge B^{\pi}$ are forbidden. 
Motivated by this observation, we truncate the likelihood ratio by conditioning on the global event that $A \wedge B^{\pi}$
is a pseudoforest. 
Finally, in order to show the conditional second moment is bounded under the desired condition~\prettyref{eq:conditional_er_sparse},
we carefully control
the co-occurrence of edge orbits in $A \wedge B^{\pi}$ under the pseudoforest constraint, which involves a delicate enumeration of pseudoforests that can be assembled from edge orbits.

Next, let us proceed to the rigorous analysis. Define 
$$
\calE \triangleq \{(A,B,\pi): A\land B^{\pi} \text{ is a pseudoforest}\}. 
$$
Note that $A \wedge B^{\pi}\sim\calG(n,ps^2)$ under the planted model $\calP$. The following result 
shows that in the subcritical case $A \wedge B^{\pi}$ is a pseudoforest.

\begin{lemma}[{\cite[Lemma 2.10]{frieze2016introduction}}]\label{lmm:cond_high_prob_sparse_ER}
If $nps^2 \le 1-\omega\left(n^{-1/3}\right)$, then $\calP\left( (A, B, \pi) \in \calE \right) = 1 - o\left( \frac{1}{n^3}\right)$ as $n \to \infty$. 
\end{lemma}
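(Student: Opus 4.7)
My plan is to apply a first moment argument to the intersection graph viewed as a subcritical Erdős-Rényi random graph. Under the planted model, conditional on $\pi$, the edge indicators $(A_{ij}\wedge B_{\pi(i)\pi(j)})$ for $i<j$ are independent Bernoullis, and from \prettyref{eq:erh1} each equals $1$ with probability $ps\cdot s = ps^{2}$; since this does not depend on $\pi$, the unconditional law of $A\wedge B^{\pi}$ under $\calP$ is $\calG(n,q)$ with $q=ps^{2}$. The task thus reduces to the classical statement that a subcritical Erdős-Rényi graph with $nq\le 1-\omega(n^{-1/3})$ is a pseudoforest with the claimed probability.

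Next, I would characterize the complement event combinatorially. A graph is a pseudoforest iff every induced subgraph $S$ satisfies $|E(S)|\le|S|$, equivalently iff it contains no \emph{minimal bicyclic subgraph}, namely a theta graph (two vertices joined by three internally vertex-disjoint paths) or a handcuff graph (two cycles joined by a possibly trivial path), each having $v$ vertices and $v+1$ edges for some $v\ge 4$. By the union bound,
\[
\Prob_{\calP}(\calE^{c}) \;\le\; \sum_{v\ge 4}\Expect\bigl[\,\#\{\text{minimal bicyclic subgraphs on $v$ vertices in }\calG(n,q)\}\,\bigr].
\]

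To bound the right-hand side I would use Wright's enumeration of labeled connected graphs of prescribed excess, together with the component-isolation factor $(1-q)^{v(n-v)}\approx e^{-v(1-\epsilon)}$ (where $\epsilon=1-nq$), so that the $v$th summand is dominated by a geometric series in $v$ with ratio $(1-\epsilon)e^{\epsilon}\le 1-\Omega(\epsilon^{2})$. The key obstacle is sharpening this into the claimed tail $o(n^{-3})$: a naive estimate on $K_{4}-e$ alone only yields $O(1/n)$, so one has to combine the subcritical gap $\epsilon=\omega(n^{-1/3})$ with the small edge density $q\le 1/n$ and sum over all $v$, producing a bound of the form $O(q\cdot\epsilon^{-c})$ for a small constant $c$, which is $o(n^{-3})$ under the hypothesis. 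This refined calculation is precisely the content of \cite[Lemma~2.10]{frieze2016introduction}, which can be invoked directly to conclude.
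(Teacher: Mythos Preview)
The paper gives no proof of its own; it simply cites \cite[Lemma~2.10]{frieze2016introduction} after noting (just above the lemma) that $A\wedge B^{\pi}\sim\calG(n,ps^{2})$ under $\calP$. Your reduction to the subcritical \ER graph $\calG(n,q)$ with $q=ps^{2}$, and your final decision to invoke the reference, therefore match the paper's treatment exactly.

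Where your sketch goes wrong is the quantitative tail. The first-moment count you outline---whether over minimal bicyclic subgraphs or, as your isolation factor $(1-q)^{v(n-v)}$ suggests, over complex components as in Frieze--Karo\'nski---yields a bound of order $O\bigl(q\,\epsilon^{-3}\bigr)=O\bigl(1/(n\epsilon^{3})\bigr)$ with $\epsilon=1-nq$. Under $\epsilon=\omega(n^{-1/3})$ this is $o(1)$, but \emph{not} $o(n^{-3})$: the $K_{4}{-}e$ term alone already contributes $\Theta(n^{4}q^{5})=\Theta(1/n)$ when $nq\to 1$, and summing more terms can only worsen this; no bound of the form $O(q\epsilon^{-c})$ with $c>0$ is $o(n^{-3})$ at $q\sim 1/n$, $\epsilon\sim n^{-1/3}$. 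In fact the $o(1/n^{3})$ in the lemma's statement appears to be a misprint for $o(1)$: the cited result proves only a w.h.p.\ statement, and the paper's sole use of the lemma---to obtain $\calP(\calE)=1-o(1)$ when passing from \prettyref{eq:condition_second_moment} to \prettyref{eq:beforedecompose}---needs nothing stronger.
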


Recall from \prettyref{eq:LR-sq} and \prettyref{eq:LR-orbit} in \prettyref{sec:second-moment-calculation}  the following representation of the squared likelihood ratio 
\begin{equation}
\pth{\frac{\calP(A,B)}{\calQ(A,B)}}^2 = \Expect_{\pi\ci\widetilde{\pi}}\qth{\prod_{\orbit \in \calO }X_{\orbit}},
\label{eq:squaredLR}
\end{equation}
where for each edge orbit $O$ of $\sigma=\pi^{-1}\circ\tpi$, 
\[
X_O = \prod_{ij \in O} X_{ij}, \quad X_{ij}=  L\left(A_{ij} , B_{\pi(i)\pi(j)}\right) L\left(A_{ij}, B_{\tpi(i)\tpi(j)} \right),
\]
with $L(\cdot,\cdot)$ is defined in \prettyref{eq:Lba-ER1}. 
In order to decompose \prettyref{eq:squaredLR} further, let us introduce the following key definitions.
Recall from \prettyref{sec:orbit} that $O_i$ denotes the node-orbit of $i$ (under the node permutation $\sigma$) and $O_{ij}$ denotes the edge-orbit of $(i,j)$ (under the edge permutation $\sigmae$).
Fix some $k$ to be specified later.

\begin{itemize}
	\item Define $\calO_k$ as the set of edge orbits of length at most $k$ that are formed by node orbits with length at most $k$, that is, 
	\begin{align*}
	   \calO_k = \{O_{ij}: \left| O_i \right| \le k, \left|O_j \right| \le k, \left| O_{ij} \right|\le k, 1 \le i < j \le n\}.
   	\end{align*}
\item Define $\calJ_k$ as the set of edge orbits $\orbit \in \calO_k$ that are subgraphs of $A \wedge B^{\pi}$, \ie, 
\begin{align*}
\calJ_k & = \{\orbit\in \calO_k :  A_{ij}=1,B_{\pi(i)\pi(j)}=1, \forall (i,j)\in \orbit\} \nonumber \\
& =  \{\orbit\in \calO_k :  A_{ij}=1,B_{\tpi(i)\tpi(j)}=1, \forall (i,j)\in \orbit\},
\end{align*}
where the second equality holds because $\{B_{\pi(i)\pi(j)}\}_{(i,j)\in \orbit}=\{B_{\tpi(i)\tpi(j)}\}_{(i,j)\in \orbit}$. 
\item Define 
\begin{equation}
H_k = \bigcup_{\orbit\in \calJ_k} \orbit.
\label{eq:Hk}
\end{equation}
\end{itemize}
Note that while $\calO_k$ depends only on the random permutation $\sigma=\pi^{-1}\circ\tpi$, both $\calJ_k$ and $H_k$ depend in addition on the random graph $A \wedge B^{\pi}$.

As will be discussed at length in \prettyref{sec:classification_edge_orbits}, each edge orbit can be viewed as a subgraph of the complete graph $K_n$. 
Different edge orbits are by definition edge disjoint, and the union of all edge orbits is the edge set of $K_n$. 
We shall call a graph an \emph{orbit graph} if it is union of edge orbits. Importantly, by definition, the orbit graph $H_k$ is a subgraph of $A\wedge B^{\pi}$.


To  compute the conditional second moment, by \prettyref{lmm:cond_high_prob_sparse_ER}, it follows from \prettyref{eq:condition_second_moment} that 
\begin{align}
    \Expect_{\calQ}\left[\left(\frac{\calP'\left(A,B\right)}{\calQ\left(A,B\right)}\right)^2\right] 
    & = \left( 1+o\left(1\right)\right) \Expect_{\pi\ci\widetilde{\pi}}\left[
    \Expect_{\calQ}
    \left[\prod_{\orbit \in \calO }X_{\orbit}\indc{\left(A,B,\pi\right)\in \calE}\indc{ \left(A,B,\widetilde{\pi} \right)\in \calE}\right]\right] \nonumber \\
    & \le \left( 1+o\left(1\right)\right)  \Expect_{\pi\ci\widetilde{\pi}}\left[
    \Expect_{\calQ}
    \left[\prod_{\orbit \in \calO }X_{\orbit}\indc{H_k \text{ is a pseudoforest}}\right]\right],
    \label{eq:beforedecompose}
\end{align}
where the last inequality holds because on the event that $A\land B^{\pi}$ is a pseudoforest, its subgraph $H_k$ is also one. 


To further upper bound the right hand side of \prettyref{eq:beforedecompose}, we decompose the product over edge orbits into three terms:
\begin{align*}
\prod_{O\in\calO} X_O = \prod_{O\notin\calO_k} X_O \times \prod_{O\in\calO_k \backslash \calJ_k} X_O \times \prod_{O\in\calJ_k} X_O
\end{align*}
which correspond to the contributions of \emph{long orbits}, \emph{short incomplete orbits} (that are not subgraphs of $A \wedge B^{\pi}$), and \emph{short complete orbits} (that are subgraphs), respectively. 
As shown earlier in \prettyref{eq:complete-orbit}, for each complete edge orbit $O$, we have $X_O=(1/p)^{2|O|}$. Therefore in view of \prettyref{eq:Hk}, the collective contribution of short complete orbits are 
\begin{equation}
\prod_{O\in\calJ_k} X_O = \pth{\frac{1}{p}}^{2e(H_k)}. 
\label{eq:XO-Jk}
\end{equation}
Thus, fixing $\sigma=\pi^{-1}\circ \tpi$, we have
\begin{align}
&  \Expect_{\calQ}
    \left[\prod_{\orbit \in \calO }X_{\orbit}\indc{H_k \text{ is a pseudoforest}}\right] \nonumber  \\
   & =\Expect_{\calQ}\left[\prod_{\orbit\notin \calO_k}X_{\orbit}\right] \Expect_{\calQ}
    \left[\prod_{\orbit\in \calO_k}X_{\orbit}\indc{H_k \text{ is a pseudoforest}}\right]  \nonumber \\
    & = \Expect_{\calQ}\left[\prod_{\orbit\notin \calO_k}X_{\orbit}\right] 
    \Expect_{\calJ_k }\left[ \left( \frac{1}{p} \right)^{2 e(H_k)} \indc{H_k \text{ is a pseudoforest}}\Expect_{\calQ}\left[\prod_{\orbit\in \calO_k\backslash \calJ_k }X_{\orbit} \; \Big | \; \calJ_k \right]\right],  \label{eq:thm11}
\end{align}
where the first equality holds 
because $\{X_{\orbit}\}_{\orbit\in\calO}$ are mutually independent and $\calJ_k \subset \calO_k$, 
so that $\{X_{\orbit}\}_{\orbit \in \calO \backslash \calO_k}$ is independent of $\{X_{\orbit}\}_{\orbit \in \calO_k}$ and the event that $H_k$ is a pseudoforest;
the second equality holds because $H_k $ is measurable with respect to $\calJ_k$.

The contributions of long orbits and incomplete orbits can be readily bounded as follows whose proofs are deferred till Sections \ref{sec:con1} and \ref{sec:con2}.

\begin{proposition}[Long orbits]\label{prop:con1}
Fix any $\sigma=\pi^{-1}\circ \tpi$. For any $k\in \naturals$,  
\begin{align*}
    \Expect_{\calQ}\left[\prod_{\orbit \in \calO \backslash \calO_k} X_{\orbit}\right] \le \left(1+\rho^k\right)^{\frac{n^2}{k}}. 
\end{align*}
\end{proposition}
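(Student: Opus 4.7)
The plan is to exploit the mutual independence of $\{X_O\}_{O\in\calO}$ under $\calQ$ (for fixed $\pi,\tpi$), as established in \prettyref{sec:second-moment-calculation}. Combined with \prettyref{eq:ercycle}, this gives
\[
\Expect_\calQ\qth{\prod_{O\in\calO\setminus\calO_k} X_O} = \prod_{O\in\calO\setminus\calO_k} \Expect_\calQ[X_O] = \prod_{O\in\calO\setminus\calO_k}\pth{1+\rho^{2|O|}}.
\]
So the claim reduces to the purely combinatorial inequality $\prod_{O\in\calO\setminus\calO_k}(1+\rho^{2|O|}) \le (1+\rho^k)^{n^2/k}$, i.e., there is no more probability left to analyze.

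The key observation---and the main (minor) obstacle---is a structural lemma: \emph{every edge orbit $O=O_{ij}\in\calO\setminus\calO_k$ satisfies $|O|>k/2$}. Indeed, failing membership in $\calO_k$ means at least one of $|O_i|,|O_j|,|O_{ij}|$ exceeds $k$. The nontrivial case is when $|O_{ij}|\le k$ but (say) $|O_i|>k$. I would then split on whether $i$ and $j$ lie in a common node orbit of $\sigma$. If they lie in different node orbits, then $|O_{ij}| = \lcm(|O_i|,|O_j|) \ge |O_i| > k$, contradicting $|O_{ij}|\le k$. If they lie in the same node orbit of length $L=|O_i|=|O_j|$, writing $j=\sigma^d(i)$ for some $1\le d\le L/2$, a short case analysis (on whether $d=L/2$, only possible for even $L$) shows $|O_{ij}|\in\{L/2,L\}$, hence $|O|\ge L/2=|O_i|/2>k/2$.

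With the structural lemma in hand, the rest is an elementary geometric-type inequality. For $|O|>k/2$, we have $2|O|/k>1$ and $\rho^{2|O|}\le\rho^k$, so
\[
1+\rho^{2|O|} \;\le\; 1+\rho^k \;\le\; (1+\rho^k)^{2|O|/k}.
\]
Multiplying over all $O\in\calO\setminus\calO_k$ and using the trivial identity $\sum_{O\in\calO}|O|=\binom{n}{2}\le n^2/2$, I obtain
\[
\prod_{O\in\calO\setminus\calO_k}(1+\rho^{2|O|}) \;\le\; (1+\rho^k)^{(2/k)\sum_{O\in\calO\setminus\calO_k}|O|} \;\le\; (1+\rho^k)^{n^2/k},
\]
which is the desired bound. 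Modulo the structural lemma, the argument is routine: there is no conditioning, no Poisson approximation, and no enumeration---just the fact that every edge orbit outside $\calO_k$ is ``long enough'' that its contribution $1+\rho^{2|O|}$ can be absorbed into a per-unit-length factor $(1+\rho^k)^{2/k}$.
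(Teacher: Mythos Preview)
Your proof is correct and follows essentially the same approach as the paper: both factor the expectation using independence and \prettyref{eq:ercycle}, establish the structural lemma that every $O\in\calO\setminus\calO_k$ has $|O|\ge\lceil(k+1)/2\rceil$, and then bound the product. The only cosmetic difference is in the final step: the paper bounds each factor uniformly by $1+\rho^k$ and then counts orbits (at most $\binom{n}{2}/\lceil(k+1)/2\rceil\le n^2/k$ of them), whereas you bound each factor by $(1+\rho^k)^{2|O|/k}$ and sum edge counts; both routes yield the same exponent $n^2/k$.
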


\begin{proposition}[Incomplete orbits]\label{prop:con2}
Fix any $\sigma=\pi^{-1}\circ \tpi$. If  $p\le 1/2$ and $s\le 1/2$, then
\begin{align*}
    \Expect_{\calQ}\left[\prod_{\orbit \in \calO_k \backslash \calJ_k }X_{\orbit} \; \Bigg| \; \calJ_k \right] \le 1. 
\end{align*}
\end{proposition}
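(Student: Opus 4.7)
The plan is to exploit both the mutual independence of $\{X_\orbit\}$ across edge orbits under $\calQ$ and the fact that the event $\{\orbit\in\calJ_k\}$ is local to the orbit $\orbit$, so that the conditional expectation factorizes orbit by orbit and the problem reduces to a single-orbit calculation. Fix $\sigma = \pi^{-1}\circ\tpi$ and hence $\calO_k$. Since distinct orbits involve disjoint edges and $\{\orbit\in\calJ_k\}$ is determined by $\{A_{ij},\,B_{\pi(i)\pi(j)} : (i,j)\in\orbit\}$, the $\iid$ structure of $(A,B)$ under $\calQ$ yields
\[
\Expect_{\calQ}\Bigl[\prod_{\orbit\in\calO_k\setminus\calJ_k} X_\orbit \,\Big|\, \calJ_k\Bigr]
= \prod_{\orbit\in\calO_k\setminus\calJ_k} \Expect_{\calQ}\bigl[X_\orbit \,\big|\, \orbit\notin\calJ_k\bigr],
\]
so it suffices to show $\Expect_{\calQ}[X_\orbit \mid \orbit\notin\calJ_k]\le 1$ for each individual orbit.

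Next, with $k=|\orbit|$, I would compute this conditional expectation directly. When $\orbit$ is complete we have $L(A_{ij},B_{\pi(i)\pi(j)}) = L(A_{ij},B_{\tpi(i)\tpi(j)}) = L(1,1) = 1/p$ for every $(i,j)\in\orbit$, so $X_\orbit = p^{-2k}$; and under $\calQ$ the $2k$ variables $\{A_{ij}\}_{(i,j)\in\orbit}\cup\{B_{\pi(i)\pi(j)}\}_{(i,j)\in\orbit}$ are $\iid$ $\Bern(ps)$, giving $\Prob_{\calQ}[\orbit\in\calJ_k]=(ps)^{2k}$ and $\Expect_{\calQ}[X_\orbit\indc{\orbit\in\calJ_k}]=s^{2k}$. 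Combined with the identity $\Expect_{\calQ}[X_\orbit]=1+\rho^{2k}$ from \prettyref{prop:cycle}, this yields
\[
\Expect_{\calQ}[X_\orbit \mid \orbit\notin\calJ_k] = \frac{1+\rho^{2k}-s^{2k}}{1-(ps)^{2k}},
\]
so the claim reduces to $\rho^{2k}\le s^{2k}(1-p^{2k})$. Setting $\alpha=p(1-s)/(1-ps)$, so that $\rho/s=1-\alpha$, this is equivalent to the numerical inequality $(1-\alpha)^{2k}\le 1-p^{2k}$.

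I expect the last inequality to be the only step requiring a bit of care, but it should be elementary. I would deduce it from the trivial bound $(1-\alpha)^{2k}\le 1-\alpha$, valid for $\alpha\in[0,1]$ and $2k\ge 1$, together with the estimate $\alpha\ge p^{2k}$ which holds under the hypotheses $p,s\le 1/2$: indeed $\alpha\ge p(1-s)\ge p/2$, while $p^{2k-1}\le (1/2)^{2k-1}\le 1/2$ gives $p^{2k}\le p/2\le\alpha$. The only conceptually delicate point is the factorization in the first paragraph, which works because $\calJ_k$ was defined precisely so that $\{\orbit\in\calJ_k\}$ is local to $\orbit$; no global structure of $A\wedge B^\pi$ is needed for this proposition, in contrast to the pseudoforest truncation that handles the short \emph{complete} orbits elsewhere in the argument.
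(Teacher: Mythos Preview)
Your proof is correct and follows essentially the same approach as the paper: factorize over orbits using the locality of $\{\orbit\in\calJ_k\}$, compute the single-orbit conditional expectation as $\frac{1+\rho^{2|\orbit|}-s^{2|\orbit|}}{1-(ps)^{2|\orbit|}}$, and reduce to the elementary inequality $(1-\alpha)^{2|\orbit|}\le 1-p^{2|\orbit|}$ with $\alpha=p(1-s)/(1-ps)$. The only difference is in how the final inequality is verified: the paper observes that the left side decreases and the right side increases in $|\orbit|$, so it suffices to check $|\orbit|=1$, which it does by rewriting as $(1-ps)^2\ge(1-p)/(1+p)$; your chain $(1-\alpha)^{2|\orbit|}\le 1-\alpha\le 1-p^{2|\orbit|}$ via $\alpha\ge p/2\ge p^{2|\orbit|}$ is an equally valid (and arguably cleaner) alternative.
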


Applying \prettyref{prop:con1} and  \prettyref{prop:con2} to \prettyref{eq:thm11}, we get that for  any $\sigma=\pi^{-1}\circ \tpi$, 
\begin{align}
 \Expect_{\calQ}
    \left[\prod_{\orbit \in \calO }X_{\orbit}\indc{H_k \text{ is a pseudoforest}}\right]
    \le   \left(1+\rho^k\right)^{\frac{n^2}{k}} \Expect_{\calJ_k }\left[ \left( \frac{1}{p} \right)^{2 e(H_k)} \indc{H_k \text{ is a pseudoforest}} \right]
    \label{eq:short_orbit_subgraph}
\end{align}

It remains to further upper bound the RHS of \prettyref{eq:short_orbit_subgraph}.
Let $\calH_k$ denote the set of all orbit graphs that consist of edge orbits in $\calO_k$ and are pseudoforests -- we call such graphs \emph{orbit pseudoforests}.
As such $\calH_k$ depends only on $\sigma$ but not the graph $A$ and $B$. Therefore, 
\begin{align}
  \Expect_{\calJ_k}\left[ \left(\frac{1}{p}\right)^{2 e(H_k)} \indc{H_k\text{ is a pseudoforest}} 
   \right] 
    & = \sum_{H\in \calH_k}
    \calQ\left(H_k = H \right)    \left(\frac{1}{p}\right)^{2 e(H )} \indc{H \text{ is a pseudoforest}} \nonumber \\
    & \le \sum_{H \in \calH_k} s^{2 e(H)},
		\label{eq:gf-pseudoforest}
\end{align}
where the last step holds because 
$$ 
\calQ \left(H_k = H\right)  \le  
\calQ \left(
A_{ij}=1,B_{\pi(i)\pi(j)}=1,\forall (i,j)\in E(H) \right)= (ps)^{2|e(H)|}.
$$


In view of \prettyref{eq:gf-pseudoforest}, to further upper bound the second moment, it boils down to bounding the the generating function of the class $\calH_k$ of orbit pseudoforests. This is done in the following theorem in terms of the cycle type of $\sigma$. The proof involves a delicate enumeration of orbit pseudoforests, which constitutes the most crucial part of the analysis. 
We note that if we ignore the orbit structure and treat $\calH_k$ as arbitrary pseudoforests, the resulting bound will be too crude to be useful.

\begin{theorem}[Generating function of orbit pseudoforests]\label{thm:jk}
For any $k\in\naturals$, $\sigma=\pi^{-1}\circ \tpi$, and any $s \in [0,1]$,
\begin{align}
  \sum_{H \in \calH_k} s^{2 e(H)}
    \le \prod_{m=1}^k 
    \left( 1 +  s^{m} n_{m}   \indc{m:\mathrm{even}}   +2 s^{2m}  
    \sum_{\ell =1}^m \ell n_{\ell}+ s^{4m} m n_{2m}  \indc{2m \le k}  \right)^{n_m},\label{eq:jk}
\end{align} 
where $n_m$ is the number of $m$-node orbits in $\sigma=\pi^{-1}\circ\tpi$ for $1\le m\le k$.
\end{theorem}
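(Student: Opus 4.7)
The plan is to perform a careful combinatorial enumeration that (i) classifies the edge orbits of $\sigma$ in $\calO_k$ by the node orbits they involve, (ii) uses the pseudoforest constraint to rule out many configurations, and (iii) allocates each remaining edge orbit to one of its endpoint node orbits so that the weighted sum $\sum_{H \in \calH_k} s^{2e(H)}$ factorizes as a product over node orbits. Each edge orbit in $\calO_k$ is either \emph{internal} to a single $m$-node orbit ($m \le k$) or \emph{between} two distinct node orbits of lengths $\ell \le m \le k$ with $\lcm(\ell, m) \le k$. Within an $m$-node orbit, the rotational orbits indexed by step $d \in \{1, \ldots, \lfloor m/2 \rfloor\}$ with $d \ne m/2$ all have length $m$, while the antipodal choice $d = m/2$ (for even $m$) yields a unique orbit of length $m/2$. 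Between an $\ell$-orbit and an $m$-orbit, there are $\gcd(\ell, m)$ edge orbits each of length $\lcm(\ell, m)$.

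Next, I would analyze the graph structure of each type and apply the pseudoforest constraint. The antipodal internal orbit is a perfect matching on the $m$-orbit, whereas a rotational internal orbit with step $d$ is a disjoint union of $\gcd(d, m)$ cycles of length $m/\gcd(d, m)$ (thus already saturating the $m$-orbit's vertices as a pseudoforest on its own). Between-orbits with $\ell \mid m$ form disjoint stars (a forest), while those with $\ell \nmid m$ necessarily contain multiple cycles and cannot appear in any pseudoforest. The pseudoforest condition further restricts co-occurrence — for instance, once a rotational internal orbit is placed on an $m$-orbit $O$, no additional edge orbit touching $O$ can coexist, and the simultaneous presence of two between-orbits incident to $O$ is severely restricted since each contributes $\lcm(\ell,m) \ge m$ edges.

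I would then allocate each edge orbit in a given $H \in \calH_k$ to a unique node orbit it touches (the larger, with a tie-breaking rule), and enumerate the admissible local configurations charged to each $m$-node orbit. The four terms in the parenthesized factor then correspond to (i) no charge, (ii) a configuration anchored by the antipodal matching of an even $m$-orbit, (iii) a between-orbit with an $\ell$-orbit of length $\ell \le m$, using $s^{2\lcm(\ell, m)} \le s^{2m}$ and $\gcd(\ell, m) \le \ell$ to collapse the count to $2 s^{2m} \sum_{\ell=1}^m \ell n_\ell$, and (iv) a between-orbit with a $2m$-orbit (weight $s^{4m}$, $m n_{2m}$ choices, feasible only when $2m \le k$). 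Taking the product over all $n_m$ $m$-node orbits then yields the claimed bound \eqref{eq:jk}.

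The main obstacle will be the case analysis supporting step (iii): one must verify that these four options exhaustively cover all pseudoforest-compatible local configurations at each node orbit, and that the numerical coefficients — in particular, the factor $2$ in the third term and the separate treatment of the $2m$-orbit in the fourth — correctly absorb the allocation ambiguities without under- or over-counting. Handling co-occurrences that involve two between-orbits, or between-orbits with $\ell = m$ where the tie-breaking in the allocation matters, will require especially careful bookkeeping, and the proof will rely crucially on the coarse bound $s^{2\lcm(\ell, m)} \le s^{2m}$ to merge many distinct configurations into the clean product form.
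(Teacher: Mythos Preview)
Your classification of edge orbits (internal rotational, antipodal, and between-orbit) matches the paper's Types $\sfC$, $\sfS$, $\sfM$, $\sfB$, and your observation that only bridges with $\ell \mid m$ can appear is correct. However, the allocation scheme you propose has a fundamental gap that prevents it from yielding the product bound.

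The issue is this: allocating each edge orbit to ``the larger endpoint, with tie-breaking'' does not give each node orbit at most one charge. Consider three $m$-node orbits $O_1,O_2,O_3$ joined by matchings $O_1\!-\!O_2$ and $O_2\!-\!O_3$ (a path in the backbone graph). Your rule must charge both matchings to some $m$-orbit, and there is no way to avoid some $m$-orbit receiving zero charges while another receives one; more generally a tree on $n_m$ giant nodes has $n_m-1$ matching edges, and a product $\prod_m(1+\cdots)^{n_m}$ can only encode configurations where each giant node makes \emph{one} independent choice. The paper resolves this by encoding each $\Gamma_m$ as a \emph{rooted} pseudoforest: then every non-root giant node has a unique parent edge, so ``each giant node chooses its parent (or chooses to be a root)'' is exactly what the multinomial expansion of the product captures. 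This relies on the enumeration formulas $\binom{n-1}{a}n^a$ for rooted forests and $\binom{n}{a}(2n)^a$ for rooted pseudoforests (the latter is where the factor $2$ in $2s^{2m}\sum_{\ell\le m}\ell n_\ell$ actually originates --- not from allocation ambiguity as you suggest).

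Two further gaps: (a) your claim that ``once a rotational internal orbit is placed on $O$, no additional edge orbit touching $O$ can coexist'' is false --- a Type-$\sfC$ cycle on $O$ together with a Type-$\sfM$ matching $O\!-\!O'$ gives $2m$ edges on $2m$ vertices, a valid unicyclic pseudoforest; the paper handles cycles as \emph{self-loops} in the rooted pseudoforest $\Gamma_m$, absorbed into the $a_t$ count alongside matchings. (b) The forward/backward split (your terms (iii) and (iv)) is not arbitrary: the paper proves structural properties (P5)--(P7) showing that when a tree component of $\Gamma_m$ is incident to two bridges or to a bridge plus a split, the extra bridge must go to an $(m/2)$-orbit whose endpoint lies in a \emph{plain tree} component of $\Gamma_{m/2}$; this is what justifies charging such bridges backward to distinct roots in $\Gamma_{m/2}$ and explains the $s^{4m} m n_{2m}$ term. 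Without establishing these properties, you cannot argue that the backward bridges are disjoint at their short ends. Finally, the factor $n_m$ multiplying $s^m$ in term (ii) comes from the fact that a tree component of $\Gamma_m$ may contain \emph{two} splits (property (P4)), and the paper accounts for the second split's location by a choice among $n_m$ nodes --- this is not explained by ``anchored by the antipodal matching'' alone.
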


Combining \prettyref{eq:beforedecompose}, \prettyref{eq:short_orbit_subgraph}, \prettyref{eq:gf-pseudoforest},  and \prettyref{eq:jk}, 
we get that 
\begin{align}
     &\Expect_{\calQ}\left[\left(\frac{\calP'\left(A,B\right)}{\calQ\left(A,B\right)}\right)^2\right]  \nonumber \\
     & \le \left( 1+ o(1) \right) \left(1+\rho^k\right)^{\frac{n^2}{k}} 
     \Expect_{\pi\ci\widetilde{\pi}}\left[ \prod_{m=1}^k \left( 1 +  s^m n_{m}  \indc{m:\mathrm{even}}  
     + 2 s^{2m} \sum_{\ell \le m} \ell n_{\ell}+ s^{4m} m n_{2m}  \indc{2m \le k}  \right)^{n_m}\right], \label{eq:short_orbit_subgraph_3}
\end{align}
which is further bounded by the next result.

\begin{proposition}\label{prop:jkbound} 
Suppose $k(\log k)^4 = o(n)$.  If $s\le 0.1$, 
            \begin{align}
                \Expect_{\pi\ci\widetilde{\pi}}\left[\prod_{m=1}^k \left( 1 +  s^m n_{m}  \indc{m:\mathrm{even}}  
                 + 2 s^{2m} \sum_{\ell \le m} \ell n_{\ell}+ s^{4m} m n_{2m}  \indc{2m \le k}  \right)^{n_m} \right] = O(1).
            		\label{eq:jkbound1}
            \end{align}
Furthermore, if $s= o\left(1\right)$,
            \begin{align}
            \Expect_{\pi\ci\widetilde{\pi}}\left[ \prod_{\ell=m}^k \left( 1 +  s^{m} n_{m}  \indc{m:\mathrm{even}}  +  2 s^{2m} \sum_{\ell \le  m} \ell n_{\ell}+ s^{4m} m n_{2m}  \indc{2m \le k} \right)^{n_m} \right] = 1+ o(1)  . \label{eq:jkbound2}
            \end{align} 
\end{proposition}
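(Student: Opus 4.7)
The plan is to bound the expectation via a careful combinatorial expansion, avoiding the naive linearization $(1+x)^n \le e^{nx}$, which is catastrophically lossy in our setting: for instance, it would replace the actual contribution of $\sigma = \mathrm{id}$, namely $(1+2s^2 n)^n/n! \asymp (2s^2 e)^n/\sqrt{n}$, by the exponentially larger $\exp(2 s^2 n^2)/n!$, making the bound diverge even under the hypothesis $s \le 0.1$. The key tool we will use instead is the joint factorial-moment identity for the cycle counts of a uniform random permutation:
\begin{align*}
\Expect\Bigl[\prod_m (n_m)_{j_m}\Bigr] = \prod_m \frac{1}{m^{j_m}} \quad \text{whenever } \sum_m m j_m \le n,
\end{align*}
which the assumption $k(\log k)^4 = o(n)$ makes available at all expansion orders that contribute non-negligibly.

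First, I would apply the pointwise inequality $1 + a + b + c \le (1+a)(1+b)(1+c)$ (for non-negative $a, b, c$) to split
\begin{align*}
\prod_{m=1}^k (1 + T_m)^{n_m} \le \prod_{m=1}^k (1 + A_m)^{n_m} \cdot \prod_{m=1}^k (1 + B_m)^{n_m} \cdot \prod_{m=1}^k (1 + C_m)^{n_m},
\end{align*}
with $A_m = s^m n_m \indc{m:\mathrm{even}}$, $B_m = 2 s^{2m}\sum_{\ell \le m}\ell n_\ell$, and $C_m = s^{4m} m\, n_{2m}\indc{2m \le k}$. The $A$-product is the simplest: each factor $(1+s^m n_m)^{n_m}$ depends on a single cycle count $n_m$, and a direct computation (expanding binomially and invoking the factorial-moment identity) gives $1 + O(s^{2m}/m^2)$ per factor, for a total of $1 + O(s^4)$. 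The $C$-product is handled analogously using the coupled pair $(n_m, n_{2m})$, yielding $1 + O(s^4)$.

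The main obstacle is the $B$-product $\prod_m(1 + 2 s^{2m} R_m)^{n_m}$ with $R_m = \sum_{\ell \le m}\ell n_\ell$, which couples all short cycle counts. I would expand $(1+2s^{2m} R_m)^{n_m}$ binomially and $R_m^{r_m}$ multinomially into monomials $\prod_\ell n_\ell^{q_{\ell, m}}$ with $\sum_\ell q_{\ell, m} = r_m$ and $\ell \le m$, yielding a double sum over $(r_m)$ and $(q_{\ell, m})$ weighted by $\prod_m(2 s^{2m})^{r_m}\prod_\ell \ell^{q_{\ell, m}}$ times a joint moment $\Expect[\prod_\ell n_\ell^{\sum_m q_{\ell, m}}]$. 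Converting powers of $n_\ell$ to falling factorials via Stirling numbers and applying the factorial-moment identity bounds each such moment by $\prod_\ell O(1/\ell)^{\sum_m q_{\ell, m}}$, so the whole double sum is a geometric-type series in $s^2$. The dominant contribution $4s^2$ comes from the diagonal $r_1 = 1$, $q_{1, 1} = 1$ (i.e., the $m = \ell = 1$ term $2 s^2 \Expect[n_1^2]$); all other monomials carry an additional factor of $s^2$ or smaller and the sum converges geometrically. For $s \le 0.1$ this gives a total of $1 + O(s^2) = O(1)$, and for $s = o(1)$ every non-trivial term is $o(1)$, yielding $1 + o(1)$.
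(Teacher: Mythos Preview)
There are two gaps. First, the split $(1+a+b+c)\le(1+a)(1+b)(1+c)$ does not decouple the expectation: the $A$-, $B$-, and $C$-products are all functions of the same cycle counts $(n_m)$, so you cannot bound their expectations separately as your write-up does (``a total of $1+O(s^4)$'' for the $A$-product alone, etc.). A H\"older step would separate them but cube the exponents and move the admissible threshold in $s$; a single joint expansion is possible but then the three pieces are no longer independent analyses. (Incidentally your $A$-factor estimate is already off by one order: the $r=1$ term in $\Expect[(1+s^m n_m)^{n_m}]$ is $s^m\Expect[n_m^2]=s^m(1/m+1/m^2)\sim s^m/m$, not $O(s^{2m}/m^2)$, so the $A$-total is $1+O(s^2)$.) Second, and more seriously, your moment bound ``$\prod_\ell O(1/\ell)^{\sum_m q_{\ell,m}}$'' is not what the factorial-moment identity delivers. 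The identity controls \emph{falling} factorials; the Stirling conversion $n_\ell^{q}=\sum_j S(q,j)(n_\ell)_j$ introduces coefficients that for $\ell=1$ sum to the Bell number $B_q\sim(q/\log q)^q$, not $O(1)^q$, and the mixed terms $(n_\ell)_{r_\ell}n_\ell^{q_\ell}$ arising from the interaction of the binomial and multinomial layers are worse still. The $1/r_m!$ factors do partially compensate---for the one-dimensional $m=\ell=1$ slice the resulting series has ratio $2es^2<1$---but establishing convergence of the full multi-index sum, where each $n_\ell$ enters both its own binomial $(n_\ell)_{r_\ell}$ and every $R_m$ with $m\ge\ell$, is precisely the hard part and is not addressed by your sketch.

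The paper's route is structurally different. It first collapses $T_m$ to the single term $1.04\,s^m\sum_{\ell\le 2m}n_\ell\indc{\ell\le k}$, then passes to independent Poissons $Z_\ell\sim\Pois(1/\ell)$ via total-variation approximation on the bulk event $\{\sum n_\ell<\eta\harmonic_k\}$ together with the domination $\Expect[g(n_1,\ldots,n_k)]\le e^{\harmonic_k}\Expect[g(Z_1,\ldots,Z_k)]$ on the complementary tail. The Poisson expectation is then controlled by a recursive peeling: partition $[K]$ into blocks $(m_{\ell-1},m_\ell]$ with $m_\ell$ chosen (doubly exponentially) so that $s^{m_{\ell-1}}\log m_\ell$ decays geometrically, and at each step integrate out the last block using a one-variable lemma for $\Expect[(a+bX)^{X+d}e^{\alpha X}]$ with $X$ Poisson. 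Two scalar sequences $(\alpha_\ell,\beta_\ell)$ track the accumulated error backward through the recursion; they remain bounded exactly when $s\le 0.1$, which is where that specific constant originates.
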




The proof of \prettyref{prop:jkbound} is involved and deferred to 
\prettyref{sec:jkbound}. To provide some concrete idea, the following simple calculation shows that $s=o(1)$ is necessary for \prettyref{eq:jkbound2} to hold. Indeed, consider $k=1$ for which the LHS reduces to
$
  \expect{\left( 1+ 2s^2n_1\right)^{n_1} }. 
$
By Poisson approximation (see \prettyref{app:perm}), replacing $n_1$ by $\Pois(1)$ yields 
$$
\expect{\left( 1+2 s^2 n_1\right)^{n_1} }
\approx e^{-1} \sum_{a=0}^\infty \left( 1+ 2s^2 a\right)^{a} \frac{1}{a!} 
\ge  e^{-1} \sum_{a=0}^\infty \left( 1+ 2s^2 \right)^{a} \frac{1}{a!} 
=e^{2s^2},
$$
which is $1+o(1)$ if and only if $s=o(1)$. 
To evaluate the full expectation in \prettyref{eq:jkbound2}, note that even if we use Poisson approximation to replace $n_m$'s by independent Poissons, the terms inside the product over $[k]$ are still dependent. To this end, we carefully partition the product into disjoint parts, and recursively peeling off the expectation backwards.

We are now ready to complete the proof of \prettyref{thm:er} in the sparse case.

\begin{proof}[Proof of \prettyref{thm:er}: Impossibility Result in Sparse Regime]
Let $k =3 \log n$. 
If $s\le \frac{1}{2}$,
then $\frac{n^2 s^k}{k} = o(1)$ and thus 
$$\left(1+\rho^{k}\right)^{\frac{n^2}{k}} \le \exp\left(\frac{n^2 \rho^{k}}{k}\right) \le  \exp\left(\frac{n^2 s^{k}}{k}\right) = 1+o(1).$$
Note that $ k (\log k)^4 = o(n)$. 
Combining \prettyref{eq:short_orbit_subgraph_3} with  \prettyref{eq:jkbound1} and \prettyref{eq:jkbound2} yields that 
 $ \Expect_{\calQ}[(\frac{\calP'(A,B)}{\calQ(A,B)})^2] =O(1)$ for $s\le 0.1$
 and $\Expect_{\calQ}[(\frac{\calP'(A,B)}{\calQ(A,B)})^2]=1+o(1)$ for $s=o(1)$, which completes the proof
 in view of \prettyref{eq:cond_second_moment_TV_impossibilit_strong} and \prettyref{eq:cond_second_moment_TV_impossibilit_weak}. 
\end{proof}

The remainder of this section is organized as follows. To prepare for the proof of \prettyref{thm:jk}, we study the graph structure and the classification of edge orbits in \prettyref{sec:classification_edge_orbits}. 
An equivalent representation of orbit graphs as backbone graphs is given in \prettyref{sec:backbone} to aid the enumeration argument. 
As a warm-up, we first enumerate orbit forests (orbit graphs that are forests) and bound their generating function in \prettyref{sec:enumeration_forst}. 
The more challenging case of orbit pseudoforests is tackled in \prettyref{sec:enumeration_pseudoforst}, completing the proof of \prettyref{thm:jk}. Sections \ref{sec:con1}--\ref{sec:jkbound} contain the proofs of Propositions \ref{prop:con1}--\ref{prop:jkbound}.

\subsection{Classification of edge orbits} \label{sec:classification_edge_orbits}

\begin{table}[!ht]
\centering
\begin{tabular}{ >{\centering\arraybackslash}m{1in}  | >{\centering\arraybackslash}m{2in}  |  >{\centering\arraybackslash}m{1in} }
Type & Edge orbit & Orbit graph \\
\hline
\multirow{2}{*}{\raisebox{-0.2in}{$\sfM$} } 
& (13,24) & \tikz[scale=1,font=\scriptsize]{
	\draw (0,0) node (a) [vertexdot] {};
	\draw (0,-0.5) node (b) [vertexdot] {};
	\draw (1,0) node (c) [vertexdot] {};
	\draw (1,-0.5) node (d) [vertexdot] {};
	\node at (a) [left] {$1$};
	\node at (b) [left] {$2$};
	\node at (c) [right] {$3$};
	\node at (d) [right] {$4$};
	\draw[Medge] (a)--(c)(b)--(d);
	}  \\ 
	\cline{2-3}
 & (14,23) & \tikz[scale=1,font=\scriptsize]{
	\draw (0,0) node (a) [vertexdot] {};
	\draw (0,-0.5) node (b) [vertexdot] {};
	\draw (1,0) node (c) [vertexdot] {};
	\draw (1,-0.5) node (d) [vertexdot] {};
	\node at (a) [left] {$1$};
	\node at (b) [left] {$2$};
	\node at (c) [right] {$3$};
	\node at (d) [right] {$4$};
	\draw[Medge] (a)--(d) (b)--(c);
	}  \\ 
\hline
\multirow{4}{*}{\raisebox{-1.5in}{$\sfB$}} 
& (15,26,17,28) & \tikz[scale=1,font=\scriptsize]{
	\draw (0,0.5) node (a) [vertexdot] {} ;
	\draw (0,0) node (b) [vertexdot] {};
	\draw (1,1) node (c) [vertexdot] {};
	\draw (1,0.5) node (d) [vertexdot] {};
    \draw (1,0) node (e) [vertexdot] {};
	\draw (1,-0.5) node (f) [vertexdot] {};
	\node at (a) [left] {$1$};
	\node at (b) [left] {$2$};
	\node at (c) [right] {$5$};
	\node at (d) [right] {$6$};
	\node at (e) [right] {$7$};
	\node at (f) [right] {$8$};
	\draw[Bedge] (a)--(c) (a)--(e) (b)--(d) (b)--(f);
	} \\ 
\cline{2-3}
& (16,27,18,25) & \tikz[scale=1,font=\scriptsize]{
	\draw (0,0.5) node (a) [vertexdot] {} ;
	\draw (0,0) node (b) [vertexdot] {};
	\draw (1,1) node (c) [vertexdot] {};
	\draw (1,0.5) node (d) [vertexdot] {};
    \draw (1,0) node (e) [vertexdot] {};
	\draw (1,-0.5) node (f) [vertexdot] {};
	\node at (a) [left] {$1$};
	\node at (b) [left] {$2$};
	\node at (c) [right] {$5$};
	\node at (d) [right] {$6$};
	\node at (e) [right] {$7$};
	\node at (f) [right] {$8$};
	\draw[Bedge] (a)--(d) (a)--(f) (b)--(c) (b)--(e);
	} \\ 
\cline{2-3}
	& (35,46,37,48) & \tikz[scale=1,font=\scriptsize]{
	\draw (0,0.5) node (a) [vertexdot] {} ;
	\draw (0,0) node (b) [vertexdot] {};
	\draw (1,1) node (c) [vertexdot] {};
	\draw (1,0.5) node (d) [vertexdot] {};
    \draw (1,0) node (e) [vertexdot] {};
	\draw (1,-0.5) node (f) [vertexdot] {};
	\node at (a) [left] {$3$};
	\node at (b) [left] {$4$};
	\node at (c) [right] {$5$};
	\node at (d) [right] {$6$};
	\node at (e) [right] {$7$};
	\node at (f) [right] {$8$};
	\draw[Bedge] (a)--(c) (a)--(e) (b)--(d) (b)--(f);
	} \\ 
\cline{2-3}
& (36,47,38,45) & \tikz[scale=1,font=\scriptsize]{
	\draw (0,0.5) node (a) [vertexdot] {} ;
	\draw (0,0) node (b) [vertexdot] {};
	\draw (1,1) node (c) [vertexdot] {};
	\draw (1,0.5) node (d) [vertexdot] {};
    \draw (1,0) node (e) [vertexdot] {};
	\draw (1,-0.5) node (f) [vertexdot] {};
	\node at (a) [left] {$3$};
	\node at (b) [left] {$4$};
	\node at (c) [right] {$5$};
	\node at (d) [right] {$6$};
	\node at (e) [right] {$7$};
	\node at (f) [right] {$8$};
	\draw[Bedge] (a)--(d) (a)--(f) (b)--(c) (b)--(e);
	} \\ 
	\hline
	$\sfC$
& (56,67,78,85) & \tikz[scale=1,font=\scriptsize]{
	\draw (0,1) node (c) [vertexdot] {};
	\draw (0,0.5) node (d) [vertexdot] {};
    \draw (0,0) node (e) [vertexdot] {};
	\draw (0,-0.5) node (f) [vertexdot] {};
	\node at (c) [left] {$5$};
	\node at (d) [left] {$6$};
	\node at (e) [left] {$7$};
	\node at (f) [left] {$8$};
	\draw[Cedge] (c)--(d) (d)--(e) (e)--(f);
	\draw (c) edge[blue, bend left, thick] (f);
	}  \\ 
\hline
\multirow{3}{*}{\raisebox{-0.5in}{$\sfS$}} 
& (12) & \tikz[scale=1,font=\scriptsize]{
	\draw (0,0.5) node (a) [vertexdot] {};
	\draw (0,0) node (b) [vertexdot] {};
	\node at (a) [left] {$1$};
	\node at (b) [left] {$2$};
	\draw[Sedge] (a)--(b);
	}  \\ 
	\cline{2-3}
& (34) & \tikz[scale=1,font=\scriptsize]{
	\draw (0,0.5) node (a) [vertexdot] {};
	\draw (0,0) node (b) [vertexdot] {};
	\node at (a) [left] {$3$};
	\node at (b) [left] {$4$};
	\draw[Sedge] (a)--(b);
	}  \\ 
	\cline{2-3}
 & (57,68) & \tikz[scale=1,font=\scriptsize]{
	\draw (0,1) node (c) [vertexdot] {};
	\draw (0,0.5) node (d) [vertexdot] {};
    \draw (0,0) node (e) [vertexdot] {};
	\draw (0,-0.5) node (f) [vertexdot] {};
	\node at (c) [left] {$5$};
	\node at (d) [left] {$6$};
	\node at (e) [left] {$7$};
	\node at (f) [left] {$8$};
	\draw (c) edge[bend left, thick] (e) (d) edge[bend left, thick] (f);
	}\\
	\hline
\end{tabular}
\caption{Edge orbits corresponding to the node permutation $\sigma=(12)(34)(5678)$. When representing an edge orbit in cycle notation, each edge $(i,j)$ is abbreviated as $ij$. As a convention, nodes in each node orbit are vertically aligned and arranged in the order of the permutation $\sigma$. For edge orbits, type $\sfM$ are in green, type $\sfB$ in red, type $\sfC$ in blue, and type $\sfS$ in black. 
}

\label{tab:orbits}
\end{table}

To prove \prettyref{thm:jk}, we are interested in orbit graphs consisting of short edge orbits, and the main task lies in enumerating those that are pseudoforests.
To this end, we need to understand the graph structure of edge orbits.

Throughout this subsection, fix a node permutation $\sigma$.
For a given edge $(i,j)$, its edge orbit can be viewed a graph with vertex set $O_i\cup O_j$ and edge set $\orbit_{ij}$. 
Let $|O_i|=\ell$ and $|O_j|=m$.
Each edge orbit can be classified into the following four categories (see \prettyref{tab:orbits} for a concrete example). 
\begin{description}
    \item[Type $\sfM$ (Matching):] $i$ and $j$ belong to different node orbits of the same length.
		In this case, $|\orbit_{ij}|= m$ and $O_{ij}$ is a perfect matching.
		We call such $O_{ij}$ an $\sfM_m$ edge orbit (or a \emph{matching}).
		Furthermore, for two distinct node orbits $O$ and $O'$ of length $m$, the total number of possible $\sfM_m$ edge orbit is $m$.

    \item[Type $\sfB$ (Bridge):] $i$ and $j$ belong to different node orbits of different lengths.
		Without loss of generality, assume that the orbit of $i$ is shorter than that of $j$, i.e. $\ell<=m$. 
		In this case, 
		let $M=\lcm(\ell,m)$. Then $|\orbit_{ij}|= M$ and $O_{ij}$ consists of $\frac{\ell m}{M}$ vertex-disjoint copies of the complete bipartite graphs $K_{M/\ell,M/m}$.
		We call such edge orbit a $\sfB_{m,\ell}$ edge orbit (or a \emph{bridge}).
		Furthermore, for two node orbits $O$ and $O'$ with $|O|=\ell<|O'|=m$, the total number of possible bridges is $\frac{\ell m}{M}$.

		Of special interest is the case where $\ell$ is a divisor of $m$ and the orbit is $\ell K_{1,\frac{m}{\ell}}$ (i.e. $\ell$ copies of $\frac{m}{\ell}$-stars).		
		These are the only bridges that are cycle-free; otherwise the bridge contains a component with \emph{at least two} cycles. 
		This observation is useful for the enumeration argument in Sections \ref{sec:enumeration_forst} and	\ref{sec:enumeration_pseudoforst} under constraints on the number of cycles.

		
		\item[Type $\sfC$ (Cycle):] $i$ and $j$ belong to the same node orbit of length $m$ and $j \neq \sigma^{m/2}(i)$. 
		In this case, $|\orbit_{ij}|= m$ and $O_{ij}$ is an $m$-cycle.
		We call such $O_{ij}$ a $\sfC_m$ edge orbit (or a \emph{cycle}), and there are a total number $\floor{\frac{m-1}{2}}{}$
		of them for the same node orbit.

		

    \item[Type $\sfS$ (Split):] $i$ and $j$ belong to the same node orbit (of even length $m$) and $j = \sigma^{m/2}(i)$.		
		In this case, $|\orbit_{ij}|= m/2$ and $O_{ij}$ is a perfect matching.
		We call such $O_{ij}$ an $\sfS_m$ edge orbit (or a \emph{split}).
		Clearly, for each node orbit of even length, there is a unique way for it to split into an $\sfS_m$ edge orbit.
				

\end{description}
In summary, matchings and bridges are edge orbits formed by two distinct node orbits, which are bipartite graphs with vertex sets $O_i$ and $O_j$.
Cycles and splits are edge orbits formed by a single node orbit $O_i$, which can either form a full cycle or split into a perfect matching.


\subsection{Orbit graph and backbone graph}
	\label{sec:backbone}

Every orbit graph $H$ can be equivalently and succinctly represented as a 
\emph{backbone} graph $\Gamma$ defined as follows.

\begin{definition}[Backbone graph]
\label{def:backbone}
Given an orbit graph $H$, its \emph{backbone} graph is an undirected labeled multigraph, 
whose nodes and edges (referred to as \emph{giant nodes} and \emph{giant edges}) correspond to node orbits and edge orbits in $H$, respectively.
Each giant node carries a binary label (represented as shaded or non-shaded) indicating whether the node orbit forms a Type $\sfS$ edge orbit (split) or not.
Each giant edge carries a label (an integer) encoding the specific realization of the edge orbit.
Specially,
\begin{itemize}
	\item A Type $\sfS$ edge orbit (split) is represented by a shaded giant node. 
	\item A Type $\sfC_m$ edge orbit (cycle) is represented by a self-loop,
	whose edge label takes values in $\left[\floor{\frac{m-1}{2}}{}\right]$. 
\item A Type $\sfM_m$ edge orbit (matching) is represented by a giant edge between two $m$-node orbits, with edge label taking values in  $[m]$.
\item A Type $\sfB_{m,\ell}$ edge orbit (bridge) is represented by a giant edge between a $\ell$-node orbit and a $m$-node orbit ($\ell<m$), with edge label taking values in  $\left[\frac{\ell m}{\lcm(\ell,m)}\right]$.
\end{itemize}

\end{definition}

See \prettyref{fig:example} for an example of an orbit graph and its corresponding  backbone graph.
As a convention, for backbone graph, the labeled giant edges representing Type $\sfM$, Type $\sfB$, and Type $\sfC$ edge orbits are colored green, red, and blue, respectively. 
Each shaded giant node represents a Type $\sfS$ edge orbit. For convenience, the number inside each giant node represents the length of its corresponding node orbit. 

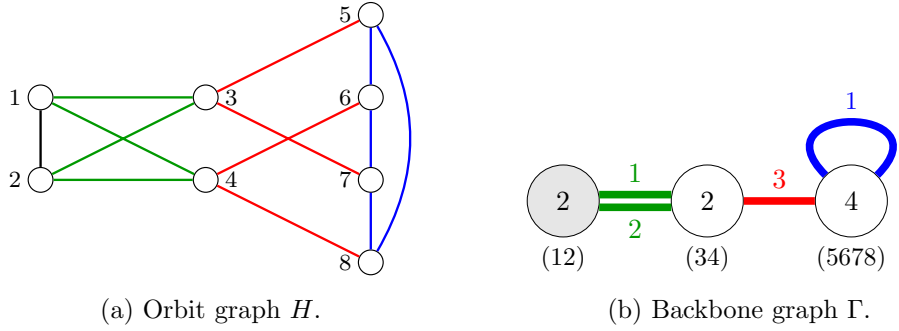
\begin{figure}[H]
\centering
\begin{tabular}{C{.4\textwidth}C{.4\textwidth}}
\subcaptionbox{Orbit graph $H$.}{
\resizebox{0.35\textwidth}{!}{%
\centering
\begin{tikzpicture}[scale=1,font=\scriptsize]
    \draw (0,1) node (a) [nodedot] {} ;
	\draw (0,0) node (b) [nodedot] {} ;
	\draw (2,1) node (c) [nodedot] {} ;
	\draw (2,0) node (d) [nodedot] {} ;
	\draw (4,2) node (e) [nodedot] {} ;
	\draw (4,1) node (f) [nodedot] {} ;
    \draw (4,0) node (g) [nodedot] {} ;
	\draw (4,-1) node (h) [nodedot] {} ;
	\node at (a) [left=0.1] {$1$};
	\node at (b) [left=0.1] {$2$};
	\node at (c) [right=0.1] {$3$};
	\node at (d) [right=0.1] {$4$};
	\node at (e) [left=0.1]{$5$};
	\node at (f) [left=0.1] {$6$};
	\node at (g) [left=0.1] {$7$};
	\node at (h) [left=0.1] {$8$};
	\draw[Medge] (a)--(c) (b)--(d) (a)--(d) (b)--(c);
	\draw[Bedge]  (c)--(e) (c)--(g) (d)--(f) (d)--(h);
	\draw[Cedge] (e)--(f) (f)--(g) (g)--(h);
	\draw (e) edge[blue, bend left, thick] (h);
	\draw[Sedge] (a)--(b);
\end{tikzpicture}
}}&
\subcaptionbox{Backbone graph $\Gamma$.}{
    \resizebox{0.35\textwidth}{!}{%
    \centering
    \begin{tikzpicture}
		\draw (0,0) node (a) [shadedgiantnode] {$2$};
	\draw (2,0) node (b)[unshadedgiantnode]{$2$};
	\draw (4,0) node (c)[unshadedgiantnode] {$4$};
	\draw (b.170)  edge[auto=right, green!60!black, line width=3.0pt] node {$1$} (a.10);
	\draw (a.-10)  edge[auto=right, green!60!black, line width=3.0pt] node {$2$} (b.190);
	\draw (c) edge[auto=right, red , line width=3.0pt] node {$3$}  (b); 
	\path (c) edge[my loop, blue, line width=3.0pt] node[above]  {$1$} (c);
	\draw(0,-0.8) node (c) {\small $(12)$};
	\draw(2,-0.8) node (c) {\small $(34)$};
	\draw(4,-0.8) node (c) {\small $(5678)$};
    \end{tikzpicture}
}}
\end{tabular}
\caption{Example of an orbit graph and its corresponding backbone graph for $\sigma = (12)(34)(5678)$. 
The labels of giant edges are determined based on the enumeration of edge orbits in \prettyref{tab:orbits}. 
For instance, the two green giant edges correspond to the two Type $\sfM$ (perfect matchings) between node orbits $(12)$ and $(34)$, and the red giant edge corresponds to the Type $\sfB$ edge orbits (bridge) between node orbits $(34)$ and $(5678)$. 
}
\label{fig:example}
\end{figure}

Recall that $\calH_k$ denotes the collection of orbit pseudoforests consisting of edge orbits of length at most $k$ formed by node orbits of size at most $k$. 
To enumerate $H \in \calH_k$, it is equivalent to enumerating the corresponding backbone graph $\Gamma$. 
To facilitate the enumeration, we introduce the following definitions:
\begin{itemize}
	\item Let $S_m$ denote the set of giant nodes corresponding to $m$-node orbits. 

\item 
Let  $\Gamma_m=\Gamma[S_m]$ denote the subgraph of $\Gamma$ induced by node set $S_m$ for $1 \le m \le k$.
Let $\Gamma_{ m, \ell }= \Gamma[ S_m, S_\ell ]$ denote the (bipartite) subgraph of $\Gamma$ induced by edges between 
$S_m$ and $S_\ell$, for $1 \le \ell <m \le k$. 
Each giant edge in $\Gamma_{m,\ell} $ corresponds to a $\sfB_{m,\ell}$ edge orbit (bridge). 
\item A  connected component of $\Gamma_m$ is called  \emph{plain} if it contains no split and is not incident to any bridge in $\cup_{\ell<m}\Gamma_{m,\ell}$.

\end{itemize}

Following \cite[page 112]{janson2011random}, 
we define the \emph{excess} of a graph $G$, denoted by $\ex(G)$, as its number of edges minus its number of nodes.
Given a connected component $C$ in $\Gamma_m$, let $H_C$ denote the orbit graph consisting of edge orbits (including splits, matchings, and cycles) 
in $C$, as well as bridges in $\cup_{\ell<m}\Gamma_{m,\ell}$ that are incident to $C$.
The following two operations can be recursively applied to $C$ to increase $\ex(H_C)$:
\begin{enumerate}[label=(O\arabic*)]
\item \label{O:split} Adding one
split in $C$  increases $\ex(H_C)$ by $m/2$;
\item \label{O:1bridge} Adding one $\sfB_{m,\ell}$ bridge ($\ell<m$) to $C$ increases $\ex(H_C)$ by at least $\lcm(\ell,m)-\ell$.
\end{enumerate}

In addition, we need the following fact about the excess of an orbit graph:
\begin{lemma}\label{lmm:plain}
For any connected component $C$ in $\Gamma_m$, $\ex(H_C)\ge  -m$, where the equality holds if and only if $C$ is a plain tree component in $\Gamma_m$.
\end{lemma}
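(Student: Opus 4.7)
The plan is to decompose $\ex(H_C)$ into contributions from the four types of edge orbits (matching $\sfM_m$, cycle $\sfC_m$, split $\sfS_m$, and bridge $\sfB_{m,\ell}$) and to bound each contribution separately, showing that each is minimised precisely when $C$ is a plain tree. Let $t$ denote the number of giant nodes in $C$; let $e_M$, $e_C$, $e_S$ count the matchings, cycles (self-loops in $\Gamma_m$), and splits in $C$; and for each $\ell<m$ let $b_\ell$ be the number of bridges in $\Gamma_{m,\ell}$ incident to $C$ and $v_\ell$ the number of distinct giant nodes in $S_\ell$ that host those bridges. Then
\[
v(H_C) \;=\; tm + \sum_{\ell<m} v_\ell\,\ell, \qquad e(H_C) \;=\; (e_M+e_C)\,m + \tfrac{m}{2}\, e_S + \sum_{\ell<m} b_\ell\, \lcm(\ell,m),
\]
which gives
\[
\ex(H_C) \;=\; (e_M+e_C-t)\,m \;+\; \tfrac{m}{2}\, e_S \;+\; \sum_{\ell<m}\bigl( b_\ell\, \lcm(\ell,m) - v_\ell\, \ell \bigr).
\]

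I would then bound each of the three summands. For the first, connectedness of $C$ forces $e_M\ge t-1$ (self-loops do not aid connectivity), so $(e_M+e_C-t)m\ge -m$, with equality iff $e_C=0$, $e_M=t-1$, and $C$ carries no multi-edges, i.e., iff $C$ is a tree in $\Gamma_m$. The second summand is trivially non-negative and vanishes iff $e_S=0$. For the bridge summand I use the observation that whenever $v_\ell\ge 1$, each of the $v_\ell$ giant neighbours in $S_\ell$ carries at least one bridge to $C$, so $b_\ell\ge v_\ell$, and
\[
b_\ell\, \lcm(\ell,m) - v_\ell\, \ell \;\ge\; v_\ell\bigl(\lcm(\ell,m)-\ell\bigr) \;\ge\; v_\ell(m-\ell) \;>\; 0,
\]
where I used $\lcm(\ell,m)\ge m>\ell$; the term vanishes iff $b_\ell=v_\ell=0$.

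Summing the three bounds yields $\ex(H_C)\ge -m$, and equality forces $C$ to be a tree in $\Gamma_m$ that carries no split and no incident bridge, which is exactly the definition of a plain tree component. The only point that requires a little care is the bridge bound: a naive per-edge argument loses a factor because several bridges can share the same endpoint in $S_\ell$, so one must group bridges by their $S_\ell$-endpoint and invoke $b_\ell\ge v_\ell$ as above. Everything else is elementary excess accounting for connected multigraphs.
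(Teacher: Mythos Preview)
Your proof is correct and follows essentially the same logic as the paper's. The paper computes $\ex(H_C)=am-bm=-m$ for a plain tree component (where $a$ and $b$ are the numbers of giant edges and giant nodes in $C$) and then invokes the pre-stated operations (O1), (O2), and the self-loop increment to argue that any deviation from the plain-tree configuration strictly increases the excess; your version writes out the same contributions explicitly and bounds them term by term. The only cosmetic difference is that where the paper phrases the bridge contribution incrementally (``adding one bridge increases $\ex(H_C)$ by at least $\lcm(\ell,m)-\ell$''), you aggregate bridges by their $S_\ell$-endpoint via $b_\ell\ge v_\ell$; these are equivalent bookkeeping devices for the same inequality.
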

\begin{proof}
Given a connected component $C$ in $\Gamma_m$, let $a$ and $b$ denote the total number of giant edges and giant nodes in $C$, respectively. If $C$ is a plain tree component, we have $a +1 = b$. Since each giant edge in $\Gamma_m$ represents an $m$-edge orbit, and each giant node represent an $m$-node orbit, we have $\ex(H_C) = am - bm =-m$. By \ref{O:split}, \ref{O:1bridge}, and 
the fact that adding one self-loop in $C$ increases $\ex(H_C)$ by $m$, we have $\ex(H_C) \ge  -m$, 
where the equality holds if and only if $C$ does not contain any split or self-loop and is not incident to any bridge in $\cup_{\ell< m}\sfB_{m,\ell}$
that is, $C$ is a plain tree component in $\Gamma_m$. 
\end{proof}
As we will see next, the pseudoforest (forest) constraint of $H$ restricts the possible configurations of $\Gamma_m$ and forbids certain operations on its components (which would otherwise generate too many cycles).

\subsection{Warm-up: Generating function of orbit forests}\label{sec:enumeration_forst}

Fix  $\sigma =\pi^{-1}\circ\tpi$ and recall that $n_m$ denotes the number of $m$-node orbits in $\sigma$.
Our enumeration scheme crucially exploits the classification of edge orbits and orbit graphs in \prettyref{sec:classification_edge_orbits} and the representation of orbit graphs as backbone graphs introduced in \prettyref{sec:backbone}. 
As a warm-up, in this section we bound the generating function of orbit forests, which is much simpler than orbit pseudoforests. 
Restricting the summation to the set $\calF_k$ of orbit forests, a strict subset of $\calH_k$, we show the following improved version of \prettyref{eq:jk}:
\begin{align}
\sum_{H \in \calF_k } s^{2 e(H)} 
\le &\prod_{1\le m\le k} \left( 1+  s^m \indc{m:\mathrm{even}} + s^{2m} \sum_{\ell \le m} \ell n_\ell \right)^{n_m}.
\label{eq:jk-forest}
\end{align}

When the orbit graph $H$ is a forest, its corresponding backbone graph $\Gamma$ must satisfy the following four conditions:
\begin{enumerate}[label=(T\arabic*)]
 \item \label{pt:forest} For each $1 \le m \le k$, $\Gamma_m$ is a forest with simple edges (of multiplicity $1$);
		\item \label{pt:forest-divisor} For each $1 \le \ell < m \le k$, $\Gamma_{m,\ell}$ is empty unless $\ell$ is a divisor of $m$; 
   
		\item \label{pt:forest-noC} There is no  self-loop;

     \item \label{pt:forest-sb} For each $1 \le m \le k$,  each component of $\Gamma_m$ either contains at most $1$  split
     or 
     is incident to at most $1$ bridge in $\cup_{\ell<m} \Gamma_{ m,\ell}$, 
      but not both.

\end{enumerate}
Otherwise, $H$ contains at least one cycle. Indeed, \ref{pt:forest}-\ref{pt:forest-noC} can be readily verified based on the classification of edge orbits and orbit graphs in \prettyref{sec:classification_edge_orbits}.
Suppose the condition in \ref{pt:forest-sb} does not hold.  Then by \ref{O:split}, \ref{O:1bridge} and \prettyref{lmm:plain}, there exists a component $C$ in $\Gamma_m$ such that $\ex(H_C)\ge  0$, contradicting $H$ being a forest.
See~\prettyref{fig:tree_forbidden_pattern} for an illustration of forbidden patterns that
violate   \ref{pt:forest-sb} for $m=4$ and $\ell=2$. 
\begin{figure}[H]
\centering
\begin{tabular}{C{.3\textwidth}C{.3\textwidth}C{.3\textwidth}}
\subcaptionbox{A component in $\Gamma_4$ is incident to $2$ bridges in $\Gamma_{4,2}$.
}%
[0.9\linewidth]{
\resizebox{0.20\textwidth}{!}{%
\centering
\begin{tikzpicture}[scale=1,font=\large]
    \draw (0,1) node (a) [nodedot] {} ;
	\draw (0,0) node (b) [nodedot] {} ;
	\draw (2,2) node (c) [nodedot] {} ;
	\draw (2,1) node (d) [nodedot] {} ;
    \draw (2,0) node (e) [nodedot] {} ;
	\draw (2,-1) node (f) [nodedot] {} ;
	\draw (4,1) node (g) [nodedot] {} ;
	\draw (4,0) node (h) [nodedot] {} ;
	\draw[Bedge]  (a)--(c) (b)--(d) (a)--(e) (b)--(f) 
	(g)--(c) (h)--(d) (g)--(e) (h)--(f) ;
	\draw (0,-2) node (A)[unshadedgiantnode] {$2$};
	\draw (2,-2) node (B)[unshadedgiantnode] {$4$};
	\draw (4,-2) node (C)[unshadedgiantnode] {$2$};
	\draw[Bgiantedge]  (A)--(B) (C)--(B);
    \end{tikzpicture}
}}&
\subcaptionbox{A component in $\Gamma_4$ contains $2$ splits.
}[0.9\linewidth]{
\resizebox{0.20\textwidth}{!}{%
\centering
\begin{tikzpicture}[scale=1,font=\large]
	\draw (0,2) node (a) [nodedot] {} ;
	\draw (0,1) node (b) [nodedot] {} ;
    \draw (0,0) node (c) [nodedot] {} ;
	\draw (0,-1) node (d) [nodedot] {} ;
	\draw (2,2) node (e) [nodedot] {} ;
	\draw (2,1) node (f) [nodedot] {} ;
    \draw (2,0) node (g) [nodedot] {} ;
	\draw (2,-1) node (h) [nodedot] {} ;
	\draw (4,2) node (i) [nodedot] {} ;
	\draw (4,1) node (j) [nodedot] {} ;
    \draw (4,0) node (k) [nodedot] {} ;
	\draw (4,-1) node (l) [nodedot] {} ;
	\draw[Medge]  (a)--(e) (b)--(f) (c)--(g) (d)--(h) 
	(e)--(i) (f)--(j) (g)--(k) (h)--(l) ;
	\draw (a) edge[thick, bend right] (c) (b) edge[thick, bend right] (d);
	\draw (i) edge[thick, bend left] (k) (j) edge[thick, bend left] (l);
	\draw (0,-2) node (A)[shadedgiantnode] {$4$};
	\draw (2,-2) node (B)[unshadedgiantnode] {$4$};
	\draw (4,-2) node (C)[shadedgiantnode] {$4$};
	\draw[Mgiantedge]  (A)--(B) (C)--(B);
    \end{tikzpicture}
}}&
\subcaptionbox{A component in $\Gamma_4$ contains $1$ split and is incident to $1$ bridge in $\Gamma_{4,2}$.
}[.9\linewidth]{
\resizebox{0.20\textwidth}{!}{%
\centering
\begin{tikzpicture}[scale=1,font=\large]
    \draw (0,1) node (a) [nodedot] {} ;
	\draw (0,0) node (b) [nodedot] {} ;
	\draw (2,2) node (c) [nodedot] {} ;
	\draw (2,1) node (d) [nodedot] {} ;
    \draw (2,0) node (e) [nodedot] {} ;
	\draw (2,-1) node (f) [nodedot] {} ;
	\draw (4,2) node (g) [nodedot] {} ;
	\draw (4,1) node (h) [nodedot] {} ;
    \draw (4,0) node (i) [nodedot] {} ;
	\draw (4,-1) node (j) [nodedot] {} ;
	\draw[Bedge]  (a)--(c) (b)--(d) (a)--(e) (b)--(f);
	\draw[Medge]  (c)--(g) (d)--(h) (e)--(i) (f)--(j);
	 \draw (g) edge[thick, bend left] (i) (h) edge[thick, bend left] (j);
	\draw (0,-2) node (A)[unshadedgiantnode] {$2$};
	\draw (2,-2) node (B)[unshadedgiantnode] {$4$};
	\draw (4,-2) node (C)[shadedgiantnode] {$4$};
	\draw[Bgiantedge]  (A)--(B);
	\draw[Mgiantedge]  (C)--(B);
    \end{tikzpicture}
}}%
\end{tabular}
\caption{Examples of backbone graphs violating  \ref{pt:forest-sb}, whose corresponding orbit graphs contain cycles.}
\label{fig:tree_forbidden_pattern}
\end{figure}
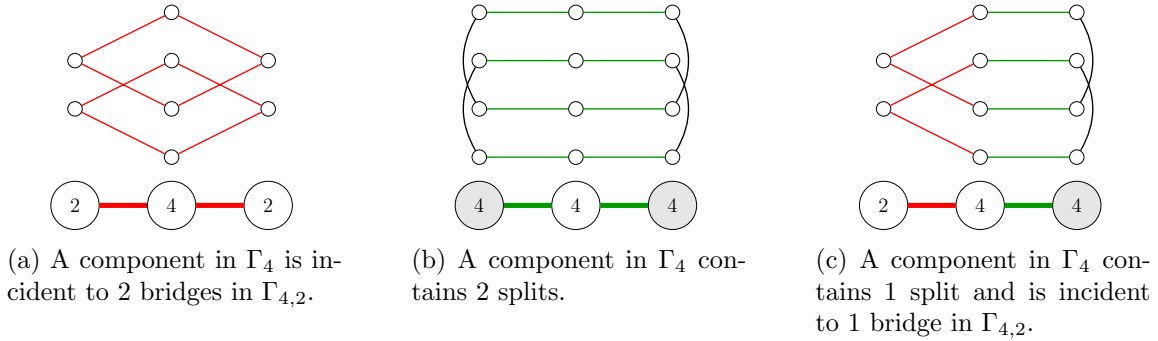

Next, we describe an algorithm for generating all possible backbone graphs $\Gamma$ that satisfy the aforementioned conditions \ref{pt:forest}--\ref{pt:forest-sb}.
Given a sequence of integers $(\mathbf{a}, \mathbf{b}, \mathbf{c})=(a_m, b_m, c_m)_{1 \le m \le k}$ with $b_m=0$ for odd $m$, we construct $\Gamma$ as follows: 
\begin{algorithm}[H]
\caption{Forest enumeration algorithm} \label{alg:f}
\begin{algorithmic}[1]
\For{each $t=1,\ldots,k$}
   \State{\bfseries Step 1: Matching stage.}
    Construct a rooted forest $\Gamma_t$  with $n_t$ giant nodes and $a_t$ giant edges; 
    Attach a label from $[t]$ to each giant edge; 
    \State{\bfseries Step 2: Splitting stage.}
		Choose $b_t$ components from $n_t-a_t$ tree components of $\Gamma_t$, and within each chosen component,
    add a split to the root;
    \State{\bfseries Step 3: Bridging stage.}
     Choose $c_t$ out of the remaining $n_t-a_t-b_t$ tree components of $\Gamma_t$, and for each chosen component,
    add a bridge connecting its root to a giant node in $\Gamma_\ell$ 
    for some $\ell<t$ that is a divisor of $t$. Attach a label from $[\ell]$ to the added bridge. 
\EndFor
\end{algorithmic}
\end{algorithm}

We claim that any orbit forest can be generated by 
\prettyref{alg:f}. To verify this claim formally, let $H$ be an orbit forest and $\Gamma$ denote the its  corresponding backbone graph in \prettyref{def:backbone}, which, for $1\le m\le k$, contains
\begin{itemize}
\item $a_m$ matchings corresponding to Type $\sfM_m$ edge orbits; 
 \item $b_m$ splits corresponding to Type $\sfS_m$ edge orbits;
 \item  $c_m$ bridges corresponding to
 Type $\sfB_{m,\ell}$ edge orbits for some $\ell<m$ that is a divisor of $m$.
 \end{itemize}
 
For each $\Gamma_m$, we arbitrarily choose the root for each plain tree component, and specify the root in each non-plain tree component  as the 
\emph{unique} giant node that either splits or is incident to a bridge in $\cup_{\ell<m} \Gamma_{m,\ell}$.
Then clearly Steps 1--3 can realize any configuration of matchings, splits and bridges in $\Gamma$, thanks to the properties \ref{pt:forest}-\ref{pt:forest-sb}.

Note that the total number of edges in the corresponding orbit forest $H$ is determined by the input parameter $(\mathbf{a}, \mathbf{b}, \mathbf{c}) $ as 
$$\sum_{m=1}^k \left[ m (a_m+c_m)  + m b_m/2 \right].$$ 
To enumerate the orbit forests, it suffices to count all possible output backbone graphs $\Gamma$ of \prettyref{alg:f} as follows. For $t=1,\ldots,k$,
  \begin{enumerate}
    \item
	It is well-known that the total number of rooted forests on $n$ vertices with $a$ edges is 
	\begin{equation}
	\binom{n-1}{a}n^{a}
	\label{eq:rooted-forest}
	\end{equation}
	(see e.g.~\cite[II.18, p.~128]{flajolet2009analytic}.)  Moreover,  each giant edge added in Step $1$ has $t$ possible labels. Therefore, the total number of  rooted  \emph{backbone} graphs $\Gamma_t$ is at most
    \begin{align}
   \binom{n_t-1}{a_t} \left( t n_t\right) ^{a_t}  \le  \binom{n_t}{a_t}\left(tn_t\right)^{a_t}.   \label{eq:at}
    \end{align}
    \item
 The total number of ways of placing $b_t$ splits is at most 
    \begin{align}
        \binom{n_t-a_t}{b_t }. \label{eq:bt}
    \end{align}
    \item
   The total number of ways of placing $c_t$ bridges is at most 
       \begin{align}
        \binom{n_t-a_t-b_t}{c_t} \left(\sum_{\ell<t}\ell n_{\ell}\right)^{c_t}. \label{eq:dt}
    \end{align}
   Note that we could further restrict the summation over $\ell$ to divisors of $t$ and get a tighter upper bound, but this is not needed for the main results.  
 
   
\end{enumerate}
Combining \prettyref{eq:at}, \prettyref{eq:bt}, and \prettyref{eq:dt}, we get that the total number of output backbone graphs $\Gamma$ with 
input parameter $(\mathbf{a}, \mathbf{b}, \mathbf{c}) $ is at most
\begin{align}
    \prod_{1\le t\le k} \indc{ b_t=0 \text{ for odd } t }
    \binom{n_t}{a_t,\, b_t, \, c_t } \left(tn_t\right)^{a_t}  \left(\sum_{\ell<t}\ell n_{\ell}\right)^{c_t} \label{eq:gforest}.
\end{align}
Then the desired \prettyref{eq:jk-forest} readily follows from
\begin{align*}
\sum_{H \in \calF_k } s^{2 e(H)}  
\le & \sum_{\mathbf{a}, \mathbf{b}, \mathbf{c} } \;  \prod_{1\le t\le k} 
\indc{ b_t=0 \text{ for odd } t }
  \binom{n_t}{a_t,\, b_t, \, c_t }  \left(t n_t\right)^{a_t}  \left(\sum_{\ell<t }\ell n_{\ell}\right)^{c_t}  s^{ 2t a_t + t b_t  + 2t c_t } \\
\le &\prod_{1\le t\le k} \left( 1+  s^t \indc{t:\mathrm{even}} + s^{2t} \sum_{\ell \le t} \ell n_\ell \right)^{n_t}.
\end{align*}

\subsection{Proof of \prettyref{thm:jk}: Generating function of orbit pseudoforests}\label{sec:enumeration_pseudoforst}

Fix  $\sigma =\pi^{-1}\circ\tpi$ and recall $n_m$ denotes the number of $m$-node orbits in $\sigma$.
In this section we bound the generating function \prettyref{eq:gf-pseudoforest} 
of orbit pseudoforests $H \in \calH_k$ and prove \prettyref{thm:jk}. Recall that each orbit graph $H$ can be equivalently represented as a backbone graph $\Gamma$ as in \prettyref{def:backbone}.
In addition, we need the following vocabularies:
For $1\le m\le k$ and each $u\in S_m$, let $C(u)$ denote the connected component in $\Gamma_m$ containing $u$.

Similar to the reasoning in \prettyref{sec:enumeration_forst}, when $H$ is a pseudoforest, its backbone graph $\Gamma$ must satisfy the following properties:
\begin{enumerate}[label=(P\arabic*)]
    \item \label{pt:pseudo} For each $1 \le m \le k$, $\Gamma_m$ is a pseudoforest (with self-loops and parallel edges counted as cycles);
    \item  \label{pt:divisor} For each $1 \le \ell < m \le k$, $\Gamma_{m, \ell}$ is empty unless $\ell$ is a divisor of $m$.
    \item \label{pt:unicyclic} Each unicyclic component of  $\Gamma_m$ is plain.
    \item \label{pt:sss} A tree component in $\Gamma_m$ contains at most two splits.
    \item \label{pt:bb} 
    Let $(u,v) \in \Gamma_{m,\ell_1}$ and $(u',v')\in \Gamma_{m,\ell_2}$ be two bridges with $\ell_1,\ell_2<m$, such that $u$ and $u'$ belong to the same tree component in $\Gamma_m$. 
    Then $m$ must be even and $\ell_1=\ell_2=m/2$. 
    \item \label{pt:sb} 
    Let $(u,v) \in \Gamma_{m,\ell}$ be a bridge with $\ell<m$ such that $u$ belongs to a tree component that contains a split in $\Gamma_m$. Then $m$ must be even and $\ell=m/2$. Furthermore, $v$ must belong to a plain tree component in $\Gamma_{m/2}$. 
    \item \label{pt:hybrid} For each $(u,v)$ and $(u',v')$ that satisfy either \ref{pt:bb} or \ref{pt:sb} where $v \neq v'$,
    the ending points $v$ and $v'$ must belong to distinct plain tree components in $\Gamma_{m/2}$.
\end{enumerate}
Otherwise, $H$ contains a component with at least two cycles, violating the pseudoforest constraint.
See \prettyref{fig:forbidden_bb} - \prettyref{fig:forbidden_hybrid} for illustrations of forbidden patterns that violate  \ref{pt:sss} - \ref{pt:hybrid}.
   \begin{figure}[ht]
    \centering
    \begin{tabular}{C{.4\textwidth}C{.4\textwidth}}
    \subcaptionbox{A component in $\Gamma_4$ contains $3$ splits.
    }[1\linewidth]{
    \resizebox{0.27\textwidth}{!}{%
    \centering
    \begin{tikzpicture}[scale=1,font=\large]
    	\draw (0,2) node (a) [nodedot] {} ;
    	\draw (0,1) node (b) [nodedot] {} ;
        \draw (0,0) node (c) [nodedot] {} ;
    	\draw (0,-1) node (d) [nodedot] {} ;
    	\draw (2,2) node (e) [nodedot] {} ;
    	\draw (2,1) node (f) [nodedot] {} ;
        \draw (2,0) node (g) [nodedot] {} ;
    	\draw (2,-1) node (h) [nodedot] {} ;
    	\draw (4,2) node (i) [nodedot] {} ;
    	\draw (4,1) node (j) [nodedot] {} ;
        \draw (4,0) node (k) [nodedot] {} ;
    	\draw (4,-1) node (l) [nodedot] {} ;
    	\draw[Medge]  (a)--(e) (b)--(f) (c)--(g) (d)--(h) 
    	(e)--(i) (f)--(j) (g)--(k) (h)--(l) ;
    	\draw (a) edge[thick, bend right] (c) (b) edge[thick, bend right] (d);
    	\draw (e) edge[thick, bend right] (g) (f) edge[thick, bend right] (h);
    	\draw (i) edge[thick, bend left] (k) (j) edge[thick, bend left] (l);
    	\draw (0,-2) node (A)[shadedgiantnode] {$4$};
    	\draw (2,-2) node (B)[shadedgiantnode] {$4$};
    	\draw (4,-2) node (C)[shadedgiantnode] {$4$};
    	\draw[Mgiantedge]  (A)--(B) (C)--(B);
    		\draw(0,-2.7) node (c) {$ $};
	    \draw(2,-2.7) node (c) {$ $};
	    \draw(4,-2.7) node (c) {$ $};
	    \draw(6,-2.7) node (c) {$ $};
        \end{tikzpicture}
    }}&
    \subcaptionbox{A component in $\Gamma_4$ is incident to a bridge $(u,v)\in \Gamma_{4,1}$ and a bridge $(u',v')\in \Gamma_{4,2}$.
    }%
    [1\linewidth]{
    \resizebox{0.27\textwidth}{!}{%
    \centering
    \begin{tikzpicture}[scale=1,font=\large]
        \draw (0,0.5) node (a) [nodedot] {} ;
    	\draw (2,2) node (b) [nodedot] {} ;
    	\draw (2,1) node (c) [nodedot] {} ;
        \draw (2,0) node (d) [nodedot] {} ;
    	\draw (2,-1) node (e) [nodedot] {} ;
    	\draw (4,2) node (f) [nodedot] {} ;
    	\draw (4,1) node (g) [nodedot] {} ;
        \draw (4,0) node (h) [nodedot] {} ;
    	\draw (4,-1) node (i) [nodedot] {} ;
    	\draw (6,1) node (j) [nodedot] {} ;
    	\draw (6,0) node (k) [nodedot] {} ;
    	\draw[Bedge]  (a)--(b) (a)--(c) (a)--(d) (a)--(e) 
    	(j)--(f) (k)--(g) (j)--(h) (k)--(i) ;
    	\draw[Medge] (b)--(f) (c)--(g) (d)--(h) (e)--(i) ;
    	\draw (0,-2) node (A)[unshadedgiantnode] {$1$};
    	\draw (2,-2) node (B)[unshadedgiantnode] {$4$};
    	\draw (4,-2) node (C)[unshadedgiantnode] {$4$};
    	\draw (6,-2) node (D)[unshadedgiantnode] {$2$};
    	\draw[Bgiantedge]  (A)--(B) (C)--(D);
    	\draw[Mgiantedge] (B)--(C);
    	\draw(0,-2.7) node (c) {$v$};
	    \draw(2,-2.7) node (c) {$u$};
	    \draw(4,-2.7) node (c) {$u'$};
	    \draw(6,-2.7) node (c) {$v'$};
        \end{tikzpicture}
    }}
    \end{tabular}
    \caption{Examples of backbone graphs violating  \ref{pt:sss} and \ref{pt:bb}, shown in  (a) and (b), respectively, and the corresponding orbit graphs.
    \label{fig:forbidden_bb}
    }
    \end{figure}
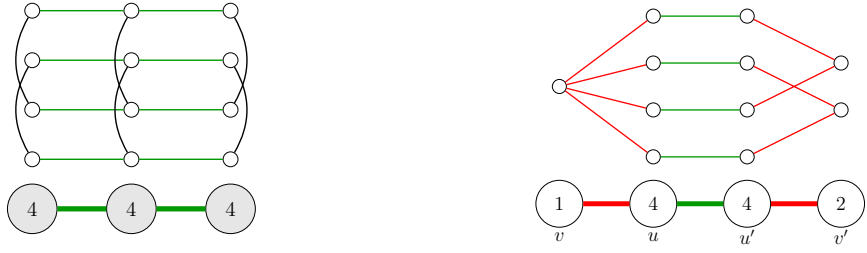
    \begin{figure}[ht]
        \centering
        \begin{tabular}{C{.3\textwidth}C{.3\textwidth}C{.3\textwidth}}
        \subcaptionbox{A component in $\Gamma_4$ contains $1$ splits and is incident to $1$ bridge $(u,v)\in \Gamma_{4,1}$. 
				}[.9\linewidth]{
        \resizebox{0.27\textwidth}{!}{%
        \centering
        \begin{tikzpicture}[scale=1,font=\large]
        	\draw (0,2) node (a) [nodedot] {} ;
        	\draw (2,2) node (b) [nodedot] {} ;
        	\draw (2,1) node (c) [nodedot] {} ;
            \draw (2,0) node (d) [nodedot] {} ;
        	\draw (2,-1) node (e) [nodedot] {} ;
        	\draw (4,2) node (f) [nodedot] {} ;
        	\draw (4,1) node (g) [nodedot] {} ;
            \draw (4,0) node (h) [nodedot] {} ;
        	\draw (4,-1) node (i) [nodedot] {} ;
        	\draw (6,2) node (j) [nodedot] {} ;
        	\draw (6,1) node (k) [nodedot] {} ;
            \draw (6,0) node (l) [nodedot] {} ;
        	\draw (6,-1) node (m) [nodedot] {} ;
        	\draw[Medge] (b)--(f) (c)--(g) (d)--(h) (e)--(i) (j)--(f) (k)--(g) (l)--(h) (m)--(i) ;
        	\draw[Bedge] (a)--(b) (a)--(c) (a)--(d) (a)--(e) ;
        	\draw (j) edge[thick, bend left] (l) (k) edge[thick, bend left] (m);
        	\draw (0,-2) node (A)[unshadedgiantnode] {$1$};
        	\draw (2,-2) node (B)[unshadedgiantnode] {$4$};
        	\draw (4,-2) node (C)[unshadedgiantnode] {$4$};
        	\draw (6,-2) node (D)[shadedgiantnode] {$4$};
        	\draw[Bgiantedge]  (A)--(B);
        	\draw[Mgiantedge]  (C)--(B) (C)--(D);
        	\draw(0,-2.7) node (c) {$v$};
	        \draw(2,-2.7) node (c) {$u$};
            \end{tikzpicture}
        }}&
        \subcaptionbox{A component in $\Gamma_4$ contains $1$ split and is incident to $1$ bridge $(u,v) \in \Gamma_{4,2}$ where $v$ is 
        in a non-plain component in $\Gamma_2$ that is incident to $1$ bridge in $\Gamma_{2,1}$.
        }[.9\linewidth]{
        \resizebox{0.27\textwidth}{!}{%
        \centering
        \begin{tikzpicture}[scale=1,font=\large]
        	\draw (-2,0.5) node (a) [nodedot] {} ;
            \draw (0,1) node (c) [nodedot] {} ;
        	\draw (0,0) node (d) [nodedot] {} ;
        	\draw (2,2) node (e) [nodedot] {} ;
        	\draw (2,1) node (f) [nodedot] {} ;
            \draw (2,0) node (g) [nodedot] {} ;
        	\draw (2,-1) node (h) [nodedot] {} ;
        	\draw (4,2) node (j) [nodedot] {} ;
            \draw (4,1) node (k) [nodedot] {} ;
            \draw (4,0) node (l) [nodedot] {} ;
            \draw (4,-1) node (m) [nodedot] {} ;
        	\draw[Bedge] (a)--(c) (a)--(d) (c)--(e) (d)--(f) (c)--(g) (d)--(h) ;	
        	\draw[Medge] (e)--(j) (f)--(k) (g)--(l) (h)--(m);
        	\draw (j) edge[thick, bend left] (l) (k) edge[thick, bend left] (m);
        	\draw (-2,-2) node (A)[unshadedgiantnode] {$1$};
        	\draw (0,-2) node (B)[unshadedgiantnode] {$2$};
        	\draw (2,-2) node (C)[unshadedgiantnode] {$4$};
        	\draw (4,-2) node (D)[shadedgiantnode] {$4$};
        	\draw[Bgiantedge] (A)--(B) (B)--(C) ;
        	\draw[Mgiantedge]  (C)--(D);
        	\draw(0,-2.7) node (c) {$v$};
	        \draw(2,-2.7) node (c) {$u$};
            \end{tikzpicture}
        }}&
        \subcaptionbox{A component in $\Gamma_4$ contains $1$ split and is incident to $1$ bridge $(u,v)\in\Gamma_{4,2}$ where $v$ is in a non-plain component in $\Gamma_2$ that contains a split.
         }[.9\linewidth]{
        \resizebox{0.27\textwidth}{!}{%
        \centering
        \begin{tikzpicture}[scale=1,font=\large]
        	\draw (-2,1) node (a) [nodedot] {} ;
        	\draw (-2,0) node (b) [nodedot] {} ;
            \draw (0,1) node (c) [nodedot] {} ;
        	\draw (0,0) node (d) [nodedot] {} ;
        	\draw (2,2) node (e) [nodedot] {} ;
        	\draw (2,1) node (f) [nodedot] {} ;
            \draw (2,0) node (g) [nodedot] {} ;
        	\draw (2,-1) node (h) [nodedot] {} ;
        	\draw (4,2) node (j) [nodedot] {} ;
            \draw (4,1) node (k) [nodedot] {} ;
            \draw (4,0) node (l) [nodedot] {} ;
            \draw (4,-1) node (m) [nodedot] {} ;
        	\draw[Bedge]  (c)--(e) (d)--(f) (c)--(g) (d)--(h) ;	
        	\draw[Medge] (a)--(c) (b)--(d) (e)--(j) (f)--(k) (g)--(l) (h)--(m);
        	\draw[Sedge] (a)--(b);
        	\draw (j) edge[thick, bend left] (l) (k) edge[thick, bend left] (m);
        	\draw (-2,-2) node (A)[shadedgiantnode] {$2$};
        	\draw (0,-2) node (B)[unshadedgiantnode] {$2$};
        	\draw (2,-2) node (C)[unshadedgiantnode] {$4$};
        	\draw (4,-2) node (D)[shadedgiantnode] {$4$};
        	\draw[Bgiantedge] (B)--(C) ;
        	\draw[Mgiantedge] (A)--(B) (C)--(D);
            \draw(0,-2.7) node (c) {$v$};
	        \draw(2,-2.7) node (c) {$u$};
            \end{tikzpicture}
        }}
        \end{tabular}
        \caption{Examples of backbone graphs  violating  \ref{pt:sb} and the corresponding  orbit graphs.}
        \label{fig:forbidden_sb}
        \end{figure} 
        
    \begin{figure}[ht]
        \centering
        \begin{tabular}{C{.45\textwidth}C{.5\textwidth}}
        \subcaptionbox{$(u,v)$ and $(u,v')$ satisfy \ref{pt:bb}, while $v$ is  in a non-plain component in $\Gamma_2$ that contains a split.
        \\}[.9\linewidth]{
        \resizebox{0.27\textwidth}{!}{%
        \centering
        \begin{tikzpicture}[scale=1,font=\large]
        	\draw (-2,1) node (a) [nodedot] {} ;
        	\draw (-2,0) node (b) [nodedot] {} ;
            \draw (0,1) node (c) [nodedot] {} ;
        	\draw (0,0) node (d) [nodedot] {} ;
        	\draw (2,2) node (e) [nodedot] {} ;
        	\draw (2,1) node (f) [nodedot] {} ;
            \draw (2,0) node (g) [nodedot] {} ;
        	\draw (2,-1) node (h) [nodedot] {} ;
        	\draw (4,1) node (j) [nodedot] {} ;
            \draw (4,0) node (k) [nodedot] {} ;
        	\draw[Medge]  (a)--(c) (b)--(d);
        	\draw[Bedge]  (c)--(e) (d)--(f) (c)--(g) (d)--(h)  (j)--(e) (k)--(f) (j)--(g) (k)--(h);
        	\draw[Sedge](a)--(b);
        	\draw (-2,-2) node (A)[shadedgiantnode] {$2$};
        	\draw (0,-2) node (B)[unshadedgiantnode] {$2$};
        	\draw (2,-2) node (C)[unshadedgiantnode] {$4$};
        	\draw (4,-2) node (D)[unshadedgiantnode] {$2$};
        	\draw[Bgiantedge]  (B)--(C) (C)--(D);
        	\draw[Mgiantedge]  (A)--(B);
        	\draw(0,-2.7) node (c) {$v$};
	        \draw(2,-2.7) node (c) {$u$};
	        \draw(4,-2.7) node (c) {$v'$};
            \end{tikzpicture}
        }}&
        \subcaptionbox{$(u,v)$ and $(u,v')$ satisfy \ref{pt:bb}, while $v$ is in a non-plain component that is incident to a bridge in $\Gamma_{2,1}$
        \\}[.9\linewidth]{
        \resizebox{0.27\textwidth}{!}{%
        \centering
        \begin{tikzpicture}[scale=1,font=\large]
        	\draw (-2,0.5) node (a) [nodedot] {} ;
            \draw (0,1) node (c) [nodedot] {} ;
        	\draw (0,0) node (d) [nodedot] {} ;
        	\draw (2,2) node (e) [nodedot] {} ;
        	\draw (2,1) node (f) [nodedot] {} ;
            \draw (2,0) node (g) [nodedot] {} ;
        	\draw (2,-1) node (h) [nodedot] {} ;
        	\draw (4,1) node (j) [nodedot] {} ;
            \draw (4,0) node (k) [nodedot] {} ;
        	\draw[Bedge] (a)--(c) (a)--(d) (c)--(e) (d)--(f) (c)--(g) (d)--(h)  (j)--(e) (k)--(f) (j)--(g) (k)--(h);	\draw (-2,-2) node (A)[unshadedgiantnode] {$1$};
        	\draw (0,-2) node (B)[unshadedgiantnode] {$2$};
        	\draw (2,-2) node (C)[unshadedgiantnode] {$4$};
        	\draw (4,-2) node (D)[unshadedgiantnode] {$2$};
        	\draw[Bgiantedge]  (A)--(B) (B)--(C) (C)--(D);
        	\draw(0,-2.7) node (c) {$v$};
	        \draw(2,-2.7) node (c) {$u$};
	        \draw(4,-2.7) node (c) {$v'$};
            \end{tikzpicture}
        }}\\
        \subcaptionbox{$(u,v)$ and $(u,v')$ satisfy \ref{pt:bb}, while $v$ and $v'$ are in the same component in $\Gamma_2$.
         }[.9\linewidth]{
        \resizebox{0.35\textwidth}{!}{%
        \centering
        \begin{tikzpicture}[scale=1,font=\large]
        	\draw (-2,1) node (a) [nodedot] {} ;
        	\draw (-2,0) node (b) [nodedot] {} ;
            \draw (0,1) node (c) [nodedot] {} ;
        	\draw (0,0) node (d) [nodedot] {} ;
        	\draw (2,2) node (e) [nodedot] {} ;
        	\draw (2,1) node (f) [nodedot] {} ;
            \draw (2,0) node (g) [nodedot] {} ;
        	\draw (2,-1) node (h) [nodedot] {} ;
        	\draw (4,1) node (j) [nodedot] {} ;
            \draw (4,0) node (k) [nodedot] {} ;
            \draw (6,1) node (l) [nodedot] {} ;
            \draw (6,0) node (m) [nodedot] {} ;
        	\draw[Medge]  (a)--(c) (b)--(d) (j)--(l) (k)--(m);
        	\draw[Bedge]  (c)--(e) (d)--(f) (c)--(g) (d)--(h)  (j)--(e) (k)--(f) (j)--(g) (k)--(h);
        	\draw (a) edge[green!60!black, thick, bend left=15] (l) (b) edge[green!60!black, thick, bend right=15] (m);
        	\draw (-2,-2.5) node (A)[unshadedgiantnode] {$2$};
        	\draw (0,-2.5) node (B)[unshadedgiantnode] {$2$};
        	\draw (2,-2.5) node (C)[unshadedgiantnode] {$4$};
        	\draw (4,-2.5) node (D)[unshadedgiantnode] {$2$};
        	\draw (6,-2.5) node (E)[unshadedgiantnode] {$2$};
        	\draw (A) edge[green!60!black, line width=3.0pt, bend left=20] (E);
        	\draw[Bgiantedge]  (B)--(C) (C)--(D);
        	\draw[Mgiantedge]  (A)--(B) (D)--(E);
        	\draw(0,-3.2) node (c) {$v$};
	        \draw(2,-3.2) node (c) {$u$};
	        \draw(4,-3.2) node (c) {$v'$};
            \end{tikzpicture}
        }}& 
        \subcaptionbox{$(u,v)$ satisfies \ref{pt:bb}, $(u',v')$ satisfies \ref{pt:sb},  while $v$ and $v'$ are in the same component in $\Gamma_2$.
        }[0.9\linewidth]{
        \resizebox{0.42\textwidth}{!}{%
        \centering
        \begin{tikzpicture}[scale=1,font=\large]
            \draw (-2,1) node (a) [nodedot] {} ;
        	\draw (-2,0) node (b) [nodedot] {} ;
        	\draw (0,2) node (c) [nodedot] {} ;
        	\draw (0,1) node (d) [nodedot] {} ;
            \draw (0,0) node (e) [nodedot] {} ;
        	\draw (0,-1) node (f) [nodedot] {} ;
        	\draw (2,1) node (g) [nodedot] {} ;
            \draw (2,0) node (h) [nodedot] {} ;
            \draw (4,1) node (i) [nodedot] {} ;
            \draw (4,0) node (j) [nodedot] {} ;
            \draw (6,2) node (k) [nodedot] {} ;
            \draw (6,1) node (l) [nodedot] {} ;
            \draw (6,0) node (m) [nodedot] {} ;
            \draw (6,-1) node (n) [nodedot] {} ;
            \draw (8,2) node (o) [nodedot] {} ;
            \draw (8,1) node (p) [nodedot] {} ;
            \draw (8,0) node (q) [nodedot] {} ;
            \draw (8,-1) node (r) [nodedot] {} ;
        	\draw[Medge]  (g)--(i) (h)--(j)  (k)--(o) (l)--(p) (m)--(q) (n)--(r);
        	\draw[Bedge]  (a)--(c) (b)--(d) (a)--(e) (b)--(f)  (g)--(c) (h)--(d) (g)--(e) (h)--(f) 
        	(i)--(k) (j)--(l) (i)--(m) (j)--(n) ;
        	\draw (o) edge[thick, bend left] (q) (p) edge[thick, bend left] (r);
        	\draw (8,-2) node (A)[shadedgiantnode] {$4$};
        	\draw (6,-2) node (B)[unshadedgiantnode] {$4$};
        	\draw (4,-2) node (C)[unshadedgiantnode] {$2$};
        	\draw (2,-2) node (D)[unshadedgiantnode] {$2$};
        	\draw (0,-2) node (E)[unshadedgiantnode] {$4$};
        	\draw (-2,-2) node (F)[unshadedgiantnode] {$2$};
        	\draw[Bgiantedge]  (B)--(C)  (D)--(E) (E)--(F);
        	\draw[Mgiantedge]  (A)--(B) (C)--(D)   ;
        	\draw(0,-2.7) node (c) {$u$};
	        \draw(2,-2.7) node (c) {$v$};
	        \draw(4,-2.7) node (c) {$v'$};
	        \draw(6,-2.7) node (c) {$u'$};
            \end{tikzpicture}
        }}
        \end{tabular}
        \caption{Examples of backbone graphs violating  \ref{pt:hybrid} and the corresponding  orbit graphs.}
        \label{fig:forbidden_hybrid}
        \end{figure}
Properties \ref{pt:pseudo}--\ref{pt:hybrid} are justified by the following arguments:
\begin{itemize}
    \item Paralleling  conditions \ref{pt:forest} and \ref{pt:forest-divisor} for the forest constraint, \ref{pt:pseudo} and \ref{pt:divisor} follows from the classification of edge orbits and orbit graphs in \prettyref{sec:classification_edge_orbits}; 
    \item Suppose \ref{pt:unicyclic} does not hold. Since the excess of the corresponding orbit graph of a plain unicyclic component in $\Gamma_m$ is $0$, by \ref{O:split} and \ref{O:1bridge}, there exists a unicyclic component component $C$ in $\Gamma_m$ such that $\ex(H_C) > 0$, contradicting $H$ being a pseudo-forest;
    \item Suppose \ref{pt:sss} does not hold. Then by \ref{O:split} and \prettyref{lmm:plain}, there exists a tree component component $C$ in $\Gamma_m$ such that $\ex(H_C) > 0$, contradicting $H$ being a pseudo-forest;
    \item Suppose \ref{pt:bb} does not hold. Then by \ref{O:1bridge} and \prettyref{lmm:plain}, there exists a tree component component $C$ in $\Gamma_m$ such that $\ex(H_C) > 0$, contradicting $H$ being a pseudo-forest;
    \item  To prove \ref{pt:sb}, 
        let $G_1$ denote the orbit graph of $C(u)$ consisting of edge orbits (including splits, matchings, and cycles). Let $G_2$ denote the orbit graph of $C(v)$ consisting of edge orbits (including splits, matchings, and cycles) in $C(v)$, as well as bridges in $\cup_{\ell<m/2}\Gamma_{m/2,\ell}$ that are incident to $C(v)$. 
        Let $G_{\mathrm{new}}$ denote the edge-disjoint union of $G_1$, $G_2$, and the edge orbit corresponding to the bridge $(u,v)$. 
        Since $C(u)$ contains a split, by \ref{O:split} and \prettyref{lmm:plain}, $ \ex(G_1)\ge -m/2$. 
        Then we have 
         $$\ex(G_{\mathrm{new}}) =\ex(G_1)+\ex(G_2)+m  \ge \ex(G_2) +m/2 \ge  0,$$
        where the last inequality is met with equality if and only if $C(v)$ is a plain tree component in $\Gamma_{m/2}$ by \prettyref{lmm:plain}. Hence,  \ref{pt:sb} follows. 
    \item To prove \ref{pt:hybrid}, 
        let $C=C(u) \cup C(u')$. 
        Let $G_1$ denote the orbit graph of $C$ consisting of edge orbits (including splits, matchings, and cycles), as well as bridges in $\cup_{\ell<m} \Gamma_{m, \ell}$ except for $(u,v)$ and $(u',v')$ that are incident to $C$. 
        If $C(u) = C(u')$, then $\ex(G_1) \ge -m$ by  \prettyref{lmm:plain}.  If $C(u) \neq C(u')$, then $G_1$ is an edge-disjoint union of 
        $H_1$ and $H_1'$, where $H_1$ (resp.\ $H_1'$) is the 
        orbit graph of $C(u) $ (resp.\ $C(u')$) consisting of edge orbits (including splits, matchings, and cycles) in $C(u)$ (resp.\ $C(u')$),
        as well as bridges in $\cup_{\ell<m} \Gamma_{m, \ell}$ except for $(u,v)$ (resp.\ $(u',v')$) that are incident to $C(u)$ (resp.\ $C(u')$). 
        By assumption, together with \ref{O:split}, \ref{O:1bridge} and \prettyref{lmm:plain}, $\ex(H_1)\ge -m/2$ and $\ex(H_1') \ge -m/2$ and thus $\ex(G_1) \ge \ex(H_1)+\ex(H_2 ) \ge -m$.

        Let $C' = C(v) \cup C(v')$.
        Note that by \ref{pt:bb} and \ref{pt:sb}, both $C(v)$ and $C(v')$ are components in $\Gamma_{m/2}$.
        Let $G_2$ denote the orbit graph of $C'$ consisting of edge orbits (including splits, matchings, and cycles) in $C'$, as well as bridges in $\cup_{\ell<m/2} \Gamma_{m/2, \ell}$ that are incident to $C'$. 
        Let $G_{\mathrm{new}}$ denote the edge-disjoint union of $G_1$, $G_2$, and  the edge obits corresponding to the two bridges $(u,v)$ and $(u',v')$.
        Then,
        $$
        \ex(G_\mathrm{new}) \ge \ex(G_1) + \ex(G_2) + 2m  \ge -m + \ex(G_2) + 2 m = \ex(G_2) +m .
        $$ 
        By assumption, $G_{\mathrm{new}}$ is a pseudo-forest and thus $\ex(G_\mathrm{new}) \le 0$. 
        It follows that $\ex(G_2) \le -m$ and hence $v$ and $v'$ must be in distinct plain tree components in $\Gamma_{m/2}$ by \prettyref{lmm:plain}. 

\end{itemize}         

The implication of \ref{pt:sss}-\ref{pt:hybrid}
is the following. 
For each $m \in [k]$, define
\[
\calE(m) \triangleq \cup_{\ell < m} E(\Gamma_{m,\ell})
\]
consisting of all bridges between $m$-node orbits and shorter orbits. 
Then $\calE(m)$ can be divided into two sets of bridges as follows.
For each $u\in S_m$, recall that $C(u)$ denotes the connected component in $\Gamma_m$ containing $u$. A bridge is denoted by a giant edge $(u,v) \in \Gamma_{m,\ell}$ with $\ell<m$, where $u\in S_m$ in the longer orbit is called the \emph{starting point} and $v \in S_\ell$ in the shorter orbit is called the \emph{ending point}.
Define 
\begin{align}
\Esingle(m) \triangleq & ~ \{(u,v)\in E(\Gamma): u \in S_m, v \in \cup_{\ell < m} S_\ell, \label{eq:Esingle}\\
& ~~~\text{$C(u)$ contains no split and is not incident to any bridge in $\cup_{\ell<m}\Gamma_{m,\ell}$ other than $(u,v)$} \}	\nonumber \\
\Edouble(m) \triangleq & ~ \{(u,v)\in E(\Gamma): u \in S_m, v \in  S_{m/2}, \label{eq:Edouble}\\
& ~~~\text{$C(u)$ contains a split or is incident to some bridge in $\cup_{\ell<m}\Gamma_{m,\ell}$ other than $(u,v)$} \}	\nonumber 
\end{align}
By Properties \ref{pt:sb}--\ref{pt:hybrid}
, we have $\calE(m)=\Esingle(m) \cup \Edouble(m)$. Moreover,
\begin{itemize}
		\item For each $(u,v),(u',v') \in \Esingle(m)$, the starting points $u$ and $u'$ belong to separate tree components in $\Gamma_m$, \ie, $C(u)$ and $C(u')$ are distinct tree components in $\Gamma_m$. 
		Furthermore, $C(u)$ (resp.\ $C(u')$) contains no split and is not incident to any bridge other than $(u,v)$ (resp.\ $(u',v')$). (This is just repeating definition). 
	\item For each $(u,v),(u',v') \in \Edouble(m)$, the ending points $v$ and $v'$ belong to separate plain tree components in $\Gamma_{m/2}$, by \ref{pt:hybrid}. 
\end{itemize}
The above observation suggests that to specify the bridges in $\Esingle(m)$, one can use the \emph{forward construction} by first choosing their starting points from separate components of $\Gamma_m$ then choosing the ending points from shorter orbits $\cup_{\ell<m} S_\ell$ in an unconstrained way; to specify the bridges in $\Edouble(m)$, one can use the \emph{backward construction} by first choosing their ending points from separate components of $\Gamma_{m/2}$ then choosing the starting points from $S_m$ in an unconstrained way.
This separate account of bridges is useful in the enumeration scheme which we describe next.

Next, we describe an algorithm for generating all possible backbone graphs $\Gamma$ that satisfy the properties \ref{pt:pseudo}--\ref{pt:hybrid}.
Given a sequence of integers $(\mathbf{a}, \mathbf{b}, \mathbf{c}, \mathbf{d} ) = (a_t, b_t, c_t,d_t)_{1 \le t \le k}$ with $b_t=0$ for odd $t$ and $d_t=0$ if $2t>k$, 
we construct $\Gamma$ as follows: 
\begin{algorithm}[H]
\caption{Pseudoforest enumeration algorithm} \label{alg:pf}
\begin{algorithmic}[1]
\For{each $t=1,\ldots,k$}
   \State{\bfseries Step 1: Matching stage.}
    Construct a rooted pseudoforest $\Gamma_t$  with $n_t$ giant nodes and $a_t$ giant edges (allowing self-loops and multiple edges).     
    Attach a label from $[t]$ to each giant edge and a label from $\left[\lfloor\frac{t-1}{2}\rfloor \right]$ to each self-loop; 
    \State{\bfseries Step 2: Splitting stage.}
		Choose $b_t$ components from $n_t-a_t$ tree components of $\Gamma_t$, and within each chosen component, choose a node:
    if the node chosen is the same as the root, 
    add a split to the root; otherwise, add two splits, one at the root and the other one at the chosen node;
    \State{\bfseries Step 3: Forward bridging stage.}
     Choose $c_t$ out of the remaining $n_t-a_t-b_t$ tree components of $\Gamma_t$, and for each chosen component,
    add a bridge connecting its root to a giant node in $\Gamma_\ell$ 
    for some $\ell<t$ that is a divisor of $t$. Attach a label from $[\ell]$ to the added bridge. 
    \State{\bfseries Step 4: Backward bridging stage.}
     Choose $d_t$ from the remaining $n_t-a_t-b_t-c_t$ tree components of of $\Gamma_t$. 
     For each chosen component, add a bridge by connecting its root to a giant node in $\Gamma_{2t}$. 
     Attach a label from $[t]$ to the added bridge. 
\EndFor
\end{algorithmic}
\end{algorithm}
We note that an output graph of \prettyref{alg:pf} is not necessarily a pseudoforest; nevertheless any orbit pseudoforest can be generated by \prettyref{alg:pf}, which is what we need for upper bounding the generating function of orbit pseudoforests, $\sum_{H\in\calH_k}s^{2e(H)}$.
To verify this claim formally, let $H$ be an orbit pseudoforest and let $\Gamma$ denote its backbone graph as in \prettyref{def:backbone}, where for $1\le m\le k$ there are:
\begin{itemize}
\item $a_m$ giant edges (including self-loops) corresponding to either Type $\sfM_m$ or $\sfC_m$ edge orbits; 
 \item 
 $b_m$ components that contain splits
 corresponding to Type $\sfS_m$ edge orbits; 
 \item  $c_m$ giant edges corresponding to
 Type $\sfB_{m,\ell}$ edge orbits for some $\ell<m$ that is a divisor of $m$;
 \item  $d_m$ giant edges corresponding to Type $\sfB_{m, 2m}$ edge orbits.
 \end{itemize}
For each $\Gamma_m$ where $1\le m\le k$, we arbitrarily choose the root for each plain tree component, and specify the root in each non-plain tree component as the giant node that either splits or is incident to a bridge in $\Esingle(m) \cup \Edouble(2m)$ (when there are two giant nodes that split in a tree component, we choose any one of them as the root; otherwise, the choice of the root is unique). 
Then it is clear that Steps 1 and 2 can realize any configuration of splits and matchings in $\Gamma$, thanks to Properties \ref{pt:pseudo}--\ref{pt:sss}. 
Finally, note that 
bridges in $\Esingle(m)$ are added by Step 3 (forward bridging) at iteration $t=m$ with $c_m=|\Esingle(m)|$, and
bridges in $\Edouble(m)$ are added by Step 4 (backward bridging) at iteration $t=m/2$ with $d_{m/2}=|\Edouble(m)|$.

Next we bound the generating function $\sum_{H\in\calH_k}s^{2e(H)}$ from above.
We first state an auxiliary lemma, which extends the well-known formula \prettyref{eq:rooted-forest} for  enumerating rooted forests.
     \begin{lemma}\label{lmm:counting_pseudoforests}
      The number of rooted pseudoforests on $n$ nodes with $a$ edges (allowing self-loops and multiple edges)  is at most
\begin{equation}
         \binom{n}{a }\left(2n \right)^{a}. \label{eq:rooted-pseudoforest}
\end{equation}
\end{lemma}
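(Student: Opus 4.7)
The plan is to bound the count by comparison with a directed pseudo-multigraph count in which every vertex has out-degree at most one. First I would observe that any pseudoforest $P$ on $[n]$ can be oriented in this way: direct every tree component toward a chosen ``sink'' and every unicyclic component by choosing an orientation of its unique cycle (with all hanging trees pointing toward the cycle). Such oriented multigraphs are in bijection with pairs $(S,f)$ where $S\subseteq[n]$ with $|S|=a$ is the set of out-degree-one vertices and $f\colon S\to[n]$ records the head of each outgoing edge. Conversely, for any such $(S,f)$ the underlying undirected multigraph has, in every connected component, at most as many edges as vertices, so it is automatically a pseudoforest. Hence the number of such oriented objects is exactly $\binom{n}{a}n^a$.

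Next I would count, for each underlying undirected pseudoforest $P^\circ$, both its number of orientations and its number of rootings, and take the ratio. Each tree component contributes a factor $|V|$ to the orientation count (the free choice of sink), while each unicyclic component contributes a factor $\gamma\in\{1,2\}$ (one orientation for self-loops and $2$-cycles; two for cycles of length $\ge3$). The number of rootings is $\prod_{\text{all}}|V|$ since the root is chosen freely in each component. Therefore
\[
\frac{\#\text{rootings of }P^\circ}{\#\text{orientations of }P^\circ}=\frac{\prod_{\text{uni}}|V_i|}{\prod_{\text{uni}}\gamma_i}\le\prod_{\text{uni}}|V_i|.
\]

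The crux of the argument is the estimate $\prod_{\text{uni}}|V_i|\le 2^a$. Using the elementary bound $k\le 2^k$ for integers $k\ge1$, the product is at most $2^{\sum|V_i|}$, and since each unicyclic component has equal vertex and edge counts, $\sum_{\text{uni}}|V_i|=\sum_{\text{uni}}|E_i|\le a$. Summing the resulting inequality $\#\text{rootings}(P^\circ)\le 2^a\cdot\#\text{orientations}(P^\circ)$ over all $P^\circ$ and invoking the earlier bijection yields
\[
\#\{\text{rooted pseudoforests on $[n]$ with $a$ edges}\}\le 2^a\cdot\binom{n}{a}n^a=\binom{n}{a}(2n)^a,
\]
as desired.

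The main subtlety I expect is verifying the orientation correspondence for self-loops and multi-edges, which the lemma explicitly permits: a self-loop at $v$ must be oriented as $f(v)=v$, occupying $v$'s single out-slot, and a pair of parallel edges between $v$ and $w$ must be oriented as $f(v)=w$ and $f(w)=v$ (any other choice violates the out-degree constraint). Once this bookkeeping is in place and the identity $|V|=|E|$ on each unicyclic component is checked in the self-loop and $2$-cycle cases, the remaining estimate is elementary.
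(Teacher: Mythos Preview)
Your proof is correct and takes a genuinely different route from the paper. The paper's argument fixes the number $m$ of unicyclic components, starts from the known upper bound $\binom{n}{a-m}n^{a-m}$ for rooted forests with $a-m$ edges, selects which $m$ of the $n-a+m$ roots will receive an extra cycle-forming edge (at most $n$ choices for the other endpoint), and then sums over $m$ using the identity $\sum_{m=0}^{a}\binom{n}{a-m}\binom{n-a+m}{m}=2^{a}\binom{n}{a}$. Your approach instead counts functional digraphs (out-degree $\le 1$) exactly as $\binom{n}{a}n^{a}$ and bounds, for each underlying pseudoforest, the ratio of rootings to orientations by $\prod_{\text{uni}}|V_i|\le 2^{\sum_{\text{uni}}|E_i|}\le 2^{a}$ via the crude but sufficient inequality $k\le 2^{k}$. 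Your argument is more self-contained---it needs neither the rooted-forest enumeration formula nor the binomial identity---and the functional-graph viewpoint makes the self-loop and parallel-edge bookkeeping transparent (both force a unique orientation, so $\gamma=1$, exactly as you note). The paper's approach, by contrast, stays closer to the surrounding enumeration machinery, which already invokes the rooted-forest count, and extracts the factor $2^{a}$ as an exact combinatorial identity rather than an inequality.
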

\begin{proof}
    To see this, let $m$ denote the number of cycles (including self-loops and parallel edges). 
    Then the number of connected components is $n - a + m$. 
    To enumerate all such rooted pseudoforests, we first enumerate all rooted forests on $n$ vertices with $a-m$ edges,
    then choose $m$ roots out of $n-a+m$ roots, and finally add one edge to each chosen root to form a cycle within its corresponding component. 
  Each added edge can either be a self-loop at the root or connect the root to some other node, so there are at most $n$ different choices of the added edge. 
  Therefore,  the total number of rooted pseudoforests on $n$ vertices with $a$ edges is at most 
     \begin{align*}
         \sum_{m=0}^{a}\binom{n}{a-m}n^{a-m}\binom{n-a+m}{m}n^m 
         & = n^{a} \underbrace{\sum_{m=0}^{a}\binom{n}{a-m }\binom{n-a+m}{m}}_{2^{a} \binom{n}{a}} = \binom{n}{a}(2n)^{a}.
     \end{align*}
\end{proof}

Now, we can enumerate all possible output backbone graphs $\Gamma$ of \prettyref{alg:pf} as follows.
For $t=1,\ldots,k$,
\begin{enumerate}
   \item 

   Note that $\Gamma_t$ constructed in Step 1 is a  rooted pseudoforest with $n_t$ giant nodes and $a_t$ giant edges. 
   Moreover, each giant edge added in Step 1 carries at most $t$ possible labels. 
   Hence, the total number of all possible rooted pseudoforests  $\Gamma_t$ constructed in Step 1 is at most:
    \begin{align}
        \binom{n_t}{a_t}\left(2t n_t\right)^{a_t}. \label{eq:pseudoam} 
    \end{align} 
   \item 
The total number of different ways of splitting is at most 
   \begin{align}
        \binom{n_t-a_t}{b_t} n_t^{b_t} \label{eq:pseudobm}.
   \end{align}
   \item The total number of different ways of forward bridging is at most:
   \begin{align}
       \binom{n_t-a_t-b_t  }{c_t}\left(\sum_{\ell<t} \ell n_{\ell}\right)^{c_t}  \label{eq:pseudocm}.
   \end{align}
   \item  The total number of different ways of backward bridging is at most:
   \begin{align}
      \binom{n_t-a_t-b_t-c_t }{d_t } \left( t n_{2t} \right)^{d_t}   \label{eq:pseudodm}.
   \end{align}
\end{enumerate}
Combining \prettyref{eq:pseudoam}, \prettyref{eq:pseudobm}, \prettyref{eq:pseudocm}, and \prettyref{eq:pseudodm}, we conclude  that
the total number of possible
 output backbone graphs $\Gamma$ with input parameter $ (\mathbf{a}, \mathbf{b}, \mathbf{c},\mathbf{d})$ is at  most 
\begin{align*}
 \prod_{t=1}^k   \indc{b_t=0 \text{ for odd } t } \indc{d_t=0 \text{ if } 2t>k } 
  \binom{n_t}{a_t, \, b_t,\, c_t, \, d_t } \left(2t n_t\right)^{a_t}n_t^{b_t} \left(\sum_{\ell<t} \ell n_{\ell}\right)^{c_t}\left( tn_{2t} \right)^{d_t}.
\end{align*}
Note that for each output backbone graph, the total number of edges in the corresponding orbit graph $H$ satisfies
 $$
 e(H) \ge \sum_{t=1}^k t \left(a_t+b_t/2 +c_t+2d_t \right).
 $$
 Combining the above two displays, we obtain
\begin{align*}
  \sum_{H \in \calH_k}  s^{2e(H) }  & \le \sum_{\mathbf{a}, \mathbf{b}, \mathbf{c},\mathbf{d}} \;
  \prod_{t=1}^k  
  \indc{b_t=0 \text{ for odd } t }  \indc{d_t=0 \text{ if } 2t>k }
 \binom{n_t}{a_t, \, b_t , \, c_t, \, d_t }  \\
 &~~~~ \times  \left(2tn_t\right)^{a_t} n_t^{b_t    } \left(\sum_{\ell<t} \ell n_{\ell}\right)^{c_t}\left( tn_{2t} \right)^{d_t}
 \times s^{2t a_t + t b_t   + 2t d_t + 4t d_t} \\
 & \le \prod_{t=1}^k 
 \left( 1 +   s^{t} n_{t} \indc{t:\mathrm{even}}   + 2t n_t s^{2t} + s^{2t} \sum_{\ell < t} \ell n_{\ell}+ s^{4t} t n_{2t}  \indc{2t \le k}  \right)^{n_t}\\
 & \le \prod_{t=1}^k   \left( 1 +  s^{t} n_{t}  \indc{t:\mathrm{even}}  +  2 s^{2t} \sum_{\ell \le  t} \ell n_{\ell}+ s^{4t} t n_{2t}  \indc{2t \le k}  \right)^{n_t},
\end{align*}
completing the proof of \prettyref{thm:jk}.

\section{Conclusion and Open Questions}
In this paper, we formulate the general problem of testing network correlation and characterize the statistical detection limit.
For both Gaussian-weighted complete graphs and dense \ER graphs, we determine the sharp threshold at which the asymptotic optimal  testing error probability jumps from $0$ to $1$. For sparse \ER graphs, we determine the threshold within a constant factor. The proof of the impossibility results relies on a delicate application of the truncated second moment method, and in particular, leverages the pseudoforest structure of subcritical \ER graphs in the sparse setting. We conclude the paper with a few important open questions.
\begin{enumerate}
\item In a companion paper~\cite{MWXY20}, we show that a polynomial-time test based on counting trees achieves strong detection when the average degree $np\ge n^{-o(1)}$ and
the correlation $\rho \ge c$ for an explicit constant $c$. In particular, this result combined with our negative results in~\prettyref{thm:er} imply that the detection limit is attainable in polynomial-time up to a constant factor in the sparse regime when $np=\Theta(1).$
However, achieving the optimal detection threshold in  polynomial time remains largely open. 
\item 
It is of interest to study the detection limit under general weight distributions. Our proof techniques are likely to work beyond the Gaussian Wigner and \ER graphs model. For example, for general distributions $P$ and $Q$, 
as shown in the proof of~\prettyref{prop:cycle}, the second moment is determined by the eigenvalues of the kernel operator 
defined by  the likelihood ratio $L(x,y)=\frac{P(x,y)}{Q(x,y)}$. 
Another interesting direction is testing correlations between hypergraphs. 
\item Another important open problem is to determine the  sharp threshold for detection in the sparse \ER graphs with $p=n^{-\Omega(1)}$. In particular, to improve our positive result,
one may need to analyze  a more powerful test statistic beyond QAP. 
For the negative direction,
one needs to consider the case where the intersection graph $A \wedge B^\pi$ is 
supercritical and a more sophisticated conditioning beyond the pseudoforest structure may be required.

\end{enumerate}

\appendix

\section{Supplementary Proof for Sections \ref{sec:intro}, \ref{sec:uncond} and \ref{sec:lb-dense}} \label{sec:supp_1_3_4}
\subsection{Proof for \prettyref{rmk:weak_detection}}
\label{sec:pf-weak_detection}
In this subsection, we show that in the non-trivial regime of 
$p=\omega(1/n^2)$ and $p=1-\Omega(1)$, weak detection can be achieved by comparing the number of edges of the two observed graph, provided that $s=\Omega(1)$. 
To see this, 
let $X,Y$ denote the total number of edges in the observed graphs $\overline{G}_1$ and $\overline{G}_2$, respectively.
Under  $\calH_0$ (resp.\  $\calH_1$), $X-Y$ is a sum of $m$ $\iid$ random variables that equal to $-1$ or $+1$ with the equal probability $ps(1-ps)$ (resp.\ $ps(1-s)$) and $0$ otherwise. Using Gaussian approximation, this essentially reduces to testing $\calN(0,1)$ vs. $\calN\left(0,\frac{1-ps}{1-s}\right)$, which can be non-trivially separated as long as $s$ is non-vanishing.

Formally,  assume $s\le 1-\Omega(1)$ without loss of generality and  let $m=\binom{n}{2}$. Then for some $\tau$, 
\begin{align*}
    & \calQ\left(|X-Y| \ge \tau\right)-\calP \left(|X-Y| \ge \tau\right) \\
    & \overset{(a)}{\ge} \Prob\left(|\calN(0,2m ps(1-ps))| \ge \tau\right)-\Prob\left(|\calN(0,2 m ps(1-s))|\ge \tau\right) - O\left(\frac{1}{\sqrt{m p }}\right)\\
    &\overset{(b)}{=} \mathrm{TV} \left(\calN\left(0,1\right),\calN\left(0,\frac{1-ps}{1-s}\right)\right)  - O\left(\frac{1}{\sqrt{m p }}\right) 
    \overset{(c)}{=} \Omega(1),
\end{align*}
where $(a)$ follows from Berry-Esseen theorem \cite[Theorem 5.5]{petrov} and the assumption that $ \Omega(1) \le s \le 1-\Omega(1)$;
$(b)$ holds for some $\tau$ because the likelihood ratio between two centered normals is monotonic in $|x|$;
$(c)$ holds because $(1-ps)/(1-s)$ is bounded away from $1$ and $mp = \omega(1)$ under assumptions that $s = \Omega(1)$, $p$ is bounded away from $1$, and $n^2p= \omega(1)$.

\subsection{Proof of \prettyref{prop:n1n2_weak_detection}} \label{sec:n1n2_weak_detection}
\begin{proof}
Using \prettyref{eq:N1N2}, we have $N_2 = \binom{n_2}{2}\times 2+n_1 n_2 + n_4 \le (n_2 + n_1) n_2 + n_4 \le n n_2 + n $. Therefore, 
\begin{align*}
    & \Expect_{\pi\indep{}\tpi}\left [ \exp \left( \mu n_1^2 + \nu n_1 +  \tau n_2 + \tau^2 N_2  \right) \indc{a \le n_1 \le b } \right] \\
    & \le \Expect_{\pi\indep{}\tpi}\left [ \exp \left( \mu n_1^2 + \nu n_1 + \left( \tau + \tau^2 n\right)n_2 + \tau^2 n \right) \indc{a \le n_1 \le b } \right] \\
    & = \left( 1+o(1)\right)  \Expect_{\pi\indep{}\tpi} \left [ \exp \left( \mu n_1^2 + \nu n_1 +  (\tau+ \tau^2 n) n_2 \right) \indc{ a \le n_1  \le b} \right],
\end{align*}
where the last equality holds because $n\tau^2 = o(1)$ by assumption. 
Pick $\eta$ such that $\omega(1) \le \eta \le \sqrt{n}$. We decompose
$
 \Expect_{\pi\indep{}\tpi} \left [ \exp \left( \mu n_1^2 + \nu n_1 +  (\tau+ \tau^2 n) n_2   \right) \indc{ a \le n_1  \le b} \right]
$
as  $\termI+\termII$, where:
\begin{align*}
    \termI 
    & = \Expect_{\pi\indep{}\tpi}\left [ \exp \left( \mu n_1^2 + \nu n_1 +  \left( \tau +\tau^2 n\right) n_2 \right) \indc{a \le n_1 \le b } \indc{0 \le n_1< \eta}\right]
    \\
     \termII 
    & = \Expect_{\pi\indep{}\tpi}\left [ \exp \left( \mu n_1^2 + \nu n_1 +  \left( \tau +\tau^2 n\right) n_2 \right) \indc{a \le n_1 \le b } \indc{n_1\ge  \eta}  \right].
\end{align*}
\begin{itemize}
    \item To bound $\termI$, we first apply the total variation bound \prettyref{eq:cycle_decomposition_poisson_distribution_total_v} in \prettyref{lmm:cycle_decomposition_poisson_distribution_total_v} and get
        $$
        \mathrm{TV} \left(  \calL\left( n_1, n_2\right) , \calL\left( Z_1, Z_2 \right) \right) \le F\left( \frac{n}{2} \right),
        $$
        where $Z_1\sim \Pois(1)$ and $Z_2 \sim \Pois(\frac{1}{2})$ are independent Poisson random variables, and $\log F(n/2)=-(1+o(1)) \frac{n}{2}\log \left(\frac{n}{2}\right)$.  
        Then, we have 
        \begin{align*}
            \termI 
            & \le \Expect\left [ \exp \left( \mu Z_1^2 + \nu Z_1 + \left( \tau +\tau^2 n\right) Z_2\right) \indc{a \le Z_1 < b}\right]  + 2 F\left(\frac{n}{2} \right) \exp \left(\left(\mu \eta + \nu\right) \eta + \left( \tau +\tau^2 n \right)n\right) \\
            & =   \Expect\left [ \exp \left( \mu Z_1^2 + \nu Z_1 + \left( \tau +\tau^2 n\right) Z_2\right) \indc{a \le Z_1 < b}\right] + o(1),
        \end{align*}
    where the last equality holds by the claim that $\left(\mu \eta + \nu\right) \eta + \left( \tau +\tau^2 n \right)n = o(n\log n)$. Indeed, note that $\mu b +\nu +2 -\log b \le 0$ and $\omega(1)\le b \le n$.
    It follows that $\mu \le \log(b)/b=o(1)$ and $\nu \le \log b \le \log n$. Moreover, 
    $\tau^2=o\left(\frac{1}{n}\right)$ and $\omega(1) \le \eta \le \sqrt{n}$. Hence the claim follows. 
    \item To bound $\termII$, we apply \prettyref{eq:jointdis2} in \prettyref{lmm:jointpmf}: 
        \begin{align*}
             \termII 
            & \le \Expect \left [\exp \left( \mu Z_1^2 + \nu Z_1 +  \left( \tau +\tau^2 n\right) Z_2\right) \indc{\eta \le Z_1 \le b } \right]e^{\frac{3}{2}}.
        \end{align*}
\end{itemize}
By applying the moment generating function $ \Expect_{X\sim  \Pois(\lambda)} \left[ \exp\left(tX\right)\right] = \exp\left(\lambda\left(e^t-1\right)\right)$,
we then get that $ \Expect \left [ \exp \left(\left( \tau +\tau^2 n\right) Z_2\right)\right]= 1+o(1)$ given $\tau^2 = o\left(\frac{1}{n}\right)$.
Therefore, we have 
        \begin{align*}
            \termI & \le \Expect \left [ \exp \left( \mu Z_1^2 + \nu Z_1 \right) \indc{a \le Z_1 \le b } \right] (1+o(1)) + o(1)\\
            \termII & \le  \Expect \left [\exp \left( \mu Z_1^2 + \nu Z_1 \right) \indc{\eta \le Z_1 \le b } \right] (1+o(1))e^{\frac{3}{2}}.
        \end{align*}



The following intermediate result is proved in the end.
\begin{lemma}\label{lmm:exp_poisson}
Assume $\alpha, \beta \ge  0$, $\alpha m +\beta +2 - \log m \le 0$ for some $1 \le m \le n$ such that $m=\omega(1)$, and $Z\sim \Pois\left(\lambda\right)$ for some $0<\lambda\leq 1$. 
\begin{itemize}
    \item If $\ell=\omega(1)$ and $\beta \le \log(\ell)-3$, 
    \begin{align}
    \Expect \left [ \exp \left( \alpha Z^2 + \beta Z\right) \indc{\ell \le Z \le m }  \right] = o(1). \label{eq:z_ell_omega_1}
    \end{align}
    \item If $\ell=0$ and $\beta=o(1)$, 
    \begin{align}
     \Expect \left [ \exp \left( \alpha Z^2 + \beta Z\right) \indc{\ell \le Z \le m }  \right] = 1+ o(1) . 
     \label{eq:z_ell_0}
    \end{align}
\end{itemize}
\end{lemma}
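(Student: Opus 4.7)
The plan is to establish a uniform pointwise bound $a_k \le e^{-k}$ for the summand $a_k := \Prob(Z = k)\,e^{\alpha k^2 + \beta k}$ on the relevant range of $k$, so that $\sum_{k=\ell}^m a_k$ is dominated by a convergent geometric tail. Starting from Stirling's inequality $k! \ge (k/e)^k$ together with $\lambda \le 1$, one has for $k \ge 1$
\[
a_k \;\le\; \frac{e^k}{k^k}\,e^{\alpha k^2 + \beta k} \;=\; \exp\bigl(k\,\psi(k)\bigr), \qquad \psi(k) \triangleq 1 + \alpha k + \beta - \log k.
\]
Since $\psi''(k) = 1/k^2 > 0$, the function $\psi$ is strictly convex on $(0,\infty)$ with unique minimizer at $k^\star = 1/\alpha$ (interpreted as $+\infty$ when $\alpha = 0$).

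The heart of the argument is the claim that $\psi(k) \le -1$ uniformly over $k \in [\ell, m]$. The hypothesis $\alpha m + \beta + 2 - \log m \le 0$ is exactly the statement $\psi(m) \le -1$, while the hypothesis $\beta \le \log\ell - 3$ yields $\psi(\ell) \le \alpha\ell - 2$. I split into two cases. If $\alpha\ell \le 1$, then $\psi(\ell) \le -1$ and, by convexity of $\psi$, $\max_{k \in [\ell,m]} \psi(k) = \max\{\psi(\ell),\psi(m)\} \le -1$. If instead $\alpha\ell > 1$, then $k^\star = 1/\alpha < \ell$, so $\psi$ is strictly increasing on $[\ell, m]$ and $\psi(k) \le \psi(m) \le -1$. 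With the claim in hand, part (i) of the lemma follows immediately from
\[
\sum_{k=\ell}^m a_k \;\le\; \sum_{k \ge \ell} e^{-k} \;=\; \frac{e^{-\ell}}{1 - e^{-1}} \;=\; o(1),
\]
using $\ell = \omega(1)$.

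For part (ii), where $\ell = 0$ and $\beta = o(1)$, I introduce an auxiliary threshold $K = \omega(1)$ chosen so that $\alpha K^2 + \beta K = o(1)$; this is possible since $\alpha \le (\log m)/m = o(1)$ and $\beta = o(1)$ imply that both $1/\sqrt{\alpha}$ and $1/\beta$ tend to infinity (with the constraint vacuous in the degenerate cases $\alpha = 0$ or $\beta = 0$). Splitting the sum at $K$, the head is bounded by
\[
\sum_{k=0}^K a_k \;\le\; e^{\alpha K^2 + \beta K}\,\Prob(Z \le K) \;\le\; 1 + o(1),
\]
and the tail $\sum_{k > K}^m a_k = o(1)$ by re-applying the key claim with $K+1$ in place of $\ell$ (note that $\beta \le \log(K+1) - 3$ holds automatically for $K = \omega(1)$ and $\beta = o(1)$). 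The matching lower bound $\sum_{k=0}^m a_k \ge \Prob(Z \le m) = 1 - o(1)$ then gives the expectation equal to $1 + o(1)$. The main subtlety lies in the two-case analysis above: the endpoint bound $\psi(\ell) \le \alpha\ell - 2$ is insufficient when $\alpha\ell > 1$, but this very regime forces the minimizer $1/\alpha$ to lie to the left of $\ell$, so monotonicity takes over and the weaker bound $\psi(\ell) \le \psi(m)$ suffices, reflecting a clean interplay between the two hypotheses on $\beta$.
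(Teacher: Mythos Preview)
Your proof is correct and follows essentially the same approach as the paper: both use Stirling's bound to reduce to the pointwise inequality $\alpha k + \beta + 2 - \log k \le 0$ on $[\ell,m]$ (your $\psi(k)\le -1$), then sum a geometric tail, and both handle part~(ii) by splitting at an intermediate threshold $t=\omega(1)$ with $\alpha t^2+\beta t=o(1)$. The only cosmetic difference is in the verification of the key inequality: the paper rewrites it as $f(x)\triangleq(\log x-\beta-2)/x\ge\alpha$ and observes that $f$ is decreasing on $[e^{\beta+3},\infty)\supset[\ell,m]$, so it suffices to check the single endpoint $f(m)\ge\alpha$, whereas you argue via convexity of $\psi$ with a case split on $\alpha\ell\lessgtr 1$.
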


If $a= \omega(1)$, applying \prettyref{eq:z_ell_omega_1} yields $\termI=o(1)$ and $\termII=o(1)$, hence the desired \prettyref{eq:n1n2_a_omega_1}.
If $a=0$, applying both \prettyref{eq:z_ell_omega_1} and \prettyref{eq:z_ell_0}, we get $\termI=1+o(1)$, $\termII=o(1)$, and hence the desired \prettyref{eq:n1n2_a_0}.

Finally to show \prettyref{eq:n1n2_main}, note that using \prettyref{eq:N1N2},  we have $N_1 = \binom{n_1}{2} + n_2 \le n_1^2/2+ n_2$. Then, we have
$\Expect_{\pi\indep{}\tpi}\left [ \exp \left( \tau N_1 + \tau^2 N_2  \right)  \right] 
    \le \Expect_{\pi\indep{}\tpi}\left [ \exp \left( \tau n_1^2/2 +  \tau n_2 + \tau^2 N_2  \right)  \right]$.
Thus the desired bound follows from applying  \prettyref{eq:n1n2_a_0} with $\mu=\tau^2/2$, $\nu=0$, $a=0$, and $b=n$. 
\end{proof}


\begin{proof}[Proof of \prettyref{lmm:exp_poisson}]
To show \prettyref{eq:z_ell_omega_1}, note that
\begin{align*}
    \Expect \left [ \exp \left( \alpha Z^2 + \beta Z\right) \indc{\ell \le Z \le m }  \right]
    & = e^{-\lambda} \sum_{a_1=\ell}^{m}  \frac{\lambda^{a_1}\exp\left( \alpha a_1^2 + \beta a_1 \right)}{a_1!} \\
    & \overset{(a)}{\le} e^{-\lambda} \sum_{a_1=\ell}^{m}  \lambda^{a_1} \exp\left( \alpha a_1^2 + \beta a_1 - a_1 \log a_1 +a_1\right) )  \\
    & \overset{(b)}{=}e^{-\lambda} \sum_{a_1=\ell}^{m} \lambda^{a_1} \exp(-a_1) = o(1),
\end{align*}
where $(a)$ holds due to $a_1!\ge   (a_1/e)^{a_1}$ for $a_1\ge  1$; $(b)$ follows from 
the claim that $\alpha x+\beta +2 -\log x \le 0$ for $\ell \le x \le m$.
  To see this, define $f(x) = \frac{\log x-\beta-2}{x}$.
Then by assumption $f(m) \ge \alpha$. Since $f'(x) \le 0$  for $x\ge  e^{\beta+3}$ and $\ell \ge e^{\beta +3}$ by assumption,   
it follows that $f(x) \ge \alpha$ for all $\ell \le x \le m$.

Next we prove \prettyref{eq:z_ell_0}. Since $\alpha m + \beta +2 - \log m \le 0$ for $m=\omega(1)$, we have $\alpha \le \log(m)/m= o(1)$.  
Moreover, $\beta=o(1)$ by assumption. Therefore there exists some $t= \omega(1)$ such that $\ell=0< t \le m$ and $ \alpha t^2 + \beta t =o(1)$.
 Then
\begin{align*}
    \Expect \left [ \exp \left( \alpha Z^2 + \beta Z\right) \indc{\ell \le Z \le m }  \right]
    & =  \Expect \left [ \exp \left( \alpha Z^2 + \beta Z\right) \indc{0 \le Z \le t }  \right]+ \Expect \left [ \exp \left( \alpha Z^2 + \beta Z\right) \indc{t <  Z \le m }  \right]\\
    & \overset{(a)}{=} e^{-\lambda} \sum_{a_1=0}^{t} \frac{\lambda^{a_1}\exp\left(\alpha a_1^2 +\beta a_1 \right)}{a_1!} + o(1) \\
    &  \overset{(b)}{=} e^{-\lambda} \sum_{a_1=0}^t \frac{\lambda^{a_1}\left(1+o(1)\right)}{a_1!}+o(1) \overset{(c)}{=} 1+o(1),
\end{align*}
where $(a)$ holds by \prettyref{eq:z_ell_omega_1}; $(b)$ holds because $ \alpha a_1^2 + \beta a_1 =o(1)$ 
for $0 \le a_1 \le t$; $(c)$ holds because 
$\sum_{a_1=0}^t \frac{\lambda^{a_1}}{a_1!}= e^{\lambda}+o(1)$ for $t=\omega(1)$.  
\end{proof}
\subsection{Sharp threshold for dense \ER graphs}  \label{sec:er_dense} 
In this subsection, we focus on the case of dense parent graph whose edge density satisfies
\begin{align}
p \le 1- \Omega(1)  \quad \text{ and } \quad p=n^{-o(1)}. \label{eq:dense_regime_er}
\end{align}
Recall that \prettyref{thm:unconditional_second_moment} implies that weak detection is impossible, if $\rho^2 =\frac{s^2 (1-p)^2 }{ (1-ps)^2 } \le (2-\epsilon) \frac{\log n}{n}$.
We improve over this condition 
by showing that if 
\begin{align}
np s^2 \left( \log \frac{1}{p} -1 + p \right) \le (2-\epsilon) \log n, 
\label{eq:lb-denseER}
\end{align}
then weak detection is impossible. 
This completes the impossibility proof for \prettyref{thm:er} in the dense regime of~\prettyref{eq:dense_regime_er}.

Without loss of generality, we assume that \prettyref{eq:lb-denseER} holds with equality (otherwise one can further subsample the edges), i.e.,
\begin{align}
np s^2 \left( \log \frac{1}{p} -1 + p \right) = (2-\epsilon) \log n. \label{eq:sharp_lower_bound_er_dense}
\end{align}
In the dense graph regime~\prettyref{eq:dense_regime_er}, 
under assumption \prettyref{eq:sharp_lower_bound_er_dense}, 
we have 
\begin{align}
nps^2=\omega(1)  \quad \text{ and } \quad s=n^{ -1/2+o(1) }. \label{eq:dense_nps} 
\end{align}

As argued in~\prettyref{sec:obstruction}, analogous to the Gaussian case, the unconditional second moment explodes when $\rho^2 \ge (2+\epsilon) \frac{\log n}{n}$,
due to the obstruction of fixed points, or more precisely, an atypically large magnitude of $\prod_{O \in \calO_1} X_O$. 
By \prettyref{eq:Lba-ER1},
\begin{align*}
L(a,b) 
&= \frac{1-\eta}{1-ps} \pth{\frac{1-s}{1-\eta}}^{a+b} \pth{\frac{s(1-\eta)}{\eta(1-s)}}^{ab},
\end{align*}
where $\eta \triangleq \frac{ps(1-s)}{1-ps}$,
and then by \prettyref{eq:X_ij}, 
\begin{align}
\prod_{O \in \calO_1} X_O & = \prod_{ i<j \in F } X_{ij}   \nonumber \\
&  =\left(\frac{1-\eta}{1-p s}\right)^{2 \binom{n_1}{2} }  \left(\frac{1-s }{1-\eta } \right)^{ 2 e_A(F) + 2 e_{ B^{\pi}} \left( F \right) } 	
	\left(\frac{s\left(1-\eta\right)}{\left(1-s\right)\eta}\right)^{2 e_{A\wedge B^{\pi}}(F)},  \label{eq:X_O_F_ER}
\end{align}
where $F$ denotes the set of fixed points of $\sigma = \pi^{-1}\circ\tpi$, and
$$
e_A(F) = \sum_{i<j \in F} A_{ij}, \quad e_{ B^{\pi}}  \left( F \right) =\sum_{i<j \in F} B_{\pi(i) \pi(j) }, \quad 
\text{ and } \quad e_{A\wedge B^{\pi}}(F) = \sum_{i<j \in F} A_{ij} B_{\pi(i) \pi(j)}.
$$ 
Since  $s>\eta$, it follows that  $\frac{1-s }{1-\eta }  <1$ and  $\frac{s\left(1-\eta\right)}{\left(1-s\right)\eta}>1$. Thus when 
$e_{A\wedge B^{\pi}}(F)$ is atypically large, 
$\prod_{ i<j \in F } X_{ij} $ becomes enormously large, driving the unconditional second moment to explode. 
Hence, we aim to truncate  $\prod_{ i<j \in F } X_{ij}$ by conditioning on the maximum possible value of $e_{A\wedge B^{\pi}}(F)$
under the planted model $\calP$ when $|F|=n_1$ is large.

Specifically, given $2 \le k \le n$, define
\begin{align}
\zeta(k) \triangleq \binom{k}{2} ps^2 \exp\sth{  1+ W \left( \frac{ 2\log (2en/k)  }{ e (k-1) ps^2} - \frac{1}{e}  \right)}, \label{eq:zeta_def}
\end{align}
where $W$ is the Lambert W function defined on $[-1,\infty)$ 
as the unique solution of $W(x) e^{W(x)} =x$ for $x \ge -1/e$.  
Let 
\begin{align}
\alpha \triangleq p \left( \log \frac{1}{p} - 1+ p \right).  \label{eq:def_alpha}
\end{align}
For each  $S\subset [n]$, define the event
\begin{align}
\calE_S \triangleq \bigg\{ & (A, B, \pi):  e_A(S), e_{ B^{\pi}}(S)  \ge \binom{|S| }{2} ps - \sqrt{2 \binom{|S|}{2}  ps |S| \log \frac{2en}{ |S|}  }, 
 e_{A\wedge B^{\pi}}(S) \le   \zeta \left( |S| \right) \bigg \}
 \label{eq:cond_high_prob_dense_ER}.
\end{align} 
We condition on the event
\begin{align}
\calE \triangleq \bigcap_{S \subset[n]: \alpha n \le |S| \le n } \calE_S  \label{eq:calE-er}.
\end{align}
As previously explained in \prettyref{sec:gaussian_second_moment} for the Gaussian case, 
since we cannot condition on the set $F$, in order to truncate $\prod_{ i<j \in F } X_{ij} $,  
$\calE$ is defined as the intersection of $\calE_S$ over all subsets $S$ with $|S| \ge \alpha n$, 
so that $\calE$ implies $\calE_F$ whenever $|F| \ge \alpha n$. 
However, in contrast to the Gaussian case, it is no longer true that
$e_{A \wedge B^{\pi}}(S)=(1+o(1))  \binom{|S|}{2} ps^2$ uniformly for all $S$ with $|S| \ge \alpha n$.
Thus, we condition on $e_{A\wedge B^{\pi}}(S) \le   \zeta \left( |S| \right) $,
where $\zeta(k)$ in \prettyref{eq:zeta_def} is defined according to the large-deviation behavior of $\Binom\left( \binom{k}{2}, ps^2 \right)$,
as we will see in the next lemma. 
 

The following lemma proves that 
$\calE$ holds with high probability under the planted model $\calP$.
\begin{lemma}\label{lmm:cond_high_prob_dense_ER}
Suppose $\alpha n =\omega(1)$. Then $\calP( (A, B, \pi) ) \in \calE )=1 - e ^{-\Omega(\alpha n) }$. 
\end{lemma}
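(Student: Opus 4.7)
The plan is a union bound over all subsets $S \subset [n]$ with $\alpha n \le |S| \le n$, combined with sharp Chernoff bounds for the three Binomial quantities appearing in $\calE_S$. Under the planted model $\calP$, we have $A, B^\pi \sim \calG(n,ps)$ marginally, and conditional on $\pi$ the intersection graph satisfies $A \wedge B^\pi \sim \calG(n, ps^2)$. Thus for any fixed $S$ with $|S|=k$, we have $e_A(S), e_{B^\pi}(S) \sim \Binom\bigl(\binom{k}{2}, ps\bigr)$ and $e_{A\wedge B^\pi}(S) \sim \Binom\bigl(\binom{k}{2}, ps^2\bigr)$.

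For the two lower-tail estimates, I would use the standard Chernoff bound $\calP(\Binom(m,q) \le mq - t) \le \exp(-t^2/(2mq))$. Plugging in $m = \binom{k}{2}$, $q = ps$ and the deviation $t_k = \sqrt{2\binom{k}{2}ps \cdot k\log(2en/k)}$ chosen in \prettyref{eq:cond_high_prob_dense_ER} gives exactly $\exp(-k\log(2en/k))$ as the tail probability. For the upper tail on the intersection, I would invoke the multiplicative Chernoff bound $\calP(\Binom(m,q) \ge t) \le \exp(t - \mu - t\log(t/\mu))$ with $\mu = mq = \binom{k}{2}ps^2$. Writing $t = \mu e^{1+y}$ reduces the exponent to $-\mu - \mu y e^{1+y} = -\mu - e\mu\, y e^y$. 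The definition \prettyref{eq:zeta_def} of $\zeta(k)$ corresponds precisely to $y = W\!\left(\frac{k\log(2en/k)}{e\mu} - \frac{1}{e}\right)$, using $\frac{2\log(2en/k)}{e(k-1)ps^2} = \frac{k\log(2en/k)}{e\mu}$; then $ye^y = \frac{k\log(2en/k)}{e\mu} - \frac{1}{e}$, so $e\mu \cdot ye^y = k\log(2en/k) - \mu$, and the tail bound collapses again to $\calP(e_{A\wedge B^\pi}(S) \ge \zeta(k)) \le \exp(-k\log(2en/k))$. This calibration, motivating the precise form of $\zeta$ (including the $-1/e$ shift that makes $W$ applicable and cancels the $-\mu$ term), is the one step that requires care.

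With all three events controlled by $\exp(-k\log(2en/k))$ for each fixed $S$, a union bound over the $\binom{n}{k} \le (en/k)^k$ subsets of size $k$ yields failure probability at most $3(en/k)^k \cdot (k/(2en))^k = 3 \cdot 2^{-k}$. Summing over $\alpha n \le k \le n$ gives $\sum_{k \ge \alpha n} 3 \cdot 2^{-k} = O(2^{-\alpha n}) = e^{-\Omega(\alpha n)}$, which is $o(1)$ since $\alpha n = \omega(1)$ by hypothesis. This completes the proof.
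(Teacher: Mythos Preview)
Your proposal is correct and follows essentially the same approach as the paper: a union bound over subsets $S$ of each size $k \in [\alpha n, n]$, with the lower-tail Chernoff bound \prettyref{eq:chernoff_binom_left} for $e_A(S)$ and $e_{B^\pi}(S)$ and the multiplicative Chernoff bound \prettyref{eq:chernoff_binom_right} (packaged via the Lambert-$W$ form in \prettyref{eq:chernoff_binom_right_Lambert}) for $e_{A\wedge B^\pi}(S)$, all calibrated to fail with probability $(k/(2en))^k$ so that the union bound over $\binom{n}{k}\le(en/k)^k$ sets leaves $3\cdot 2^{-k}$ and summing over $k\ge \alpha n$ gives $e^{-\Omega(\alpha n)}$. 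Your explicit derivation of why $\zeta(k)$ takes its particular form (the $-1/e$ shift cancelling the $-\mu$ term in the exponent) is a nice addition that the paper leaves implicit in its appeal to \prettyref{eq:chernoff_binom_right_Lambert}.
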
 
\begin{proof}
Fix an integer $\alpha n \le k \le n$ and let $m=\binom{k}{2}$. 
Let
$$
t = \sqrt{2 m ps \log (1/\delta) }, \qquad t' = m ps^2 \exp\sth{  1+ W \left( \frac{ \log (1/\delta)  }{ e m ps^2} - \frac{1}{e}  \right)},
$$
for a parameter $\delta$ to be specified later.

Fix a subset $S \subset[n]$ with $|S|=k$. As $e_A(S) \sim \Binom(m, ps)$ and $e_{ B^{\pi}} (S) \sim \Binom(m,ps)$, using Chernoff's bound
for Binomial distributions~\prettyref{eq:chernoff_binom_left}, we get that 
with probability at least $1-2\delta$, $e_A(S) \ge mps - t$ and $e_{ B^{\pi}}(S) \ge mps-t$. 
Moreover, since $e_{A\wedge B^{\pi}}(S) \sim \Binom(m,ps^2)$, Using the multiplicative Chernoff bound for Binomial distributions~\prettyref{eq:chernoff_binom_right_Lambert}, we get that 
with probability at least $1-\delta$, $e_{A\wedge B^{\pi}}(S) \le  t' $. 

Now, there are $\binom{n}{k} \le \left( \frac{en}{k} \right)^k$ different choices of $S \subset[n]$ with $|S|=k$. 
Thus by choosing $1/\delta= \left( \frac{2en}{k} \right)^k$ and applying the union bound,
we get that 
with probability at least $1- 3\sum_{k=\alpha n}^n 2^{-k}=1-e^{-\Omega(\alpha n )}$,  
$e_A(S) \ge m p s - t$,  $e_{ B^{\pi}}(S) \ge m p s-t$, and $e_{A\wedge B^{\pi}}(S) \le t'$
for all  $S \subset[n]$ with $|S|=k$ and all  $\alpha n \le k \le n$.
\end{proof}

Now we compute the conditional second moment. 
By \prettyref{lmm:cond_high_prob_dense_ER},  it follows from  \prettyref{eq:condition_second_moment} that 
\begin{align*}
     \Expect_{\calQ}\left[\left(\frac{\calP' (A, B) }{\calQ(A, B) } \right)^2\right] 
     =  \left( 1+o\left(1\right)\right) \Expect_{\pi\ci\widetilde{\pi}}\left[
    \Expect_{\calQ}
    \left[\prod_{\orbit \in \calO } X_{\orbit}\indc{\left(A,B,\pi\right)\in \calE}\indc{ \left(A,B,\widetilde{\pi} \right)\in \calE}\right]\right].
\end{align*}

To proceed further, we fix $\pi, \tpi$ and  separately consider the following two cases. Recall that $\alpha$ is defined in~\prettyref{eq:def_alpha}.

{\bf Case 1}:  $n_1 \le \alpha n$. In this case, we simply drop the indicators and use the unconditional second moment:
$$
 \Expect_{\calQ}
    \left[\prod_{\orbit \in \calO } X_{\orbit}\indc{\left(A,B,\pi\right)\in \calE}\indc{ \left(A,B,\widetilde{\pi} \right)\in \calE}\right]
 \le  \Expect_{\calQ}
    \left[\prod_{\orbit \in \calO } X_{\orbit} \right]
  =\prod_{\orbit \in \calO }  \left(1+\rho^{2|\orbit|} \right) ,
$$       
where the equality follows from \prettyref{eq:ercycle}.

\medskip 
{\bf Case 2}:  $n_1 > \alpha n$.
In this case, we have that 
\begin{align*}
 \Expect_{\calQ}
    \left[\prod_{\orbit \in \calO } X_{\orbit}\indc{\left(A,B,\pi\right)\in \calE} \indc{ \left(A,B,\widetilde{\pi} \right)\in \calE}\right] 
& \overset{(a)}{\le}  \Expect_{\calQ}
    \left[\prod_{\orbit \in \calO } X_{\orbit} \indc{\left(A,B,\pi\right)\in \calE_F } \right] \\ 
   & \overset{(b)}{=} \prod_{\orbit \notin \calO_1 } \Expect_{\calQ}\left[X_{\orbit} \right]
 \Expect_{\calQ} \left[ \prod_{\orbit \in \calO_1}   X_{\orbit}  \indc{\left(A,B,\pi\right)\in \calE_F} \right] \\
 & \overset{(c)}{=} \prod_{\orbit \notin \calO_1 } \left(1+\rho^{2|\orbit|} \right)
  \Expect_{\calQ}   \left[  \prod_{ i<j \in F} X_{ij} \indc{\left(A,B,\pi\right)\in \calE_F }  \right],
\end{align*}
where $(a)$ holds because by definition \prettyref{eq:calE-er}, $\calE\subset \calE_F$ when $n_1 > \alpha n$;
$(b)$ holds because $X_\orbit$ is a function of
$( A_{ij}, B_{\pi(i) \pi(j) })_{(i,j) }$
 that are independent across different $\orbit \in \calO$, 
and $\indc{ (A,B,\pi) \in \calE_F  }$ only depends on $\left\{ (A_{ij}, B_{\pi(i) \pi(j) })_{(i,j) \in \orbit}: \orbit \in \calO_1 \right\} $;
$(c)$ follows from \prettyref{eq:ercycle}.

Let $m=\binom{n_1}{2}$.  Under event $\calE_F$, 
we have that 
$e_A(F) \ge (1+o(1))m ps $ and
$e_{ B^{\pi}}(F)  \ge (1+o(1)) m ps $.
This is because by \prettyref{eq:cond_high_prob_dense_ER},
$$ 
\frac{ n_1 \log(n/n_1) }{ m ps}  = \frac{ 2 \log (n/n_1)} { (n_1 -1) ps } 
\overset{(a)}{=} O \left(  \frac{ \log (1/\alpha)}{ \alpha n  ps} \right) 
\overset{(b)}{=} \Theta\left( \frac{1}{np^2 s } \right) \overset{(c)}{=} o(1),
$$ 
where $(a)$ holds because $n_1 \ge \alpha n$; 
$(b)$ holds due to $\frac{1}{\alpha} \log \frac{1}{\alpha} =\Theta(1/p)$;
$(c)$ holds because  $p=n^{o(1)}$ and $s=n^{-1/2+o(1) }$. 
Moreover,  under event $\calE_F$, 
$e_{A\wedge B^{\pi}}(F)  \le \zeta(n_1)$.

Let 
\begin{align}
\gamma\equiv \gamma(n_1) =\frac{ 2\log (2en/n_1)  }{ (n_1-1) ps^2}.  \label{eq:def_gamma}
\end{align}
Then  $\zeta(n_1) =m ps^2 \exp\left( 1+ W\left(\frac{\gamma-1}{e}\right)  \right)$. 
The following lemma characterizes the behavior of $\zeta(n_1)$ in the following three asymptotic regimes depending on $\gamma$. 
\begin{lemma}\label{lmm:flucuation_W_function}
\begin{itemize}
\item If $\gamma=o(1)$, 
$\zeta(n_1) = (1+o(1))m ps^2$. 
\item  If $\gamma = \Theta(1)$, $\zeta(n_1)= \Theta( m ps^2 )$. 
In particular, for all $n_1 \ge \alpha n$,  we have $\zeta(n_1)= o(ms^2)$. 

\item If $\gamma=\omega(1)$, 
$\zeta (n_1) \le (e+o(1)) m ps^2 \gamma/ \log \gamma$. In particular,   for all $n_1 \ge \alpha n$,  $\zeta(n_1) = o(ms^2)$.
\end{itemize}
\end{lemma}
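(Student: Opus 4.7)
The plan is to reduce the entire lemma to an asymptotic analysis of the Lambert W function, exploiting the exact identity $\zeta(n_1)=mps^{2}\exp\bigl(1+W\bigl((\gamma-1)/e\bigr)\bigr)$ that one reads off directly from \prettyref{eq:zeta_def} and \prettyref{eq:def_gamma}. Thus everything boils down to understanding $\Phi(\gamma)\triangleq\exp\bigl(1+W\bigl((\gamma-1)/e\bigr)\bigr)$ in each of the three scaling regimes of $\gamma$.

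For the regime $\gamma=o(1)$, the argument $(\gamma-1)/e$ approaches the branch point $-1/e$, where $W(-1/e)=-1$. Writing $u\triangleq 1+W\bigl((\gamma-1)/e\bigr)$ (so $u\to 0^{+}$) and using the defining identity $W(x)e^{W(x)}=x$, one gets $(u-1)e^{u}=\gamma-1$. Taylor expanding $(u-1)e^{u}=-1+u^{2}/2+O(u^{3})$ yields $u=\sqrt{2\gamma}\,(1+o(1))$, and hence $\Phi(\gamma)=e^{u}=1+o(1)$. This gives the first bullet $\zeta(n_1)=(1+o(1))mps^{2}$.

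For $\gamma=\Theta(1)$ the statement $\Phi(\gamma)=\Theta(1)$ is immediate from continuity of $W$ away from $-1/e$, so $\zeta(n_1)=\Theta(mps^{2})$. The non-trivial part of this bullet is the ``in particular'' claim that $\zeta(n_1)=o(ms^{2})$ for all $n_1\ge\alpha n$, which amounts to showing $p=o(1)$ whenever $\gamma=\Theta(1)$ and $n_1\ge\alpha n$. If on the contrary $p=\Theta(1)$, then $\alpha=\Theta(1)$, so $n_1=\Theta(n)$ and the denominator of $\gamma$ is $\Theta(nps^{2})$, while the numerator is $\Theta(1)$. But by \prettyref{eq:dense_nps} we have $nps^{2}=\omega(1)$, forcing $\gamma=o(1)$, a contradiction. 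So $p=o(1)$ and the ``in particular'' claim follows.

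The final regime $\gamma=\omega(1)$ relies on the standard large-argument asymptotic $W(y)=\log y-\log\log y+o(1)$ as $y\to\infty$, which one obtains by writing $W(y)=\log y-\log W(y)$ and iterating. Substituting $y=(\gamma-1)/e$ and simplifying, $1+W\bigl((\gamma-1)/e\bigr)=\log\gamma-\log\log\gamma+o(1)$, giving $\Phi(\gamma)=(1+o(1))\gamma/\log\gamma$, which implies the stated upper bound $\zeta(n_1)\le(e+o(1))mps^{2}\gamma/\log\gamma$ with room to spare. The ``in particular'' portion is in fact vacuous for $n_1\ge\alpha n$: combining \prettyref{eq:sharp_lower_bound_er_dense} and the definition \prettyref{eq:def_alpha} of $\alpha$ gives $\alpha n\cdot ps^{2}=(2-\epsilon)\log n$, so for $n_1\ge\alpha n$ one has $(n_1-1)ps^{2}\ge(1-o(1))(2-\epsilon)\log n$, while the numerator of $\gamma$ satisfies $2\log(2en/n_1)\le 2\log(2e/\alpha)+O(1)$, which is $O(\log(1/p))=o(\log n)$; hence $\gamma=o(1)$, contradicting $\gamma=\omega(1)$.

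The main technical care is really only needed in the first regime: the branch-point expansion of $W$ near $-1/e$ must be done to sufficient order to guarantee $u=o(1)$, and one must verify that the remainder $O(u^{3})$ is genuinely absorbed into $\sqrt{2\gamma}\,(1+o(1))$. Everything else is bookkeeping driven by the scaling relation $nps^{2}\alpha=(2-\epsilon)\log n$ from \prettyref{eq:sharp_lower_bound_er_dense}.
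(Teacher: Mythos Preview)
Your treatment of the first two bullets and the main estimate in the third bullet is correct and essentially matches the paper's proof (you even derive the branch-point expansion directly rather than citing it, and your constant $1+o(1)$ in the $\gamma=\omega(1)$ regime is sharper than the paper's $e+o(1)$).

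However, there is a genuine error in your handling of the ``in particular'' clause of the third bullet. You claim the regime $\gamma=\omega(1)$ with $n_1\ge\alpha n$ is vacuous, based on the identity ``$\alpha n\cdot ps^{2}=(2-\epsilon)\log n$''. This identity is wrong: from \prettyref{eq:sharp_lower_bound_er_dense} and \prettyref{eq:def_alpha} one gets $\alpha n s^{2}=(2-\epsilon)\log n$, with no factor of $p$. With the correct relation, the denominator of $\gamma$ satisfies $(n_1-1)ps^{2}\ge(1-o(1))p(2-\epsilon)\log n$, so $\gamma=O\bigl(\log(1/p)/(p\log n)\bigr)$, which is \emph{not} $o(1)$ in general (e.g.\ take $p=1/\log n$, where $\gamma\asymp\log\log n$). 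Indeed the paper later uses this regime non-trivially in Case~2(c).

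The paper's argument instead bounds $\zeta(n_1)/(ms^2)$ directly: from your own estimate $\zeta(n_1)=O(mps^2\gamma/\log\gamma)$ and the definition of $\gamma$ one gets $\zeta(n_1)=O\bigl(n_1\log(n/n_1)/\log\gamma\bigr)$, hence
\[
\frac{\zeta(n_1)}{ms^2}=O\!\left(\frac{\log(n/n_1)}{n_1 s^2\log\gamma}\right)
=O\!\left(\frac{\log(1/\alpha)}{\alpha n s^2\log\gamma}\right)
=\Theta\!\left(\frac{1}{nps^2\log\gamma}\right)=o(1),
\]
using $\tfrac{1}{\alpha}\log\tfrac{1}{\alpha}=\Theta(1/p)$, $nps^2=\omega(1)$, and $\log\gamma=\omega(1)$. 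You need this computation (or an equivalent one) to close the gap.
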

\begin{proof}
\begin{itemize}
    \item $\gamma=o(1)$. Using the approximation $W(\frac{\gamma-1}{e}) = -1 + \sqrt{2\gamma} + O(\gamma)$~\cite[eq.(4.22)]{corless1996lambertw}, we get that  $\zeta(n_1) = (1+o(1))m ps^2$. 
    \item $\gamma = \Theta(1)$. In this regime, $W(\frac{\gamma-1}{e}) =\Theta(1)$ and thus $\zeta(n_1)= \Theta( m ps^2 )$. In particular, for all $n_1 \ge \alpha n$,  we have $\zeta(n_1)=\Theta(mps^2) = o(ms^2)$. To see this, note that $\frac{1}{\alpha }\log \frac{1}{\alpha} = \Theta(1/p)$. Thus, $\gamma =  O \left(  \frac{ \log (1/\alpha)}{ \alpha n  p s^2 } \right)  =  O\left( \frac{1}{n p^2 s^2 } \right)$. Hence $p=O\left(  \frac{1}{n p s^2 \gamma } \right) =o(1)$, as $nps^2 =\omega(1)$ and $\gamma=\Theta(1)$. 
    \item $\gamma=\omega(1)$. Using the approximation $W(x) = \log x - \log\log x +o(1)$ as $x\to \infty$~\cite[Theorem 2.7]{hoorfar2008inequalities}, we get that
    $\zeta (n_1) \le (e+o(1)) m ps^2 \gamma/ \log \gamma=  (2e+o(1)) n_1 \log (2en/n_1) /\log(\gamma)$. 
    Moreover, for all $n_1 \ge \alpha n$, 
    $$
    \frac{ \zeta(n_1)}{ms^2} = O\left( \frac{n_1 \log (n/n_1) }{ ms^2 \log \gamma }  \right) = O \left(  \frac{ \log (1/\alpha)}{ \alpha n  s^2 \log \gamma } \right) = \Theta\left( \frac{1}{nps^2 \log \gamma} \right) = o(1).
    $$
\end{itemize}

\end{proof}

In view of~\prettyref{lmm:flucuation_W_function}, we get that $\zeta(n_1) \le mps^2 + o(ms^2)$ for all $n_1 \ge \alpha n$.
For ease of notation, we henceforth write $\zeta(n_1)$ simply as $\zeta$. 

It follows from \prettyref{eq:X_O_F_ER}
 that 
\begin{align*}
& \Expect_{\calQ}   \left[  \prod_{ i<j \in F} X_{ij}  \indc{\left(A,B,\pi\right)\in \calE_F}  \right] \\
&=  \left(\frac{1-\eta}{1-p s}\right)^{2 \binom{n_1}{2} } 
\Expect_{\calQ}   \left[  \left(\frac{1-s }{1-\eta } \right)^{2e_A(F) + 2e_{ B^{\pi}}(F) } 	
	\left(\frac{s\left(1-\eta\right)}{\left(1-s\right)\eta}\right)^{2 e_{A\wedge B^{\pi}}(F)}  \indc{\left(A,B,\pi\right)\in \calE_F}    \right] \\
 & \le \left(\frac{1-\eta}{1-p s}\right)^{2m}  \left(\frac{ 1-s }{1-\eta}\right)^{(4+o(1)) mps}
 \Expect_\calQ\qth{ \left(\frac{s\left(1-\eta\right)}{\left(1-s\right)\eta}\right)^{2 e_{A\wedge B^{\pi}}(F) }
 \indc{ e_{A\wedge B^{\pi}}(F) \le \zeta} } \\
 & =  \exp\sth{ -(2+o(1)) mps^2 (1-p )}
 \Expect_\calQ\qth{ \left(\frac{s\left(1-\eta\right)}{\left(1-s\right)\eta}\right)^{2 e_{A\wedge B^{\pi}}(F) }
 \indc{ e_{A\wedge B^{\pi}}(F) \le \zeta} },
\end{align*}
where the last equality holds because  in view of $\eta= \frac{ps(1-s)}{1-ps}$ and $s=o(1)$, 
\begin{align*}
\log \frac{1-\eta}{1-ps} &= \log \left(1 + \frac{ps^2(1-p)}{ (1-ps)^2} \right) = (1+o(1)) ps^2 (1-p) \\
\log \frac{1-\eta}{1-s}  & = \log \left( 1 +  \frac{s(1-p) }{(1-s)(1-ps) } \right) = (1+o(1)) s(1-p).
\end{align*}

Let 
$$
u=\left( \frac{s\left(1-\eta\right)}{\left(1-s\right)\eta} \right)^2
=\left( \frac{(1-\eta) (1-ps) }{ p (1-s)^2 } \right)^2 = (1+o(1))  p^{-2}.
$$
Then for any $\lambda \in [0,1]$,
\begin{align*}
\Expect_\calQ\qth{ \left(\frac{s\left(1-\eta\right)}{\left(1-s\right)\eta}\right)^{2 e_{A\wedge B^{\pi}}(F) }
 \indc{ e_{A\wedge B^{\pi}}(F) \le \zeta} }
& \le  \Expect_{\calQ}   \left[  u^{   \lambda e_{A\wedge B^{\pi}}(F) + (1-\lambda)  \zeta  }   \right]\\
& = u^{(1-\lambda) \zeta } \left( 1 + p^2 s^2 ( u^{\lambda} -1 )   \right)^{m}.
\end{align*}
Optimizing over $\lambda \in[0,1]$, or equivalently, over $y=u^{\lambda} \in [1, u]$, we get (by taking the log and differentiating) that 
\[
\inf_{1 \le y  \le u} \pth{1 + p^2s^2 (y-1)}^m y^{-\zeta} = \left( \frac{m( 1-p^2s^2) }{m-\zeta }  \right)^{m} \left( \frac{\zeta (1-p^2s^2) }{ p^2s^2 (m-\zeta) } \right)^{-\zeta},
\]
where the infimum is achieved at $y^*=\frac{\zeta (1-p^2s^2) }{ p^2s^2 (m-\zeta) }$.
Note that  
$$
1 \le y^* \le u  \Longleftrightarrow mp^2s^2 \le \zeta \le \frac{ m u p^2 s^2 }{ 1+ up^2 s^2 - p^2 s^2 } .    
$$
Since $mps^2 \le \zeta \le m ps^2 + o(ms^2)$, $u= (1+o(1)) p^{-2}$, $s=o(1)$, and 
$p$ is bounded away from $1$, it follows that $y^* \in [1, u]$.  

%

Putting everything together, we get that
\begin{align*}
& 
\Expect_{\calQ}   \left[  \prod_{ i<j \in F} X_{ij} \indc{(A,B,\pi)\in\calE_F}   \right]
\\
& \le \exp\sth{ -(2+o(1)) mps^2 (1-p ) + m \log \frac{ m (1-p^2s^2) }{m-\zeta}  + \zeta \log \frac{(m-\zeta) up^2s^2 }{\zeta(1-p^2s^2)} } \\
& \overset{(a)}{\le} \exp\sth{ - m ps^2 (2-p ) + \zeta \left( \log (s^2) +o(1) \right) + m h(\zeta/m) } \\
& \overset{(b)}{\le} \exp\sth{ - m ps^2 (2-p ) + \zeta \log \frac{ems^2}{\zeta} +o(\zeta)  },
\end{align*}
where $h(x)=-x\log x - (1-x) \log (1-x) $ is the binary entropy function; 
$(a)$ holds because in view of $s=o(1)$, we have
$\zeta= ms^2 (p+o(1))$, and $u=(1+o(1))p^{-2}$,  $(m-\zeta) \log (1-p^2s^2) = - (1+o(1)) mp^2s^2$
and $\log(up^2)=o(1)$; 
$(b)$ holds due to $h(x)=x\log(e/x)+o(x)$ by the Taylor approximation.

\medskip 
Combining the two cases yields  that
\begin{align*}
 \Expect_{\calQ}\left[\left(\frac{\calP'(A,B) }{\calQ(A, B) } \right)^2\right] 
& \le   (1+o(1)) \expect{ \prod_{\orbit \in \calO }  \left(1+\rho^{2|\orbit|} \right) 
\indc{n_1 \le \alpha n}  } \\
& + (1+o(1)) \expect{ \prod_{\orbit \notin \calO_1 }  \left(1+\rho^{2|\orbit|} \right) 
\exp\sth{ \zeta \log \frac{ems^2}{\zeta} +o(\zeta) } \indc{n_1 > \alpha n} }.
\end{align*}

Recall that $\rho=\frac{s(1-p)}{1-ps} \le s$ and by \prettyref{eq:N1N2},
\begin{align*}
\prod_{\orbit \notin \calO_1 }  \left(1+\rho^{2|\orbit|} \right) 
& =
\left( 1+ \rho^2 \right)^{n_2} \prod_{k\ge 2}  
\left(1+ \rho^{2k} \right)^{N_k}  \\
& =(1+o(1)) \left( 1+\rho^2 \right)^{n_2} \left( 1+\rho^4  \right)^{N_2} \\
& \le (1+o(1)) \exp \left( s^2 n_2 + s^4 N_2 \right), 
\end{align*}
where  the first equality follows from \prettyref{eq:N1N2};
the second equality holds because $\prod_{k\ge 3}  
\left(1+ \rho^{2k} \right)^{N_k} \le \exp \left(n^2 \rho^6 /2\right) = 1+o(1)$  by \prettyref{eq:2lower2} and 
$\rho \le s =n^{-1/2+o(1)}$ in view of \prettyref{eq:dense_nps}. 
Similarly,
$$
\prod_{\orbit \in \calO_1 }  \left(1+\rho^{2|\orbit|} \right)  =  \left(1+\rho^{2} \right)^{\binom{n_1}{2}} \le  \exp \left( s^2 n_1^2 /2\right).
$$
Hence, 
\begin{align*}
 \Expect_{\calQ}\left[\left(\frac{\calP'(A,B) }{\calQ(A, B) } \right)^2\right] 
& \le   (1+o(1)) \expect{ \exp \left( s^2 n_1^2 /2 + s^2 n_2 + s^4 N_2 \right)
\indc{n_1 \le \alpha n}  } \\
& + (1+o(1)) \expect{ 
\exp\sth{ s^2 n_2 + s^4 N_2 - mps^2 (2-p ) + \zeta \log \frac{ems^2}{\zeta} +o(\zeta) } \indc{n_1 > \alpha n} }.
\end{align*}
We upper bound the two terms separately. For the first term, we apply \prettyref{eq:n1n2_a_0} in \prettyref{prop:n1n2_weak_detection} with $\mu=s^2/2$, $\nu=0$, $\tau=s^2$, $a=0$, and $b = \alpha n =  p(\log \frac{1}{p} - 1 +p)n = n^{1-o(1)}= \omega(1) $. By the assumption that $ n s^2 p(\log \frac{1}{p} - 1 +p)  \le (2-\epsilon)\log n/n$, it follows that 
$$
\mu b + \nu+2 -\log b  
=\frac{1}{2} n ps^2 \left(\log \frac{1}{p} - 1 +p \right) + 2 - (1-o(1)) \log n 
\le \frac{1}{2}\left( - \epsilon +o(1)  \right)  \log n +2 \le 0.
$$
Thus, it follows from \prettyref{eq:n1n2_a_0} in \prettyref{prop:n1n2_weak_detection} that
\begin{align*}
    \expect{ \exp \left( s^2 n_1^2 /2 + s^2 n_2 + s^4 N_2 \right) \indc{n_1 \le \alpha n}  } \le 1+o(1).
\end{align*}
%

For the second term, we further divide into three cases according to \prettyref{lmm:flucuation_W_function}. 
We define 
$$
\beta= \frac{ \log^2 (nps^2)}{nps^2} \quad \text{ and } \quad \beta' = \frac{ \log (nps^2)}{100 nps^2}.
$$
Recall $\gamma$ as defined in \prettyref{eq:def_gamma}.
By \prettyref{lmm:flucuation_W_function}, we have that
\begin{enumerate}[label=(\alph*)]
\item If $\beta n \le n_1\le n$, then $ \gamma = o(1)$ and $\zeta = (1+o(1)) mps^2 $;
\item If $ \beta' n \le n_1\le \beta n$, then $\gamma \le 200 +o(1)$ and $ \zeta=\Theta\left( mps^2\right)$; 
\item If $\alpha n \le n_1\le \beta' n$, then $\gamma \ge  200 +o(1)$ and $\zeta = O\left(n_1\log \left(n/n_1\right)/\log(\gamma)\right)$. 
\end{enumerate}

{\bf Case 2 (a) }:  $ \beta n \le n_1 \le n$. In this case, $\zeta=(1+o(1)) mps^2$ and hence
$$ 
\zeta \log \frac{ems^2}{\zeta} +o(\zeta) = (1+o(1)) mps^2 \log \frac{e}{p}. 
$$ 
Therefore, 
\begin{align*}
&\expect{ \exp\sth{ s^2 n_2 + s^4 N_2- mps^2 (2-p ) +  \zeta \log \frac{ems^2}{\zeta} +o(\zeta)  }\indc{n_1 \ge \beta n} }\\
& \le \expect{
\exp \sth{ s^2 n_2 + s^4 N_2  + \frac{1+o(1) }{2} n_1^2 p s^2  \left( \log \frac{1}{p}  -1 +p \right)  }\indc{n_1 \ge \beta n} }
=o(1),
\end{align*}
where the first inequality uses the fact that $m \le n_1^2/2$; 
the last equality holds by invoking \prettyref{eq:n1n2_a_omega_1} in \prettyref{prop:n1n2_weak_detection} with  $\mu= \frac{1+o(1) }{2} p s^2  \left( \log \frac{1}{p}  -1 +p \right)$, $\nu = 0$, $\tau=s^2$, $a=\beta n = \omega(1)$,  $b=n= \omega(1)$, and 
\begin{align*}
\mu b+\nu+2 - \log b = \frac{1+o(1) }{2} n p s^2  \left( \log \frac{1}{p}  -1 +p \right) + 2 - \log n =-\frac{1}{2} (\epsilon+o(1)) \log n +2 \le 0,
\end{align*}
where the last equality follows from the assumption that 
$nps^2 \left( \log \frac{1}{p}  -1 +p \right) = (2-\epsilon) \log n$. 

\medskip
When $p$ is a fixed constant bounded away from $1$,
as $\alpha=\Theta(1)$ and $\beta=o(1)$, we have $\beta=o(\alpha)$; thus the regime 
$\alpha n \le n_1 \le \beta n$ is vacuous. Thus henceforth we assume $p=o(1)$.

{\bf Case 2 (b) }:  $ \beta' n \le n_1 \le \beta n$.  
In this case,  $\gamma \le 200 +o(1)$ and 
$
\zeta = O\left( mps^2 \right),
$
so that 
$$ 
\zeta \log \frac{ems^2}{\zeta} +  o(\zeta)  
\le c_1 mps^2 \log \frac{1}{p}
$$ 
for a universal constant $c_1$. 
Therefore, 
\begin{align*}
&\expect{ \exp \sth{ s^2 n_2 + s^4 N_2  - mps^2 (2-p ) +  \zeta \log \frac{ems^2}{\zeta} +o(\zeta)  }\indc{ \beta' n \le n_1 \le \beta n} }\\
& \le \expect{
\exp \sth{ s^2 n_2 + s^4 N_2  + c_1 n_1^2 ps^2 \log \frac{1}{p}  } \indc{\beta'n\le   n_1 \le \beta n}} = o(1),
\end{align*}
where the last equality holds by invoking  \prettyref{prop:n1n2_weak_detection} with $\mu=c_1 ps^2 \log \frac{1}{p}$, $\nu=0$, $\tau=s^2$, $a=\beta'n = \omega(1)$, and $b=\beta n =\omega(1)$. 
Note that the conditions of \prettyref{prop:n1n2_weak_detection} for $a = \omega(1)$ are satisfied, in particular, $\mu b +2 - \log b \le 0$ for $n$ sufficiently large.
To see this, on the one hand,
$$
\mu b = c_1 \beta nps^2 \log \frac{1}{p}  = O \left( \beta \log n  \right)= o(\log n),
$$
where we used the fact that $nps^2 \log \frac{1}{p} = O(\log n)$ and $\beta=o(1)$; 
on the other hand, 
$\log(b) = \log (\beta n) =(1+o(1)) \log n$, as $\log (\beta)=o(\log n)$ by our choice
of $\beta$. 


\medskip 
{\bf Case 2 (c) }:  $ \alpha n \le n_1 \le \beta' n$.  In this case,
$\gamma \ge 200+o(1)$ and 
$$
\zeta = O \left( n_1 \log (n/n_1) / \log (\gamma)  \right), 
$$ 
so that 
$$ 
\zeta \log \frac{ems^2}{\zeta} +  o(\zeta)  
\le c_1 n_1 \frac{\log (n/n_1)}{ \log \gamma} \log \frac{n_1 s^2 \log \gamma}{ \log (n/n_1) }   \triangleq n_1 \psi
$$ 
for a universal constant $c_1$. 
Therefore, 
\begin{align*}
&\expect{ \exp \sth{ s^2 n_2 + s^4 N_2 - mps^2 (2-p ) +  \zeta \log \frac{ems^2}{\zeta} +o(\zeta)  }\indc{ \alpha n \le n_1 \le \beta' n} }\\
& \le \expect{
\exp \sth{ s^2 n_2 + s^4 n n_2  + n_1 \psi  } \indc{ \alpha n \le n_1 \le \beta' n} } =o(1),
\end{align*}
where the last equality holds 
by invoking  \prettyref{eq:n1n2_a_omega_1} in \prettyref{prop:n1n2_weak_detection}  with $\mu=0$, $\nu= \max_{  \alpha n \le n_1 \le \beta' n} \psi(n_1)$, $a=\alpha n$, $b=\beta'n$, 
and the claim that 
$\nu =o \left( \log (\alpha n)  \right) = o(\log a)$ .
Note that the conditions of \prettyref{prop:n1n2_weak_detection} for $a=\omega(1)$ are satisfied, in particular, $\nu+3 \le \log a$ and $ \mu b +\nu+2 -\log b =  \nu +2  -\log b \le \nu +2 - \log a \le 0$ for $n$ sufficiently large. 




To finish the proof, it remains to verify the claim that $\psi =o \left( \log (\alpha n ) \right)$ for $ \alpha  n \le n_1 \le \beta' n$. 
Note that $\log (\alpha n)= (1+o(1)) \log n$, as $\log (\alpha)=o(\log n)$ by the choice
of $\alpha$ and the assumption that $p=n^{o(1)}$. Thus it suffices to show that 
$\psi=o(\log n)$, \ie, 
%
%
%
%
$$
 \frac{\log (n/n_1)}{ \log \gamma} \log \frac{n_1 s^2 \log \gamma}{\log (n/n_1) }   = o \left( \log n \right) .
$$
Note that 
$$
\frac{\log (n/n_1)}{ \log \gamma}  \log \log \gamma \le \log (n/n_1) \le \log \frac{1}{\alpha} = o(\log n).
$$
Thus it suffices to show
$$
 \frac{\log (n/n_1)}{ \log \gamma} \log \frac{n_1 s^2}{\log (n/n_1) } = o \left( \log n \right)  , 
$$
which further reduces to proving that 
$\max_{x \in [1/\beta', 1/\alpha]} g(x) = o(\log n)$,
where 
$$
g(x) = \frac{\log x}{ \log \frac{  x \log x}{ nps^2} } \log \frac{n s^2}{x \log x },
$$
since $\gamma = O\left(\frac{x \log x}{nps^2}\right)$ for $x=n/n_1\in [1/\beta',1/\alpha]$.
To this end, let 
$$
\delta= \frac{1}{\log \left( \log n/ \log (1/\alpha)  \right) } =o(1),
$$ 
where the last equality holds because $\log(1/\alpha) = (1+o(1)) \log (1/p)=o(\log n)$ by the choice of $\alpha$ and the assumption that $p=n^{-o(1)}$. 
Let
$$
\phi(x) =  \log x \log \frac{n s^2}{x \log x } -  \log \frac{ x \log x}{ nps^2} \delta \log n. 
$$
It is sufficient to show that  for all $x \in [1/\beta', 1/\alpha]$, $  \phi(x) \le 0$, which immediately implies 
that $g(x) \le \delta \log n = o(\log n)$. 

Taking derivative of $\phi(x)$, we get that 
\begin{align*}
\phi'(x) & = \frac{1}{x} \log \frac{n s^2}{x \log x } -\frac{ \log x }{x} \left( 1+ \frac{1}{\log x} \right) 
- \frac{1}{x} \left( 1+ \frac{1}{\log x} \right)   \delta \log n \\
& = \frac{1}{x} \left( \log (ns^2) - 2\log x - \log\log x - 1 - \delta \log n - \frac{\delta \log n}{\log x}   \right).
\end{align*}
By assumption that $ns^2 \alpha = (2-\epsilon) \log n$, 
we get that 
$$
\log (ns^2) \le \log (2 \log n) + \log \frac{1}{\alpha} = o( \delta \log n),
$$
where the last equality uses the fact that 
$
    \frac{\log (1/\alpha)  }{\delta \log n} = \frac{\log (1/\alpha) }{\log n} \log \frac{\log n}{\log (1/\alpha) } = o(1).
$

Moreover, $\beta'=o(1)$ and hence $x=\omega(1)$.  Therefore, $\phi'(x) \le 0$ for all $x \in [1/\beta', 1/\alpha]$. 
Hence, to show $\phi(x) \le 0$, it suffices to prove $\phi(1/\beta') \le 0$. 
Note that 
\begin{align*}
\phi (1/\beta') & =  \log \frac{1}{\beta'}  \log \frac{n s^2}{ (1/\beta') \log (1/\beta') } -  \log \frac{ (1/\beta') \log (1/\beta') }{ nps^2} \delta \log n \\
& = \log \frac{1}{\beta'}  \log \frac{n s^2}{(200+o(1)) nps^2 } -  \log \frac{ (200+o(1)) nps^2  }{ nps^2} \delta \log n \\
& = \log \frac{1}{\beta'} \log \frac{1}{(200+o(1)) p } - \log \left( 200+o(1) \right) \delta \log n \le 0,
\end{align*} 
where the last inequality holds because
$$
\frac{1}{\delta}   \log \frac{1}{\beta'} 
 \le \log \frac{200 nps^2}{\log (nps^2)} \log \frac{ \log n } { \log (1/\alpha)}
 \le  O \left(  \left( \log \frac{ \log n}{ \log (1/p)} \right)^2    \right)  = o \left( \frac{ \log n}{ \log (1/p) } \right),
$$
where we used the fact that $nps^2 =O(\log n/\log (1/p))$ and $\log (1/\alpha) = (1+o(1)) \log (1/p)$ when $p=o(1)$. 

\section{Supplementary Proofs for \prettyref{sec:lb-sparse}}\label{sec:supp_lb_sparse}
\subsection{Proof of \prettyref{prop:con1}: Long orbits}\label{sec:con1}

Fix any $\sigma=\pi^{-1}\circ \tpi$.
Since $\{X_{\orbit}\}_{\orbit\notin \calO_k}$ are mutually independent, it follows that 
\begin{align*}
 \Expect_{\calQ}\left[\prod_{\orbit\notin \calO_k}X_{\orbit}\right]
 & = \prod_{\orbit\notin \calO_k}\Expect_{\calQ} \left[X_{\orbit}\right] = \prod_{\orbit\notin \calO_k} \left(1+\rho^{2|\orbit|}\right).
\end{align*}
For any $\orbit_{ij} \notin \calO_k$, we have $i$ or $j$ is from node orbit with length larger than $k$, or  $\orbit_{ij}$ has length larger than $k$. By \prettyref{sec:classification_edge_orbits}, we know that $|\orbit_{ij}|\ge  \ceil{\frac{k+1}{2}}{}$. It follows that  
\begin{align*}
 \Expect_{\calQ}\left[\prod_{\orbit\notin\calO_k}X_{\orbit}\right]  \le  \prod_{m=\ceil{\frac{k+1}{2}}{}}^{\binom{n}{2}}  \left(1+\rho^{2|\orbit|}\right)^{N_m} 
 & \overset{(a)}{\le} \left(1+\rho^k \right)^{\sum_{m\ge \ceil{\frac{k+1}{2}}{}}^{\binom{n}{2}} N_m }\\
 & \overset{(b)}{\le} \left(1+\rho^{k}\right)^{\frac{n^2}{k}}. 
\end{align*}
where $(a)$ holds, since $1+\rho^{2m}$ decreases when $m$ increases, and $1+\rho^{2m} \le 1+\rho^{k}$ for any $m \ge  \ceil{\frac{k+1}{2}}{}$; $(b)$ holds since the total number of edges is $\binom{n}{2}$, and the total number of edge orbits with length at least $\ceil{\frac{k+1}{2}}{}$ is at most $\frac{\binom{n}{2}}{\ceil{\frac{k+1}{2}}{}}\le \frac{n^2}{k}$.

\subsection{Proof of \prettyref{prop:con2}: Incomplete orbits}\label{sec:con2}

Fix any $\sigma=\pi^{-1}\circ \tpi$.
Since edge orbits are disjoint, and $A_{i,j}$ and $B_{ij}$ are i.i.d.\ $\Bern(ps)$ across all $1\le i<j \le n$ under $\calQ$, it follows that $\{A_{ij},B_{\pi(i)\pi(j)}\}_{(i,j)\in \orbit}$ are 
mutually independent across different edge orbits $\orbit$ under $\calQ$. 
Recall that an edge orbit $\orbit \in \calJ_k$ if and only if $A_{ij}=B_{\pi(i)\pi(j)} =1$ for all $(i,j) \in \orbit$.
Therefore, conditional on $\calJ_k=\calJ$, $\{A_{ij},B_{\pi(i)\pi(j)}\}_{(i,j)\in \orbit}$ 
are independent across all edge orbits $\orbit \in \calO_k\backslash \calJ$. 
In particular, the distribution of $\{A_{ij},B_{\pi(i)\pi(j)}\}_{(i,j)\in \orbit}$ for $\orbit \notin \calJ$ conditional on $\calJ_k=\calJ$ is the
same as that conditional on $\orbit \notin \calJ_k$.
Since $X_{\orbit}$ is a function of $\{A_{ij},B_{\pi(i)\pi(j)}\}_{(i,j)\in \orbit}$, it follows that 
\begin{align*}
    \Expect_{\calQ}\left[\prod_{\orbit \in \calO_k \backslash \calJ_k }X_{\orbit} \; \Big|\; \calJ_k =\calJ  \right]
     = \prod_{\orbit \in \calO_k \backslash \calJ  }  \Expect_{\calQ}\left[ X_{\orbit} \; \Big|\; \calJ_k =\calJ \right]
    =\prod_{\orbit \in \calO_k \backslash \calJ  }  \Expect_{\calQ}\left[ X_{\orbit} \; \Big|\;  \orbit \notin \calJ_k \right].
\end{align*}
Therefore, to prove \prettyref{prop:con2}, it suffices to show $\Expect_{\calQ}\left[ X_{\orbit} \; \Big|\;  \orbit \notin \calJ_k \right] \le 1$. 
Note that 
\begin{align*}
    \Expect_{\calQ}\left[X_{\orbit}| \orbit \notin \calJ_k \right]
    & = \frac{\Expect_{\calQ}\left[X_{\orbit}\indc{ \orbit \notin \calJ_k }\right]}{\Prob\left( \orbit \notin \calJ_k  \right)} =\frac{\Expect_{\calQ}\left[X_{\orbit}\right]-\Expect_{\calQ}\left[X_{\orbit}\indc{\orbit \in \calJ_k }\right]}{1-\Prob\left( \orbit \in \calJ_k \right)}.
\end{align*}
Recall that  $\orbit \in \calJ_k$ if and only if $ A_{ij}=1,B_{\pi(i)\pi(j)}=1$ for all $(i,j)\in \orbit$, in which case $X_{\orbit}=\left(\frac{1}{p}\right)^{2|\orbit|}$. 
Thus $\prob{\orbit \in \calJ_k}=\left(ps\right)^{2|\orbit|}$ and 
$$
\Expect_{\calQ}\left[X_{\orbit}\indc{\orbit \in \calJ_k }\right] = \left(\frac{1}{p}\right)^{2|\orbit|} \left(ps\right)^{2|\orbit|}= s^{2|\orbit|}.
$$
Recall that  $\Expect_{\calQ}\left[X_{\orbit}\right]=1+\rho^{2|\orbit|}$, where
$\rho = \frac{s(1-p)}{1-ps}$. Combining this with the last two displayed equation  yields that 
\begin{align*}
    &  \Expect_{\calQ}\left[X_{\orbit}| \orbit \notin \calJ_k \right] = \frac{1+\rho^{2|\orbit|}- s^{2|\orbit|} }{1-\left(ps\right)^{2|\orbit|}}
    =\frac{1-s^{2|\orbit|}
    \left(1 - \left(\frac{1-p}{1-ps}\right)^{2|\orbit|} \right)}{1-\left(ps\right)^{2|\orbit|}} \le 1,
\end{align*}
where the last inequality holds by
the following claim: if $p \le 1/2$
and $s\le 1/2$, then
\begin{align}
1-\left(\frac{1-p}{1-ps}\right)^{2|\orbit|}\ge  p^{2 |\orbit|},  \quad \forall |\orbit| \ge  1. \label{eq:X_O_bound_1}
\end{align}

Indeed, as $|\orbit|$ increases, $1-\left(\frac{1-p}{1-ps}\right)^{2|\orbit|}$ increases while $p^{2|\orbit|}$ decreases, so it suffices to
verify \prettyref{eq:X_O_bound_1} 
for $|\orbit|=1$. When $|\orbit|=1$, we have $ 1-\left(\frac{1-p}{1-ps}\right)^{2} \ge  p^{2} \iff   \left(1-ps\right)^2\ge  \frac{1-p}{1+p}$,
which holds when $p\le \frac{1}{2}$ and $s\le \frac{1}{2}$. 
\subsection{Proof of \prettyref{prop:jkbound}: Averaging over orbit lengths} \label{sec:jkbound}

In the following proof, for any $t \in \naturals$,  denote the $t$th harmonic number by $\harmonic_t \triangleq \sum_{\ell=1}^t \frac{1}{\ell}$ and $\harmonic_{0} \triangleq 0$.

Under the assumption of $s\le 0.1$, we have $2m s^{m} \le 0.04$ when $m\ge  2$ and $2ms^m \le 0.2$ for $m=1$. Thus, 
for any $1\le m\le k$, 
\begin{align}
1 +  s^{m} n_{m}  \indc{m:\mathrm{even}}  +  2 s^{2m} \sum_{\ell \le  m} \ell n_{\ell}+ s^{4m} m n_{2m}  \indc{2m \le k}  
\le 1+ 1.04 s^m \sum_{\ell \le  2m }  n_\ell \indc{\ell \le k}.  \label{eq:1.04upperbound}
\end{align}
Hence, to prove \prettyref{eq:jkbound1}, it suffices to show, for $s\le 0.1$:
\begin{align}
  \Expect \left[\prod_{m=1}^k \left(1+ 1.04 s^m \sum_{\ell \le  2m }  n_\ell \indc{\ell \le k}\right)^{n_m}\right] = O(1). \label{eq:target1}
\end{align} 
To prove \prettyref{eq:jkbound2},  it suffices to show, for $s=o(1)$:
\begin{align}
  \Expect \left[\prod_{m=1}^k \left(1+ 1.04 s^m \sum_{\ell \le  2m }  n_\ell \indc{\ell \le k}\right)^{n_m}\right] = 1+o(1). \label{eq:target2}
\end{align} 
Pick $\eta = \log k \sqrt{\log\left(n/k\right)}$. 
We can write $\Expect \left[\prod_{m=1}^k \left(1+ 1.04 s^m \sum_{\ell \le  2m }  n_\ell \indc{\ell \le k}\right)^{n_m}\right]$ as $\termI+\termII$ where
\begin{align*}
 \termI
    & =  \Expect \left[\prod_{m=1}^k \left(1+ 1.04 s^m \sum_{\ell \le  2m }  n_\ell \indc{\ell \le k}\right)^{n_m} \indc{\sum_{\ell=1}^k n_{\ell}<\eta \harmonic_k} \right],  \\
  \termII  & = \Expect \left[\prod_{m=1}^k \left(1+ 1.04 s^m \sum_{\ell \le  2m }  n_\ell \indc{\ell \le k}\right)^{n_m} \indc{\sum_{\ell=1}^k n_{\ell} \ge \eta \harmonic_k} \right]  .
\end{align*}
To bound $\termI$, we first apply \prettyref{eq:cycle_decomposition_poisson_distribution_total_v} in \prettyref{lmm:cycle_decomposition_poisson_distribution_total_v} and get: 
$$
\mathrm{TV} \left(  \calL\left( n_1, \ldots, n_k\right) , \calL\left( Z_1, \ldots, Z_k\right) \right) \le F\left( \frac{n}{k} \right),
$$
where $\calL$ denotes the law of random variables, $Z_\ell \inddistr \Pois(\frac{1}{\ell})$, and $\log F(n/k)=-(1+o(1)) \frac{n}{k}\log \left(\frac{n}{k}\right)$ when $k=o(n)$, with $o(1)$ depending only on $k/n$. 
Then, we have
\begin{align}
    \termI 
     & \le \Expect \left[\prod_{m=1}^k \left(1+ 1.04 s^m \sum_{\ell \le  2m }  Z_\ell \indc{\ell \le k}\right)^{Z_m} \right]+
     2 F\left(\frac{n}{k}\right) \left(1+ 1.04 s\eta \harmonic_k\right)^{\eta \harmonic_k}.
		\label{eq:termI}
\end{align}
Here the second term satisfies
\begin{align*}
     F\left(\frac{n}{k}\right) \left(1+1.04  s\eta \harmonic_k\right)^{\eta \harmonic_k}
    &  \overset{(a)}{\le} \exp\left(- (1+o(1)) \frac{n}{k}\log \left(\frac{n}{k}\right) + 1.04  s \eta^2 \harmonic_k^2 \right) \overset{(b)} =o(1),
\end{align*}
where $(a)$ holds because for $x \ge 0$, $\log(1+x) \le x$; $(b)$ holds since $\harmonic_k \le \log k+1$, and 
$k(\log k)^2 \eta^2 = o\left(n \log \left(\frac{n}{k}\right)\right)$ under our assumption of $k (\log k)^4 = o(n)$. 

To bound $\termII$, applying \prettyref{eq:jointdis2} in \prettyref{lmm:jointpmf}, we get that 
\begin{align}
    \termII
    & \le \Expect \left[\prod_{m=1}^k \left(1+ 1.04 s^m \sum_{\ell \le  2m }  Z_\ell \indc{\ell \le k}\right)^{Z_m} \indc{\sum_{\ell=1}^k Z_{\ell}\ge  \eta \harmonic_k}\right]e^{\harmonic_k}\nonumber\\
    & \le \Expect \left[\prod_{m=1}^k \left(1+ 1.04 s^m \sum_{\ell \le  2m }  Z_\ell \indc{\ell \le k}\right)^{Z_m} \exp\left({\frac{\sum_{\ell=1}^k Z_{\ell}}{\eta}}\right)\right].
		\label{eq:termII}
\end{align}
Since $\eta = \log k \sqrt{\log (n/k)} = \omega(\log k)$, for $s=0.1$, the desired \prettyref{eq:target1} follows from applying \prettyref{eq:s_constant} in \prettyref{lmm:poisson_approximation} to \prettyref{eq:termI} and \prettyref{eq:termII}; for $s=o(1)$, the desired \prettyref{eq:target2} follows from applying \prettyref{eq:s_little_o_1} in \prettyref{lmm:poisson_approximation} to \prettyref{eq:termI} and \prettyref{eq:termII}. Hence, our desired result follows.

\begin{lemma}\label{lmm:poisson_approximation}
Suppose $\eta  = \omega(\log k)$. If $s\le 0.1$,
        \begin{align}
            \Expect \left[\prod_{m=1}^k \left(1+ 1.04 s^m \sum_{\ell \le  2m }  Z_\ell \indc{\ell \le k}\right)^{Z_m} \exp \left( \frac{\sum_{\ell=1}^k Z_{\ell}}{\eta} \right)\right] = O(1). \label{eq:s_constant}
        \end{align}
In particular, if $s=o(1)$,
        \begin{align}
            \Expect \left[\prod_{m=1}^k \left(1+ 1.04 s^m \sum_{\ell \le  2m }  Z_\ell \indc{\ell \le k}\right)^{Z_m} \exp \left( \frac{\sum_{\ell=1}^k Z_{\ell}}{\eta} \right)\right] = 1+o(1). \label{eq:s_little_o_1}
\end{align}
\end{lemma}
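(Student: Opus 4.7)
The strategy rests on two ingredients: (i) a dyadic partition of the Poisson variables $Z_\ell$ into mutually independent blocks, which exploits the monotonicity of $T_m = \sum_{\ell \le 2m \wedge k} Z_\ell$ in $m$ together with the geometric decay of $s^m$; and (ii) a backward--peeling recursion that integrates out one block at a time using the Poisson identity $\Expect_{Y\sim\Pois(\mu)}[(1+a)^Y] = \exp(\mu a)$ valid for $a\ge 0$ independent of $Y$. Crude relaxations such as $(1+x)^n \le e^{nx}$ are too lossy on their own, since $T_m$ can be as large as $W := \sum_\ell Z_\ell \sim \Pois(H_k)$ while $\Expect[\exp(tW^2)] = +\infty$ for every fixed $t>0$; the decay of $s^m$ must be used quantitatively.

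The first step is to set $B_j := (2^{j-1},\,2^j]\cap[k]$ and $Y_j := \sum_{m \in B_j} Z_m \sim \Pois(\mu_j)$ with $\mu_j \le 1$, for $j=0,1,\ldots,J := \lceil \log_2 k\rceil$; by disjointness the $Y_j$'s are mutually independent. Since $T_m$ is nondecreasing in $m$ and $s^m \le s^{2^{j-1}+1}$ for $m\in B_j$, the integrand is pointwise dominated by
\[
\prod_{j=0}^{J}\bigl(1+c_j V_j\bigr)^{Y_j}\cdot\exp(W/\eta),\qquad c_j := 1.04\,s^{2^{j-1}+1},\qquad V_j := Y_0+\cdots+Y_{j+1},
\]
where the $c_j$'s decay doubly exponentially in $j$, so that $\sum_j c_j \lesssim s$ and even $\sum_j j c_j \lesssim s$.

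The recursion then proceeds backward from $j=J$ down to $0$. At stage $j$, I would decompose $V_j = V_j^{-} + Y_j + Y_{j+1}$ with $V_j^{-} := Y_0+\cdots+Y_{j-1}$ independent of $(Y_j,Y_{j+1})$, and use the submultiplicative bound $(1+c_jV_j)^{Y_j} \le (1+c_jV_j^{-})^{Y_j}(1+c_jY_j)^{Y_j}(1+c_jY_{j+1})^{Y_j}$. Conditioning on $Y_0,\ldots,Y_{j-1}$ and integrating over $Y_j$, the Poisson identity converts $\Expect[(1+c_jV_j^{-})^{Y_j}]$ into $\exp(\mu_j c_j V_j^{-})$; the piece $\Expect[(1+c_jY_j)^{Y_j}]$ is a pure Poisson moment that, using $c_j e\mu_j \ll 1$, evaluates via direct summation to $1+O(c_j)$; and the residual factor $(1+c_jY_{j+1})^{Y_j}$ is absorbed into the layer immediately preceding in the peeling order, where it contributes another $1+O(c_j)$ multiplier. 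Accumulating over all layers yields $\prod_{j=0}^{J}(1+O(c_j)) \le \exp(C\sum_j c_j) \le \exp(Cs)$, which is $O(1)$ for $s\le 0.1$, establishing \prettyref{eq:s_constant}, and $1+o(1)$ for $s=o(1)$, establishing \prettyref{eq:s_little_o_1}. Meanwhile $\Expect[\exp(W/\eta)] = \exp(H_k(e^{1/\eta}-1)) = 1+o(1)$ since $H_k \le \log k + 1$ and $\eta = \omega(\log k)$.

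The main obstacle is the coupling between adjacent dyadic layers through $V_j$ containing both $Y_j$ and $Y_{j+1}$: the backward peeling cannot apply the clean Poisson identity once per layer, but must propagate a residual exponential between two successive stages. The resolution is to carry an auxiliary factor in the inductive hypothesis so that each stage loses at most a multiplicative $(1+O(c_j))$, and the telescoping closes because the doubly-exponential decay of $c_j$ makes $\sum_j c_j$ of order $s$, which is small in both regimes considered. A secondary technical point is verifying that $\Expect_{Y\sim\Pois(\mu)}[(1+cY)^Y] = 1+O(c)$ when $c e\mu < 1$; this follows from the crude termwise bound $\mu^y(1+cy)^y/y! \le (e^2c\mu)^y$ for $y\ge 1$ together with geometric summation, and is the only place where the quantitative threshold $s\le 0.1$ enters explicitly.
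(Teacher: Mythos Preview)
Your high-level plan—partition the indices into independent Poisson blocks and peel recursively while carrying an auxiliary exponential weight—is exactly the paper's strategy. The gap is in the scale of the partition and the per-layer error accounting.

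The crux is the residual cross-factor. After your submultiplicative split, when you integrate out $Y_j$ holding $Y_{j-1}$ fixed you must evaluate something of the form $\Expect_X\bigl[(a+bX)^X\,(1+c'X)^{d}\,e^{\alpha X}\bigr]$ with $X=Y_j\sim\Pois(\mu_j)$, $d=Y_{j-1}$, $b=c_j$, $c'=c_{j-1}$. You cannot dispose of $(1+c'X)^d$ by the crude bound $e^{c'dX}$: this makes the effective rate $\alpha+c'Y_{j-1}$ random and the series diverges on $\{Y_{j-1}\ \text{large}\}$ (precisely the $\Expect[e^{tW^2}]=\infty$ obstacle you flagged). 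The only way to control this mixed moment is to split on $\{X<\gamma\mu_j\}$ versus $\{X\ge\gamma\mu_j\}$ (this is the paper's auxiliary Lemma~``\prettyref{lmm:fact}''), and the Poisson tail then contributes a term that, after recombining with the main part, becomes a \emph{multiplicative} error $1+Ce^{-\mu_j}$ per stage—not $1+O(c_j)$. With dyadic blocks $\mu_j\approx\log 2$ is constant, so over $J\asymp\log k$ stages the accumulated error is $(1+C/2)^{\Theta(\log k)}=k^{\Theta(1)}$, which destroys the $O(1)$ bound.

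The paper's remedy is to take much faster-growing blocks $(m_{\ell-1},m_\ell]$ with $m_\ell=\lfloor\exp(3\cdot 2^{-2\ell+1}s^{-m_{\ell-1}/2})\rfloor$, guaranteeing $m_\ell\ge m_{\ell-1}^2$; then the block rates $\lambda_\ell\asymp\log(m_\ell/m_{\ell-1})$ grow and $\sum_\ell e^{-\lambda_\ell}\lesssim\sum_\ell m_{\ell-1}/m_\ell=O(1)$ (indeed $o(1)$ when $s=o(1)$, since $m_2\to\infty$). Two further structural devices close the recursion: the overlap variable $B_{\ell+1}=\sum_{m_\ell<t\le 2m_\ell}Z_t$ is appended to the \emph{exponent} in the inductive quantity $S_\ell$, so the variable integrated at each step is the single Poisson $C_\ell+B_{\ell+1}$ independent of $(M_{\ell-1},B_\ell)$; and the growth $4s^{m_{\ell-1}}\le s^{m_{\ell-2}}$ lets the error-term coefficient $4b$ be absorbed into the next layer's base. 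With dyadic blocks, $2m_{\ell-1}=m_\ell$ exactly, so $C_\ell=0$ and this overlap mechanism degenerates.
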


To show \prettyref{lmm:poisson_approximation} we need the following elementary result (proved at the end of this subsection):
\begin{lemma}\label{lmm:fact}
Let $a, b, d, \alpha,\lambda>0$ such that $be^{\alpha+1} \lambda < \frac{1}{4}$.
Let $X \sim \Pois(\lambda)$. Then
\begin{align}
    \Expect\left[\left(a+bX\right)^{X+d}\exp\left(\alpha X\right)\right]
   & \le   a^d\left(1+2b e^{\alpha+1}\lambda \right)^d \exp \left(\lambda \left(a e^{\alpha}\left(1+2be^{\alpha+1}\lambda\right)-1\right)\right) \nonumber\\
   & + 27 \exp\left(-\lambda \right)\max\big\{  \left(4 b d\right)^d , a^d \big \} \label{eq:lmm_fact_1}.
\end{align}
Moreover, if $d=0$, and $a,b,\alpha,\lambda$ are fixed constants such that $b e^{\alpha+1}\lambda<1$,
\begin{align}
    \Expect\left[\left(a+bX\right)^{X}\exp\left(\alpha X\right)\right] =O(1)\label{eq:lmm_fact_2}.
\end{align}
In particular, if $d=0$, $a=1$, $b = o(1)$, $\alpha=o(1)$, and $\lambda$ is some fixed constant, 
\begin{align}
    \Expect\left[\left(1+bX\right)^{X}\exp\left(\alpha X\right)\right]  = 1+o(1).  \label{eq:lmm_fact_3}
\end{align}
\end{lemma}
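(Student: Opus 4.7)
The plan is to expand the expectation as the explicit Poisson series
\[
\Expect\qth{(a+bX)^{X+d}e^{\alpha X}} = e^{-\lambda}\sum_{k=0}^\infty \frac{\lambda^k(a+bk)^{k+d}e^{\alpha k}}{k!}
\]
and split the sum at the threshold $T \triangleq 2ae^{\alpha+1}\lambda$, chosen so that $a+bT = a(1+\beta)$ with $\beta \triangleq 2be^{\alpha+1}\lambda < 1/2$ (by hypothesis). For the body $k \le T$, I would use the monotonicity $(a+bk)^{k+d}\le (a(1+\beta))^{k+d}$ to bound the partial sum by $a^d(1+\beta)^d\,\Expect\qth{(a(1+\beta)e^\alpha)^X}$; evaluating the Poisson probability generating function yields the first claimed term $a^d(1+\beta)^d\exp(\lambda(ae^\alpha(1+\beta)-1))$. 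For the tail $k > T$, I would apply Stirling's inequality $k! \ge (k/e)^k$ to bound the summand by $e^{-\lambda}(a+bk)^d\left(e\lambda e^\alpha(a+bk)/k\right)^k$ and observe that on $\{k > T\}$ the ratio $e\lambda e^\alpha(a+bk)/k \le 1/2 + be^{\alpha+1}\lambda < 3/4$, so the tail is controlled by $e^{-\lambda}\sum_k(a+bk)^d(3/4)^k$.

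To finish the first bound I would estimate this polynomial-geometric sum by splitting at $k = a/b$ and using $(a+bk)^d \le 2^d\max(a^d,(bk)^d)$, leaving the geometric sum $\sum_k (3/4)^k = 4$ on one side and $\sum_k k^d(3/4)^k \lesssim (Cd)^d$ (for an absolute constant $C$, obtainable from the Eulerian polynomial identity $\sum_k k^d r^k = A_d(r)/(1-r)^{d+1}$ together with Stirling's bound $d! \le d^d$) on the other; collecting constants yields the advertised second term $27 e^{-\lambda}\max\{(4bd)^d, a^d\}$. The second assertion follows immediately from the first by plugging in $d = 0$, as both terms are $O(1)$ for fixed constants satisfying $be^{\alpha+1}\lambda < 1$ (which may require a minor adjustment of the threshold $T$, since the first assertion's hypothesis is $< 1/4$). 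For the third assertion, the first bound only gives $\Expect \le 1 + o(1) + 27 e^{-\lambda}$, which is $O(1)$ but not $1 + o(1)$. The cleanest remedy is dominated convergence: $(1+bX)^X e^{\alpha X}$ is monotone increasing in $b$ and in $\alpha$, so for $b \le b_0, \alpha \le \alpha_0$ it is pointwise dominated by $(1+b_0 X)^X e^{\alpha_0 X}$; choosing $b_0, \alpha_0$ small enough that $b_0 e^{\alpha_0+1}\lambda < 1/4$ ensures the dominating random variable is integrable (by the first bound), and since $(1+bX)^X e^{\alpha X} \to 1$ pointwise as $b,\alpha \to 0$, DCT gives $\Expect[(1+bX)^X e^{\alpha X}] \to 1$.

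The main obstacle I anticipate is the bookkeeping in the tail estimate for the first assertion: tracking the $d$-dependence in $\sum_k(a+bk)^d(3/4)^k$ to obtain the explicit bound $27 e^{-\lambda}\max\{(4bd)^d, a^d\}$ requires a careful estimate of $\sum_k k^d(3/4)^k$ via the Eulerian polynomial identity, or equivalently via $\Gamma$-function bounds on $\int_0^\infty x^d(3/4)^x dx$, combined with Stirling; pinning down the specific absolute constants $27$ and $4$ will demand some delicate accounting. A secondary subtlety is reconciling the weaker hypothesis in the second assertion ($be^{\alpha+1}\lambda < 1$) with the stronger one in the first ($< 1/4$); one can either enlarge the threshold $T$ to compensate (replacing the decay factor $3/4$ by any constant in $(be^{\alpha+1}\lambda, 1)$), or give an independent geometric argument exploiting the absence of the $k^d$ factor when $d = 0$.
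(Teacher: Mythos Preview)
Your overall strategy matches the paper's: the same split at $T = 2ae^{\alpha+1}\lambda$, the same Poisson MGF computation for the body, and the same Stirling bound $k! \ge (k/e)^k$ for the tail. The divergence is in how the tail sum $e^{-\lambda}\sum_{k}(a+bk)^d(3/4)^k$ is handled. You propose to split via $(a+bk)^d \le 2^d\max(a^d,(bk)^d)$ and then control $\sum_k k^d(3/4)^k$ by Eulerian/Gamma-type estimates. The paper instead peels off a factor $e^{k/4}$ from the geometric, writing the summand as $(3e^{1/4}/4)^k\exp(f(k))$ with $f(x)=d\log(a+bx)-x/4$, and then optimizes the concave function $f$ directly: its maximum over $x\ge 0$ is $d\log(4bd)$ if $4bd\ge a$ and $d\log a$ otherwise, which immediately yields $\max\{(4bd)^d,a^d\}$; the remaining geometric sum $\sum_k(3e^{1/4}/4)^k$ gives the constant $27$. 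This trick is cleaner and produces the advertised constants without further accounting, whereas your splitting introduces an extra $2^d$ that would turn $a^d$ into $(2a)^d$---a nuisance for the downstream recursion where this lemma is applied.

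For \prettyref{eq:lmm_fact_2} the paper does not reuse \prettyref{eq:lmm_fact_1} at all but gives the one-line direct argument $\sum_{k\ge 1}\big(\tfrac{a\lambda e^{\alpha+1}}{k}+be^{\alpha+1}\lambda\big)^k<\infty$ (valid once $be^{\alpha+1}\lambda<1$), which sidesteps the hypothesis mismatch you noted. For \prettyref{eq:lmm_fact_3} the paper does the explicit truncation at $t=\omega(1)$ with $bt^2+\alpha t=o(1)$; your dominated convergence argument is correct and arguably tidier.
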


Before proceeding to the proof of \prettyref{lmm:poisson_approximation}, we pause to note that a direct application of \prettyref{eq:lmm_fact_2} (with $X=Z_1+\ldots+Z_k$, $a=1$, $b=1.04s$, $\alpha=1/\eta$) yields the condition $s \log k = O(1)$. Since $k$ will be chosen to be $\Theta(\log n)$ eventually, this translates into the statement that strong detection is impossible if $s=O(\frac{1}{\log \log n})$. In order to improve this to $s=O(1)$, the key idea is to partition the product over  $[k]$ in \prettyref{eq:s_constant} into subsets and recursively peel off the expectation backwards by repeated applications of \prettyref{lmm:fact}.
\begin{proof}[Proof of \prettyref{lmm:poisson_approximation}]
Let $m_0 =0$, $m_1 = 1$, and iteratively define 
\begin{align}
     m_{\ell} = \floor{\exp\left(3 \times 2^{-2\ell+1}s^{-\frac{m_{\ell-1}}{2}}\right)}{}, \quad  2 \le \ell \le r, \label{eq:m_ell}
\end{align}
where $r$ is the integer such that $m_{r-1}<k \le m_r$. Note that for $s\le 0.1$, we have 
$m_2\ge  3 = 3 m_1$; moreover, if $s=o(1)$, we have $m_2 =\omega(1)$. 
Note that it is possible that $m_{r-1}$ is very close to $k$ and  $m_r$ is much larger than $k$, especially when $s=o(1)$. To simplify the argument,
define $K= \min\{k^2, m_r\}$. Since the quantity inside the expectation in \prettyref{eq:s_constant} increases in $k$, it suffices to bound 
$$
 \Expect \left[\prod_{m=1}^K \left(1+ 1.04 s^m \sum_{\ell \le  2m }  Z_\ell \indc{\ell \le K}\right)^{Z_m} \exp \left( \frac{\sum_{\ell=1}^K Z_{\ell}}{\eta} \right)\right]
$$
and we can assume, without loss of generality, that $k=\omega(1)$. 




Next we partition $[K]$ into the following $r$ subsets: 
$$\{m_{0}+1,\cdots, m_{1}\},\{m_{1}+1,\cdots, m_2\},\cdots, \{m_{r-1}+1,\cdots, K\}.$$
For $1\le \ell\le r$, define 
$$
A_{\ell} \triangleq \sum_{t=m_{\ell-1}+1}^{m_{\ell}} Z_t\indc{t \le K} , \quad  B_{\ell} \triangleq \sum_{t=m_{\ell-1}+1}^{2m_{\ell-1}}Z_{t}\indc{t \le K} , \quad C_{\ell} \triangleq \sum_{t=2m_{\ell-1}+1}^{m_{\ell}} Z_{t}\indc{t \le K}, \quad M_{\ell} \triangleq \sum_{t=1}^{\ell} A_{t},
$$
and $B_{r+1}=0$.
Note that $A_{\ell} = B_{\ell}+C_{\ell}$ and, by the definition of $r$, $M_r = \sum_{t=1}^K Z_t$. Furthermore, for $1 \le \ell \le r$,
\begin{align*}
     C_{\ell}+B_{\ell+1} & \inddistr \Pois\left(\lambda_{\ell}\right),\quad \mathrm{where\ }\lambda_{\ell} \triangleq \harmonic_{2m_{\ell}\wedge K} - \harmonic_{2m_{\ell-1}\wedge K} \,.
\end{align*}
Then we can upper bound \prettyref{eq:s_constant} as 
\begin{align}
    \prettyref{eq:s_constant}
    & \le \Expect \left[\prod_{\ell=1}^{r}\left(\prod_{m = m_{\ell-1}+1}^{m_{\ell}} \left(1+1.04s^{m_{\ell-1}+1} \left(M_{\ell}+B_{\ell+1}\right)\right)^{Z_m} \right)\exp\left(\frac{M_r}{\eta}\right)\right] \nonumber \\ 
    & = \Expect \left[\prod_{\ell=1}^{r} \left(1+1.04 s^{m_{\ell-1}+1} \left(M_{\ell}+B_{\ell+1}\right)\right)^{A_{\ell}}\exp\left(\frac{M_r}{\eta}\right) \right]. \label{eq:abm}
\end{align}
Define the sequence $\{\alpha_\ell\}_{\ell=1}^r$ and $\{\beta_\ell\}_{\ell=1}^r$ backward recursively by 
$ \alpha_r =\frac{1}{\eta}$, $\beta_r = 0$ and for $2\le \ell \le r$, 
\begin{align}
            \alpha_{\ell-1} 
            & \triangleq \alpha_{\ell} + 1.04 \lambda_{\ell} e^{\alpha_{\ell}}s^{m_{\ell-1}+1} \left(1+2.08 s^{m_{\ell-1}+1} e^{\alpha_{\ell}+1} \lambda_{\ell} \right) + \log \left(1+2.08 s^{m_{\ell-1}+1} e^{\alpha_{\ell}+1}\lambda_{\ell}\right)\label{eq:alpha_ell-1},\\
            \beta_{\ell-1} & \triangleq \beta_{\ell} + \lambda_{\ell} \left(e^{\alpha_{\ell}}\left(1+2.08s^{m_{\ell-1}+1} e^{\alpha_{\ell}+1} \lambda_{\ell} \right)-1\right). \label{eq:beta_ell-1}
\end{align}
For $1\le \ell\le r$, define
\begin{align}
    S_{\ell} 
    &\triangleq \Expect \biggl [\left(\prod_{t=1}^{\ell-1} \left(1+1.04 s^{m_{t-1}+1}\left(M_{t}+B_{t+1}\right)\right)^{A_{t}}\right) \nonumber \\
    &~~~~ \left(1+1.04s^{m_{\ell-1}+1}\left(M_{\ell}+B_{\ell+1}\right)\right)^{A_{\ell}+B_{\ell+1}}  e^{\alpha_{\ell}\left( M_{\ell}+B_{\ell+1}\right)+\beta_{\ell}}\biggr ] \label{eq:sell_1}, 
\end{align}
Since $B_{r+1}=0$, it follows that $S_r$ is precisely the RHS of \prettyref{eq:abm} and our goal is to show $S_r=O(1)$. 
This is accomplished by the following sequence of claims:
\begin{enumerate}[label=(C\arabic*)]
    \item \label{C:1} For any $2\le \ell \le r$, $ \left(\log m_{\ell}\right) s^{m_{\ell-1}+1}  \le \left(\log m_{\ell}\right)^2 s^{m_{\ell-1}+1} \le 9 s \times 2^{-4\ell+2}$; 
    \item \label{C:2}  For any $2\le \ell \le r$, $m_{\ell} \ge m_{\ell-1}^2$ and $m_{\ell} \ge \left(m_2\right)^{2^{\ell-2}}$; 
    \item \label{C:3} For any $2 \le \ell \le r$, $ \left( \log m_{\ell+1} \right) s^{m_{\ell}} \le \frac{1}{8} \left( \log m_{\ell}  \right) s^{m_{\ell-1}}$;
    \item \label{C:lambda} For any $2\le \ell \le r$, $\lambda_{\ell}\le \log m_{\ell}$, and $\sum_{\ell=2}^r \exp\left(-\lambda_{\ell}\right)=O(1)$, in particular, if  $s=o(1)$, $ \sum_{\ell=2}^r \exp\left(-\lambda_{\ell}\right)=o(1)$; 
    \item \label{C:alpha_beta} For $1\le\ell \le r$, $\alpha_{\ell} \le \frac{2}{5}$, and $\beta_1 \le 3$, in particular, if  $s=o(1)$, for $1\le\ell \le r$, $\alpha_{\ell} = o(1)$, and $\beta_1 = o(1)$; 
     \item \label{C:sell_sell-1} For any $2 \le \ell\le r$,
    $ S_{\ell} \le S_{\ell-1} \left(1+27\exp\left(-\lambda_{\ell} \right)\right)$, and $S_r =O(1)$, in particular, if $s=o(1)$, $S_r =1+o(1)$.
\end{enumerate}
We finish the proof by verifying these claims:
\begin{itemize}
    \item Proof of \ref{C:1}: This follows from the definition of $m_\ell$ given in \prettyref{eq:m_ell}.
    \item Proof of \ref{C:2}: 
    It suffices to prove the first inequality. We proceed by induction. 
    For the base case of  $\ell=2$, recall that we have shown under the assumption $s \le 0.1$, we have $m_2 \ge  3 m_1 \ge m_1^2$. By \prettyref{eq:m_ell}, we can get that 
    $$
    \frac{m_3}{m_2^2}\ge  \frac{1}{m_2^2} \left(\exp\left(3\times 2^{-5}0.1^{-\frac{m_2}{2}} \right)-1 \right) \ge  2,
    $$ 
    where  the last inequality holds because $m_2 \ge 3$ and $x \mapsto \frac{1}{x^2}\left(\exp\left(3 \times 2^{-5}0.1^{-\frac{x}{2}}\right)-1\right)$ increases for $x\ge 3$. Then we have $m_3 \ge 2 m_2^2 \ge 6 m_2$ given $m_2 \ge 3$. 
    
    Fix any $4 \le \ell \le r$,
    suppose we have shown for every $2 \le t \le \ell-1$, $m_{t} \ge m_{t-1}^2$. By \prettyref{eq:m_ell}, $ m_{\ell-1} \le  \exp\left(3\times 2^{-2\ell+3}s^{-\frac{m_{\ell-2}}{2}}\right)$ and
    $$
    m_{\ell} \ge \exp \left(3\times 2^{-2\ell+1}s^{-\frac{m_{\ell-1}}{2}}\right)-1 \ge   \exp\left(3\times 2^{-2\ell+1}s^{-\frac{6m_{\ell-2}}{2}}\right)-1,
    $$
    where the last inequality holds because $m_{\ell-1} \ge 6 m_{\ell-2}$ for $\ell \ge 4$
    following from $m_3\ge 6 m_2$ and the induction hypothesis $m_{\ell-1} \ge m_{\ell-2}^2$. Then we have 
    \begin{align*}
        \frac{m_{\ell}}{m_{\ell-1}}
        \ge  \exp\left(3\times 2^{-2\ell+3}s^{-\frac{m_{\ell-2}}{2}} \left(2^{-2}s^{-\frac{5}{2}m_{\ell-2}}-1\right)\right) -1 & \overset{(a)}{\ge } \exp\left(3\times 2^{-2\ell+3}s^{-\frac{m_{\ell-2}}{2}}\times 2\right)-1\\
        &\overset{(b)}{\ge } m_{\ell-1}^2 -1 \overset{(c)}{\ge } m_{\ell-1},
    \end{align*}
    where $(a)$ holds by $2^{-2}s^{-\frac{5}{2} m_{\ell-2}} \ge  3$ given $s\le 0.1$ and $m_{\ell-2} \ge  3$; $(b)$ holds by $m_{\ell-1} \le  \exp\left(3\times 2^{-2\ell+3}s^{-\frac{m_{\ell-2}}{2}}\right)$; $(c)$ holds by $m_{\ell-1} \ge 6m_{\ell-2} \ge 18$ for $\ell\ge  4$ given $m_2\ge 3$. Hence, 
    \ref{C:2} follows.

     \item Proof of \ref{C:3}:
    We prove a stronger statement: $\left(\log m_{\ell+1 }\right)^2 s^{m_{\ell}} \le \frac{1}{8}\left(\log m_{\ell}\right)^2 s^{m_{\ell-1}}$ for $2 \le \ell \le r$. 
    Since  $\left(\log m_{\ell+1}\right)^2 s^{m_{\ell}}  \le 9 \times 2^{-4\ell-2}$ by \ref{C:1}, it suffices to show
    $\left(\log m_{\ell}\right)^2 s^{m_{\ell-1}}  \ge  9 \times  2^{-4\ell+1}$  for $\ell\ge 2$.  
    For $\ell=2$, we have $\left(\log m_{2}\right)^2 s^{m_{1}}  \ge  9 \times  2^{-7}$, since $m_2 \ge 3$ and $s\le 0.1$.
    By \prettyref{eq:m_ell}, we have 
    $$ 
    m_{\ell} \ge  \exp \left(3 \times 2^{-2\ell+1}s^{-\frac{m_{\ell-1}}{2}}\right)-1 \ge  \exp\left(3 \times 2^{-2\ell+\frac{1}{2}}s^{-\frac{m_{\ell-1}}{2}}\right),
    $$ 
    where the last inequality holds because  $\exp(x)-1\ge  \exp(x/\sqrt{2})$ for $x\ge  1.22$, and   $3 \times 2^{-2\ell+1}s^{-\frac{m_{\ell-1}}{2}} \ge \ln m_{\ell}
    \ge 2^{\ell-2} \ln m_2 \ge 2$ for $\ell\ge  3$, by \ref{C:1}, \ref{C:2}, and $m_2 \ge 3.$
    

    \item Proof of \ref{C:lambda}:
    Note that $2m_{r-1} \le m_r$ by \ref{C:2}. Moreover, $2 m_{r-1} < k^2$ as $m_{r-1}<k$ and $k=\omega(1)$. Since $K=\min\{k^2, m_r\}$,  it follows that $2m_{r-1} \le K \le m_r$.  Therefore,
    $\lambda_{r} = \harmonic_{K} - \harmonic_{2m_{r-1}} $ and 
    $ \lambda_{\ell}= \harmonic_{2m_\ell} - \harmonic_{2m_{\ell-1}}$ for $1 \le \ell \le r-1.$
    
    Since for any $n,k \in \naturals$, $ \harmonic_n - \harmonic_k \le \int_k^n \frac{1}{x}  \mathrm{d}x  \le \log\frac{n}{k},$ 
    then for any $2 \le \ell \le r$,
    $$
    \lambda_{\ell} \le \harmonic_{2m_\ell} - \harmonic_{2m_{\ell-1}} \le \log \frac{m_\ell}{m_{\ell-1}} \le \log m_{\ell},
    $$
    where the last inequality holds due to $m_{\ell-1} \ge 1$ for $\ell \ge 2.$

   Conversely, since for any $n,k \in \naturals$, $ \harmonic_n - \harmonic_k \ge \int_{k+1}^{n+1}\frac{1}{x}  \mathrm{d}x  \geq \log \frac{n+1}{k+1} \geq \log \frac{n}{k}-1$, it follows that for any $2 \le \ell \le r-1$,
  $$ 
  \lambda_{\ell} = \harmonic_{2m_\ell} - \harmonic_{2m_{\ell-1}} \ge  \log\left(\frac{m_{\ell}}{m_{\ell-1}}\right)-1,
  $$
 and  $\lambda_r \ge \log \frac{K}{2m_{r-1}} -1$. 
    If $K=k^2$, then $K \ge k m_{r-1}$
    and thus $\lambda_r \ge \log \frac{k}{2}-1$; otherwise, $K=m_r$
    and thus
    $\lambda_r  \ge \log \frac{m_r}{2 m_{r-1}} - 1$. Hence, we get that
    \begin{align*}
    \sum_{\ell=2}^r \exp\left(-\lambda_{\ell}\right) 
        & \le   \sum_{\ell=2}^{r}\frac{2 e m_{\ell-1}}{m_{\ell}}  + \frac{2 e }{k} \overset{(a)}{\le} \frac{2e}{m_2 } +  \sum_{\ell=3}^{r} \frac{2e}{m_{\ell-1}} + o(1) \overset{(b)}{\le} \frac{3 e}{m_{2}}+ o(1) = O(1),
    \end{align*}

    where $(a)$ holds by $m_1=1$, $k=\omega(1)$, and \ref{C:2} so that $m_{\ell} \ge m_{\ell-1}^2$ for $\ell\ge 2$; $(b)$ holds because in view of \ref{C:2}, $m_{\ell} \ge \left(m_2\right)^{2^{\ell-2}}$ and
    $m_2 \ge 3$, so that
    $$
    \sum_{\ell=3}^{r-1} \frac{1}{m_{\ell-1}} \le \sum_{\ell=3}^{r-1} \frac{1}{ (m_2)^{2^{\ell-2}}} 
    \le \sum_{\ell=2}^{\infty} \frac{1}{(m_2)^\ell}
    = \frac{1}{m_2^2} \frac{1}{1-m_2^{-1}}
    \le
    \frac{1}{2m_2}.
    $$
    In particular, if $s=o(1)$, then $m_{2}=\omega(1)$ and we have 
    $$
    \sum_{\ell=2}^r \exp\left(-\lambda_{\ell}\right) \le \frac{3 e}{m_{2}}+ o(1) = o(1).
    $$
    
    Hence, \ref{C:lambda} follows.
		
        \item Proof of \ref{C:alpha_beta}:
        For $2 \le \ell \le r$, let $c=1.04$, we define 
        $$
        \psi_{\ell} \triangleq  (2+4e)c \left( \log m_{\ell}  \right) s^{m_{\ell-1}+1}+ 8e c^2\left(\log m_{\ell}\right)^2 s^{2m_{\ell-1}+2}.
        $$
        We prove $\alpha_{\ell} \le \frac{2}{5}$ for $1\le \ell\le r$ by induction. 
        Since $\eta=\omega(1) \ge  6$, we have  $\alpha_r=\frac{1}{\eta}< \frac{2}{5}$. 
        Fix any $2 \le \ell \le r$.
        Suppose we have shown for any $\ell \le t\le r$, $\alpha_{t}\le \frac{2}{5}$. 
        Then $e^{\alpha_t}\le 1+2\alpha_t$, $e^{2\alpha_t}\le 1+4\alpha_t$. Since $\log(1+x) \le x$ for $x\ge  0$, by \prettyref{eq:alpha_ell-1} we have 
        \begin{align*}
            \alpha_{t-1} 
            & \le \alpha_{t} + \left(1 + 2e \right)c \lambda_{t} e^{\alpha_{t}} s^{m_{t-1}+1} + 2e c^2 s^{2m_{t-1}+2} e^{2\alpha_{t}} \lambda_{t}^2 \\
            & \le \alpha_{t} + \left(1 + 2e \right)c \lambda_{t} \left(1+2\alpha_{t}\right) s^{m_{t-1}+1}+ 2e c^2  s^{2m_{t-1}+2}  \left(1+4\alpha_{t}\right) \lambda_{t}^2 \\
            & \le \alpha_{t} \left(1+\psi_t\right) + \frac{1}{2} \psi_t,
        \end{align*}

        where the last inequality holds by \ref{C:lambda}.
        By the induction hypothesis, 
        \begin{align}
            \alpha_{\ell-1} 
             & \le  \alpha_r \prod_{t=\ell}^r \left(1+\psi_t \right) + \frac{1}{2} \sum_{t=\ell}^r \psi_t \prod_{j=\ell}^{t-1} \left( 1 + \psi_j \right) \nonumber  \\
             & \le  \left(  \alpha_r +  \frac{1}{2} \sum_{t=\ell}^r \psi_t \right) \exp \left( \sum_{t=\ell}^r \psi_t \right) \overset{(a)}{<} \frac{9}{7}\alpha_r+\frac{36}{49} \psi_{\ell} \overset{(b)}{<} \frac{2}{5}, \label{eq:alpha_ell_upper_bound}
        \end{align}

         where $(a)$ holds by $ \exp\left(\sum_{t=\ell}^r \psi_t \right) \le \frac{9}{7}$, since $\sum_{t=\ell}^r \psi_t \le \frac{8}{7} \psi_{\ell}$ following from $\psi_{t+1} \le \frac{1}{8} \psi_{t}$ by \ref{C:3}, and $\psi_{\ell} \le 0.2$ for $\ell\ge  2$ because $\left(\log m_{\ell} \right) s^{m_{\ell-1}+1} \le 9s \times 2^{-6} \le 2^{-6}$ by \ref{C:1}; $(b)$ holds because $\alpha_r= \frac{1}{\eta} \le \frac{1}{6}$ by $\eta = \omega(1)$ and $\psi_{\ell} \le 0.2$.
         
         In particular, if $s=o(1)$, by \ref{C:3}, we have $\sum_{t=\ell}^r \psi_{t} \le \frac{8}{7} \psi_\ell = o(1)$
         because $\psi_{\ell} = o(1)$ for $\ell\geq 2$ following from $\left(\log m_{\ell} \right) s^{m_{\ell-1}+1} \le 9 s \times 2^{-4\ell+2}  = o(1)$ by \ref{C:1} and $s=o(1)$. Then for $2 \le \ell \le r $, we have 
         $$
         \alpha_{\ell-1} \le  \left(  \alpha_r +  \frac{1}{2} \sum_{t=\ell}^r \psi_t \right) \exp \left( \sum_{t=\ell}^r \psi_t \right) = \left(\alpha_r +o(1) \right) \left(1+ o(1)\right) = o(1),
         $$ 
         where the last equality holds because $\alpha_r = \frac{1}{\eta} = o(1)$ given $\eta = \omega(\log k)$. 
         
        Since we have shown that $\alpha_{\ell}\le \frac{2}{5}$ for $1\le \ell\le r$, it follows that 
        $e^{\alpha_\ell}\le 1+2\alpha_\ell$ and $e^{2\alpha_\ell}\le 1+4\alpha_\ell$.
        Then by \prettyref{eq:beta_ell-1}, we can get that for $2 \le \ell\le r$,  
        \begin{align*}
            \beta_{\ell-1}
            & \le \beta_{\ell} + \lambda_{\ell} \left\{  2\alpha_\ell + 2ec s^{m_{\ell-1}+1} (1+4\alpha_\ell) \lambda_{\ell} \right\} \\
            & \overset{(a)}{\le }\beta_{\ell} +  2\alpha_{\ell}\lambda_{\ell} + \left(1 + \frac{8}{5}\right) 2ec\lambda_{\ell}^2 s^{m_{\ell-1}+1} \\
            &\overset{(b)}{\le }\beta_{\ell} +  2\left(\frac{9}{7}\alpha_r+\frac{36}{49} \psi_{\ell+1}\right)\lambda_{\ell} + 
            \frac{26}{5} e c\lambda_{\ell}^2 s^{m_{\ell-1}+1} \\
            & \overset{(c)}{<} \beta_{\ell} +\frac{18}{7} \alpha_r \lambda_{\ell} + \frac{9}{49} \psi_\ell \log (m_\ell) 
            + \frac{26}{5} e c \left( \log m_{\ell} \right)^2  s^{m_{\ell-1}+1},
        \end{align*}
        where $(a)$ holds by $\alpha_{\ell} \le \frac{2}{5}$; 
        $(b)$ holds by \prettyref{eq:alpha_ell_upper_bound} so that $\alpha_{\ell}\le \frac{9}{7}\alpha_r+\frac{36}{49}\psi_{\ell+1}$; 
        $(c)$ holds because $\lambda_\ell \le \log m_\ell$ for $\ell \ge 2$ by  \ref{C:lambda} and
        $\psi_{\ell+1}\le \frac{1}{8}\psi_{\ell}$ by \ref{C:3}. Since $ \left( \log m_{\ell} \right) s^{m_{\ell-1}+1} \le 9s \times 2^{-4 \ell+2} \le 2^{-6}$ by \ref{C:1}, it follows that 
        $$
           \psi_{\ell} \le (2+4e) c \left( \log m_{\ell} \right) s^{m_{\ell-1}+1} + \frac{8ec^2}{2^6} \left( \log m_{\ell} \right) s^{m_{\ell-1}+1} 
           \le 14\left(  \log m_{\ell} \right) s^{m_{\ell-1}+1}.
        $$  
       Combining the last two displayed equation yields that 
         \begin{align*}
          \beta_{\ell-1}  \le  \beta_{\ell} + \frac{18}{7} \alpha_r \lambda_{\ell} + 18 \left( \log m_{\ell} \right)^2  s^{m_{\ell-1}+1}
           \le \beta_{\ell} + \frac{18}{7} \alpha_r \lambda_{\ell}  + 18  \times 9s \times 2^{-4 \ell+2},
         \end{align*}
     where the last inequality holds 
     in view of $\left(\log m_{\ell}\right)^2 s^{m_{\ell-1}+1} \le   9 s\times 2^{-4\ell+2}$ by \ref{C:1}.

    By the telescoping summation of the last displayed equation 
        and $\beta_r=0$, we get 
        \begin{align*}
            \beta_1
            & \le \frac{18}{7}\alpha_r  \sum_{\ell=2}^{r} \lambda_{\ell}+ 18  \times 9 s\times\sum_{\ell=2}^r 2^{-4\ell+2}\le 3,
        \end{align*}
        where the last equality holds by since $  \sum_{\ell=2}^{r} \lambda_{\ell} \le \log K \le 2\log k$ and $\alpha_r = \frac{1}{\eta} = \frac{1}{\omega\left(\log k\right)}$. 
        In particular if $s=o(1)$, we have 
        $
             \beta_1 = o(1). 
        $
    
		\item Proof of \ref{C:sell_sell-1})
        First, we prove for any $2 \le \ell\le r$, $ S_{\ell} \le S_{\ell-1} \left(1+27\exp\left(-\lambda_{\ell} \right)\right)$. 
        Since $M_{{\ell}} = M_{{\ell}-1}+B_{{\ell}}+C_{{\ell}}$ and $A_{\ell} = B_{{\ell}} + C_{{\ell}}$, by \prettyref{eq:sell_1}, letting $c=1.04$, we have  
        \begin{align}
            S_{\ell} 
            &\le \Expect \bigg[\left(\prod_{t=1}^{\ell-1} \left(1+c s^{m_{t-1}+1}\left(M_{t}+B_{t+1}\right)\right)^{A_{t}}\right) \nonumber\\ 
            &~~~~\left(1+c s^{m_{\ell-1}+1}\left(M_{\ell-1}+B_{\ell}+C_{\ell}+B_{\ell+1}\right)\right)^{B_{\ell}+C_{\ell}+B_{\ell+1}}\exp\left(\alpha_{\ell}\left( M_{\ell-1}+B_{\ell}+C_{\ell}+B_{\ell+1}\right)+\beta_{\ell}\right)\bigg ]\nonumber \\
            & = \Expect \bigg[\left(\prod_{t=1}^{\ell-1} \left(1+c s^{m_{t-1}+1}\left(M_{t}+B_{t+1}\right)\right)^{A_{t}}\right) \exp\left(\alpha_{\ell}\left( M_{\ell-1}+B_{\ell}\right)+\beta_{\ell}\right)\nonumber \\
            &~~~~ \left(1+c s^{m_{\ell-1}+1}\left(M_{\ell-1}+B_{\ell} + C_{\ell}+B_{\ell+1}\right) \right)^{B_{\ell}+C_{\ell}+B_{\ell+1}} \exp\left(\alpha_{\ell}\left(C_{\ell}+B_{\ell+1}\right)\right)\bigg] \label{eq:sell_2}. 
        \end{align} 
 Note that   $\{B_t, A_t \}_{t=1}^{\ell-1}$ and $B_\ell$ are independent from  $ C_{\ell}+B_{\ell+1}$.
 To proceed, we condition on $\{B_t, A_t \}_{t=1}^{\ell-1}$ and $B_\ell$, and take expectation over
 $ C_{\ell}+B_{\ell+1}$ by applying \prettyref{eq:lmm_fact_1} in \prettyref{lmm:fact}. 
 In particular, let $X = C_{\ell}+B_{\ell+1} \sim \Pois\left(\lambda_{\ell}\right)$, $a=1+cs^{m_{\ell-1}+1}\left(M_{\ell-1}+B_{\ell}\right)$, $b=c s^{m_{\ell-1}+1}$, $d=B_{\ell}$, $\lambda=\lambda_{\ell}$, and $\alpha = \alpha_{\ell} \le \frac{2}{5}$ by \ref{C:alpha_beta}. 
 By \ref{C:lambda}, $\lambda_{\ell} \le \log m_{\ell}$ for $\ell \ge  2$, and thus
 $$
 be^{\alpha+1} \lambda = c s^{m_{\ell-1}+1} e^{\alpha_{\ell}+1} \lambda_{\ell} \le c e^{1.4}  s^{m_{\ell-1}+1} \log m_{\ell} 
 \le c e^{1.4} \times 2^{-6} <\frac{1}{4},
 $$
 where the second-to-the-last inequality holds by \ref{C:1}.
For ease of notation, let $\xi=1+2be^{\alpha+1} \lambda$ and note that $a=1+b\left(M_{\ell-1}+B_{\ell}\right)$.
Then  it follows from \prettyref{eq:lmm_fact_1} in \prettyref{lmm:fact} that 
    \begin{align*}
          & \Expect_{C_{\ell}+B_{\ell+1}} 
          \left[ \left(1+cs^{m_{\ell-1}+1}\left(M_{\ell-1}+B_{\ell}  + C_{\ell}+B_{\ell+1}\right) \right)^{B_{\ell}+C_{\ell}+B_{\ell+1}} \exp\left(\alpha_{\ell}\left(C_{\ell}+B_{\ell+1}\right)\right) \mid M_{\ell-1}, B_{\ell} \right]\\
          & \le
           \left( 1+b  \left(M_{\ell-1}+B_{\ell}\right)\right)^{B_{\ell}} \xi^{B_{\ell}}  \exp\left\{ \lambda_{\ell}\left[ \left( 1+ b \left(M_{\ell-1}+B_{\ell}\right)\right) e^{\alpha_{\ell}}\xi - 1\right]\right\} \\
          &~~~~+ 27\exp\left(-\lambda_{\ell} \right)\max\big\{  \left(4 b B_{\ell}\right)^{B_{\ell}} , \left(1+b \left(M_{\ell-1}+B_{\ell}\right)\right)^{B_{\ell}}\big \} \\
          & \le \left( 1+b \left(M_{\ell-1}+B_{\ell}\right)\right)^{B_{\ell}} \xi^{M_{\ell-1}+B_{\ell}}
           \exp\left \{ \lambda_{\ell}\left[ \left( 1+ b \left(M_{\ell-1}+B_{\ell}\right)\right) e^{\alpha_{\ell}} \xi -1\right] \right\} \\
          &~~~~ + 27 \exp\left(-\lambda_{\ell} \right) \left(1+ 4 b \left(M_{\ell-1}+B_{\ell}\right)\right)^{B_{\ell}}\\
          & \le \left( 1+ 4 b \left(M_{\ell-1}+B_{\ell}\right)\right)^{B_{\ell}}\exp\left \{
         \left( \lambda_{\ell} b e^{\alpha_{\ell}} \xi+ \log \xi \right) \left(M_{\ell-1}+B_{\ell}\right)  + \lambda_{\ell}\left( e^{\alpha_{\ell}} \xi-1\right) \right \} 
          \left(1+27\exp\left(-\lambda_{\ell}\right) \right)\\
          & \le \left( 1+ c s^{m_{\ell-2}+1}\left(M_{\ell-1}+B_{\ell}\right)\right)^{B_{\ell}}\exp\left\{\left(\alpha_{\ell-1}-\alpha_{\ell}\right)\left(M_{\ell-1}+B_{\ell}\right)+\left(\beta_{\ell-1}-\beta_{\ell}\right)\right\} \left(1+27 \exp\left(-\lambda_{\ell} \right) \right),
        \end{align*}
    where the last inequality holds in view of \prettyref{eq:alpha_ell-1} and \prettyref{eq:beta_ell-1}, and
    the fact that $4 s^{m_{\ell-1}} \le s^{m_{\ell-2}}$ by \ref{C:2} and $s\le 0.1$.
        
    Combining the above result with \prettyref{eq:sell_2}, we get 
        \begin{align*}
            S_{\ell}
            & \le \Expect \bigg[ \left(\prod_{t=1}^{\ell-1} \left(1+ c s^{m_{t-1}+1}\left(M_{t}+B_{t+1}\right)\right)^{A_{t}}\right)\nonumber \\
            &~~~~\left( 1+ c s ^{m_{\ell-2}+1}\left(M_{\ell-1}+B_{\ell}\right)\right)^{B_{\ell}} \exp\left(\alpha_{\ell-1}\left( M_{\ell-1}+B_{\ell}\right)+\beta_{\ell-1}\right) \bigg] \left(1+27\exp\left(-\lambda_{\ell} \right)\right)\\
            & =  S_{\ell-1}\left(1+27\exp\left(-\lambda_{\ell} \right)\right).
        \end{align*}
        Applying the last displayed equation recursively, we get that
        \begin{align*}
            \prettyref{eq:s_constant}=S_r 
            & \le S_1 \prod_{\ell=2}^r \left(1+27\exp\left(-\lambda_{\ell}\right)\right) \le S_1 \exp\left(27\sum_{\ell=2}^r \exp\left(-\lambda_{\ell}\right) \right)  = O(S_1),
        \end{align*}
        where the last equality holds by \ref{C:lambda}, in particular if $s=o(1)$, we have 
        $$
        S_r = S_1 \left(1+ o(1)\right).
        $$
         
        It remains to calculate $S_1$. Note that
\begin{align}
    S_1
    & = \expect{(1+c s (M_1+B_2))^{A_1+B_2}\exp\left(\alpha_1(M_1+B_2)+\beta_1\right)} \nonumber \\
    & = \expect{(1+c s(C_1+B_2))^{C_1+B_2}\exp\left(\alpha_1(C_1+B_2)\right)} \exp\left(\beta_1\right) \nonumber,
\end{align}
where the last equality holds due to $B_1=0$ and $M_1=A_1=C_1$.
We apply \prettyref{eq:lmm_fact_2} in \prettyref{lmm:fact}, with $X= C_1+B_2 \sim \Pois(\lambda_1)$, $a=1$, $b=1.04 s \le 0.104 $, $\lambda = \lambda_1=\frac{3}{2}$ and $\alpha =\alpha_1 \le \frac{2}{5}$ by \ref{C:alpha_beta}. Noting that $be^{\alpha+1}\lambda \le 1.04 \cdot0.1 \cdot e^{1.4} \cdot \frac{3}{2} < 0.633$ and $\beta_1 \le 3$ by \ref{C:alpha_beta}, we conclude $
S_1 =O(1)$ and hence $S_r= O(1)$.

In particular, if $s=o(1)$, by \prettyref{eq:lmm_fact_3} in \prettyref{lmm:fact} with $a = 1$, $b=1.04 s=o(1)$, $d=0$, $\alpha = \alpha_1= o(1)$ by \ref{C:alpha_beta}, and $\lambda= \frac{3}{2}$, we have 
$S_1 = 1+o(1)$ and hence $S_r = 1+o(1)$.

        \end{itemize}

\end{proof}

\begin{proof}[Proof of \prettyref{lmm:fact}]
First, we show \prettyref{eq:lmm_fact_1}: Let $\gamma =  2 e^{\alpha+1}a$. Write
\begin{align*}
\Expect \left[\left(a+bX\right)^{X+d}\exp\left(\alpha X\right)\right] 
    & = \Expect \left[\left(a+bX\right)^{X+d}\exp\left(\alpha X\right)\indc{X < \gamma\lambda}\right]+\Expect \left[\left(a+bX\right)^{X+d}\exp\left(\alpha X\right)\indc{X \ge  \gamma\lambda}\right] \\
    & =  \termI +\termII.
\end{align*}
Then we have 
\begin{align*}
    \termI 
    & \le \Expect \left[\left(a+b\gamma\lambda\right)^{X+d}\exp\left(\alpha X\right)\right]\\
    & = \left(a+b\gamma\lambda\right)^d \Expect\left[\exp\left( X \left(\log \left(a+b\gamma\lambda\right)+\alpha \right)\right)\right]\\
    & \overset{(a)}{=}  \left(a+b\gamma\lambda\right)^d \exp \left(\lambda \left(e^{\alpha}\left(a+b\gamma\lambda\right)-1\right)\right)\\
    & = \left(a+2a b e^{\alpha+1}\lambda \right)^d \exp \left(\lambda \left(a e^{\alpha}\left(1+2be^{\alpha+1}\lambda\right)-1\right)\right),
\end{align*}
where $(a)$ holds by the moment generating function $\Expect\left[\exp\left(t X\right)\right]=\exp \left(\lambda \left(e^t-1\right)\right)$. Then, directly substituting the Poisson PMF into $\termII$, we get 
\begin{align*}
    \termII
    & \le e^{-\lambda}\sum_{k \ge  \gamma\lambda}\frac{\lambda^k}{k!}\left(a+bk\right)^{k+d}\exp\left(\alpha k\right)\\
    & \overset{(a)}{\le} e^{-\lambda}\sum_{k \ge  \gamma\lambda}\left(\frac{e\lambda}{k}\right)^{k} \left(a+bk\right)^{k+d}\exp\left(\alpha k\right)\\
    & \le e^{-\lambda}\sum_{k \ge  \gamma\lambda}\left[ \frac{ae^{\alpha+1}}{\gamma }+b e^{\alpha+1}\lambda\right]^k\left(a+bk\right)^{d} \\
    & \overset{(b)}{\le} e^{-\lambda}\sum_{k \ge  \gamma\lambda}\left(\frac{3 e^{\frac{1}{4}}}{4} \right)^k\exp\left(f\left(k\right)\right),
\end{align*}
where $(a)$ holds because $k! \ge  \left(\frac{k}{e}\right)^k$; $(b)$ holds by the choice of 
 $\gamma =  2 a e^{\alpha+1}$, our assumption that $b e^{\alpha+1}\lambda <\frac{1}{4}$, and defining:
$$
f(x) \triangleq d \log \left(a+b x\right)- \frac{x}{4}.
$$
As $b,d>0$, $f(x)$ is  concave and 
$
f'(x)= \frac{bd}{a+bx}-\frac{1}{4},
$
which equals $0$ when $x =4 d-\frac{a}{b}$.
Therefore,  
if $4d \ge  \frac{a}{b}$, 
then $f(x)$ for $x\ge 0$ is maximized at $x= 4d-\frac{a}{b}$, and $f(4d-\frac{a}{b}) =  d \log \left(4bd\right)- d+\frac{a}{4b}\le d \log \left(4 bd\right)$. Otherwise, $f(x)$ for $x\ge0$ is maximized at $x=0$,  and
$
f(0)= d \log a. 
$
In conclusion, we have 
$
\max_{k\ge  0} f(k) \le \max\{ d\log \left( 4 bd\right) , d\log a\}.
$
Then we get an upper bound on $\termII$ as
\begin{align*}
    \termII 
    & \le \exp\left(-\lambda \right) \sum_{k\ge  \gamma \lambda} \left(
    \frac{3e^{\frac{1}{4}}}{4}\right)^k \max\big\{  \left(4 bd\right)^d , a^d \big \}\\
    & \le 27 \exp\left(-\lambda \right)\max\big\{  \left(4 b d\right)^d , a^d \big \}.
\end{align*}
where the last inequality follows by $\frac{ \frac{3}{4} e^{1/4}  }{1 - \frac{3}{4} e^{1/4} } <27$. 

Next we prove \prettyref{eq:lmm_fact_2}. Substituting the Poisson PMF into \prettyref{eq:lmm_fact_2} yields that 
 \begin{align*}
 \Expect\left[\left(a+bX\right)^{X}\exp\left(\alpha X\right)\right] 
        & = \sum_{k=0}^{\infty}\frac{\lambda^k e^{-\lambda}}{k!}  (a+b k)^{k} \exp (\alpha k) \\
        & \overset{(a)}{\le } e^{-\lambda} \left(1+\sum_{k=1}^{\infty}\left(\frac{a\lambda e^{\alpha+1}}{k}+b e^{\alpha+1}\lambda\right)^{k} \right) \\
        & \overset{(b)}{=} O(1).
\end{align*}
where $(a)$ holds due to $k!\ge  \left(\frac{k}{e}\right)^k$ for $k\ge  1$, and $(b)$ holds because $a, b, \alpha, \lambda$ are fixed constants such that $b e^{\alpha+1}\lambda< 1$.  
 
It remains to prove \prettyref{eq:lmm_fact_3}. Given $b = o(1)$, $\alpha=o(1)$, we pick $t=\omega(1)$  such that $b t^2 +\alpha t =o(1)$ and get:
\begin{align*}
 \Expect\left[\left(1+bX\right)^{X}\exp\left(\alpha X\right)\right] 
        & = \sum_{k=0}^{t}\frac{\lambda^k e^{-\lambda}}{k!}  (1+b k)^{k}e^{\alpha k} + \sum_{k=t+1}^{\infty}\frac{\lambda^k e^{-\lambda}}{k!}  (1+b k)^{k}e^{\alpha k} \\
        & \overset{(a)}{\le } \sum_{k=0}^{t}\frac{\lambda^k e^{-\lambda}}{k!}  \exp\left(bk^2+\alpha k\right)+ e^{-\lambda} \sum_{k=t+1}^{\infty}\left(\frac{ \lambda e^{\alpha+1}}{k}+b e^{\alpha+1}\lambda\right)^{k}\\
        & = 1+o(1),
\end{align*}      
where the last equality holds because $\exp\left(bk^2+\alpha k\right)= 1+o(1)$ for any $0 \le k\le t$ given $bt^2+\alpha t=o(1)$, and $\sum_{k=t+1}^{\infty}\left(\frac{ \lambda e^{\alpha+1}}{k}+b e^{\alpha+1}\lambda\right)^{k} = o(1) $ for any $k>t$ given $t=\omega(1)$, $b=o(1)$, $\alpha= o(1)$, and $\lambda$ is some constant. 
\end{proof}

\section{Concentration Inequalities for Gaussians and Binomials}
\label{app:concentration}

\begin{lemma}[Hanson-Wright inequality] \label{lmm:hw}
Let $X, Y \in \reals^n$ are standard Gaussian vectors  such that the pairs 
$(X_i, Y_i)\sim \calN  \Big( \left(\begin{smallmatrix} 0\\ 0\end{smallmatrix}\right) , \left(\begin{smallmatrix} 1 & \rho \\ \rho & 1 \end{smallmatrix}\right)  \Big )
$ are independent for $i=1,\ldots,n$.
Let $M \in \reals^{n \times n}$ be any deterministic matrix. There exists some universal constant $c>0$
such that 
with probability at least $1-2\delta$,
\begin{align}
 \left| X^\top M Y -  \rho \Tr(M) \right| \le c \left( \|M\|_F \sqrt{ \log (1/\delta) } +  \| M \| \log (1/\delta) \right) \label{eq:hw_bilinear}. 
\end{align}
\end{lemma}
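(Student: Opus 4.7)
\medskip

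\noindent\textbf{Proof plan.} The strategy is to reduce the bilinear form with correlated coordinates to the sum of a classical quadratic form in $X$ alone plus a decoupled bilinear form, so that the standard Hanson-Wright inequality and Gaussian concentration can be applied separately.

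First, I would introduce the Gaussian decomposition $Y = \rho X + \sqrt{1-\rho^2}\, Z$, where $Z \in \reals^n$ is a standard Gaussian vector independent of $X$; this is consistent with the joint distribution of $(X_i,Y_i)$. Substituting gives
\[
X^\top M Y - \rho\,\Tr(M) \;=\; \rho\bigl(X^\top M X - \Tr(M)\bigr) \;+\; \sqrt{1-\rho^2}\, X^\top M Z,
\]
so it suffices to control each of the two terms on the right with probability at least $1-\delta$ each and combine by a union bound.

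For the quadratic term $X^\top MX - \Tr(M)$, I would invoke the classical Hanson-Wright inequality for Gaussian vectors (see, e.g., Rudelson-Vershynin), which yields
\[
\bigl|X^\top M X - \Tr(M)\bigr| \;\le\; c_1\bigl(\|M\|_F\sqrt{\log(1/\delta)} + \|M\|\log(1/\delta)\bigr)
\]
with probability at least $1-\delta$. For the decoupled bilinear term $X^\top M Z$, I would condition on $X$: since $Z \ci X$ and $Z\sim\calN(0,I_n)$, we have $X^\top MZ \mid X \sim \calN(0,\|M^\top X\|^2)$, and the standard Gaussian tail bound gives $|X^\top MZ|\le \|M^\top X\|\sqrt{2\log(2/\delta)}$ with conditional probability at least $1-\delta/2$. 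The random coefficient $\|M^\top X\|$ is then controlled by Gaussian concentration for Lipschitz functions: the map $x\mapsto \|M^\top x\|$ is $\|M\|$-Lipschitz with expectation at most $\|M\|_F$, so
\[
\|M^\top X\| \;\le\; \|M\|_F + \|M\|\sqrt{2\log(2/\delta)}
\]
with probability at least $1-\delta/2$. Multiplying the two bounds yields $|X^\top MZ|\le \|M\|_F\sqrt{2\log(2/\delta)}+2\|M\|\log(2/\delta)$ with probability at least $1-\delta$ after another union bound.

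Finally, combining the two estimates via union bound over the two events and absorbing the factors $\rho,\sqrt{1-\rho^2}\in[0,1]$ and the constants $\log 2$ into a single universal constant $c$, we obtain the claimed inequality with probability at least $1-2\delta$. The only delicate point is that the constant in the classical Hanson-Wright bound is typically stated for centered sub-Gaussian entries of sub-Gaussian norm $O(1)$; one needs to check that the standard Gaussian case gives the form $\|M\|_F\sqrt{\log(1/\delta)}+\|M\|\log(1/\delta)$ directly, which is routine (Gaussian chaos of order two). No other obstacle is expected, since every remaining inequality is a textbook Gaussian tail bound.
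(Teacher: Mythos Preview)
Your proof is correct but follows a different route from the paper. The paper uses the polarization identity
\[
X^\top M Y \;=\; \tfrac{1}{4}(X+Y)^\top M(X+Y) - \tfrac{1}{4}(X-Y)^\top M(X-Y),
\]
observes that $X\pm Y$ are Gaussian vectors with i.i.d.\ entries of variance $2(1\pm\rho)$, and applies the classical Hanson--Wright inequality once to each quadratic form. Your regression decomposition $Y=\rho X+\sqrt{1-\rho^2}\,Z$ instead splits the bilinear form into a pure quadratic piece (handled by Hanson--Wright) and a decoupled bilinear piece $X^\top MZ$ (handled by conditioning plus Lipschitz concentration for $\|M^\top X\|$).

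The paper's argument is shorter---two invocations of Hanson--Wright and nothing else---whereas yours is slightly more hands-on. On the other hand, your approach is more robust to the shape of $M$: the polarization identity as written actually yields $\tfrac12 X^\top(M+M^\top)Y$, not $X^\top MY$, so the paper's proof tacitly assumes $M$ is symmetric (which is fine for every application in the paper, where $M=I$). Your decomposition works verbatim for arbitrary $M$. A minor simplification on your side: the decoupled term $X^\top MZ$ can also be handled in one step by applying Hanson--Wright to the $2n$-dimensional standard Gaussian $(X,Z)$ with the block matrix $\bigl(\begin{smallmatrix}0&M\\0&0\end{smallmatrix}\bigr)$, avoiding the separate Lipschitz-concentration step.
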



\begin{proof}
When $\rho=1$, i.e.~$X=Y$,  the bilinear form reduces to a quadratic form and this lemma is the original Hanson-Wright inequality \cite{hanson1971bound,rudelson2013hanson}. 
In general, note that 
\begin{align*}
X^\top M Y=\frac{1}{4}(X+Y)^\top M( X + Y)
 -\frac{1}{4}(X-Y)^\top M(X- Y ).
\end{align*}
Thus, it suffices to analyze the two terms separately.
Note that 
 \[
\expect{(X \pm Y)^\top M( X \pm  Y) } = (2 \pm 2\rho)  \Tr(M).
 \]
Applying the Hanson-Wright inequality, we get that with probability at least $1-\delta$, 
$$
\left| (X \pm Y )^\top  M (X \pm Y) - 
\expect{ ( X \pm Y )^\top  M ( X \pm Y) } \right|
\le c \left( \|M\|_F \sqrt{ \log (1/\delta) } +  \| M \| \log (1/\delta) \right),
$$
where $c$ is some universal constant. The conclusion readily follows by combining the last three displayed equations.
\end{proof}

\begin{lemma}[{C}hernoff's inequality for Binomials]
Suppose  $X \sim \Binom(n,p)$ with mean $\mu=np$.
Then for any $\delta>0$,
\begin{align}
\prob{ X \ge (1+\delta) \mu} \le \exp \left( - \mu \left( (1+\delta) \log (1+\delta) - \delta \right) \right), \label{eq:chernoff_binom_right}
\end{align}
and 
\begin{align}
\prob{ X \le (1-\delta) \mu } \le \exp\left( - \frac{\delta^2}{2} \mu \right). \label{eq:chernoff_binom_left}
\end{align}
In particular, it follows from \prettyref{eq:chernoff_binom_right} that  
\begin{align}
\prob{X \ge  \tau }\le \exp\left(-t\right), \quad \forall t>0,\label{eq:chernoff_binom_right_Lambert}
\end{align}
where $\tau = \mu \exp \sth{ 1+ W \left(\frac{t} {e\mu}-\frac{1}{e}\right)}$ and 
$W(x)$ is the Lambert W function defined on $[-1,\infty]$ as the unique solution of $W(x) e^{W(x)} =x$ for $x \ge -1/e$. 
\end{lemma}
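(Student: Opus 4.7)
The plan is to apply the standard Chernoff (exponential Markov) method, using the moment generating function of a binomial. For the upper tail $\prob{X \ge (1+\delta)\mu}$, I would start from $\prob{X \ge a} \le e^{-\lambda a}\,\Expect[e^{\lambda X}]$ for any $\lambda > 0$, and compute $\Expect[e^{\lambda X}] = (1-p+p e^\lambda)^n$. Applying the elementary inequality $1+x \le e^x$ gives $1-p+pe^\lambda \le \exp(p(e^\lambda - 1))$, so $\Expect[e^{\lambda X}] \le \exp(\mu(e^\lambda - 1))$. Setting $a = (1+\delta)\mu$ reduces the problem to minimizing $-\lambda(1+\delta) + e^\lambda - 1$ over $\lambda > 0$, which is achieved at $\lambda^* = \log(1+\delta)$ and yields exactly the exponent $(1+\delta)\log(1+\delta) - \delta$.

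For the lower tail, I would run the same argument with $\lambda < 0$: write $\prob{X \le (1-\delta)\mu} \le e^{\lambda(1-\delta)\mu}\,\Expect[e^{\lambda X}]$, invoke the same MGF bound, and optimize at $\lambda^* = \log(1-\delta) < 0$ to obtain $\prob{X \le (1-\delta)\mu} \le \exp(-\mu\psi(\delta))$, where $\psi(\delta) := (1-\delta)\log(1-\delta) + \delta$. The claimed Gaussian-type bound then follows from the scalar inequality $\psi(\delta) \ge \delta^2/2$ on $[0,1]$, which is routine by differentiating twice and noting that $\psi(0) = \psi'(0) = 0$ while $\psi''(\delta) = 1/(1-\delta) \ge 1$.

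For the Lambert W form, I would take the first inequality with $\tau = (1+\delta)\mu$, so that the exponent is $\mu\varphi(\delta)$ with $\varphi(\delta) = (1+\delta)\log(1+\delta) - \delta$, and reparametrize by $w$ via $\tau = \mu e^{1+w}$, i.e., $1+\delta = e^{1+w}$. A direct substitution gives $\mu\varphi(\delta) = \mu(w e^{1+w} + 1) = \mu(e \cdot w e^w + 1)$. Requiring $\mu\varphi(\delta) \ge t$ thus becomes $w e^w \ge t/(e\mu) - 1/e$, and since $w \mapsto w e^w$ is strictly increasing on $[-1,\infty)$ with inverse $W$ on $[-1/e,\infty)$, equality is attained at $w = W(t/(e\mu) - 1/e)$. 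Substituting back recovers the stated formula $\tau = \mu\exp\{1 + W(t/(e\mu) - 1/e)\}$.

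No step here is genuinely hard; the only mild technicality is the convexity inequality $\psi(\delta) \ge \delta^2/2$, and the algebraic reparametrization $1+\delta = e^{1+w}$ in the Lambert W step, both of which are elementary.
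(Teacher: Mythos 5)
Your proposal is correct and matches the paper's treatment: the paper simply cites Mitzenmacher--Upfal (Theorems 4.4 and 4.5) for the two tail bounds, which are exactly the standard Chernoff/MGF arguments you spell out, and your Lambert W reparametrization $1+\delta = e^{1+w}$ is algebraically the same substitution $y\log y - y = x$ with $x = t/\mu - 1$, $y = 1+\delta$ used in the paper. No gaps.
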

\begin{proof}
Note that \prettyref{eq:chernoff_binom_right} and \prettyref{eq:chernoff_binom_left} are direct consequences of \cite[Theorems 4.4 and 4.5]{ProbabilityComputing05}. 
To derive \prettyref{eq:chernoff_binom_right_Lambert} from \prettyref{eq:chernoff_binom_right}, we use 
 $$
 y = \exp \left( 1+ W(x/e) \right) \iff \log \frac{y}{e}  = W(x/e) \iff \log \frac{y}{e}   \exp \left(  \log \frac{y}{e}   \right) = \frac{x}{e}  \iff y \log y - y = x,
 $$
 and let $x=t/\mu -1$, $y=1+\delta$, and $\tau= y \mu$. 
\end{proof}

\section{Facts on Random Permutation}
\label{app:perm}
In this appendix we collect several useful facts about random permutation (cf.~\cite{arratia1992cycle}). For any $\ell\in\naturals$, let $n_\ell$ denote the number of $\ell$-cycles in a uniform random permutation $\sigma \in \calS_n$. Let $\{Z_{\ell}\}_{1\le  \ell \le k}$ denote a sequence of independent Poisson random variables where $Z_{\ell} \sim \Pois\left(\frac{1}{\ell}\right)$. 
\begin{lemma}
\label{lmm:jointpmf}
For any $k\in[n]$ and $a_1,a_2,\cdots,a_k \in\integers_+$,
\begin{align}
 \prob{n_1 \ge a_1,n_2 \ge a_2,\cdots,n_k \ge a_k}
 & \le \frac{1}{\prod_{\ell=1}^k \ell^{a_\ell} a_\ell!}.
\label{eq:jointdis1} 
\end{align} 
Consequently, for any nonnegative function $g$, 
\begin{equation}
\Expect[g(n_1,\ldots,n_k)] \le \Expect[g(Z_1,\ldots,Z_k)] \exp\left(\harmonic_k\right) 
\label{eq:jointdis2}
\end{equation}
where $\harmonic_k = \sum_{1\le \ell\le k} \frac{1}{k}$ denote the harmonic number. 
\end{lemma}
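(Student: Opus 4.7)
The plan is to establish \prettyref{eq:jointdis1} first, via the factorial-moment method, and then deduce \prettyref{eq:jointdis2} from it by passing through a pointwise (PMF) bound together with the explicit product form of the joint Poisson density.

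The heart of the argument is the classical \emph{factorial-moment identity}
\[
\Expect\left[\prod_{\ell=1}^k \binom{n_\ell}{a_\ell}\right] \;=\; \prod_{\ell=1}^k \frac{1}{\ell^{a_\ell}\, a_\ell!},
\]
valid whenever $m \triangleq \sum_{\ell\le k}\ell a_\ell \le n$ (and both sides are $0$ or the bound is vacuous otherwise). I would prove this by counting ordered tuples of distinguished disjoint cycles. The quantity $\prod_\ell \binom{n_\ell}{a_\ell}$ equals the number of ways to select, for each $\ell \le k$, an unordered collection of $a_\ell$ cycles of length $\ell$ in $\sigma$. Summing indicators, its expectation equals $\frac{1}{\prod_\ell a_\ell!}$ times the number of ordered tuples of pairwise disjoint labeled cycles of the specified lengths times the probability that $\sigma$ contains them all. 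The number of such ordered tuples on $[n]$ is $\frac{n!}{(n-m)! \prod_\ell \ell^{a_\ell}}$ (each $\ell$-cycle being a cyclic arrangement of $\ell$ distinct elements contributes a factor $\frac{1}{\ell}$ relative to ordered choices), and the probability that a uniform $\sigma \in \calS_n$ contains a prescribed set of disjoint cycles of total size $m$ is $(n-m)!/n!$. Multiplying yields the identity. Markov's inequality then gives
\[
\prob{n_\ell \ge a_\ell \ \forall \ell} \;\le\; \prob{\textstyle\prod_\ell \binom{n_\ell}{a_\ell} \ge 1} \;\le\; \Expect\left[\prod_\ell \binom{n_\ell}{a_\ell}\right] \;=\; \prod_\ell \frac{1}{\ell^{a_\ell} a_\ell!},
\]
which is \prettyref{eq:jointdis1}.

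For \prettyref{eq:jointdis2}, the key observation is that since $\{n_1=a_1,\ldots,n_k=a_k\} \subseteq \{n_1\ge a_1,\ldots,n_k\ge a_k\}$, the tail bound just proved yields the pointwise estimate
\[
\prob{n_\ell = a_\ell \ \forall \ell \le k} \;\le\; \prod_{\ell=1}^k \frac{1}{\ell^{a_\ell} a_\ell!}.
\]
On the other hand, since the $Z_\ell \sim \Pois(1/\ell)$ are independent,
\[
\prob{Z_\ell = a_\ell \ \forall \ell \le k} \;=\; \prod_{\ell=1}^k e^{-1/\ell}\frac{(1/\ell)^{a_\ell}}{a_\ell!} \;=\; e^{-\harmonic_k}\prod_{\ell=1}^k \frac{1}{\ell^{a_\ell} a_\ell!}.
\]
Combining these two displays gives the density-ratio bound $\prob{n = a} \le e^{\harmonic_k}\prob{Z = a}$, and then for any nonnegative $g$,
\[
\Expect[g(n_1,\ldots,n_k)] = \sum_{a} g(a)\,\prob{n = a} \;\le\; e^{\harmonic_k}\sum_{a} g(a)\,\prob{Z = a} \;=\; e^{\harmonic_k}\Expect[g(Z_1,\ldots,Z_k)],
\]
which is \prettyref{eq:jointdis2}.

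There is no real obstacle: the main step is the factorial-moment identity, which is a standard bookkeeping exercise, and the passage from the tail bound to the expectation bound is immediate once the joint Poisson density is written out and the inclusion $\{n=a\}\subseteq\{n\ge a\}$ is used. The only thing to be careful about is that the factorial-moment identity requires $\sum \ell a_\ell \le n$; when this fails both sides of \prettyref{eq:jointdis1} degenerate appropriately so no separate argument is needed.
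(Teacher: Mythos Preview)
Your proposal is correct and follows essentially the same approach as the paper: the paper also proves \prettyref{eq:jointdis1} via the factorial-moment identity $\Expect\bigl[\prod_\ell \binom{n_\ell}{a_\ell}\bigr]=\prod_\ell \frac{1}{\ell^{a_\ell}a_\ell!}$ combined with the indicator bound $\indc{n_\ell\ge a_\ell}\le\binom{n_\ell}{a_\ell}$, and then asserts that \prettyref{eq:jointdis2} follows from \prettyref{eq:jointdis1}. You actually supply more detail than the paper does---you sketch a direct counting proof of the factorial-moment identity (the paper just cites it) and you spell out the density-ratio step for \prettyref{eq:jointdis2}, which the paper leaves implicit.
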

\begin{proof}
It suffices to check the first inequality.
Note that for all $\sum_{\ell=1}^k \ell a_{\ell} \le n$,
$\expect{\prod_{1\le \ell\le k}\binom{n_{\ell}}{a_{\ell}}} = \frac{1}{\prod_{\ell=1}^k \ell^{a_\ell} a_\ell!}$ (see~e.g.~\cite[Eq.~(5)]{arratia1992cycle}). Then \prettyref{eq:jointdis1} follows due to $\indc{n_\ell\ge a_\ell}  \le \binom{n_\ell}{a_\ell}$ for $1\le \ell \le k$.
\end{proof}

\begin{lemma}[{\cite[Theorem 2]{arratia1992cycle}}]
\label{lmm:cycle_decomposition_poisson_distribution_total_v}
For any $1 \le k< n$,
the total variation distance between the law of $\{n_{\ell}\}_{ 1\le \ell\le k}$ and the law of $\{Z_{\ell}\}_{1\le  \ell \le k}$ satisfies:
\begin{align}
\mathrm{TV} \left( \calL\left( n_1, n_2, \ldots, n_k \right), \calL\left( Z_1, Z_2, \ldots, Z_k \right)\right)  \le  F\left(\frac{n}{k} \right), 
\label{eq:cycle_decomposition_poisson_distribution_total_v}
\end{align}
where 
$
 F(x) = \sqrt{2\pi m} \frac{2^{m-1}}{\left(m-1\right)!}+\frac{1}{m!}+3\left(\frac{x}{e}\right)^{-x},
$
with $m\triangleq \ceil{x}{}$, so that $\log F(x)= -x\log x(1+o(1))$ as $x\diverge$.
\end{lemma}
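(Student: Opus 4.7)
The plan is to prove the lemma via an explicit coupling construction that places $\{n_\ell\}_{\ell=1}^k$ and $\{Z_\ell\}_{\ell=1}^k$ on a common probability space, so that the total variation distance is bounded by the probability of disagreement on $[1,k]$. The natural tool here is the \emph{Feller coupling}: take independent Bernoulli random variables $\xi_1,\xi_2,\ldots$ with $\xi_i\sim\Bern(1/i)$, and declare an $\ell$-spacing to be a maximal run of the form $\xi_i=1,\xi_{i+1}=\cdots=\xi_{i+\ell-1}=0,\xi_{i+\ell}=1$. A classical identity (see \cite{arratia1992cycle}) shows that using only $\xi_1,\ldots,\xi_n$ (with boundary convention $\xi_{n+1}=1$), the number $\widehat n_\ell$ of $\ell$-spacings has the same joint distribution as $\{n_\ell\}_{\ell\le n}$, while along the infinite sequence the $\ell$-spacing counts $\widehat Z_\ell$ are independent with $\widehat Z_\ell \sim \Pois(1/\ell)$. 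On this common space both vectors are realized simultaneously, so it suffices to bound $\Prob\{(\widehat n_1,\ldots,\widehat n_k)\neq (\widehat Z_1,\ldots,\widehat Z_k)\}$.

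Next, I would isolate the two sources of discrepancy. The coordinates $\widehat n_\ell$ and $\widehat Z_\ell$ for $\ell\le k$ agree whenever no $\ell$-spacing, $\ell\le k$, straddles position $n$, and the tail $(\xi_i)_{i>n}$ does not produce an ``unexpected'' short spacing that is attributed to $\widehat Z_\ell$ but not to $\widehat n_\ell$. The first event is controlled by the probability that some $\xi_i=1$ for $i\in[n-k+1,n]$ combined with a long zero run extending past $n$; a union bound over the position and spacing length gives a contribution on the order of $\sum_{\ell\le k}\sum_{i=n-k+1}^{n}\frac{1}{i(i+\ell)}$, which one refines using Stirling to obtain the $\sqrt{2\pi m}\frac{2^{m-1}}{(m-1)!}+\frac{1}{m!}$ terms in $F(n/k)$ with $m=\lceil n/k\rceil$. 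The second event (mis-specification from the infinite tail) is bounded by the probability that there is at least one $1$ among $\xi_{n+1},\xi_{n+2},\ldots$ in a block of length $k$ starting before position $n$; estimating $\prod_{i>n}(1-1/i)$-type quantities via $\log(1-1/i)\le -1/i$ and Stirling yields the residual $3(x/e)^{-x}$ term with $x=n/k$.

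Finally, assembling the two estimates gives $\mathrm{TV}\le F(n/k)$ with exactly the claimed $F$. The main obstacle is not conceptual but combinatorial: matching the explicit multiplicative constants in \prettyref{eq:cycle_decomposition_poisson_distribution_total_v} requires careful bookkeeping of which spacings can straddle the boundary and in how many ways, plus a sharp Stirling bound on the resulting sum. Since this bookkeeping is carried out in full in \cite[Theorem 2]{arratia1992cycle}, the cleanest write-up is to perform the coupling step explicitly (as sketched above) and then invoke the calculations there for the final form of $F$; no independent estimates are needed for the application in this paper, where only $\log F(n/k)=-(1+o(1))(n/k)\log(n/k)$ is used.
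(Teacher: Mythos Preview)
The paper does not supply its own proof of this lemma: it is simply quoted verbatim from \cite[Theorem~2]{arratia1992cycle} and used as a black box (only the asymptotic $\log F(n/k)=-(1+o(1))(n/k)\log(n/k)$ is ever invoked). Your sketch via the Feller coupling is exactly the argument carried out in that reference, so your proposal is correct and matches the source the paper relies on.
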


\section*{Acknowledgment}
The authors thank Cristopher Moore for simplifying the proof of \prettyref{prop:cycle}.
J.\ Xu would like to thank Tselil Schramm for helpful discussions
on the hypothesis testing problem
at the early stage of the project.
\bibliography{detection}
\bibliographystyle{alpha}

\end{document}